\documentclass[11pt,reqno,a4paper]{amsart}
\usepackage[margin=0.9in]{geometry}
\usepackage{amsmath, amsthm, amssymb}
\usepackage{enumitem}
\usepackage[colorlinks=true, pdfstartview=FitV, linkcolor=blue,citecolor=blue, urlcolor=blue]{hyperref}
\usepackage[usenames,dvipsnames]{xcolor}
\usepackage{subcaption}
\usepackage{mathtools}
\usepackage{bbm}
\usepackage{upgreek}
\usepackage{comment}
\usepackage{dsfont}
\usepackage{tikz}
\usepackage{xcolor}
\usepackage{dsfont}

\usepackage[numbers,sort&compress]{natbib}

 \usepackage[capitalise]{cleveref}
\usepackage{autonum}
\crefname{enumi}{}{}
\Crefname{enumi}{}{}

\crefformat{equation}{(#2#1#3)}
\crefrangeformat{equation}{(#3#1#4) to~(#5#2#6)}
\crefmultiformat{equation}{(#2#1#3)}%
{ and~(#2#1#3)}{, (#2#1#3)}{ and~(#2#1#3)}

\definecolor{labelkey}{rgb}{0,0,1}

\definecolor{redA}{RGB}{150,1,80}

\def\dd{{\rm d}}

\def\eps{\epsilon}
\def\e{{\rm e}} 
\def\de{{\partial}}

\def\RR {\mathbb{R}}
\def\R {\mathbb{R}} 

\def\NN {\mathbb{N}}

\def\Re{{\rm Re}}

\newcommand{\cA}{\mathcal{A}}

\newcommand{\cH}{\mathcal{H}}

\newcommand{\cJ}{\mathcal{J}}
\newcommand{\cK}{\mathcal{K}}
\newcommand{\cL}{\mathcal{L}}
\newcommand{\cM}{\mathcal{M}}

\newcommand{\cO}{\mathcal{O}}
\newcommand{\cP}{\mathcal{P}}

\newcommand{\cR}{\mathcal{R}}
\newcommand{\cS}{\mathcal{S}}
\newcommand{\cT}{\mathcal{T}}

\newcommand{\cX}{\mathcal{X}}
\newcommand{\cY}{\mathcal{Y}}
\newcommand{\cZ}{\mathcal{Z}}

\newcommand{\adv}{\mathsf{adv}}

\newcommand{\app}{\mathrm{app}}

\newcommand{\mrm}{\mathrm{m}}

\newcommand{\dhk}{{\tt{d}}}

\newtheorem{proposition}{Proposition}[section]
\newtheorem{theorem}[proposition]{Theorem}
\newtheorem{corollary}[proposition]{Corollary}
\newtheorem{lemma}[proposition]{Lemma}
\theoremstyle{definition}
\newtheorem{definition}[proposition]{Definition}
\newtheorem{remark}[proposition]{Remark}

\theoremstyle{definition}

\numberwithin{equation}{section}

\def\dhk{{\sf{d_{HK}}}}

\def\Z{\boldsymbol{Z}}

\def\cX{\mathcal{X}}
\def \X{\cX_{\eps,i}}
\def \cA{\mathcal{A}}
\def \I{\textnormal{I}_{\eps,i}}

\def \Xg{\boldsymbol{\cX}_{\eps}}
\def \Ii{\textnormal{I}_{\eps,i}}

\def \IIi{\textnormal{II}_{\eps,i}}
\def \IIIi{\textnormal{III}_{\eps,i}}

\newcommand{\zhk}[1]{z_{\rm #1}}

\usepackage{pict2e}

\DeclareRobustCommand{\blackinwhitediamond}{%
  \mathord{%
    \begingroup
    \setlength{\unitlength}{1.2ex}%
    \begin{picture}(1,1)
      \polygon(.5,0)(1,.5)(.5,1)(0,.5)
      \polygon*(.5,0.22)(.78,.5)(.5,.78)(.22,.5)
    \end{picture}%
    \endgroup
  }%
}
\title{Long-time dynamics of viscous Gröbli triangles}

\author{Michele Dolce}
\address{Institute of Mathematics, EPFL, Station 8, 1015 Lausanne, Switzerland}
\email{michele.dolce@epfl.ch}

\author{Antonio Tirotta}
\address{Institute of Mathematics, EPFL, Station 8, 1015 Lausanne, Switzerland}
\email{antonio.tirotta@epfl.ch}

\begin{document}

\maketitle

\begin{abstract}
In 1877 Gr\"obli discovered that three point vortices in the plane can move
self-similarly, forming a triangle that rotates while either expanding
indefinitely or collapsing in finite time. We study the viscous counterpart of
these configurations, namely the two-dimensional Navier--Stokes evolution
starting from three point vortices arranged in a Gr\"obli triangle. In the
expanding case, viscosity spreads the vortex cores at the same rate as the
triangle grows, so the vortices remain well separated relative to their size.
Building on this observation, we show that at high Reynolds number the solution
stays close to three Lamb--Oseen vortices centered near the vertices of the
inviscid triangle for times that grow exponentially with a power of the Reynolds
number. In the collapsing case, the same description holds up to times close to
the inviscid collision. As an application of the long-time result in the
expanding case, we obtain a vanishing viscosity realization of the vortex burst,
a nonuniqueness mechanism for point vortices in which a single vortex splits
into an expanding Gr\"obli triangle.\end{abstract}

\section{Introduction}

The understanding of the dynamics of highly concentrated vortices in incompressible fluids is a foundational theme in fluid dynamics \cite{meleshko2007bibliography}.  This has been an extremely active area of research since the pioneering studies of Helmholtz in the mid-19th century. He was the first
to idealize vortices as line filaments, and, after the subsequent refinements
of Kirchhoff, this led to the celebrated Helmholtz--Kirchhoff system describing the
(Hamiltonian) motion of point vortices (straight vortex filaments). These ideas also motivated
several investigations by J.J. Thomson\footnote{Thomson discovered the electron and, curiously enough, a plasma of electrons can realize experimentally several features of two-dimensional inviscid fluids \cite{Sch99}. For example, point vortex crystals appear consistently after a turbulent mixing phase.}, Kelvin and many others. Later,
Onsager used the point vortex model as one of the starting points for his
statistical theory of two-dimensional hydrodynamics \cite{onsager1949statistical}.  Since then, a vast literature has been devoted to the Helmholtz--Kirchhoff
system itself and we refer to Newton's book for a modern account on this subject \cite{NewtonBook}. In fact, point vortex dynamics can be regarded as a ``classical mathematics playground'' in Aref's words \cite{aref2007point}, in the sense that it retains several
features of the underlying fluid equations and exhibits remarkable connections
with other areas, e.g. celestial mechanics. In parallel, many results have justified
point vortex motion through desingularization procedures for inviscid fluids,
starting from the classical approaches of Marchioro and Pulvirenti \cite{MarPul84} and the variational methods \cite{Turkington_1985}. This  remains an extremely active area of research, e.g. \cite{CLZ21,DdPMP23,Hassainia25, donati2026confinementresultsnearpoint, hassainia2024desingularizationtimeperiodicvortexmotion}.

For two-dimensional viscous incompressible fluids, the analogous problem is to
understand to what extent the Navier--Stokes evolution of sharply concentrated
vortices is described by point vortex dynamics. The first results in this
direction go back to Marchioro \cite{Mar98}. A major breakthrough was then obtained by Gallay
\cite{Gallay2011}, who proved that the Navier--Stokes evolution emanating from a finite
collection of Dirac masses is, in the large Reynolds number regime, accurately
described by a superposition of Lamb--Oseen vortices whose centers follow a viscous regularization of the
Helmholtz--Kirchhoff system. Most importantly, this approximation holds on every fixed time
interval on which the inviscid point vortex motion is well defined. For desingularizations of more general concentrated initial data, Ceci and
Seis \cite{CeciSeis24} obtained concentration estimates in
Wasserstein distance, that are uniform in viscosity and valid on time intervals that may grow logarithmically
with the Reynolds number. More recently, building on the methods developed
by Gallay and \v{S}ver\'ak for one viscous vortex ring \cite{GaSring}, the
Lamb--Oseen approximation has been extended to much longer time scales in
special configurations. In the case of a translating vortex dipole
\cite{dolce2024long}, general pairs with non zero total circulation \cite{zhang2025long}, and rigidly rotating
vortex crystals \cite{dolce2026viscousvortexcrystals}, its validity was pushed
arbitrarily close to the diffusive time scale. In all these cases, however,
the rigidity of the underlying point vortex motion plays a fundamental role.

 The purpose of the present paper is to treat a different and dynamically richer situation: three vortices undergoing a self-similar expansion or collapse. This is a special case of ``the 3-vortex problem (that) is fundamental to much of the
development of the subject''-\cite{NewtonBook}, see also \cite{anurag2024new}. These solutions of the Helmholtz--Kirchhoff system were discovered by Gr\"obli\footnote{see \cite{aref1992grobli} for a nice historical account of Gr\"obli's biography.} in
his PhD thesis in 1877 and later rediscovered, in different forms, by other authors
\cite{aref1992grobli}. In the inviscid setting, a desingularization of the self-similar expanding  solutions was obtained in \cite{davila2024expanding}, while  \cite{zbarsky2021point,iftimie2018self} exploits the expanding mechanism to show that the mutual distances among concentrated vortices approximating the desired configuration grow
like $\sqrt{t}$, along with a suitable concentration estimate.  The viscous problem considered here is qualitatively different:
diffusion continuously changes the vortex cores, and the Navier--Stokes
evolution is uniquely determined by the atomic initial datum. Moreover, the
inviscid dynamics prescribes that the mutual distances between the vortices
grow (or collapse) like $\sqrt{t}$ (or $\sqrt{t_0-t}$), while viscosity spreads each vortex core at the rate
$\sqrt{\nu t}$. Hence, in the appropriate high Reynolds number regime, the
ratio between the viscous core size and the distance between the vortices
remains uniformly small for all times $t\geq 0$ (or up to times $t$ suitably close to $t_0^{-}$, depending on the viscosity). This structural feature is the main
reason why in the expanding case one can hope to propagate the point vortex approximation far beyond
the time scales reached in all previous viscous desingularization results. Moreover, if the dynamics is followed up to a sufficiently large time-scale, this has implications for a vanishing viscosity justification of the nonuniqueness mechanism in Helmholtz--Kirchhoff via the so called \emph{vortex burst} \cite{grotto2022burst}. 

\subsection{Mathematical setting}
We consider the two-dimensional Navier--Stokes equations in the whole plane
$\mathbb{R}^2$, written in vorticity form as
\begin{align}
\label{eq:NSintro}
    \begin{cases}
        \de_t \omega+\{\psi,\omega\}=\nu\Delta \omega,
        \qquad t>0,\ x\in \mathbb{R}^2,\\
        \Delta\psi=\omega.
    \end{cases}
\end{align}
Here $\nu>0$ is the kinematic viscosity,
\[
    \{f,g\}=\nabla^\perp f\cdot \nabla g
    =\de_1 f\,\de_2 g-\de_2 f\,\de_1 g
\]
is the standard Poisson bracket, $\nabla^\perp \psi=u$ the velocity field and the streamfunction is given by
\begin{align}
    \psi(x,t)=\frac{1}{2\pi}\int_{\mathbb{R}^2}
    \ln|x-y|\,\omega(y,t)\,\dd y.
\end{align}
We are interested in initial data consisting of a finite collection of point
vortices,
\begin{align}
\label{def:omin}
    \omega^{\rm in}(x)=\sum_{i=1}^N\Gamma_i\delta_{z_i^{\rm in}},
\end{align}
where $\Gamma_i\neq 0$ is the circulation of the vortex initially located at
$z_i^{\rm in}$. By the result of Gallagher and Gallay \cite{GaGa2005} the
Navier--Stokes system is globally well posed for such initial data. In the inviscid limit, the vortex centers are expected to evolve according to
the Helmholtz--Kirchhoff system
\begin{equation}
\label{eq:HK}
    z_i'(t)=\sum_{j\neq i}\frac{\Gamma_j}{2\pi}
    \frac{(z_i(t)-z_j(t))^\perp}{|z_i(t)-z_j(t)|^2},
    \qquad z_i(0)=z_i^{\rm in},
    \qquad i=1,\dots,N.
\end{equation}
This approximation has been justified, in different senses, both for inviscid
and viscous desingularizations as briefly mentioned previously. In this paper we focus on the case
$N=3$. Although the three-vortex problem is integrable, it already displays
several interesting phenomena, such as self-similar collapse and expansion \cite{NewtonBook}.

From now on we fix the origin to the total vortex center so that 
\begin{align}
\label{eq:center_of_vorticity}
     \sum_{i=1}^3\Gamma_i z_i^{\rm in}
         =0
\end{align}
and we identify $\mathbb R^2$ with $\mathbb C$, so that rotations are
written as multiplication by complex phases.
Then the Helmholtz--Kirchhoff trajectories we will be looking at take the form
\begin{equation}
\label{def:zjGrobli}
    z_i(t)=\dhk(t)e^{\mathrm{i}\theta(t)}{\sf n}_i,
    \qquad i=1,2,3, 
\end{equation}
where we set  \begin{equation}
\label{def:key_parameters_intro}
    d=\max_{i\neq j}|z_i^{\rm in}-z_j^{\rm in}|,
    \qquad
    \Gamma:=\max_{1\leq i\leq 3}|\Gamma_i|,
    \qquad
    \gamma_i:=\frac{\Gamma_i}{\Gamma}.
\end{equation}
We call $d$ the size of the initial triangle and $\Gamma$ the largest
circulation in modulus. The vectors $({\sf n}_1,{\sf n}_2,{\sf n}_3)$ where ${\sf n}_i:=z_i^{\rm in}/d$, $i=1,2,3$, form
the normalized initial triangle, so that
\begin{equation}
\label{def:normalized}
    \max_{i\neq j}|{\sf n}_i-{\sf n}_j|=1.
\end{equation}
Then, plugging the ansatz \cref{def:zjGrobli} in \cref{eq:HK} and exploiting the conserved quantities of this system, see Section \ref{sec:selfsimilar_HK}, the scaling and rotation factors are  explicitly given by
\begin{equation}
\label{def:dhk}
\dhk(t)=d\,\sqrt{1+\operatorname{sign}(\alpha)\frac{t}{T_{\rm adv}}}, \qquad \theta(t)
    =
    \frac{\beta}{\alpha}
    \log\left(1+ \operatorname{sign}(\alpha)\frac{t}{T_{\rm adv}}\right), \qquad T_{\rm adv}:=\frac{d^2}{|\alpha| \Gamma}
\end{equation}
where $\alpha$ and $\beta$ are dimensionless parameters depending only on the \emph{normalized} self similar configuration, whereas the \emph{advection time} $T_{\rm adv}$ is the characteristic time-scale of the point vortex motion.  When $\alpha>0$ we have a self-similar expanding configuration whereas for $\alpha<0$ we get a self-similar collapsing one. The case $\alpha\to 0$ formally corresponds to the 
relative equilibrium situation of an equilateral triangle, that is in fact covered by \cite{dolce2026viscousvortexcrystals}, meaning that we will always assume $\alpha\neq 0$. The explicit expressions for $\alpha,\beta$ and their derivations are recalled in
Subsection ~\ref{app:point_vortices}. These  solutions were constructed by Gr\"obli
in 1877, and the conditions to obtain the self-similar configurations are
\begin{align}
\label{eq:groebli_condition_intro}
\sum_{1\leq i<j\leq 3}\Gamma_i\Gamma_j=0\qquad \sum_{1\leq i<j\leq 3}    \Gamma_i\Gamma_j |z_i^{\rm in}-z_{j}^{\rm in}|^2
    =0 
,
\end{align}
together with a suitable choice of the initial positions described in Section \ref{sec:selfsimilar_HK}. The conditions above follow from the conservation of the Hamiltonian and of angular impulse for the Helmholtz--Kirchhoff system respectively. We will refer to these solutions as the expanding (collapsing) \emph{Gr\"obli triangle} when $\alpha>0$ ($\alpha<0$). A nice geometric description of the initial configurations leading to a self-similar motion can be directly inferred from the conserved quantities of the system, as shown for instance by Aref in \cite{ArefSelfSimilar}. Once
two vortices are fixed, the admissible positions of the third vortex lie on a
circle. Some arcs correspond to self-similar expansion, while the complementary
ones correspond to self-similar collapse. The endpoints of these arcs are
relative equilibria. In the rest of the paper we work in the
reference frame centered at the total center of vorticity, so that the Gr\"obli
triangle in \cref{def:zjGrobli} grows (collapses) directly around the origin, and we refer to 
\cref{fig:aref_circle_comparison} for a visualization of the setting.
\begin{figure}[htbp]
\centering

\begin{minipage}{0.49\textwidth}
\centering
\begin{tikzpicture}[scale=2.2, >=stealth]

\definecolor{collorange}{RGB}{255,140,0}   

\tikzset{
    vortex pos/.style={circle, shading=radial, inner color=blue, outer color=blue!40, draw=none, minimum size=7.5pt, inner sep=0pt},
    vortex neg/.style={circle, shading=radial, inner color=red, outer color=red!40, draw=none, minimum size=7.5pt, inner sep=0pt},
    degen/.style={circle, fill=black, draw=none, minimum size=4.5pt, inner sep=0pt},
    arc coll/.style={line width=2pt, collorange, line cap=round},
    arc exp/.style={line width=2pt, green!50!black, line cap=round},
    gridline/.style={dashed, black!40, thick}
}

\pgfmathsetmacro{\GOne}{1}
\pgfmathsetmacro{\GTwo}{2}
\pgfmathsetmacro{\GThree}{-\GOne*\GTwo/(\GOne+\GTwo)}
\pgfmathsetmacro{\d}{1}

\pgfmathsetmacro{\xone}{-\GOne/(\GOne+\GTwo)*\d}
\pgfmathsetmacro{\xtwo}{ \GTwo/(\GOne+\GTwo)*\d}
\pgfmathsetmacro{\xmid}{0.5*(\xone+\xtwo)}

\pgfmathsetmacro{\R}{sqrt((\GOne+\GTwo+\GThree)/(\GOne+\GTwo))*\d}
\pgfmathsetmacro{\yeq}{sqrt(3)/2*\d}
\pgfmathsetmacro{\thetaE}{atan2(\yeq,\xmid)}

\coordinate (O)  at (0,0);
\coordinate (V1) at (\xone,0);
\coordinate (V2) at (\xtwo,0);
\coordinate (E1) at (\R,0);
\coordinate (E3) at (-\R,0);
\coordinate (E2) at (\xmid,\yeq);
\coordinate (E4) at (\xmid,-\yeq);

\draw[->, thick, black!60] (-1.1,0) -- (1.3,0) node[right, text=black] {$x_1$};
\draw[->, thick, black!60] (0,-1.05) -- (0,1.1) node[above, text=black] {$x_2$};

\fill (O) circle (0.008);
\draw[dash dot, black!40] (\xmid,-0.95) -- (\xmid,0.95); 

\draw[|-|, thick, black!70] (\xone, -0.18) -- (\xtwo, -0.18) node[midway, fill=white, inner sep=1.5pt, text=black] {\footnotesize $\tilde{d}$};

\draw[gridline] (V1) -- (E2) -- (V2);
\draw[gridline] (V1) -- (E4) -- (V2);

\draw[arc coll] (0:\R) arc (0:\thetaE:\R);
\draw[arc exp]  (\thetaE:\R) arc (\thetaE:180:\R);
\draw[arc coll] (180:\R) arc (180:{360-\thetaE}:\R);
\draw[arc exp]  ({360-\thetaE}:\R) arc ({360-\thetaE}:360:\R);

\node[degen] at (E1) {};
\node[degen] at (E2) {};
\node[degen] at (E3) {};
\node[degen] at (E4) {};

\node[vortex pos] at (V1) {}; \node[below left=2pt] at (V1) {$2$};
\node[vortex pos] at (V2) {}; \node[below right=2pt] at (V2) {$1$};

\pgfmathsetmacro{\thetaThree}{125}
\coordinate (V3) at ({\R*cos(\thetaThree)},{\R*sin(\thetaThree)});
\node[vortex neg] at (V3) {}; \node[above left=2pt] at (V3) {$3$};

\begin{scope}[shift={(0.85, 1.15)}]
    \draw[draw=black!30, fill=white, fill opacity=0.9, text opacity=1, rounded corners=2pt, thick] (-0.1, 0.06) rectangle (0.8, -0.6);
    \node[vortex pos, minimum size=5pt] at (0, -0.04) {}; \node[right] at (0.05, -0.04) {\scriptsize $\Gamma_i > 0$};
    \node[vortex neg, minimum size=5pt] at (0, -0.16) {}; \node[right] at (0.05, -0.16) {\scriptsize $\Gamma_i < 0$};
    \node[degen, minimum size=4pt]      at (0, -0.28) {}; \node[right] at (0.05, -0.28) {\scriptsize Equilibria};
    \draw[arc exp]  (-0.05, -0.40) -- (0.05, -0.40);      \node[right] at (0.05, -0.40) {\scriptsize Expanding};
    \draw[arc coll] (-0.05, -0.52) -- (0.05, -0.52);      \node[right] at (0.05, -0.52) {\scriptsize Collapsing};
\end{scope}
\end{tikzpicture}
\end{minipage}%
\hfill
\begin{minipage}{0.49\textwidth}
\centering
\begin{tikzpicture}[scale=2.2, >=stealth]

\definecolor{collorange}{RGB}{255,140,0}
\tikzset{
    vortex pos/.style={circle, shading=radial, inner color=blue, outer color=blue!40, draw=none, minimum size=7.5pt, inner sep=0pt},
    vortex neg/.style={circle, shading=radial, inner color=red, outer color=red!40, draw=none, minimum size=7.5pt, inner sep=0pt},
    gridline/.style={dashed, black!60, thick}
}

\pgfmathsetmacro{\GOne}{1}
\pgfmathsetmacro{\GTwo}{2}
\pgfmathsetmacro{\GThree}{-\GOne*\GTwo/(\GOne+\GTwo)}
\pgfmathsetmacro{\d}{1}

\pgfmathsetmacro{\xone}{-\GOne/(\GOne+\GTwo)*\d}
\pgfmathsetmacro{\xtwo}{ \GTwo/(\GOne+\GTwo)*\d}
\pgfmathsetmacro{\R}{sqrt((\GOne+\GTwo+\GThree)/(\GOne+\GTwo))*\d}
\pgfmathsetmacro{\thetaThree}{125}
\pgfmathsetmacro{\xthree}{\R*cos(\thetaThree)}
\pgfmathsetmacro{\ythree}{\R*sin(\thetaThree)}

\pgfmathsetmacro{\Gtot}{\GOne+\GTwo+\GThree}
\pgfmathsetmacro{\CVx}{(\GThree*\xthree)/\Gtot}
\pgfmathsetmacro{\CVy}{(\GThree*\ythree)/\Gtot}

\pgfmathsetmacro{\angleVtwo}{atan2(-\CVy, \xtwo-\CVx)}

\coordinate (CV)  at (0,0);

\begin{scope}[rotate=-\angleVtwo]
    \coordinate (V1s) at (\xone-\CVx, -\CVy);
    \coordinate (V2s) at (\xtwo-\CVx, -\CVy);
    \coordinate (V3s) at (\xthree-\CVx, \ythree-\CVy);
\end{scope}


\draw[->, thick, black!60] (-1.1,0) -- (1.3,0) node[right, text=black] {$x_1$};
\draw[->, thick, black!60] (0,-0.7) -- (0,1.4) node[above, text=black] {$x_2$};

\draw[gridline] (V1s) -- (V2s) -- (V3s) -- cycle;

\draw[|-|, thick, black!70] ([xshift=2.5pt, yshift=2.5pt]V2s) -- ([xshift=2.5pt, yshift=2.5pt]V3s) node[midway, fill=white, inner sep=1.5pt, text=black, sloped] {\footnotesize $d$};

\node[vortex pos] at (V1s) {}; \node[below left=1pt] at (V1s) {$2$};
\node[vortex pos] at (V2s) {}; \node[below right=1pt] at (V2s) {$1$};
\node[vortex neg] at (V3s) {}; \node[above left=1pt] at (V3s) {$3$};

\end{tikzpicture}
\end{minipage}

\caption{\textsc{Left}: Aref's circle characterizing the self-similar regimes of three point vortices for the condition $
\Gamma_1\Gamma_2+\Gamma_2\Gamma_3+\Gamma_3\Gamma_1=0.$ The origin is the center of vorticity of vortices $1$ and $2$, separated by distance $\tilde{d}$. The admissible locations of the third vortex lie on the circle
$
x_1^2+x_2^2
=
\frac{\Gamma_1+\Gamma_2+\Gamma_3}{\Gamma_1+\Gamma_2}\,\tilde{d}^2.$
Placing the third vortex on the green arcs produces self-similar expansion; the orange arcs produce self-similar collapse. The black dots represent relative equilibria. \textsc{Right:} The identical vortex configuration ($1, 2,$ and $3$) transformed into a reference frame centered on the total center of vorticity. In this frame, which is the one used in this paper, the self-similar motion of the dashed triangle occurs directly around the origin.}
\label{fig:aref_circle_comparison}
\end{figure}

To formulate the viscous approximation, let us first recall that a single
Dirac mass evolves under Navier--Stokes as a Lamb--Oseen vortex, that is
\begin{align}
\label{def:G}
    \omega^{\rm in}=\Gamma\delta_{z_0}
    \quad\Longrightarrow\quad
    \omega(x,t)=\frac{\Gamma}{\nu t}
    G\left(\frac{x-z_0}{\sqrt{\nu t}}\right),
    \qquad
    G(\xi):=\frac{1}{4\pi}e^{-|\xi|^2/4}.
\end{align}
It is therefore natural to expect that, at leading order, three interacting
point vortices give rise to a superposition of Lamb--Oseen vortices whose
centers follow the Helmholtz--Kirchhoff dynamics, namely
\begin{align}
\label{eq:omegaPVintro}
    \omega_{\rm PV}(x,t)
    =
    \frac{1}{\nu t}\sum_{i=1}^3
    \Gamma_i
    G\left(\frac{x-z_i(t)}{\sqrt{\nu t}}\right),
\end{align}
where $z_i(t)$ solves \cref{eq:HK}. To measure quantitatively the validity of an approximation, we introduce 
\begin{align}
\label{eq:QtyDef}
    \delta:=\frac{\nu}{\Gamma},
    \qquad
    T_{\rm diff}:=\frac{d^2}{\nu}.
\end{align}
 The parameter $\delta$ is the inverse circulation Reynolds number while
$T_{\rm diff}$ is the time at which viscosity spreads a vortex core over a
distance comparable with $d$. Most importantly, the parameter
\begin{equation}
    \label{def:epsintro}
    \eps(t):=\frac{\sqrt{\nu t}}{\dhk(t)} \quad \text{(\emph{effective aspect ratio})}
\end{equation}
measures the ratio between viscous spreading versus the expected inviscid dynamics. The standard aspect ratio used in \cite{Gallay2011,dolce2024long,dolce2026viscousvortexcrystals,zhang2025long} is instead fixed with the minimal distance reached by the Helmholtz-Kirchhoff dynamics. The key feature of the effective aspect ratio is that denoting
\begin{align}
\label{def:keytaueps}
\varsigma:=\operatorname{sign}(\alpha)\quad \Longrightarrow \quad \eps(t)^2=\frac{\delta}{|\alpha|}\frac{\frac{t}{T_{\rm adv}}}{1+\varsigma\frac{t}{T_{\rm adv}}}.
\end{align}
This is telling us that in the expanding case ($\varsigma=1$) the effective aspect ratio is proportional to $\sqrt{\delta}$ even at large times, whereas in the collapsing case ($\varsigma=-1$) it explodes as $t \to T_{\rm adv}$. 
\begin{remark}
    The standard aspect ratio used in relative equilibria as in \cite{dolce2024long,dolce2026viscousvortexcrystals,donati2025fast,zhang2025long} satisfies the following relation instead
    \begin{align}
    \label{def:standardeps}
        \varepsilon(t):=\frac{\sqrt{\nu t}}{d} \quad \Longrightarrow \quad \varepsilon(t)^2=\frac{\delta}{|\alpha|}\frac{t}{T_{\rm adv}}=\frac{t}{T_{\rm diff}}.
    \end{align}
    Hence, the parameter $\varepsilon$ remains a useful smallness parameter if and only if $t\ll T_{\rm diff}$, and $t\to T_{\rm diff}$ represents a technical but also physically relevant barrier \cite{dolce2026viscousvortexcrystals}.
\end{remark}
\subsection{Main result}
As a starting point, the estimates in
\cite{Gallay2011,donati2025fast} lead to the following informal statement
\begin{quote}
    On time intervals where the
Helmholtz--Kirchhoff dynamics remains well defined, i.e. collision-free, the Navier--Stokes
solution emanating from point vortices is well approximated by a
superposition of Lamb--Oseen vortices whose centers follow the point vortex
motion. More precisely, let $d_*=\min_{t,\, i\neq j}|(z_j-z_i)(t)|>0$, then
\begin{align}
\label{bd:naive}
    \frac{1}{\Gamma}
    \|(\omega-\omega_{\rm PV})(t)\|_{L^1(\mathbb{R}^2)}
    \lesssim
    \frac{\nu t}{d_*^2},
    \qquad
    0<t<T_{*,\rm adv}|\ln\delta|, \quad T_{*,\rm adv}:=\frac{d_*^2}{\max_{i}|\Gamma_i|}
\end{align}
\end{quote}
The general theory does not exploit the structural identity
\cref{def:keytaueps}. In particular, an aspect ratio based on a fixed minimal
separation cannot remain uniformly small along an expanding trajectory. The
effective aspect ratio is the mechanism that allows us to go beyond the
usual diffusive time scale obstruction in \cite{dolce2024long,dolce2026viscousvortexcrystals,zhang2025long} and, crucially for the burst limit, to obtain
estimates uniform in the initial size $d$.
Our first result is then summarized in the following simplified version.
\begin{theorem}
\label{thm:main_intro}
Let $\omega$ be the Navier--Stokes solution with initial datum \cref{def:omin} where $(\Gamma_i,z_i^{\rm in})$ generate an expanding $\varsigma=1$ (or collapsing $\varsigma=-1$)
Gr\"obli triangle $z_i(t)$ of \cref{eq:HK}. Let $d>0$ and $\{{\sf n}_i\}_{i=1}^3$ as in \cref{def:zjGrobli}. 
Then there exist sufficiently small constants $\delta_0,p_0,p_0'\in (0,1)$ and a large constant $C_0>1$, depending only on $\{{\gamma_i, \sf n}_i\}$ but independent of $d$ such that the following holds: assume that
$\delta\leq\delta_0$  and fix $p\in(0,p_0)$ (or $p'\in(0,p_0'))$. Then there exist trajectories $Z_j(t)$, defined through
the modulation conditions in \cref{eq:dtZi}, such that
\begin{align}
\label{eq:main_bound_intro}\frac1\Gamma
\left\|
\omega(t)
-
\frac1{\nu t}\sum_{i=1}^3
\Gamma_i
G\left(\frac{\cdot-Z_i(t)}{\sqrt{\nu t}}\right)
\right\|_{L^1}
&\leq
C_0
(\eps^2+\delta\sqrt{\eps})(t),
\\[2mm]
\max_{1\leq i\leq3}
\frac{|Z_i(t)-z_i(t)|}{\dhk(t)}
&\leq
C_0\sqrt{\delta}\eps(t)^2,
\end{align}
for all times
\begin{equation}
0<t<
\begin{cases}
T_{\rm adv}\exp(\delta^{-p}),
&\text{ if } \varsigma=1 \quad (\text{\rm expanding}),\\[1mm]
(1-\delta^{p'})T_{\rm adv},
&\text{ if } \varsigma=-1 \quad (\text{\rm collapsing}).
\end{cases}
\end{equation}
\end{theorem}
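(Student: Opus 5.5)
We plan to follow the Gallay--Šverák strategy adapted to a self-similar frame. Write the vorticity in self-similar variables attached to each vortex, $\xi=(x-Z_i(t))/\sqrt{\nu t}$, and decompose $\omega=\sum_i \frac{\Gamma_i}{\nu t}w_i(\xi,t)$ with $w_i$ localized near vortex $i$. Rather than the usual time variable, we will reparametrize time by $\tau=\log(1+\varsigma t/T_{\rm adv})$. In the expanding case this is natural because the Gröbli triangle is then stationary up to rotation, and by \cref{def:keytaueps} the effective aspect ratio $\eps(t)$ stays of size $\sqrt{\delta}$ (uniformly in $d$). Every interaction term between vortex $i$ and the velocity field of vortex $j$ will be expanded in powers of $\eps$, using the separation $|Z_i-Z_j|\gtrsim \dhk(t)$.

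\textbf{Step 1 (approximate solution).} We build a high-order approximate solution $w^{\rm app}_i=G+\eps^2 w^{(2)}_i+\dots$ by solving the linearized problem $\mathcal{L}w^{(k)}-\Lambda w^{(k)}=$ (forcing from the Taylor expansion of the neighbours' strain fields). Here $\mathcal{L}$ is the Fokker--Planck operator and $\Lambda=\{\ln|\cdot|*G,\cdot\}+\{\cdot,G\}$. The leading correction is the quadrupolar response to the strain $\propto \eps^2$. We pay attention that the forcing coefficients depend on $t$ only through $\eps(t)$ and the normalized triangle. The modulation conditions \cref{eq:dtZi} will fix $Z_i$ by imposing vanishing first moments of the remainder. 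This yields $Z_i'=$ HK velocity plus $O(\Gamma\eps^{k}/\dhk)$. Integrating in $t$ then gives the trajectory bound $|Z_i-z_i|/\dhk\lesssim\sqrt\delta\,\eps^2$, since $\int dt/\dhk^2\sim\tau/(|\alpha|\Gamma)$. The extra factor $\sqrt{\delta}$ comes from the viscous correction to the point-vortex velocity.

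\textbf{Step 2 (energy estimate for the remainder).} Set $w_i=w_i^{\rm app}+\delta$-scaled remainder $\tilde w_i$. We estimate $\tilde w_i$ in the Gaussian-weighted space $L^2(G^{-1}d\xi)$, localized via a cut-off at distance $\sim 1/\eps$, combined with the Gallay--Šverák weighted functional whose advection term is skew-symmetric. Coercivity of $-\mathcal{L}$ on zero-mass, zero-first-moment functions (spectral gap $1$ in the time variable $\log t$) then gives a differential inequality of the form
\[
\partial_s E\le -cE+C\eps^{2k'}+C\delta^{-1}\eps^{m}E+\text{(nonlinear)},
\]
in $s=\log t$. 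The remaining work is to connect the weighted local bound to the $L^1$ bound $\eps^2+\delta\sqrt\eps$. Far-field control of the tails will use an $L^1$ argument based on the maximum principle / Gaussian upper bounds of Carlen--Loss type. The residue $\delta\sqrt{\eps}$ is the price for tail truncation.

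\textbf{Main obstacle: the long time horizon.} Closing the estimate up to $T_{\rm adv}\exp(\delta^{-p})$ is the hardest step. The weight $G^{-1}$ is only usable on a ball of radius $\sim\eps^{-1}\sim\delta^{-1/2}$, and the errors from the cut-off region accumulate over the time interval. In $\tau$ the horizon has length $\delta^{-p}$, while the per-unit-$\tau$ losses from exponentially small Gaussian tails are at most $\exp(-c/\eps^2)\sim\exp(-c'/\delta)$ together with polynomial terms absorbed by the spectral gap. The dangerous piece is the slow, non-decaying drift of the modulation parameters combined with the ratio between the advection scale and the gap. We will try to show that the weighted functional is bounded by a growth factor $\exp(C\delta^{q}\tau)$, which stays $O(1)$ for $\tau\le\delta^{-p}$ with $p<q$; this is where $p_0$ is fixed. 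In the collapsing case the same argument runs until $\eps(t)$ reaches a small power of $\delta$. By \cref{def:keytaueps} this happens at $1-t/T_{\rm adv}\sim\delta^{p'}$.
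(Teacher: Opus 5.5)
\textbf{Gap 1: the bound on the centres (Step 1).} The velocity you get from the moment conditions is the Helmholtz--Kirchhoff velocity evaluated at the \emph{modulated} positions $Z=z+\rho$, not at $z$. So $\rho$ solves $\partial_t\rho=\mathfrak L(t)\rho+O(\Gamma|\rho|^2/\dhk^3)+F$. Here $\mathfrak L(t)$ is the HK system linearized at the Gr\"obli triangle, and $F=O(\dhk\eps^4/t)$ is the far-field contribution of the zero-mass, zero-dipole corrections to the profiles. You cannot simply integrate $F$ in time. A Gronwall bound with the Lipschitz constant $\sim\Gamma/\dhk^2$ of the HK field gives a loss $(1+t/T_{\rm adv})^{C}$ with an unquantified exponent, i.e.\ $e^{C\delta^{-p}}$ on your horizon, which is fatal.

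The missing ingredient is the spectral structure of $\mathfrak L$. In the variables $\tau=\varsigma T_{\rm adv}\log(1+\varsigma t/T_{\rm adv})$, $g=(e^{-\mathrm i\upomega\tau}\rho,e^{\mathrm i\upomega\tau}\overline\rho)$, the linearization becomes autonomous with matrix $\mathfrak B$. Its spectrum is $\{\pm\mathrm i\upomega,\pm1/(2T_{\rm adv})\}$, and the real eigenvalues are double and generically carry a Jordan block (Lemma~\ref{lem:SpectralPropertiesB}). The unstable rate matches the dilation exactly, so $\|\mathcal U_{\mathfrak L}(t,s)\|\lesssim(1+|\log(\dhk(t)/\dhk(s))|)(\dhk(t)/\dhk(s))^{\varsigma}$, and relative errors grow only logarithmically. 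Duhamel then yields $|\rho|/\dhk\lesssim\delta^2\log^2(e+t/T_{\rm adv})$ and $|t\partial_t\rho|/\dhk\lesssim\delta\eps^2\log^2(e+t/T_{\rm adv})$. The second bound is also needed, because $t\partial_t\rho$ enters the profile equation through the moving frame. The $\sqrt\delta$ in the statement is $\delta^{1-2p}$ with $p<1/4$ absorbing these logarithms (resp.\ $\delta^{1-2p'}$ when collapsing), not a viscous correction to the point-vortex velocity.

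\textbf{Gap 2: closing the energy estimate (Step 2 and the time horizon).} Your Step 2 misses the three mechanisms that actually close the argument.

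First, $\delta^{-1}\eps^2\to1/|\alpha|$, so the linear term $\delta^{-1}\eps^2\mathsf L_2$ produced by the elliptic deformation $\Omega_2$ is of order one. With a weight built on $G$ (e.g.\ $G^{-1}$), your term $C\delta^{-1}\eps^mE$ has $m=2$ and competes with the spectral gap. The Arnold weight must be $F_i'(G+\eps^2\Omega^E_{2,i})$. This needs an approximate functional relationship $\Phi^{(2),E}_{{\rm app},i}+F_i(\Omega^{(2),E}_{{\rm app},i})=O(\eps^3)$ without any symmetry. The paper obtains it with $F_i=\gamma_iF$ from $\gamma_i\mathsf H\Omega^E_{2,i}=-h_i$ (Proposition~\ref{prop:functionalRel}), valid only for $|\xi|\lesssim\eps^{-\sigma}$, leaving errors $\delta^{-1}\eps^3\lesssim\sqrt\delta$.

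Second, the logarithmic losses of the centres feed the translation forcing $\delta^{-2}\mathsf T(\rho)$ at size $\eps\log^2(e+t/T_{\rm adv})$, which would break $\|w\|\lesssim\eps$. This is not handled by a growth factor $\exp(C\delta^q\tau)$, which you do not justify. It is handled by a cancellation: the leading part is a combination of $\partial_1G,\partial_2G\in\ker\mathsf H$, so it is invisible in the Arnold pairing up to $O(\eps^{\eta_2})$.

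Third, since $t\partial_t\eps=\eps/(2(1+t/T_{\rm adv}))$, the usual intermediate plateau $e^{\eps^{-2\sigma_1}/4}$ loses its coercivity for $t\gg T_{\rm adv}$. The paper lowers its height by $\sigma_3^2\log(1+t/T_{\rm adv})$, and keeping the height comparable to $\eps^{-2\sigma_1}$ is what forces $p<\sigma_1$.

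With these three ingredients one gets $t\partial_tE+\kappa E\le C\eps^2$ uniformly, with no growth factor at all. Finally, no maximum-principle tail argument is needed for the $L^1$ bound. The weighted space has a stretched-exponential weight far out and embeds in $L^1$, and $\delta\sqrt\eps$ simply dominates $\delta\|w\|\lesssim\delta\eps$ (resp.\ $\delta\eps^{1-5p'}$).
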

This result actually follows by the stronger Theorem \ref{thm:NonLinControl} proved in Section \ref{sec:NonlinearCorrection}, but this simplified version already entails the main points we want to highlight.
Note that the constants in the estimate are uniform
with respect to the initial size $d$ of the triangle. This uniformity is crucial
for the vortex burst (collapse) limit described next, but let us stress some important facts before moving on.
\begin{remark}
In contrast to the results for translating dipoles \cite{dolce2024long},
rotating pairs \cite{zhang2025long}, and crystals
\cite{dolce2026viscousvortexcrystals}, this theorem justifies the
underlying inviscid configuration, which expands and rotates
self-similarly, up to a time scale that grows as a stretched exponential in the inviscid limit $\delta\to 0$. In the collapsing regime, we instead get
$\delta^{p'}$-close to the natural limiting time $t=T_{\rm adv}$, at which the
inviscid collapse occurs.
This result is closer in spirit to
\cite{dolce2024long,dolce2026viscousvortexcrystals,zhang2025long,GaSring},
where the expected limiting time scale is
$t/T_{\rm adv}=\mathcal{O}(\delta^{-1})$. In those settings, one can
reach time scales $t/T_{\rm adv}=\delta^{-\sigma}$ for small $\sigma>0$
in the case of a single axisymmetric vortex ring \cite{GaSring}, and
for any $\sigma\in(0,1)$ for the configurations studied in
\cite{dolce2024long,dolce2026viscousvortexcrystals,zhang2025long}.
In our case, we do not quantify the small parameters $p,p'$,
although it seems possible to obtain reasonable lower bounds for the
admissible exponents by tracking the constants in our proof more
carefully. For the expanding case, our limitations are mostly technical,
although some parts of our proof clearly break down as $p\to 1$.
We stress that, over the time scales considered here, the expanding
case is the most technically challenging.

On the other hand, the result stated here for the collapsing case
follows essentially by a direct adaptation of the arguments developed
for the expanding case. As discussed in Remark~\ref{rem:nguyen_collapse},
T.~T.~Nguyen \cite{nguyen2026inviscid} very recently obtained the result
for any $p'\in(0,1)$, with bounds slightly different from those stated
in Theorem~\ref{thm:main_intro}.
The $p'=1$ is in fact a physically relevant regime, which is when the distances between the vortices would be
of order $d\sqrt{\delta}$, a length scale at which diffusive effects are expected
to take over. Hence, $p'=1$ is a natural barrier in the collapsing regime for the perturbative methods around the Lamb--Oseen vortices, and it is extremely interesting to understand the dynamics beyond this limit.
\end{remark}
\begin{remark}
\label{rem:GWR}
In the expanding case,  vortices cannot keep traveling away for all
times. By the results \cite{GalWay2005,GaGa2005}, for every finite Radon measure initial datum one has
\begin{align}
\label{eq:convergenceGW}
    \lim_{t\to\infty}
    \left\|
    \omega(t)
    -
    \frac{\Gamma_{\rm tot}}{\nu t}
    G\left(\frac{\cdot}{\sqrt{\nu t}}\right)
    \right\|_{L^1(\mathbb{R}^2)}
    =
    0,
    \qquad
    \Gamma_{\rm tot}
    :=
    \int_{\mathbb{R}^2}d(\omega^{\rm in}).
\end{align}
Thus every viscous expanding Gr\"obli triangle eventually relaxes towards a
single Lamb--Oseen vortex. Theorem~\ref{thm:main_intro} identifies a long
metastable regime before that relaxation, whose duration is at least of
order $T_{\rm adv}\exp(\delta^{-p})$.
Interestingly enough, a quantitative version of the limit in \cref{eq:convergenceGW} done by Gallay and Rodrigues \cite{gallay2008temps} 
gives a double exponential upper bound in the Reynolds number of the form $\exp(\exp(c\delta^{-2}))$ on the time needed to approach the Lamb--Oseen profile. It remains an interesting open question whether the double exponential bound is optimal or can be improved.  
\end{remark}

\begin{remark}
\label{rem:expanding_relaxation}
A possible mechanism for the eventual relaxation of the expanding
Gr\"obli triangle is a slow change in its shape, caused by the cumulative
effect of viscous corrections to the Helmholtz--Kirchhoff positions.
This could bring the vortices close to a relative equilibrium, after
which vortex merging could drive relaxation towards a single
Lamb--Oseen vortex. Indeed, the first correction to the dynamics of
the rescaled positions arising from the approximate solution is of
order $\eps^4/t\approx\delta^2/t$ at long times. Such a correction could
accumulate into an $\mathcal{O}(1)$ change in the rescaled configuration
towards a relative equilibrium, suggesting a time scale of order
$T_{\rm adv}\exp(c\delta^{-2})$ to reach it.
Subsequent vortex merging could then yield a relaxation time of the
same exponential order, with one fewer exponential than the general
upper bound recalled in Remark~\ref{rem:GWR}.
Understanding the evolution of the expanding Gr\"obli triangle beyond
the time scale considered here, including how and when it eventually
stops expanding, remains an interesting open problem.
\end{remark}

\begin{remark}
\label{rem:GallaySverakrebound}
The inviscid dynamics near collapse is sensitive to perturbations.
In their recent work \cite{gallay2026threevortexsystemhopffibration},
Gallay and {\v S}ver{\'a}k show that, for suitable collapsing
configurations, small perturbations of the initial positions can replace
a self-similar collision by a near-collision followed by a \emph{rebound}
and expansion of the vortices. Capturing this dynamical behavior in the
viscous problem might require new ideas, such as quantitative control
of less well-prepared initial data, rather than initial data consisting
exactly of point vortices (see also \cite{donati2025fast}).
\end{remark}

\begin{remark}
\label{rem:nguyen_collapse}
The very recent result of T.~T.~Nguyen \cite{nguyen2026inviscid}
establishes the viscous realization of the collapsing Gr\"obli triangle
up to times $t=(1-\delta^{p'})T_{\rm adv}$ for every fixed
$p'\in(0,1)$. This result appeared after the completion of our manuscript,
and our collapsing result was obtained independently.
An important feature of the argument in \cite{nguyen2026inviscid} is the normalization of the
nonlinear remainder by $\eps(t)^2$, rather than by $\delta$ as in \cref{eq:NonlinearAnsatz}, combined with the use of logarithmic
time $s=-\log(1-t/T_{\rm adv})$ naturally adapted to the collapse. Our choice instead works well for the expanding problem, where $\eps(t)^2$ and $\delta$ are comparable after the initial
advection time, and allows us to treat some collapsing regime with only minor modifications of the arguments. Near collapse, however, $\eps(t)^2/\delta$ grows, so normalizing by $\eps^2$ permits weaker absolute bounds on the remainder. Together with the change of time, this compensates for the growth of the error of the approximate solution near collapse.
We retain our weaker result for the collapsing case for completeness,
since it follows directly from the arguments developed for the
expanding case. For the sharper result on collapsing Gr\"obli triangles, we refer to \cite{nguyen2026inviscid}.
\end{remark}

\subsection{The vortex burst limit}
The classical Helmholtz--Kirchhoff dynamics ceases to be defined when the three
vortices collide. In a generalized point-vortex formulation,
however, a single vortex of circulation
$\Gamma_{\rm tot}=\sum_i\Gamma_i$ can be continued after the singular time by
an admissible expanding Gr\"obli triangle, and  such continuations are inherently nonunique. Grotto and Pappalettera \cite{grotto2022burst} used this mechanism to
construct weak solutions of the Euler equations, in the sense of Schochet, that are nonunique. Following their terminology, we then
call the splitting of one Dirac mass into an expanding Gr\"obli triangle a
\emph{vortex burst}.

We then aim at making sense of such a branch as a vanishing viscosity limit. To do so, we can
start from three vortices whose initial separation $d_\nu$ tends to zero as $\nu \to0$ and
follow them until they reach a distance independent of $\nu$. The
logarithmic time scales available from  the general desingularization estimates in \cite{CeciSeis24}
correspond only to regimes such as
\begin{equation}
    d_\nu\to0 \quad \text{ and }\quad \frac{d_\nu^2}{\Gamma}\Big|\log\Big(\frac{\nu}{\Gamma}\Big)\Big|\to+\infty\qquad \text{ as }\nu \to 0.
\end{equation}
This allows logarithmically shrinking separations, meaning that one can capture 
only a  weak burst regime. On the other hand, as a direct application of our Theorem \ref{thm:main_intro}, we are able to handle the stronger regime 
\begin{equation}
\label{eq:condexp_intro}
    d_\nu\rightarrow 0,
    \qquad
    \frac{d_\nu^2}{\Gamma}
    \exp\Big(\Big(\frac{\Gamma}{\nu}\Big)^p\Big)
    \rightarrow+\infty, \qquad \text{ as }\nu \to 0,
\end{equation}
which includes distances that are degenerating at any polynomial power in the inverse circulation Reynolds number. Note that in this case the advection time becomes 
\begin{align}
\label{def:Tadvnu}
    T_{\rm adv,\nu}:=\frac{d_\nu^2}{\alpha \Gamma},
\end{align}
meaning that as $\nu \to 0$ there is a singularity in the phase $\theta(t)$ in \cref{def:dhk}. However, upon suitably correcting these singularities by choosing 
\begin{equation}
\label{def:betanuintro}
    \beta_\nu
    =
    \beta_*+\frac{\beta}{\alpha}\log \Big(\frac{d_\nu^2}{\alpha\Gamma t_\star}\Big)
    \qquad
    \mod 2\pi
\end{equation}
where $t_\star>0$ is a prescribed reference time. We can consider the initial data
\begin{equation}
\label{eq:omnuin_intro}
    \omega_\nu^{\rm in}
    =
    \sum_{i=1}^3
    \Gamma_i\delta_{d_\nu e^{\mathrm{i}\beta_\nu}{\sf n}_i}.
\end{equation}
Then
\begin{equation}
    \omega_\nu^{\rm in}
    \xrightharpoonup[\nu\to0]{*}
    \Gamma_{\rm tot}\delta_0,
    \qquad
    \Gamma_{\rm tot}:=\sum_{i=1}^3\Gamma_i,
\end{equation}
as finite Radon measures.
For $t>0$, set
\begin{equation}
\label{def:burstbranch_intro}
    z_{0,i}(t)
    :=
    \sqrt{\alpha\Gamma t}\,
    \exp\left(
        \mathrm{i}\beta_*
        +
        \mathrm{i}\frac{\beta}{\alpha}
        \log\left(\frac{t}{t_\star}\right)
    \right)
    {\sf n}_i,
    \qquad i=1,2,3,
\end{equation}
with the convention that $z_{0,i}(0):=0$. The burst branch emanating from \cref{eq:omnuin_intro} as $\nu\to0$ is expected to be
\[
    \omega_0^{\rm b}(t)
    :=
    \sum_{i=1}^3\Gamma_i\delta_{z_{0,i}(t)},
    \qquad t\geq0.
\]
We then have the following.
\begin{theorem}
\label{theo:burst_intro}
Let $\omega_\nu$ be the unique Navier--Stokes solution with initial datum
\cref{eq:omnuin_intro}, where $d_\nu$ satisfies
\cref{eq:condexp_intro} and the phases $\beta_\nu$ are chosen according to
\cref{def:betanuintro}. Then, for every fixed $T>0$,
\[
    \omega_\nu
    \xrightharpoonup[\nu\to0]{*}
    \omega_0^{\rm b}
    \qquad
    \text{in }
    C\bigl([0,T];\mathcal M(\mathbb R^2)\bigr).
\]
\end{theorem}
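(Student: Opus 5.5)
The plan is to apply Theorem~\ref{thm:main_intro} in the expanding case with initial size $d=d_\nu$ and rotated normalized triangle $e^{\mathrm{i}\beta_\nu}{\sf n}_i$. Rotating the initial datum rotates the Navier--Stokes solution, and the constants $\delta_0,p_0,C_0$ do not depend on $d$. Hence, for $\nu$ small enough that $\delta=\nu/\Gamma\le\delta_0$, we obtain trajectories $Z_i^\nu(t)$ with the bounds \cref{eq:main_bound_intro} on $0<t<T_{\rm adv,\nu}\exp(\delta^{-p})$. By \cref{eq:condexp_intro}, this interval eventually contains $[0,T]$ for any fixed $T$. On this interval $\eps(t)^2\le \delta/\alpha$ by \cref{def:keytaueps}. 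Therefore the $L^1$ distance between $\omega_\nu(t)$ and the Lamb--Oseen superposition is $\lesssim \Gamma(\delta+\delta\sqrt{\eps})\to0$, uniformly in $t\in(0,T]$. In addition, $|Z_i^\nu(t)-z_i^\nu(t)|\le C_0\sqrt\delta\,\eps^2\,\dhk(t)\lesssim \delta^{3/2}\dhk(t)$.

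The next step is to show that the Helmholtz--Kirchhoff trajectories $z_i^\nu(t)=\dhk(t)e^{\mathrm{i}(\beta_\nu+\theta_\nu(t))}{\sf n}_i$ converge to $z_{0,i}(t)$ uniformly on $[0,T]$. We have $\dhk(t)=\sqrt{d_\nu^2+\alpha\Gamma t}\to\sqrt{\alpha\Gamma t}$ uniformly, since $|\sqrt{a+b}-\sqrt b|\le\sqrt a$. For the phase, note that
\[
\beta_\nu+\tfrac{\beta}{\alpha}\log\bigl(1+t/T_{\rm adv,\nu}\bigr)=\beta_*+\tfrac\beta\alpha\log\Bigl(\tfrac{d_\nu^2+\alpha\Gamma t}{\alpha\Gamma t_\star}\Bigr)\quad(\mathrm{mod}\ 2\pi).
\]
This is exactly how \cref{def:betanuintro} was designed, and the right-hand side converges to the phase in \cref{def:burstbranch_intro} locally uniformly in $t>0$. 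The phase itself does not converge near $t=0$. There, however, the modulus $\dhk(t)\le\sqrt{d_\nu^2+\alpha\Gamma t}$ is small. So we split $[0,T]$ into $[0,\tau]$ and $[\tau,T]$. On $[0,\tau]$ both $|z_i^\nu|$ and $|z_{0,i}|$ are at most $C\sqrt{\tau}+d_\nu$. On $[\tau,T]$ the convergence is uniform. Letting $\nu\to0$ and then $\tau\to0$ gives $\sup_{[0,T]}|z_i^\nu-z_{0,i}|\to0$.

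To conclude weak-$*$ convergence in $C([0,T];\mathcal M)$, we test against $\varphi\in C_0(\mathbb R^2)$. It suffices to take $\varphi$ Lipschitz and compactly supported, using density together with the uniform bound $\|\omega_\nu(t)\|_{L^1}\le\sum|\Gamma_i|$. We then write
\[
\langle\omega_\nu(t),\varphi\rangle-\sum_i\Gamma_i\varphi(z_{0,i}(t))
\]
as a sum of three terms: the $L^1$ error above, the term $\sum_i\Gamma_i[(G_{\nu t}*\varphi)(Z_i^\nu)-\varphi(Z_i^\nu)]$, which is bounded by $\|\nabla\varphi\|_\infty\sqrt{\nu t}$, and $\sum_i\Gamma_i[\varphi(Z_i^\nu)-\varphi(z_{0,i})]$, which is controlled by the trajectory convergence. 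All three are uniform in $t\in[0,T]$. The point $t=0$ is handled by the weak convergence of the initial data. Since $\omega_\nu\in C([0,T];\mathcal M)$ weakly and the limit $\omega_0^{\rm b}$ is continuous, this gives the claim.

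The main obstacle I expect is ensuring that Theorem~\ref{thm:main_intro} really applies uniformly as $d_\nu\to0$. This requires that all constants, and the validity of the modulation construction near $t=0$, depend only on $\{\gamma_i,{\sf n}_i\}$ and not on $d$. The statement asserts exactly this. The only remaining care is the singular phase at $t=0$, which the splitting above handles.
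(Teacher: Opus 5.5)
Your proof is correct. Its skeleton matches the paper's proof, which goes through Theorem~\ref{thm:ViscousBurst}: compare the viscous vorticity with Dirac masses at $Z_i$, then $Z_i$ with the Helmholtz--Kirchhoff positions, then those with $z_{0,i}$, and conclude by density in $C_0$ using the uniform total-variation bound.

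The real difference is in the first comparison.
\begin{itemize}
\item The paper works in the Kantorovich--Rubinstein distance. It uses the maximum principle to get positivity of $\Omega_i$, together with unit mass and a uniform first-moment bound $\int|\xi|\Omega_i\,\dd\xi\le C$. That moment bound comes from the profile bounds and $\|\boldsymbol w\|_{\Xg}\lesssim\eps$. The result is $W_1(\mu_{i,\nu}(t),\delta_{Z_i(t)})\le C\sqrt{\nu t}$.
\item You instead use only the $L^1$ estimate of Theorem~\ref{thm:main_intro} as a black box, plus Gaussian smoothing of a Lipschitz test function.
\end{itemize}

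What each route buys:
\begin{itemize}
\item Your route is more economical, since it needs no positivity and no moment control on $w_i$. However, it only controls pairings with bounded Lipschitz test functions, because an $L^1$ bound does not control first moments of the error.
\item The paper's route gives the quantitative rate $C(d_\nu+\sqrt{\delta}\,d_{\rm fin})$ in KR, uniformly on the possibly $\nu$-dependent interval $[0,T_{\rm fin}]$. This is what yields the separation statement \cref{eq:PositiveBurstSeparation}.
\end{itemize}

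Similarly, your $\tau$-splitting for the singular phase is qualitative. The paper gets the explicit bound $\sup_t|z_{i,\nu}(t)-z_{0,i}(t)|\le C d_\nu$ by splitting at $t=s_\nu=d_\nu^2/(\alpha\Gamma)$.

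For the fixed-$T$ statement asked here, both arguments suffice. You also correctly flag a point the paper leaves implicit: the normalized triangle $e^{\mathrm i\beta_\nu}{\sf n}_i$ depends on $\nu$, so rotation invariance is needed to keep the constants of Theorem~\ref{thm:main_intro} uniform.
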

Thus, through different well-prepared families
converging to the same measure $\Gamma_{\rm tot}\delta_0$ in the limit $\nu\to 0$, one can select any
of the expanding branches in Aref's arcs in Fig. \ref{fig:aref_circle_comparison}, together with an arbitrary limiting
rotation. In this precise sense, Theorem \ref{theo:burst_intro} gives a viscous
realization of the vortex burst mechanism, and we refer to Fig. \ref{fig:burst} for a schematic representation of it.
\begin{remark}
Although Theorem \ref{theo:burst_intro} is formulated as a vanishing viscosity limit
on a fixed  time interval, it can also be interpreted as a consequence of the long-time dynamics of
the normalized configuration. Indeed, for every fixed $t>0$ we know that
\[
    \frac{t}{T_{\rm adv,\nu}}
    =
    \frac{\alpha\Gamma t}{d_\nu^2}
    \longrightarrow+\infty
    \qquad\text{as }\nu\to0.
\]
This connects the result with the long-time inviscid analysis in \cite{iftimie2003large}. They proved,
for compactly supported $L^p$ Euler vorticity with $p>2$, that every
self-similar rescaling on a spatial scale $t^\kappa$ with $\kappa>1/2$ sees only the total
circulation concentrated at the origin. The square root scale $\kappa=1/2$ is therefore
the borderline case not resolved by their result. The limiting branch in
Theorem \ref{theo:burst_intro} separates precisely on this scale and produces a
nontrivial atomic configuration.
\end{remark}
\begin{remark}
In \cite{albritton2025vanishingviscositynonuniquesolutions},
the authors study vanishing viscosity selection
for the forced two-dimensional Euler equations from Navier--Stokes with the same force, with vorticity in
$L^p$, $2<p<\infty$.
Their work builds on Vishik's nonuniqueness scenario
\cite{vishik2018nonuniquenessI,vishik2018nonuniquenessII}, see also \cite{albritton2024vishik,castro2025vishik,dolce2025selfsimilar}, where an unstable radial self-similar vortex is the building block for the nonuniqueness mechanism. In 
\cite{albritton2025vanishingviscositynonuniquesolutions}, they   identify a critical size $\nu^{\kappa}$
for perturbations of the initial data, measured in a suitably rescaled
Sobolev norm, for which  $o(\nu^{\kappa_{\rm c}})$ perturbations select
the radial solution, whereas along a sequence at the critical size the obtain the nonradial solutions with the same limiting initial data and force. 
In our setting, a single Dirac mass is compatible with both
a stationary point vortex and a self-similar expanding Gr\"obli triangle in the generalized point-vortex formulation of \cite{grotto2022burst}. Thus, within the setting described above, our long-time estimates allow us to follow the expanding configuration until its separation is independent of the viscosity, thereby realizing an expanding Gr\"obli triangle as a vanishing viscosity limit.
\end{remark}
\subsection{Main ideas}
To describe the main points of the proof and the mechanisms behind the two
results above, we first focus on the long-time approximation of the
expanding Gr\"obli triangle (with a brief explanation of why the collapsing case is very similar within our time-scales). Then we explain how this estimate gives
a viscous realization of vortex bursts.

\subsubsection{Long-time approximation of the expanding Gr\"obli triangle}

The proof of Theorem \ref{thm:main_intro} follows the general strategy developed in
\cite{GaSring,GaSarnold} and refined and used in \cite{dolce2024long,dolce2026viscousvortexcrystals,zhang2025long,donati2025fast}. The main difference is that the point vortex motion considered here
is dynamically richer. The Gr\"obli triangle expands and rotates, and this creates new
difficulties that are absent in all the relative equilibria configurations studied previously \cite{dolce2024long,dolce2026viscousvortexcrystals,zhang2025long}.
One crucial point is on how we propagate
rather precise information on the correction to the Helmholtz--Kirchhoff
trajectories, where we reveal  how the stability properties of the  linearized Helmholtz-Kirchhoff system around a given solution enter in the analysis of the viscous problem.

As usual, we start by working in self-similar variables around the vortex centers
$Z_i(t)$ and write
\begin{align}
    \omega(x,t)
    =
    \sum_{i=1}^3
    \frac{\Gamma_i}{\nu t}
    \left(
        \Omega_{{\rm app},i}+\delta w_i
    \right)
    \left(
        \frac{x-Z_i(t)}{\sqrt{\nu t}},t
    \right),
    \qquad
    Z_i(t)=z_i(t)+\rho_i(t).
\end{align}
Here $z_i(t)$ denotes the Helmholtz--Kirchhoff trajectory, while
$\rho_i(t)$ are modulation parameters chosen so that the perturbations $w_i$
have vanishing first order moments. The approximate profile, constructed in Section \ref{sec:ApproxSol}, is expanded as
\begin{align}
    \Omega_{{\rm app},i}(\xi,t)
    =
    G(\xi)+\sum_{n=2}^4\epsilon(t)^n\Omega_{n,i}(\xi,t),
\end{align}
where the profiles $\Omega_{n,i}$ are constructed as in the general
expansion in \cite{Gallay2011}, but here we need a more precise identification of the dependence on the effective aspect ratio
$\epsilon$ and $\delta$. The key point is to exploit that the effective
aspect ratio satisfies \cref{def:keytaueps}.
As a consequence, the residual produced by the fourth-order approximate
solution is uniformly small in time in the expanding case in the time-scale under consideration. Indeed, denoting with $ \mathsf{NS}$ the full Navier--Stokes equations in self-similar
variables, the approximate solution is constructed to satisfy
\begin{align}
    \delta^{-1}\mathsf{NS}(\Omega_{\rm app})
    =
    \mathcal{O}\left(\delta^{-2}\epsilon(t)^5+\eps(t)\right)\lesssim  \begin{cases}
       \sqrt{\delta} &\quad \text{ in the expanding case},\\
        \delta^{\frac{1-5p'}{2}}&\quad \text{ in the collapsing case}.
    \end{cases}
\end{align}
Thus, this error remains small \emph{for all $t\geq0$} in the expanding case, whereas in the collapsing case it is enough to assume that $p'$ is sufficiently small\footnote{The gain in the approach in \cite{nguyen2026inviscid} can be seen directly for the size of $\delta^{-1}{\sf NS}(\Omega_{\rm app})$. In the collapsing case, setting
 $s=-\log(1-t/T_{\rm adv})$ gives
$\delta t\partial_t=|\alpha|\eps^2\partial_s$. Combining this change of time with the normalization
$\Omega_i-\Omega_{\rm app,i}=\eps^2 w_i$ replaces the estimate
$\delta^{-2}\eps^5$ by
$\eps$. Thus, an approximate solution up to order $\eps^4$
contributes a forcing of order $\eps$, which remains small for every
fixed $p'<1$.}. This is a first structural advantage of the expanding Gr\"obli
geometry. In fact, in the previous long-time constructions for relative equilibria \cite{dolce2024long,dolce2026viscousvortexcrystals,zhang2025long}, the relevant aspect ratio grows like
$\sqrt{\nu t}/d$, and approaching the diffusive time scale requires an
approximate solution of arbitrarily high order instead of the fixed order $4$. Here, instead, the self-similar
expansion keeps the aspect ratio uniformly small, so that the approximate
solution does not deteriorate at large times.

But these good properties of the approximate solution are not enough to overcome  more serious nonlinear difficulties. The perturbations $w_j$ solve an equation
of the schematic form
\begin{align}
    t\partial_t w_i
    +
    \delta^{-1}\mathsf{L}_{\Omega_{\rm app}}w_i
    =
    \delta^{-1}
    \left(
        \mathsf{NS}(\Omega_{\rm app})
        +
        \mathtt{T}_{\rho_i}(\Omega_{\rm app})
        +
        \mathtt{L}_{\rho_i}(w_i)
    \right)
    +
    \mathcal{N}(w_i).
\end{align}
The operator on the left-hand side is the linearization of the Navier--Stokes
dynamics around the approximate solution while $\mathcal{N}(w_i)$ denotes the
nonlinear terms. The new terms $\mathtt{T}_{\rho_j}$ and
$\mathtt{L}_{\rho_j}$ arise from the fact that the true centers of vorticity are the
modulated positions $Z_i=z_i+\rho_i$, rather than the pure
Helmholtz--Kirchhoff positions $z_i$. To control $w_i$, we have to face two main difficulties.

\smallskip
$\blackinwhitediamond$ \emph{First difficulty.} Linearizing only around the Lamb--Oseen vortex, one obtains
\begin{align}
    \delta^{-1}\mathsf{L}_{\Omega_{\rm app}}
    =
    \delta^{-1}\mathsf{L}_{G}
    +
    \delta^{-1}\epsilon(t)^2\mathsf{L}_2
    +
    \cdots .
\end{align}
The stability properties of the Lamb--Oseen vortex are by now well understood
\cite{GalWay2002,GaSarnold,Weipseudo}, and they yield coercive estimates in suitable weighted spaces after
projecting out the translational modes, corresponding to imposing zero first order moments. However, the term
\begin{equation}
    \delta^{-1}\epsilon(t)^2\mathsf{L}_2
\end{equation}
cannot be treated perturbatively, since $\delta^{-1}\epsilon^2$ is not small on the time scales considered here. Thus the stability of the
Lamb--Oseen vortex alone is not sufficient. The way around this difficulty is to include the leading elliptic deformation
of the core, encoded in $\Omega_2$, as part of the reference profile. More
precisely, we write informally
\begin{align}
    \mathsf{L}_{\Omega_{\rm app}}
    =
    \mathsf{L}_{G+\epsilon^2\Omega_2}
    +
    \epsilon(t)^3\mathsf{L}_3
    +
    \cdots .
\end{align}
We then build the weighted energy estimates around
$G+\epsilon(t)^2\Omega_2$, rather than around $G$ alone. This is done using
an Arnold type functional framework inspired by \cite{GaSarnold}. The basic idea is to construct a
functional space in which the inviscid part of the linearized operator is almost
skew-adjoint, while the viscous part yields the required coercivity. In this
way the changes caused by $\Omega_2$ are absorbed into the main
operator, and the remaining linear errors are perturbative, since for instance in the expanding case
    $\delta^{-1}\epsilon(t)^3
    \lesssim
    \sqrt{\delta}$.
However, the construction of the desired  functional framework requires, at least
approximately, a functional relationship between the corrected leading order
vorticity $G+\epsilon(t)^2\Omega_2$ and the total streamfunction. In the relative equilibria
setting treated in \cite{dolce2024long,dolce2026viscousvortexcrystals,zhang2025long}, this relationship can be constructed by exploiting the strong symmetries inherent to the inviscid approximation. In the present setting though, those symmetries
are absent. Nevertheless, we show that the functional relationship can be determined explicitly up to
order two and we believe that this is a rather general feature. Indeed, this builds on some nice cancellations that only rely on the structure of the operators and the equations rather than the particular configuration at hand. After these structural properties have been established, the leading order model is
\begin{equation}
\label{eq:wintro2}
    t\partial_t w_i
    +
    \delta^{-1}\mathsf{L}_{G+\epsilon^2\Omega_2}w_i
    =
    \delta^{-1}
    \left(
        \mathtt{T}_{\rho_i}(\Omega_{\rm app})
        +
        \mathtt{L}_{\rho_i}(w_i)
    \right)
    +
    \mathcal{O}\left(\delta^{-1}\epsilon(t)^3+\eps(t)\right),
\end{equation}
up to nonlinear terms which are treated by a bootstrap argument. 
\begin{remark}
    This informal reduction is also telling us that the correct approximation of the viscous blobs has an elliptical shape due to the deformation encoded in $\Omega_2$, which is a general fact \cite{Gallay2011}. However, in our case said deformation is of order $\mathcal{O}(\delta)$  for \emph{all}  sufficiently large times in the expanding case, whereas it grows as $t\to T_{\rm adv}$ in the collapsing one. Interestingly enough, this tells us that  in the viscous setting the three vortices must always remember their interactions even when they are very far apart. This is also reminiscent of what happens in the infinite time inviscid desingularization of certain  Gr\"obli triangles done in \cite{davila2024expanding}. 
\end{remark}

$\blackinwhitediamond$ \emph{Second difficulty.}
The remaining problem is to control the modulation parameters $\rho_i(t)$.
The forcing terms $\mathtt{T}_{\rho_i}(\Omega_{\rm app})$ generated by the
moving frame are not automatically small, because the leading profile is still
the Lamb--Oseen vortex. Moreover, it is not enough to know that the corrected
triangle remains ``geometrically" close to the Gr\"obli triangle. More precisely, we need to
propagate quite precise bounds on
\begin{align}
    \frac{|\rho_i(t)|}{\dhk(t)}
    \qquad
    \text{and}
    \qquad
    \frac{|t\partial_t\rho_i(t)|}{\dhk(t)}.
\end{align}
The first quantity controls the deformation of the triangle, while the second
is needed to rule out fast oscillations of the modulated centers. Such oscillations need
not be visible from the shape of the triangle alone, but they enter directly
in the equations because we work in a moving frame. This is where the precise linearized dynamics of the expanding
Helmholtz--Kirchhoff system enters\footnote{Note that in \cite{dolce2026viscousvortexcrystals,zhang2025long}, a full nonlinear modulation of the vortex center presents analogous difficulties. However, in \cite{zhang2025long} those are bypassed by using \emph{pseudo-momenta} whereas in the more general setting in \cite{dolce2026viscousvortexcrystals} symmetries are carefully exploited to obtain quantitative smallness on the moments and translation errors.}. A purely geometric control of the side
lengths, as in the classical analysis of three point vortices \cite{MarPul94}, is
too crude for controlling $t\partial_t\rho_j$. Instead, we complement the study of the linearization around   Gr\"obli triangles
obtained by Aref \cite{ArefSelfSimilar} as follows. Schematically we have that $\boldsymbol{\rho} = (\rho_1, \rho_2, \rho_3)$ satisfies a system of the form 
\begin{align}
    \de_t \boldsymbol{\rho}=\mathfrak{L}(t,\boldsymbol{\rho})+\cO(t^{-1}\dhk \eps^4)+\text{nonlinear errors}
\end{align}
where $\mathfrak{L}$ is the Helmholtz-Kirchhoff system linearized at the Gr\"obli triangle. Following \cite{ArefSelfSimilar}, it is convenient to reformulate the problem in self-similar variables adapted to the motion that makes $\mathfrak{L}$ time-independent. As we show in Section \ref{sec:selfsimilar_HK}, the problem is reduced in characterizing the spectral picture of a $3\times 3$ matrix. In particular, denoting with $\mathcal U_{\mathfrak L}(t,s)$ the propagator, we are able to deduce that
\begin{equation}
    \|\mathcal U_{\mathfrak L}(t,s)\|
    \lesssim \left(1+\Big|\log\frac{(1+\varsigma\frac{t}{T_{\rm adv}})}{(1+\varsigma\frac{s}{T_{\rm adv}})}\Big|\right)
    \left(
    \frac{\dhk(t)}{\dhk(s)}
    \right)^{\varsigma}.
\end{equation}
Up to logarithmic factors, this bound is compatible with the growth/collapse of the underlying configuration. Combining this information with the bootstrap bounds on $w_i$, we can then prove 
\begin{align}
    \frac{|\rho_i(t)|}{d_{\rm HK}(t)}
    +
    \frac{|t\partial_t\rho_i(t)|}{d_{\rm HK}(t)}
    \lesssim
    \delta^{q_\varsigma}\epsilon(t)^2
    \begin{cases}
        \left(\log\left( e + \frac{t}{T_{\rm adv}}\right)\right)^2&\quad \text{ in the expanding case},\\
        (1-\frac{t}{T_{\rm adv}})^{-1}&\quad \text{ in the collapsing case},
    \end{cases}
\end{align}
where $q_{\varsigma}>0$ can be made close to $1$ when $p,p'$ are small.
While the algebraic loss is naturally encoded in the collapsing case, the logarithmic loss in the expanding case is related to the integration in time of the errors together with the bound on $\mathcal{U}_{\mathfrak{L}}$. This is the main obstruction to pushing the approximation
beyond the stretched--exponential time scale in Theorem \ref{thm:main_intro}. In any
case, one cannot expect the expanding approximation to hold for all times despite the good properties of the approximation, because the relaxation described in Remark \ref{rem:GWR} will eventually start.
\subsubsection{The vanishing viscosity vortex burst} Let us briefly sketch here the main points behind Theorem \ref{theo:burst_intro}. First of all, we observe that with the initial data \cref{eq:omnuin_intro}
then the corresponding point vortex trajectories are
\begin{equation}
    z_{i,\nu}(t)
    =
    \sqrt{d_\nu^2+\alpha\Gamma t}\,
    \exp\left(
        \mathrm{i}\beta_*
        +
        \mathrm{i}\frac{\beta}{\alpha}
        \log\left(
            \frac{t+T_{\rm adv,\nu}}{t_\star}
        \right)
    \right)
    {\sf n}_i
\end{equation}
where $T_{\rm adv,\nu}$ is given in \cref{def:Tadvnu} and we exploited the definition of $\beta_\nu$.  For every fixed $t>0$, this trajectory converges to
\cref{def:burstbranch_intro}. 

Once the target is understood, the quantitative burst convergence follows by
combining this explicit limit with the $L^1$ profile estimate, the modulation
bound, and the corresponding Kantorovich--Rubinstein estimates, following the
concentration framework of \cite{CeciSeis24}.

\subsection{Notations and conventions}
\label{sec:notation}
We will perform operations on vectors in $\RR^2$ as if they were complex numbers. Then, we define the mean and $k$-th moment vectors as 
\begin{equation}
    \label{def:mk}
    {\rm M}(f)=\int_{\mathbb{R}^2}f(\xi)\dd \xi, \qquad {\rm m}^k(f)\coloneqq \int_{\RR^2}\xi^k f(\xi)\dd \xi, \qquad \text{for } k\in \NN.
\end{equation}
We will use bold symbols to refer to vectors encoding quantities related to the three vortices. For example, we will use $\Z$ \ to collect all positions. Thus, 
\begin{equation}
    \Z := \Z(t) = \begin{pmatrix}
        Z_1(t) \\ Z_2(t) \\ Z_3(t)
    \end{pmatrix}.
\end{equation}
Then, given $X_i$, $i=1,2,3$  a n$L^2(\R^2)$-based Hilbert spaces, we set 
\begin{equation}
    \boldsymbol X := X_1 \times X_2 \times X_3 .
\end{equation}
Let $f_i\in X_i$ and $L^i:\boldsymbol{X}\to X_i$ be a given linear operator. We write
\begin{equation}
    \boldsymbol f := (f_1,f_2,f_3) \in \boldsymbol X, 
    \qquad
    \boldsymbol{L}(\boldsymbol{f}) = (L^1(\boldsymbol{f}),L^2(\boldsymbol{f}),L^3(\boldsymbol{f})) \in \boldsymbol X,
\end{equation}
where clearly $\boldsymbol{L}:\boldsymbol{X}\to \boldsymbol{X}$ is a linear operator as well. Then, we denote
\begin{equation}
    \left\langle \boldsymbol f,\boldsymbol g \right\rangle_{\boldsymbol X}
    :=
    \left\langle f_1,g_1 \right\rangle_{X_1}
    +
    \left\langle f_2,g_2 \right\rangle_{X_2}
    +
    \left\langle f_3,g_3 \right\rangle_{X_3}.
\end{equation}
When $X_1=X_2=X_3=L^2(\R^2)$, we abuse in notation and omit the subscript above.
We avoid writing in bold standard differential operators and moments that are applied componentwise, and whose vector notation is clear from context. For instance, we write
\begin{align}
    \nabla \boldsymbol f
    :=
    \left(
        \nabla f_1,
        \nabla f_2,
        \nabla f_3
    \right), \quad  \mathcal L \boldsymbol f
    :=
    \left(
        \mathcal L f_1,
        \mathcal L f_2,
        \mathcal L f_3
    \right),\\
\mrm^1:\boldsymbol{X}\to (\mathbb{R}^2)^3, \quad \mrm^1(\boldsymbol{f})=\big(\mrm^1(f_1),\mrm^1(f_2),\mrm^1(f_3)\big).
\end{align}

\section{Self-similar Helmholtz--Kirchhoff dynamics}
\label{sec:selfsimilar_HK}

In this section we recall the properties of self-similar three-vortex
motions that are used in the paper.  We first recall the derivation of the trajectories and
the geometric characterization of the admissible configurations.  We then
reduce the linearized dynamics to an autonomous system and record the
corresponding bounds.  We refer to Aref's paper
\cite{ArefSelfSimilar} for some algebraically involved computations fully detailed there.

\subsection{Self-similar trajectories}
\label{app:point_vortices}

Recall the self-similar ansatz
\begin{equation}
\label{eq:self_similar_ansatz_app}
    z_j(t)=\dhk(t)e^{\mathrm{i}\theta(t)}{\sf n}_j,
    \qquad j=1,2,3,
\end{equation}
where the normalized configuration $({\sf n}_1,{\sf n}_2,{\sf n}_3)$
satisfies \cref{def:normalized} and has center of vorticity at the origin.
By the conservation of the Hamiltonian associated to the Helmholtz--Kirchhoff system and the quadratic impulse, one has that a  self-similar motion satisfies
\begin{equation}
\label{eq:groebli_circulation_condition_app}
    \sum_{1\leq i<j\leq3}\Gamma_i\Gamma_j=0,
    \qquad
    \sum_{1\leq i<j\leq3}
    \Gamma_i\Gamma_j|z_i^{\rm in}-z_j^{\rm in}|^2=0.
\end{equation}
For three vortices these conditions are also sufficient.  Indeed, they imply
\begin{align}
    \Gamma_1\bigl(|z_1-z_2|^2-|z_1-z_3|^2\bigr)
    &={}
    \Gamma_2\bigl(|z_2-z_3|^2-|z_1-z_2|^2\bigr) \notag\\
    &={}
    \Gamma_3\bigl(|z_1-z_3|^2-|z_2-z_3|^2\bigr).
\end{align}
We can then derive the equations for the squared mutual distances $|z_i-z_j|^2-|z_i-z_k|^2$ from the Helmholtz-Kirchhoff system and, using the relations above one gets 
\begin{equation}
    \frac{\dd}{\dd t}\log|z_1-z_2|^2
    =
    \frac{\dd}{\dd t}\log|z_2-z_3|^2
    =
    \frac{\dd}{\dd t}\log|z_3-z_1|^2.
\end{equation}
Hence the initial triangle has sides changing at the exact same logarithmic rate, meaning that the triangle preserves its shape. To determine the dilation and rotation rates one can directly insert \cref{eq:self_similar_ansatz_app} in the Helmholtz--Kirchhoff system. In particular, define the dimensionless (and constant in time)
 velocity 
\begin{equation}
\label{eq:dimensionless_HK_velocity_app}
    \mathcal V_j(\boldsymbol{{\sf n}})
    :=
    \sum_{m\neq j}\frac{\gamma_m}{2\pi}
    \frac{({\sf n}_j-{\sf n}_m)^\perp}
    {|{\sf n}_j-{\sf n}_m|^2},
    \qquad j=1,2,3.
\end{equation}
Recalling that $v^\perp= {\rm i}v$ with the complex number notations, one finds that $\mathcal{V}_j(\boldsymbol{\mathsf{n}})$ must be a real linear combination of ${\sf n}_j$ and ${\sf n}_j^\perp$. Namely, there exists
 $\alpha,\beta\in\mathbb R$ such that
\begin{equation}
\label{eq:self_similar_condition_alpha_beta_app}
    \mathcal V_j({\boldsymbol{\sf n}})
    =
    \frac{\alpha}{2}{\sf n}_j+\beta{\sf n}_j^\perp,
    \qquad j=1,2,3.
\end{equation}

This identity can also be understood geometrically.
For a noncollinear triangle, preservation of shape implies
that the instantaneous motion consists of a common dilation,
a common rotation, and possibly a translation.
Conservation of the center of vorticity, together with
$\sum_j\gamma_j\neq0$, eliminates the translation.
Thus the dilation and rotation coefficients are the same
for all three vortices.
Therefore, a triangle preserves its shape if and only if the same infinitesimal movement of each $\mathsf{n}_j$ is either in the direction of ${\sf n}_j$ (stretching) or ${\sf n}_j^\perp$ (rotation). Then, for any $i\neq j$, the constants $\alpha,\beta$  are given by
\begin{equation}
\label{eq:alpha_pair_formula_app}
    \alpha
    =2\frac{
    \bigl(\mathcal V_i({\boldsymbol{\sf n}})-\mathcal V_j({\boldsymbol{\sf n}})\bigr)
    \cdot({\sf n}_i-{\sf n}_j)}
    {|{\sf n}_i-{\sf n}_j|^2},
    \qquad
    \beta
    =\frac{
    \bigl(\mathcal V_i({\boldsymbol{\sf n}})-\mathcal V_j({\boldsymbol{\sf n}})\bigr)
    \cdot({\sf n}_i-{\sf n}_j)^\perp}
    {|{\sf n}_i-{\sf n}_j|^2}.
\end{equation}
Comparing \cref{eq:self_similar_ansatz_app,eq:self_similar_condition_alpha_beta_app}
in the Helmholtz--Kirchhoff system yields
\begin{equation}
    \de_t\dhk=\frac{\alpha\Gamma}{2\dhk},
    \qquad
    \de_t\theta=\frac{\beta\Gamma}{\dhk^2}.
\end{equation}
Thus, setting $\varsigma=\operatorname{sign}(\alpha)$ and recalling that
$T_{\rm adv}=d^2/(|\alpha|\Gamma)$, we recover
\begin{equation}
\label{eq:self_similar_trajectory_app}
    \dhk(t)
    =d\sqrt{1+\varsigma\frac{t}{T_{\rm adv}}},
    \qquad
    \theta(t)
    =\frac{\beta}{\alpha}
    \log\left(1+\varsigma\frac{t}{T_{\rm adv}}\right).
\end{equation}

We finally recall the geometric description of the configurations satisfying
\cref{eq:groebli_circulation_condition_app} given by Aref in \cite{ArefSelfSimilar}.  After relabeling, choose the pair $1,2$ of vortices of the same sign, place
their center of vorticity at the origin,
rotate the picture and divide all lengths by
$\ell:=|z_1^{\rm in}-z_2^{\rm in}|$ so that their positions are
\begin{equation}
    \Big(\frac{\gamma_2}{\gamma_1+\gamma_2},0\Big),
    \qquad
    \Big(-\frac{\gamma_1}{\gamma_1+\gamma_2},0\Big),
\end{equation}
and we refer to Fig. \ref{fig:aref_circle_comparison} for a visualization.
Writing the position of the third vortex as $x_1+\mathrm{i}x_2$, the first
condition in \cref{eq:groebli_circulation_condition_app} gives
\begin{equation}
    \gamma_3=-\frac{\gamma_1\gamma_2}{\gamma_1+\gamma_2}.
\end{equation}
The second condition then reduces to the circle
\begin{equation}
\label{eq:aref_circle_app}
    x_1^2+x_2^2
    =
    1-\frac{\gamma_1\gamma_2}{(\gamma_1+\gamma_2)^2}
    =
    \frac{\gamma_1+\gamma_2+\gamma_3}{\gamma_1+\gamma_2}.
\end{equation}
Let $L$ denote the maximal separation in these coordinates:
\[
L:=
\max\left\{
1,\,
\sqrt{\left(x_1-\frac{\gamma_2}{\gamma_1+\gamma_2}\right)^2+x_2^2},
\,
\sqrt{\left(x_1+\frac{\gamma_1}{\gamma_1+\gamma_2}\right)^2+x_2^2}
\right\}
=\frac{d}{\ell}.
\]
In these coordinates, accounting for this change of normalization, we note that \cref{eq:alpha_pair_formula_app} becomes
\begin{equation}
\label{eq:alpha_aref_app}
    \alpha
    =
    L^2\frac{\gamma_3x_2}{\pi}
    \left[
        \frac{1}{\left(x_1-\frac{\gamma_2}{\gamma_1+\gamma_2}\right)^2+x_2^2}
        -
        \frac{1}{\left(x_1+\frac{\gamma_1}{\gamma_1+\gamma_2}\right)^2+x_2^2}
    \right].
\end{equation}
Consequently, $\alpha=0$ at the two intersections of the circle with $x_2=0$
and at the two intersections with
$x_1=(\gamma_2-\gamma_1)/(2(\gamma_1+\gamma_2))$.  These are, respectively,
the two collinear and the two equilateral relative equilibria.  The sign of
$\alpha$ is constant on each of the four remaining arcs and alternates from
one arc to the next, giving the expanding and collapsing configurations.

\subsection{Linearization in self-similar variables}Rewriting the Helmholtz--Kirchhoff system in complex notation, one has
\begin{equation}
\label{eq:HKcomplex_app}
    \frac{\dd\overline{z_j}}{\dd t}
    =
    \frac{1}{2\pi\mathrm{i}}
    \sum_{m\neq j}\frac{\Gamma_m}{z_j-z_m},
    \qquad j=1,2,3.
\end{equation}
Let
\begin{equation}
\label{eq:self_similar_time_app}
    \tau
    :=
    \varsigma T_{\rm adv}
    \log\left(1+\varsigma\frac{t}{T_{\rm adv}}\right),
    \qquad
    \upomega
    :=
    \frac{\beta\Gamma}{d^2}
    =
    \frac{\beta}{|\alpha|T_{\rm adv}}.
\end{equation}
Then $\dd\tau/\dd t=(1+\varsigma t/T_{\rm adv})^{-1}$,
$\theta(t)=\upomega\tau$, and
\begin{equation}
    z_j(t)-z_m(t)
    =
    \sqrt{1+\varsigma\frac{t}{T_{\rm adv}}}
    e^{\mathrm{i}\upomega\tau}
    (z_j^{\rm in}-z_m^{\rm in}).
\end{equation}
From now on, we use the notation
\begin{equation}
    Z_{ij}(t) := Z_i(t)-Z_j(t),
\end{equation}
and define $z_{ij}(t)$ and $\rho_{ij}(t)$ analogously.
Writing $Z_j=z_j+\rho_j$ and using
\begin{equation}
    \frac{1}{\overline Z_{ij}}
    -
    \frac{1}{\overline z_{ij}}
    =
    -\frac{\overline\rho_{ij}}{\overline z_{ij}^{2}}
    +
    \frac{\overline\rho_{ij}^{2}}
    {
    \overline z_{ij}^{2}
    \overline Z_{ij}
    } ,
    \label{eq:ExactHKExpansionSec2}
\end{equation}
the linearized problem of
\cref{eq:HKcomplex_app} around the self-similar Gr\"obli triangle is
\begin{equation}
\label{eq:linearized_HK_app}
    \de_t\rho_j
    =
    -\frac{\mathrm{i}}{2\pi}
    \sum_{m\neq j}\Gamma_m
    \frac{\overline{\rho_j}-\overline{\rho_m}}
    {(\overline{z_j}-\overline{z_m})^2}.
\end{equation}
Define the matrix $M$ by
\begin{equation}
\label{eq:def_M_app}
    (Mh)_j
    :=
    \frac{1}{2\pi}
    \sum_{m\neq j}\Gamma_m
    \frac{h_j-h_m}{(z_j^{\rm in}-z_m^{\rm in})^2},
    \qquad h\in\mathbb C^3.
\end{equation}
Upon setting
\begin{equation}
\label{eq:rotating_variable_app}
    g(\tau)
    :=
    \begin{pmatrix}
        e^{-\mathrm{i}\upomega\tau}\boldsymbol\rho(t(\tau))\\[2mm]
        e^{\mathrm{i}\upomega\tau}\overline{\boldsymbol\rho(t(\tau))}
    \end{pmatrix},
\end{equation}
equation \cref{eq:linearized_HK_app} becomes
\begin{equation}
\label{eq:autonomous_HK_app}
    \de_\tau g=\mathfrak B g,
    \qquad
    \mathfrak B
    :=
    \begin{pmatrix}
        -\mathrm{i}\upomega I_3&-\mathrm{i}\overline M\\
        \mathrm{i}M&\mathrm{i}\upomega I_3
    \end{pmatrix},
\end{equation}
where the matrix above is constant in time. Notice that the same autonomous reduction holds for both values of
$\varsigma$ since the sign only enters the relation between $t$ and $\tau$. Then, we have the following properties for the matrix $\mathfrak{B}$.
\begin{lemma}
\label{lem:SpectralPropertiesB}
    The characteristic polynomial of $\mathfrak B$ is
    \begin{equation}
    \label{eq:characteristic_B_app}
        \det(\lambda I_6-\mathfrak B)= (\lambda^2+\upomega^2)\left(\lambda^2-\left( \frac{1}{2 T_{\rm adv}} \right)^2\right)^2
    \end{equation}
    and the eigenvalues are
    \begin{equation}
    \label{eq:spectrum_B_app}
        \sigma(\mathfrak B)=
        \left \{\pm\mathrm{i}\upomega,\,\pm \frac{1}{2 T_{\rm adv}}\right \}.
    \end{equation}
    Consequently, $\pm 1/(2T_{\rm adv})$ have algebraic multiplicity $2$ and therefore for every $\tau\geq0$,
    \begin{equation}
    \label{eq:semigroup_B_app}
        \left\|e^{\tau\mathfrak B}\right\|
        \lesssim
        \left(1+\frac{\tau}{ T_{\rm adv}}\right)e^{\frac{\tau}{2 T_{\rm adv}}}.
    \end{equation}
\end{lemma}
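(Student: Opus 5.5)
The plan is to find explicit eigenvectors of $\mathfrak B$ that come from the continuous symmetries of the Helmholtz--Kirchhoff system. Together with a trace/conjugation symmetry argument this should pin down the characteristic polynomial. The growth bound then follows from the Jordan structure.

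\emph{Step 1 (symmetry modes).} Translations $z_j\mapsto z_j+c$ preserve \cref{eq:HKcomplex_app}, so $\rho_j\equiv c$ solves \cref{eq:linearized_HK_app}, and $M\mathbf 1=0$. In the $g$-variables, $g(\tau)=(e^{-\mathrm i\upomega\tau}c\mathbf 1,e^{\mathrm i\upomega\tau}\bar c\mathbf 1)$. This gives the eigenvalues $\mp\mathrm i\upomega$, with eigenvectors $(\mathbf 1,0)$ and $(0,\mathbf 1)$.

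Rotations and dilations of the self-similar solution itself are also symmetries. Writing the family $\lambda z_j(t)$, $\lambda\in\mathbb C\setminus\{0\}$, $\lambda z_j(t)$ is a solution of HK only after rescaling time by $|\lambda|^{2}$. In the self-similar frame, the rotation direction $\rho_j=\mathrm i\, z_j(t)$ gives a fixed point of $\mathfrak B$. Indeed, $e^{-\mathrm i\upomega\tau}z_j(t)=\sqrt{1+\varsigma t/T_{\rm adv}}\,z_j^{\rm in}$, so we must include the factor $e^{\varsigma\tau/(2T_{\rm adv})}$. More directly, $\partial_\lambda$ of the solution family $\lambda z_j(t/|\lambda|^2)$ at $\lambda=1$ yields $\rho_j=z_j(t)$ (scaling part, plus $-2t\dot z_j$) and $\rho_j=\mathrm i z_j(t)$. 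Each of these, after conjugation, becomes a combination of $e^{\pm\tau/(2T_{\rm adv})}$ times a fixed vector, and these supply eigenvectors for $\pm 1/(2T_{\rm adv})$. Time translation, $\rho=\dot z_j=\frac{1}{2T_{\rm adv}(1+\varsigma t/T_{\rm adv})}(\varsigma+2\mathrm i\beta/|\alpha|)z_j$, decays like $e^{-\varsigma\tau/(2T_{\rm adv})}$ relative to the rotating frame. The mode $z_j$ grows like $e^{\varsigma\tau/(2T_{\rm adv})}$. This already identifies both real eigenvalues.

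\emph{Step 2 (characteristic polynomial).} I would simply compute. Since $M\mathbf 1=0$, restrict to the quotient of $\mathbb C^3/\mathbf 1$ (two-dimensional), where $M$ acts as a $2\times2$ complex symmetric-type matrix $\tilde M$. On the quotient, $\det(\lambda I_4-\tilde{\mathfrak B})=\det\bigl((\lambda+\mathrm i\upomega)(\lambda-\mathrm i\upomega)I-\overline{\tilde M}\tilde M\bigr)$, up to checking commutation, which holds because the blocks are scalar multiples of $I$. So the quotient eigenvalues satisfy $\lambda^2+\upomega^2=\mu$, with $\mu\in\sigma(\overline{\tilde M}\tilde M)$. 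Step 1 forces $\mu=\upomega^2+(2T_{\rm adv})^{-2}$. To obtain multiplicity $2$, I would show that $\overline{\tilde M}\tilde M$ is scalar, or at least has a single eigenvalue. Its trace and determinant, computed using the Gröbli conditions \cref{eq:groebli_circulation_condition_app} and the relations \cref{eq:self_similar_condition_alpha_beta_app}, should give $\mu$ as a double root. This computation, which Aref carries out in \cite{ArefSelfSimilar}, is the main obstacle. It amounts to checking $\det\tilde M\,\overline{\det\tilde M}=\mu^2$ and $\operatorname{tr}(\overline{\tilde M}\tilde M)=2\mu$. I would verify this in Aref's normalized coordinates, using $\gamma_3=-\gamma_1\gamma_2/(\gamma_1+\gamma_2)$ and the circle \cref{eq:aref_circle_app}. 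Multiplying by the factor $(\lambda^2+\upomega^2)$ from the translation modes gives \cref{eq:characteristic_B_app}.

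\emph{Step 3 (semigroup bound).} The eigenvalues $\pm\mathrm i\upomega$ are simple, and $\pm1/(2T_{\rm adv})$ have Jordan blocks of size at most $2$. Put $\mathfrak B=P(D+N)P^{-1}$ in Jordan form with $N^2=0$ and $\|N\|\lesssim T_{\rm adv}^{-1}$. The bound on $\|N\|$ follows by scaling, since $\mathfrak B=T_{\rm adv}^{-1}\times$ a dimensionless matrix depending only on $\gamma_i,{\sf n}_i$. Then
\[
\|e^{\tau\mathfrak B}\|\le\|P\|\|P^{-1}\|\,e^{\tau/(2T_{\rm adv})}\bigl(1+\tau\|N\|\bigr),
\]
which is \cref{eq:semigroup_B_app}, with constants depending only on the normalized configuration.
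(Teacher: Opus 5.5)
Your proposal is correct and follows essentially the paper's route. The Schur complement reduces $\det(\lambda I_6-\mathfrak B)$ to $\det\bigl((\lambda^2+\upomega^2)I-\overline M M\bigr)$, and the translation and self-similar modes supply the eigenvalues $0$ and $\upomega^2+(2T_{\rm adv})^{-2}$. Your quotient by $\mathbf 1$ and your symmetry-based eigenvectors are just a repackaging of the paper's identity $Mv_2=(\upomega+\mathrm i\varsigma/(2T_{\rm adv}))\overline{v_2}$. Aref's trace identity then gives the double root; once one eigenvalue is known the trace alone suffices, so your determinant check is redundant. Finally, the Jordan form with blocks of size at most two yields the semigroup bound.

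One caveat: $\overline{\tilde M}\tilde M$ is generically \emph{not} scalar, since it carries a nontrivial Jordan block except at the four configurations of Appendix~\ref{app:explicit}. So only your weaker ``single eigenvalue'' option can work, and your Step~3 correctly accommodates this.
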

\begin{remark}
The linearly growing term in \cref{eq:semigroup_B_app} is related to the possibility of Jordan blocks of size two. In fact, the matrix is diagonalizable only for four specific choices of the initial triangle at fixed circulations, which are characterized in Appendix \ref{app:explicit}. The bound \cref{eq:semigroup_B_app} is the only consequence of the Jordan structure, while the exact scaling of the unstable eigenvalues $1/(2T_{\rm adv})$ is crucial in the subsequent analysis. Indeed, since we are working in the logarithmic time-scale $\tau$, polynomial losses cause log-type losses whereas the scaling of the unstable eigenvalues dictate the polynomial powers of time that one loses in the estimates.
\end{remark}
\begin{proof}
    Using the Schur complement for the block matrix $\mathfrak{B}$ gives that
    \begin{equation}
        \det(\lambda I_6-\mathfrak B)= \det((\lambda^2 + \upomega^2)I_3 - M\overline{M}). 
        \label{eq:PolyB}
    \end{equation}    
    Thanks to \cite[Sec.~III]{ArefSelfSimilar}, we have that the characteristic polynomial of the matrix $\overline{M}M$ is 
    \begin{equation}
        \det(\lambda I_3 - \overline{M}M) = \lambda \left( \lambda - \left( \upomega^2 + \frac{1}{4 T_{\rm adv}^2} \right )\right)^2.
    \end{equation}
    Here the eigenvalue $\lambda_1 = 0$ is simple with eigenvector $v_1 = (1,1,1)^T$ and $\lambda_2 = \upomega^2 + \frac{1}{4 T_{\rm adv}^2}$ has  algebraic multiplicity two with eigenvector $v_2 = (z_{1}^{\rm in}, z_{2}^{\rm in}, z_{3}^{\rm in})^T$. Indeed, $v_1 $ is related to the translation invariance in $\RR^2$, which is reflected in the fact that all rows of $M$ sum to zero. Therefore 
    $Mv_1=0$. On the other hand, the self-similar equation at $t=0$ gives
    \begin{equation}
    \label{eq:central_configuration_identity_app}
        Mv_2
        =
        \left(\upomega+\frac{\mathrm{i}\varsigma}{2T_{\rm adv}}\right)
        \overline{v_2}, \quad \Longrightarrow \quad
        \overline M Mv_2
        =
        \left(\upomega^2+\frac{1}{4T_{\rm adv}^2}\right)v_2.
    \end{equation}
    Then, following the computations in  \cite[Sec.~III.A]{ArefSelfSimilar}, one has that    \begin{equation}
    \label{traceMM}
        \operatorname{tr}(\overline M M)
        =
        2\left(\upomega^2+\frac{1}{4T_{\rm adv}^2}\right).
    \end{equation}
    To get this identity, one has to crucially exploit  the cancellations given by the constraints in \cref{eq:groebli_circulation_condition_app}.
    Knowing the other two eigenvalues and the trace of the matrix $\overline{M}M$ fixes the third eigenvalue. As a consequence, the matrix $\overline{M}M$ admits a Jordan decomposition of the form
    \begin{equation}
        \overline{M}M \sim \begin{pmatrix}
            0 & 0 & 0 \\ 0 & \lambda_2  & \kappa \\ 0 & 0 & \lambda_2
        \end{pmatrix}
    \end{equation}
    for some $\kappa\in \mathbb{C}$ depending only on the initial Gr\"obli triangle. We give the explicit expression in Appendix \ref{app:explicit}. Going back to \cref{eq:PolyB}, we have
    \begin{equation}
        \det(\lambda I_6-\mathfrak B) = (\lambda^2 + \upomega^2) \left(\lambda^2 + \upomega^2  - \upomega^2 - \frac{1}{4T_{\rm adv}^2}\right)^2 = (\lambda^2 + \upomega^2) \left(\lambda^2 - \left(\frac{1}{2T_{\rm adv}}\right)^2\right)^2.
    \end{equation}
    This proves \cref{eq:characteristic_B_app} and \cref{eq:spectrum_B_app}.  If $\kappa\neq0$, then $\mathfrak B$ has one Jordan block of size two at each of the eigenvalues $\pm1/(2T_{\rm adv})$. If $\kappa=0$, then $\mathfrak B$ is diagonalizable.  In either case, the estimate \cref{eq:semigroup_B_app} follows.
\end{proof}
\begin{remark}
Denoting
    $$v_3:=(\zeta_1,\zeta_2,\zeta_3)^T, \qquad \zeta_1=\frac{\overline{z_2-z_3}}{\Gamma_1}, \quad \zeta_2=\frac{\overline{z_3-z_1}}{\Gamma_2}, \quad \zeta_3=\frac{\overline{z_1-z_2}}{\Gamma_3}, $$
where $z_i$ are the positions evaluated at $t=0$, in \cite{ArefSelfSimilar}  Aref checked that $v_3$ is also an eigenvector of $\overline{M}M$ associated to the eigenvalue $\lambda_2$ in the proof of Lemma \ref{lem:SpectralPropertiesB}. He then concludes that $v_3$ is not an affine transformation of $v_1,v_2$, and this would readily  imply that $\mathfrak{B}$ is always diagonalizable. Unfortunately, a direct computation exploiting the second condition in \cref{eq:groebli_circulation_condition_app} reveals that $\det(v_1,v_2,v_3)=0$, and in fact $v_3$ is a dilation and rotation of $v_2$. His conclusions about  instability of the collapsing case remain valid. The stability of the expanding configuration instead hides a logarithmic growth that is related to the accumulation of errors when we compare two nearby expanding Gr\"obli triangles. It is however worth noting that the key observation is the algebraically involved calculation of \cref{traceMM}, which is the building block of  Lemma \ref{lem:SpectralPropertiesB}. Hence, we have also verified the identity \cref{traceMM} with a SageMath 10.7 script provided in the code attached to the arXiv version of this paper. 
\label{rem:3arefvectors}
\end{remark}
Consider now the equation
\begin{equation}
    \begin{cases}
        \partial_t f = \mathfrak{L}(t) f + F, \\
        f(0) = 0,
    \end{cases}
    \label{eq:LeqSec2}
\end{equation}
where $\mathfrak{L}$ is the operator associated with \cref{eq:linearized_HK_app}.
Then, using the spectral characterization in Lemma \ref{lem:SpectralPropertiesB}, we have that
\begin{corollary}
\label{lem:transfer_matrix}
Let $\varsigma\in\{1,-1\}$ and $\mathcal U_{\mathfrak L}(t,s)$ denote the solution operator associated with \cref{eq:linearized_HK_app}.  For
$0\leq s\leq t$ in the interval of existence of the self-similar trajectory,
\begin{equation}
\label{eq:transfer_matrix_unified_app}
    \bigl\|\mathcal U_{\mathfrak L}(t,s)\bigr\|
    \lesssim \left( 1 + \left| \log \frac{\dhk(t)}{\dhk(s)} \right|\right)
    \left(\frac{\dhk(t)}{\dhk(s)}\right)^{\varsigma}
\end{equation}
Consequently, if $f$ solves \cref{eq:LeqSec2}, then
\begin{equation}
    |f(t)|
    \lesssim
    \int_0^t \left( 1 + \left| \log \frac{\dhk(t)}{\dhk(s)} \right|\right)
    \left(\frac{\dhk(t)}{\dhk(s)}\right)^{\varsigma}
    |F(s)|\,\dd s.
    \label{eq:BoundForcingHKLin}
\end{equation}
\end{corollary}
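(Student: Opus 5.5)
The plan is to conjugate the nonautonomous linearized system \cref{eq:linearized_HK_app} to the autonomous one \cref{eq:autonomous_HK_app}. Then we read off the propagator from the semigroup bound \cref{eq:semigroup_B_app} of Lemma \ref{lem:SpectralPropertiesB} and translate $\tau$-time back into $\dhk$.

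\emph{Step 1: rewrite the propagator.} For a solution $\boldsymbol\rho$ of \cref{eq:linearized_HK_app}, the vector $g(\tau)$ defined in \cref{eq:rotating_variable_app} solves $\de_\tau g=\mathfrak B g$. Hence
\[
g(\tau(t))=e^{(\tau(t)-\tau(s))\mathfrak B}g(\tau(s)).
\]
The map $\boldsymbol\rho\mapsto g$ is a real-linear isometry up to a factor $\sqrt2$, since multiplying by the phases $e^{\mp\mathrm{i}\upomega\tau}$ preserves the norm. Therefore
\[
\|\mathcal U_{\mathfrak L}(t,s)\|\lesssim\|e^{(\tau(t)-\tau(s))\mathfrak B}\|.
\]
One point needs checking: the factor $(1+\varsigma t/T_{\rm adv})^{-1}$ coming from $\dd\tau/\dd t$ must combine with $z_j-z_m=\sqrt{1+\varsigma t/T_{\rm adv}}\,e^{\mathrm{i}\upomega\tau}(z^{\rm in}_j-z^{\rm in}_m)$ to produce exactly $M$. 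It does, because $(\overline{z_j-z_m})^{-2}$ carries $(1+\varsigma t/T_{\rm adv})^{-1}e^{2\mathrm{i}\upomega\tau}$. This is the computation leading to \cref{eq:autonomous_HK_app}, which I take as given.

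\emph{Step 2: translate to $\dhk$.} From \cref{eq:self_similar_time_app},
\[
\tau(t)-\tau(s)=\varsigma T_{\rm adv}\log\frac{1+\varsigma t/T_{\rm adv}}{1+\varsigma s/T_{\rm adv}}=2\varsigma T_{\rm adv}\log\frac{\dhk(t)}{\dhk(s)}.
\]

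In the expanding case, $\tau(t)-\tau(s)\ge0$, and \cref{eq:semigroup_B_app} gives the bound
\[
\Bigl(1+2\log\frac{\dhk(t)}{\dhk(s)}\Bigr)\frac{\dhk(t)}{\dhk(s)}.
\]

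In the collapsing case, $\tau$ is again increasing in $t$, since $\dd\tau/\dd t>0$. We then have
\[
\tau(t)-\tau(s)=-2T_{\rm adv}\log\frac{\dhk(t)}{\dhk(s)}=2T_{\rm adv}\Bigl|\log\frac{\dhk(t)}{\dhk(s)}\Bigr|\ge0.
\]
Now \cref{eq:semigroup_B_app} gives $e^{(\tau(t)-\tau(s))/(2T_{\rm adv})}=\dhk(s)/\dhk(t)=(\dhk(t)/\dhk(s))^{\varsigma}$, again with the polynomial prefactor $1+2|\log(\dhk(t)/\dhk(s))|$. This proves \cref{eq:transfer_matrix_unified_app}.

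\emph{Step 3: the forced problem.} Duhamel's formula for \cref{eq:LeqSec2} with $f(0)=0$ reads
\[
f(t)=\int_0^t\mathcal U_{\mathfrak L}(t,s)F(s)\,\dd s.
\]
Taking norms and inserting \cref{eq:transfer_matrix_unified_app} gives \cref{eq:BoundForcingHKLin}. Here $\mathfrak L(t)$ is continuous on the existence interval, so the propagator is well defined.

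\emph{Main obstacle.} The only delicate point is that \cref{eq:semigroup_B_app} must hold uniformly for all $\tau\ge0$ with a constant depending only on $\{\gamma_i,{\sf n}_i\}$. The eigenvalue $-1/(2T_{\rm adv})$ and the purely imaginary eigenvalues $\pm\mathrm{i}\upomega$ contribute bounded (or decaying) factors. The Jordan blocks contribute at most the linear factor $1+\tau/T_{\rm adv}$, uniformly in $\kappa$ as long as $\kappa$ is fixed.

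The eigenvalues $\pm\mathrm{i}\upomega$ need a little care. Are they semisimple? They are simple when $\upomega^2\ne-1/(4T_{\rm adv}^2)$, which always holds. So their contribution is bounded.

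With this, $e^{\tau\mathfrak B}$ is controlled by the Jordan form through a fixed change of basis. That gives the claimed bound with constants depending only on the normalized triangle after rescaling time by $T_{\rm adv}$, since $\mathfrak B\,T_{\rm adv}$ depends only on $\{\gamma_i,{\sf n}_i\}$.
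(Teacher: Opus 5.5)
Your proposal is correct and is essentially the paper's proof: you conjugate via \cref{eq:rotating_variable_app} to $\partial_\tau g=\mathfrak B g$, apply \cref{eq:semigroup_B_app} using $\tau(t)-\tau(s)=2T_{\rm adv}\bigl|\log(\dhk(t)/\dhk(s))\bigr|$ and $e^{(\tau(t)-\tau(s))/(2T_{\rm adv})}=(\dhk(t)/\dhk(s))^{\varsigma}$, and conclude with Duhamel. One small slip in your final aside, which does not affect the argument since the bound is already supplied by Lemma \ref{lem:SpectralPropertiesB}: $\pm\mathrm{i}\upomega$ are not simple when $\upomega=0$, but they are always semisimple, because $(\boldsymbol{1},0)$ and $(0,\boldsymbol{1})$ are eigenvectors of $\mathfrak B$ (the translation modes, using $M\boldsymbol{1}=0$).
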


\begin{proof}
By Lemma \ref{lem:SpectralPropertiesB}, since there may be a Jordan block of size two 
\begin{equation}
    \left\|e^{(\tau(t)-\tau(s))\mathfrak B}\right\|
    \lesssim\left( 1 + \frac{\tau(t) - \tau(s)}{ T_{\rm adv}} \right)
    \exp\left(\frac{\tau(t)-\tau(s)}{2T_{\rm adv}}\right),
\end{equation}
where the phase factors in \cref{eq:rotating_variable_app} are unitary. Since
\begin{equation}
    \frac{\tau(t)-\tau(s)}{ T_{\rm adv}} = 2\left| \log \frac{\dhk(t)}{\dhk(s)} \right|,
    \qquad
    \exp\left(\frac{\tau(t)-\tau(s)}{2T_{\rm adv}}\right)
    =
    \left(
        \frac{1+\varsigma t/T_{\rm adv}}
             {1+\varsigma s/T_{\rm adv}}
    \right)^{\varsigma/2}
    =
    \left(\frac{\dhk(t)}{\dhk(s)}\right)^{\varsigma},
\end{equation}
the first estimate \cref{eq:transfer_matrix_unified_app} follows.  The second one \cref{eq:BoundForcingHKLin} follows  from Duhamel's formula.
\end{proof}

\section{The self-similar framework}
In this section we introduce the self-similar framework that naturally desingularizes the point vortex motion for the Navier--Stokes equations.    Consider the rescaled vorticity ansatz
    \begin{equation}
        \omega(x,t) = \sum_{i=1}^3\frac{\Gamma_i}{\nu t} \Omega_i\left( \frac{x-Z_i(t)}{\sqrt{\nu t}} ,t\right) 
        \label{eq:vorticityAnsatz}
\end{equation}
where we fix $\mathbf{\Gamma},\boldsymbol{Z}(0)$ so that we have a Gr\"obli triangle. More precisely, set $Z_i(0)=z_i^{\rm in}$, assume that the vortex center is fixed at the origin as in \cref{eq:center_of_vorticity} and that the constraints in \cref{eq:groebli_condition_intro} hold true. Then, we define
    \begin{equation}
       \gamma_i=\frac{\Gamma_i}{\Gamma},\qquad \xi_i := \frac{x-Z_i(t)}{\sqrt{\nu t}}, \quad i=1,2,3
    \end{equation}
    and, in view of \cref{eq:vorticityAnsatz}, we  split the stream function as
    \begin{equation}
        \psi(x,t) = \sum_{i=1}^3\Gamma_i \Psi_i\left(\xi_i ,t\right), \qquad \Psi_i = \Delta^{-1} \Omega_i, \quad i=1,2,3.
    \end{equation}
    Note that for all $i,j=1, 2, 3$,
    we have
    \begin{equation}
        \xi_i = \xi_j + \frac{Z_j - Z_i}{\sqrt{\nu t}}.
        \label{eq:transformXi}
    \end{equation}
    Hence, thanks to \cref{eq:transformXi}, once we fix  $$\xi := \xi_i,$$ the other two positions $\xi_j, \xi_k$ can be recovered from \cref{eq:transformXi}. We  introduce the following notation:
\begin{itemize}
    \item we denote the rescaled diffusion operator as
    \begin{equation}
    \label{def:cL}
    \cL:=\Delta_\xi+\frac12 \xi \cdot \nabla_\xi +1;
\end{equation}

\item we denote the translation operators as
\begin{equation}
(\cT_{\eps,\boldsymbol{Z}}^{i,j}f)(\xi)=(\cT_{\eps(t),\boldsymbol{Z}(t),\dhk}^{i,j})(f)(\xi) \coloneqq   f \left( \xi + \frac{Z_i(t) -Z_j(t)}{\eps \dhk(t)} \right).
    \label{eq:Tij}
\end{equation}
\end{itemize}
We then have the following.
\begin{lemma}
    Let $\omega$ be as in \cref{eq:vorticityAnsatz}. Then, $\omega$ solves \cref{eq:NSintro} with initial data \cref{def:omin} if 
        \begin{equation}
            t \de_t \Omega_i = \mathcal{L} \Omega_i - \frac{1}{\delta} \left\{ \Phi_{\epsilon, \Z,i}\, , \Omega_i\right\}, \quad \Omega_i(0)=G,
        \label{eq:OmegaIEq}
        \end{equation}
        where the interaction-streamfunction is defined as
        \begin{equation}
           \label{def:PhiepsZi} \Phi_{\epsilon, \Z,i}={\Phi}_{\eps,i}(\mathbf{\Psi},\boldsymbol{Z}):= \gamma_i\Psi_i+\sum_{j\neq i}\gamma_j\mathcal{T}_{\eps,\boldsymbol{Z}}^{i,j}\Psi_j - \frac{ \delta t }{\eps \dhk} \partial_t Z_i \cdot \xi^\perp
        \end{equation}
\end{lemma}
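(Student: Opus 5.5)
Since the statement is essentially a change of variables, the plan is to insert the ansatz \cref{eq:vorticityAnsatz} into \cref{eq:NSintro}. We then check that if each $\Omega_i$ solves \cref{eq:OmegaIEq}, the three rescaled contributions add up to a solution. The equation is linear in $\omega$ apart from the bracket $\{\psi,\omega\}$, and $\psi=\sum_j\Gamma_j\Psi_j(\xi_j,t)$ is the full streamfunction, so we can split
\[
\{\psi,\omega\}=\sum_i \frac{\Gamma_i}{\nu t}\{\psi,\Omega_i(\xi_i,t)\},
\]
and it suffices to show, vortex by vortex, that $\omega_i:=\frac{\Gamma_i}{\nu t}\Omega_i(\xi_i,t)$ satisfies $\partial_t\omega_i+\{\psi,\omega_i\}=\nu\Delta\omega_i$. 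Summing over $i$ then gives \cref{eq:NSintro}. A short check that $\Psi_i=\Delta^{-1}\Omega_i$ in $\xi$ variables gives $\Delta_x$ of $\sum_j\Gamma_j\Psi_j(\xi_j)$ equal to $\omega$ (the factor $(\nu t)^{-1}$ from the chain rule matches), modulo harmless constants in the log kernel.

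We then compute each term in the variable $\xi=\xi_i$.
\begin{itemize}
\item Time derivative: $\partial_t\omega_i=\frac{\Gamma_i}{\nu t}\cdot\frac1t\bigl(t\partial_t\Omega_i-\Omega_i-\tfrac12\xi\cdot\nabla\Omega_i-\tfrac{t}{\sqrt{\nu t}}\partial_tZ_i\cdot\nabla\Omega_i\bigr)$, using $\partial_t\xi_i=-\xi_i/(2t)-\partial_tZ_i/\sqrt{\nu t}$.
\item Viscous term: $\nu\Delta_x\omega_i=\frac{\Gamma_i}{\nu t}\cdot\frac1t\Delta_\xi\Omega_i$. Together with the previous item this produces $\frac{\Gamma_i}{\nu t^2}\bigl(t\partial_t\Omega_i-\cL\Omega_i\bigr)$ plus the drift term.
\item Bracket: gradients in $x$ each carry a factor $(\nu t)^{-1/2}$, so $\{\psi,\omega_i\}=\frac{\Gamma_i}{(\nu t)^2}\{\textstyle\sum_j\Gamma_j\Psi_j(\xi_j),\Omega_i\}_\xi$. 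By \cref{eq:transformXi}, $\Psi_j(\xi_j)=\cT^{i,j}_{\eps,\Z}\Psi_j(\xi)$, since $\sqrt{\nu t}=\eps\dhk$ by \cref{def:epsintro}. Factoring out $\Gamma$ and using $\nu t/\Gamma=\delta t$ yields $\frac{\Gamma_i}{\nu t^2}\,\delta^{-1}\{\gamma_i\Psi_i+\sum_{j\ne i}\gamma_j\cT^{i,j}\Psi_j,\Omega_i\}$.
\end{itemize}

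The remaining step is to absorb the drift $-\frac{t}{\sqrt{\nu t}}\partial_tZ_i\cdot\nabla\Omega_i$ into the bracket. For a constant vector $a$ we have $\{a\cdot\xi^\perp,f\}=\nabla^\perp(a\cdot\xi^\perp)\cdot\nabla f$, and $\nabla^\perp(a\cdot\xi^\perp)=\pm a$, so the drift equals $\delta^{-1}\{\mp\frac{\delta t}{\eps\dhk}\partial_tZ_i\cdot\xi^\perp,\Omega_i\}$. The only delicate point of the argument is fixing the orientation convention so that the sign is the one in \cref{def:PhiepsZi}; this is a one-line check with $\nabla^\perp=(-\partial_2,\partial_1)$ and $\xi^\perp=(-\xi_2,\xi_1)$. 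Collecting all terms gives exactly \cref{eq:OmegaIEq}.

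For the initial data, $\frac{\Gamma_i}{\nu t}G\bigl(\frac{x-Z_i}{\sqrt{\nu t}}\bigr)\rightharpoonup\Gamma_i\delta_{z_i^{\rm in}}$ as $t\to0$, since $Z_i(0)=z_i^{\rm in}$ and $G$ has unit mass. Hence $\Omega_i(0)=G$ corresponds to $\omega^{\rm in}$ in \cref{def:omin}. By the uniqueness of Gallagher--Gallay \cite{GaGa2005}, $\omega$ is then the solution, provided the $\Omega_i$ are regular enough that $\omega$ belongs to the class where uniqueness holds; this is where a rigorous version needs some care, while the rest is routine bookkeeping.
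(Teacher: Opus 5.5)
Your proposal is correct and is exactly the direct change-of-variables computation the paper invokes without details (citing \cite{Gallay2011}). You split the bracket vortex by vortex, rescale, use $\Psi_j(\xi_j)=\mathcal{T}^{i,j}_{\epsilon,\Z}\Psi_j(\xi_i)$, and absorb the drift into the bracket; with $\nabla^\perp=(-\partial_2,\partial_1)$ and $\xi^\perp=(-\xi_2,\xi_1)$ one has $\nabla^\perp(a\cdot\xi^\perp)=a$, which gives precisely the minus sign in $\Phi_{\epsilon,\Z,i}$. The lemma only asserts the sufficient direction, so the appeal to uniqueness \cite{GaGa2005} is not needed here; the paper uses it later to identify $\omega$ with the solution and to justify starting from $\Omega_i(0)=G$.
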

\begin{proof}
    The proof is a direct computation as detailed in  \cite{Gallay2011} and therefore is omitted here.
\end{proof}

\subsection{Choice of the positions}
Following \cite{GaSring,dolce2024long}, it is convenient to fix the position vector $\boldsymbol{Z}(t)$ so that we follow the vortex center of each vortex $\Omega_i$. Namely, recalling \cref{def:mk}, we require that
\begin{equation}
    m^{1}_j(\Omega_i)(t)=\int_{\mathbb{R}^2}\xi_j\Omega_i(\xi,t) \dd \xi=0  \quad \text{for all } i = 1,2,3 \quad j=1,2 \quad t\geq0.
\end{equation}
At leading order where all vortices are assumed to evolve exactly as Lamb-Oseen vortex,  this choice easily recovers the Helmholtz-Kirchhoff dynamics as shown in \cite{dolce2024long}. From a  technical point of view, the zero first-order moment constraint is a natural modulation accounting for the translation symmetries of $\mathbb{R}^2$. Then, we have the following.
\begin{lemma}
    \label{lem:PositionChoice}
    Let $i=1,2,3$ and $\Omega_i$ be the solutions to \cref{eq:OmegaIEq}. If for all $t>0$,  $Z_i(t)$ is chosen  so that
    \begin{equation}
        t\de_t Z_i(t) = \frac{\dhk\eps(t)}{\delta} \sum_{j\neq i}\gamma_j\int_{\RR^2}\left( \big(\nabla^{\perp}  \cT_{\eps,\Z}^{i,j} \Psi_j \big)
         \Omega_i\right)(\xi,t) \, \dd\xi , \quad Z_i(0)=z_i^{\rm in}
        \label{eq:dtZi}
    \end{equation}
then, with the notation \cref{def:mk}, the following moment identities hold true
    \begin{equation}
        {\rm M}(\Omega_i(t)) = 1, \qquad {\rm m}^1(\Omega_i(t)) = (0,0), \qquad \text{ for all } t\geq0.
    \end{equation}
\end{lemma}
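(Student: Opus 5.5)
The plan is to differentiate the zeroth and first moments of $\Omega_i$ in time using \cref{eq:OmegaIEq}. This gives a closed linear ODE for each moment, and the choice \cref{eq:dtZi} is exactly what cancels the forcing in the first-moment equation. Throughout I will assume that $\Omega_i(t)$ lies in the Gaussian-weighted space used in \cite{Gallay2011,GaGa2005}, for instance $L^2(e^{|\xi|^2/4}\dd\xi)$. With this decay all integrations by parts below are legitimate, and ${\rm M}(\Omega_i)$ and ${\rm m}^1(\Omega_i)$ are continuous on $[0,\infty)$ with $\Omega_i(0)=G$.

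\emph{Mass.} Write $\mathcal L f=\Delta f+\tfrac12\nabla\cdot(\xi f)$, so that $\mathcal L$ is a divergence and $\int\mathcal L\Omega_i\,\dd\xi=0$. For any smooth $\Phi$ we have $\{\Phi,\Omega_i\}=\nabla\cdot(\Omega_i\nabla^\perp\Phi)$, because $\nabla^\perp\Phi$ is divergence free. Hence the bracket also integrates to zero. This requires $\nabla\Phi$ to grow at most polynomially, which holds for $\Phi_{\eps,\Z,i}$: the terms $\nabla\Psi_j$ are bounded and the modulation term is linear in $\xi$. Therefore $t\partial_t{\rm M}(\Omega_i)=0$, and ${\rm M}(\Omega_i)\equiv{\rm M}(G)=1$.

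\emph{First moment.} Multiply \cref{eq:OmegaIEq} by $\xi$ and integrate.

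For the diffusion term, $\int\xi\,\Delta\Omega_i=0$ and $\tfrac12\int\xi\,\nabla\cdot(\xi\Omega_i)=-\tfrac12{\rm m}^1(\Omega_i)$. Hence $\int\xi\,\mathcal L\Omega_i=-\tfrac12{\rm m}^1(\Omega_i)$.

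For the transport term, $\int\xi\,\{\Phi,\Omega_i\}=-\int(\nabla^\perp\Phi)\,\Omega_i$. I split $\Phi_{\eps,\Z,i}$ into its three pieces from \cref{def:PhiepsZi}.
\begin{itemize}
\item The self-interaction $\gamma_i\int\nabla^\perp\Psi_i\,\Omega_i$ vanishes, since the Biot--Savart kernel is odd and the double integral is antisymmetric.
\item The interaction terms give $-\sum_{j\neq i}\gamma_j\int(\nabla^\perp\mathcal T^{i,j}_{\eps,\Z}\Psi_j)\,\Omega_i$.
\item For the modulation term, a direct computation gives $\nabla^\perp(a\cdot\xi^\perp)=a$. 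Since ${\rm M}(\Omega_i)=1$, this piece contributes $+\frac{\delta t}{\eps\dhk}\partial_tZ_i$.
\end{itemize}

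Collecting terms,
\[
t\partial_t{\rm m}^1(\Omega_i)=-\tfrac12{\rm m}^1(\Omega_i)+\frac1\delta\sum_{j\neq i}\gamma_j\int(\nabla^\perp\mathcal T^{i,j}_{\eps,\Z}\Psi_j)\,\Omega_i\,\dd\xi-\frac{t}{\eps\dhk}\partial_tZ_i .
\]
With $Z_i$ defined by \cref{eq:dtZi}, the last two terms cancel exactly, leaving $t\partial_t{\rm m}^1=-\tfrac12{\rm m}^1$. The solutions are ${\rm m}^1(t)=Ct^{-1/2}$. Continuity at $t=0$, together with ${\rm m}^1(G)=0$ because $G$ is radial, forces $C=0$, so ${\rm m}^1(\Omega_i)\equiv0$.

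The algebra is routine. The step I expect to be delicate is justifying the computation as a coupled system near $t=0$. The velocity \cref{eq:dtZi} depends on $\Omega$ and $\Z$, and $\eps\dhk=\sqrt{\nu t}$, so the equations are singular at $t=0$. To handle this I would construct $(\boldsymbol\Omega,\Z)$ as a fixed point, or directly from the Gallagher--Gallay solution $\omega$, by defining $Z_i$ as the center of vorticity of the $i$-th piece. In either construction I would verify that the right-hand side of \cref{eq:dtZi} is $\mathcal O(1)$ as $t\to0$ and that the moments are continuous at $t=0$.
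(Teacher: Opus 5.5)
Your proposal is correct and follows the paper's argument. You test \cref{eq:OmegaIEq} against $\xi$, use \cref{eq:dtZi} to cancel the interaction and modulation contributions, and reach $t\partial_t{\rm m}^1(\Omega_i)=-\tfrac12{\rm m}^1(\Omega_i)$; the paper closes this via $\partial_t\bigl(t|{\rm m}^1(\Omega_i)|^2\bigr)=0$ and $t|{\rm m}^1(\Omega_i)|^2\to0$, which is equivalent to your $Ct^{-1/2}$ step. The paper leaves mass conservation implicit, and your explicit step is what makes the modulation term yield exactly $\frac{t}{\eps\dhk}\partial_t Z_i$.
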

\begin{proof}
    As in \cite{dolce2024long,dolce2026viscousvortexcrystals}, the proof readily follows by testing \cref{eq:OmegaIEq} against $\xi_1$ and $\xi_2$ and, upon integrating by parts, the choice \cref{eq:dtZi}  implies that the first order moments satisfy
    \begin{equation}
        t \de_t {\rm m}^1(\Omega_i) = -\frac{1}{2} {\rm m}^1(\Omega_i).
    \end{equation}
    Since $\de_t (t |{\rm m}^1(\Omega_i)|^2) = 0$, and as $t \to 0$, $ t |{\rm m}^1(\Omega_i)|^2 \to 0$, we have ${\rm m}^1(\Omega_i(t))= (0,0).$
\end{proof}

\subsection{Basic functional setting}
\label{sec:Func}
We now introduce the basic functional setting required to construct the approximate solution and control its nonlinear evolution, where we mostly recall results from \cite{GalWay2005,Gallay2011,dolce2024long,dolce2026viscousvortexcrystals}. The only difference is a new lemma related to a parabolic equation involving the diffusion  operator. 

We first recall the definition of the spaces 
\begin{align}
\label{def:cY}
  \cY \,&=\, \biggl\{f\in L^2(\RR^2)\,:\, \int_{\RR^2} |f(\xi)|^2
  \,\e^{|\xi|^2/4}\,\dd \xi < \infty\biggr\}\,,\\
  \label{def:calZ}
  \cZ \,&=\, \bigl\{f:\RR^2\to \RR \,:\, \xi \mapsto \e^{|\xi|^2/4}f(\xi) \in
  \cS_*(\RR^2)\bigr\}\,,
\end{align}
where $\cS_*(\RR^2)$ denotes the space of smooth functions with at most
polynomial growth at infinity. More precisely, \[ f\in \cS_*(\RR^2) \quad\Longleftrightarrow\quad \forall\,\alpha\in\mathbb N^2\ \exists\, C_\alpha>0,\ N_\alpha\in\mathbb N \ \text{such that}\ |\partial_\xi^\alpha f(\xi)| \leq C_\alpha(1+|\xi|)^{N_\alpha}, \quad \xi\in\RR^2 . \] 
The space $\cY$ is a Hilbert space equipped with the inner product 
\begin{equation}
  \langle{f,g}\rangle_{\cY} \,=\, \int_{\RR^2}f(\xi)\,g(\xi)\,\e^{|\xi|^2/4}\,\dd \xi\,, \qquad
  \forall\,f,g \in \cY,
\end{equation}
 and we have the orthogonal decomposition 
\begin{equation}\label{Ydecomp}
  \cY \,= \,\bigoplus_{n=0}^{\infty}\,\cY_n\,,
\end{equation}
where $\cY_n = \bigl\{f\in \cY\,:\, f(\xi) = a(r)\cos(n\theta) + b(r)\sin(n\theta) \text{ with }  a,b:
\RR_+\to\RR\bigr\}$. We also denote with $\cP_n$ the $L^2$-projection operator onto the $n$-th angular Fourier mode.

Then, we recall the $\cO_{\cS_*}$ and $\cO_\cZ$ notation introduced in \cite[Definition 3.4]{dolce2024long}.
\begin{definition}\label{def:topo}
Let $M \in \NN$. 

\smallskip\noindent 1)
Let $g_\epsilon \in \cS_*(\RR^2)$ depend on a small parameter $\epsilon > 0$. We say that $g_\epsilon \,=\, \cO_{\cS_*}\bigl(\epsilon^M\bigr)$ if, for all $\alpha \in \NN^2 $ and all $m\in \{0,\dots, M\}$ there exists $C>0$ and $\widetilde{N}\in \NN$ such that
\[
  |\partial^{\alpha}_\xi\de_\epsilon^mg_\epsilon(\xi)| \le C(1+|\xi|)^{\widetilde{N}}\epsilon^{M-m}
  \quad \forall\,\xi \in \RR^2\,.
\]

\noindent 2) Similarly, if $f_\epsilon \in \cZ$, we write $f_\epsilon = \cO_{\cZ}
\bigl(\epsilon^M\bigr)$ if $\e^{|\xi|^2/4}f_\epsilon = \cO_{\cS_*}\bigl(\epsilon^M\bigr)$. 
\end{definition}

We now state the relevant properties of the main linear operators involved in this paper.  \subsubsection*{{\rm i)} \textbf{The diffusion operator}} The diffusion operator $\cL$ defined by \cref{def:cL} is
a linear operator in $\cY$ with (maximal) domain
\begin{equation}\label{def:cLdom}
  D(\cL) \,=\, \bigl\{f\in \cY \, : \,\Delta f\in \cY, \, 
  \xi\cdot \nabla f \in \cY\bigr\}\,.
\end{equation}
It is well known, see \cite[Appendix~A]{GalWay2002}, that $\cL$ is {\em self-adjoint} in
$\cY$ with compact resolvent and purely discrete spectrum.   From \cite[Lemma 3.5]{dolce2024long} we get that:
\begin{lemma}\label{lem:Linvert}
For any $\kappa > 0$ and any $f \in \cZ$ one has $(\kappa - \cL)^{-1}f
\in \cZ$.   
\end{lemma}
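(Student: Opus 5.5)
We prove Lemma~\ref{lem:Linvert} by conjugating $\cL$ to a harmonic-oscillator type operator and using the Hermite structure of $\cY$. Set $f=e^{-|\xi|^2/4}h$ with $h\in\cS_*(\RR^2)$. It is convenient to work with the weight $e^{|\xi|^2/4}$: for $u=e^{-|\xi|^2/4}v$, a direct computation gives
\[
e^{|\xi|^2/4}\,\cL\bigl(e^{-|\xi|^2/4}v\bigr)=\Delta v-\tfrac12\xi\cdot\nabla v=:\cL^* v .
\]
This is the Ornstein--Uhlenbeck operator, i.e.\ the adjoint of $\cL$ in $L^2$ up to this conjugation. So the claim is equivalent to the following: for every $h\in\cS_*$, the equation $(\kappa-\cL^*)v=h$ has a solution $v\in\cS_*$, and this solution coincides with the one given by $\cY$-theory.

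\textbf{Step 1 (representation).} Since $\cL$ is self-adjoint in $\cY$ with spectrum $\{-k/2:k\in\NN\}$ and $\kappa>0$, the resolvent exists. We write it with the semigroup:
\[
(\kappa-\cL)^{-1}f=\int_0^\infty e^{-\kappa s}e^{s\cL}f\,\dd s .
\]
The semigroup $e^{s\cL}$ is explicit (heat kernel in self-similar variables):
\[
(e^{s\cL}f)(\xi)=\frac{e^{s}}{4\pi a(s)}\int_{\RR^2} e^{-\frac{|\xi-e^{-s/2}\eta\,e^{s}|^2}{4a(s)}}f(\eta)\,\dd\eta ,\qquad a(s)=1-e^{-s},
\]
up to the exact normalisation, which we will check.

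\textbf{Step 2 (conjugated semigroup).} Conjugating, $e^{s\cL^*}$ is the Mehler kernel
\[
(e^{s\cL^*}h)(\xi)=\mathbb E\bigl[h\bigl(e^{-s/2}\xi+\sqrt{2a(s)}\,N\bigr)\bigr],
\]
with $N$ a standard Gaussian. From this formula we read off the following.
\begin{itemize}[leftmargin=*]
\item Since $|h(y)|\le C(1+|y|)^{N}$, we get $|e^{s\cL^*}h(\xi)|\le C'(1+|\xi|)^N$ uniformly in $s\ge0$.
\item Derivatives commute with the kernel up to a factor: $\de^\alpha e^{s\cL^*}h=e^{-|\alpha|s/2}e^{s\cL^*}\de^\alpha h$. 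This gives polynomial bounds on all derivatives, uniformly in $s$.
\end{itemize}
Hence $v=\int_0^\infty e^{-\kappa s}e^{s\cL^*}h\,\dd s$ converges absolutely for $\kappa>0$ and lies in $\cS_*$. Differentiating under the integral sign is justified by dominated convergence.

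\textbf{Step 3 (identification).} Check that $v$ solves $(\kappa-\cL^*)v=h$; this follows from integrating $\frac{\dd}{\dd s}e^{s\cL^*}h$. Then check that $e^{-|\xi|^2/4}v$ equals the $\cY$-resolvent. Both are given by the same semigroup integral, and $f\in\cZ\subset\cY$, so the two Duhamel/Laplace formulas agree. Equivalently, uniqueness holds since $\kappa\notin\sigma(\cL)$ and $e^{-|\xi|^2/4}v\in D(\cL)$, which we verify from the polynomial bounds on $v$ and its derivatives.

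\textbf{Main obstacle.} The only delicate point is the uniform-in-$s$ polynomial control, in particular making sure no growth such as $e^{s}$ appears. With the Mehler representation this is automatic, because the argument $e^{-s/2}\xi$ contracts. Alternatively, if the explicit kernel is inconvenient, expand $h$ in Hermite polynomials and use that $\cL^*$ preserves polynomials of degree $\le n$. The drawback is that $\cS_*$ is not spanned by polynomials, so the kernel route is preferable.
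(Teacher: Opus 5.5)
Your proof is correct. It takes essentially the same route the paper relies on: the paper only quotes this lemma from \cite[Lemma~3.5]{dolce2024long}, but in Appendix~\ref{app:LemmaLin}, for the time-dependent analogue Lemma~\ref{lem:lin}, it uses the same approach, namely a semigroup Laplace representation together with the explicit kernel conjugated by $e^{|\xi|^2/4}$ (your Mehler formula is exactly the kernel $\mathcal K_{e^{-s}}$ there), closed by uniqueness in $D(\cL)$ since $\kappa\notin\sigma(\cL)$. The kernel displayed in your Step~1 is garbled; it should read $(e^{s\cL}f)(\xi)=\frac{1}{4\pi a(s)}\int_{\RR^2}e^{-|\xi-e^{-s/2}\eta|^2/(4a(s))}f(\eta)\,\dd\eta$, but nothing depends on it because the Mehler form in Step~2, which is what you actually use, is correct.
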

In the construction of the approximate solution, since the effective aspect ratio $\eps(t)$ includes the time-dependent $\dhk(t)$ in its definition, we encounter the following  parabolic equation to solve
    \begin{align}
    \label{eq:LinOm2}
        \begin{dcases}
            \displaystyle\tau\partial_\tau f+\frac{q}{1+\varsigma\tau}f-\mathcal{L}f=H(\tau,\xi), \qquad \tau>0, \, \xi \in \mathbb{R}^2, \,\varsigma \in \{ -1,1\},\, q\geq1, \\ 
     f|_{\tau=0}=(q-\mathcal L)^{-1}H(0,\cdot).
        \end{dcases}
    \end{align} 
    The case $\varsigma=1$ corresponds to the expanding configuration whereas $\varsigma=-1$ to the collapsing one. We need to provide a control in time of the solution $f$ in the space $\mathcal{Z}$.
    Let us define the following time intervals 
    \begin{equation}
        I_{\varsigma} := \begin{cases}
            [0,\infty) & \text{if } \varsigma = 1,\\
            [0,1) & \text{if } \varsigma = -1.
        \end{cases}
    \end{equation}
\begin{lemma}
\label{lem:lin}
    Let $f$ be the regular solution to \cref{eq:LinOm2}.   Assume that $H(\tau, \xi)$ is smooth up to $\tau = 0$, $H(\tau, \cdot) \in \cZ$ and  $M(H(\tau, \cdot)) = 0$ for every $\tau \in I_{\varsigma}$.
    Moreover, suppose that there exists a family of suitable positive, bounded functions $A_j(\tau)$ such that , for every multi-index
    $\alpha$ and every $j\in\mathbb N_0$,
    \begin{equation}
        \left|(\tau\partial_\tau)^j\partial_\xi^\alpha
        \left(e^{|\xi|^2/4}H(\tau,\xi)\right)
        \right|
        \leq
        C_{\alpha,j}(1+|\xi|)^{N_{\alpha,j}}A_j(\tau)
        \end{equation}
with constants independent of $\tau$ and $\xi$.
    Then
    \begin{enumerate}[label = \roman *)]
        \item If $\varsigma = 1$, $f(\tau,\cdot)\in \mathcal{Z}$ for every $\tau\geq 0$. More precisely, for every $\beta \in \mathbb{N}$ and for every multi-index $\alpha$ there exist  constants $C_{\alpha, \beta,q},N_{\alpha,\beta}>0$, depending only on $q$ and bounds on the forcing, such that the family $\{f(\tau,\cdot)\}_{\tau\geq0}\in \mathcal{Z}$ satisfies 
    \begin{align}
    \label{bd:lemmapara}
    \big|(\tau\partial_\tau)^{\beta}\partial_\xi^\alpha\big(\e^{|\xi|^2/4}f(\tau,\xi)\big)\big|\leq C_{\alpha,\beta,q}(1+|\xi|)^{N_{\alpha,\beta}} \sum_{j = 0}^\beta \sup_{0 \leq r \leq \tau} A_j(r),
    \end{align}
    for all $\tau\geq 0, \, \xi \in \mathbb{R}^2$.
    \item If $\varsigma = -1$, $f(\tau,\cdot)\in \mathcal{Z}$ for every $\tau\in [0,1)$ in the sense specified in item i), but with the bound
        \begin{align}
    \big|(\tau\partial_\tau)^{\beta}\partial_\xi^\alpha\big(\e^{|\xi|^2/4}f(\tau,\xi)\big)\big|\leq \frac{C_{\alpha,\beta,q}}{(1-\tau)^\beta}(1+|\xi|)^{N_{\alpha,\beta}} \sum_{j = 0}^\beta \sup_{0 \leq r \leq \tau} (1-r)^{j+1}A_j(r),
    \end{align}
     for all $\tau \in [0,1), \, \xi \in \mathbb{R}^2$.
    \end{enumerate}
\end{lemma}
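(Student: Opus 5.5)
We conjugate by the Gaussian weight and work in the Hermite-type basis where $\cL$ is diagonal. Set $g:=e^{|\xi|^2/4}f$. Then $\cL$ becomes the operator $\Lambda:=e^{|\xi|^2/4}\cL e^{-|\xi|^2/4}=\Delta-\tfrac12\xi\cdot\nabla-\tfrac{|\xi|^2}{16}\cdot 0+\dots$. More usefully, we write $f=\sum_{k}$ of eigenfunctions $\partial^\gamma G$ of $\cL$, with eigenvalues $-|\gamma|/2$; these span $\cZ$ in the sense that polynomials times $G$ are invariant under $\cL$ and $(\kappa-\cL)^{-1}$ (Lemma \ref{lem:Linvert}).

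Since $M(H)=0$, the component of $H$ on $G$ (eigenvalue $0$) vanishes, so every mode sees an eigenvalue $\mu\ge 1/2$ of $-\cL$. Projected on a mode, \cref{eq:LinOm2} reads
\[\tau\partial_\tau f_\mu+\Big(\mu+\tfrac{q}{1+\varsigma\tau}\Big)f_\mu=H_\mu .\]
Its unique solution regular at $\tau=0$ is
\[f_\mu(\tau)=\int_0^1 \sigma^{\mu-1}\exp\Big(-\int_{\sigma\tau}^{\tau}\frac{q\,\dd r}{r(1+\varsigma r)}\Big)H_\mu(\sigma\tau)\,\dd\sigma .\]
This is consistent with $f(0)=(q+\mu)^{-1}H_\mu(0)=(q-\cL)^{-1}H(0)$.

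Rather than working mode by mode, which is delicate for the pointwise $\cS_*$ bounds, we use the operator form of the same representation:
\[f(\tau)=\int_0^1 \sigma^{-1}e^{(\log\sigma)(-\cL)\cdot(-1)}\,K_q(\tau,\sigma)\,H(\sigma\tau)\,\dd\sigma,\qquad \sigma^{-\cL}=e^{-(\log\sigma)\cL}.\]
Here $s\mapsto e^{s\cL}$ is the explicit Ornstein--Uhlenbeck-type semigroup given by Mehler's formula (rescaled heat kernel), and
\[K_q=\exp\Big(-q\int_{\sigma\tau}^\tau \frac{\dd r}{r(1+\varsigma r)}\Big)=\Big(\sigma\,\frac{1+\varsigma\tau}{1+\varsigma\sigma\tau}\Big)^{q}.\]
On mean-zero data, $e^{s\cL}$ maps $\cZ$-bounds to $\cZ$-bounds: $|\partial^\alpha(e^{|\xi|^2/4}e^{s\cL}h)|\lesssim e^{-s/2}(1+|\xi|)^{N}$ uniformly in $s\ge0$ whenever $e^{|\xi|^2/4}h$ has polynomial bounds. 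This follows from the explicit Gaussian kernel, since polynomially weighted Gaussians are preserved and the decay comes from the spectral gap on $M=0$. With $s=-\log\sigma$ this gives a factor $\sigma^{1/2}$, so $\sigma^{-1}\sigma^{1/2}\sigma^{q}$ is integrable because $q\ge1$.

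For $\varsigma=1$ we have $K_q\le\sigma^q$ since $(1+\tau)/(1+\sigma\tau)\le\sigma^{-1}$; hence $K_q\le 1$, and \cref{bd:lemmapara} with $\beta=0$ follows with $\sup_{r\le\tau}A_0(r)$. For $\varsigma=-1$ the factor $\big((1-\tau)/(1-\sigma\tau)\big)^q\le 1$ as well. We then bound $|H(\sigma\tau)|\le A_0(\sigma\tau)=\frac{(1-\sigma\tau)A_0(\sigma\tau)}{1-\sigma\tau}$ and use $\tfrac{1-\tau}{1-\sigma\tau}\le1$ to absorb one power, which yields the weight $(1-r)A_0(r)$ divided by $(1-\tau)$. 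This is in fact slightly better than the stated bound, which tolerates the weaker form.

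For $\beta\ge1$ we apply $\tau\partial_\tau$ to the equation. The function $f_1:=\tau\partial_\tau f$ solves \cref{eq:LinOm2} with the same $q$ and forcing
\[\tau\partial_\tau H+\frac{\varsigma q\tau}{(1+\varsigma\tau)^2}f.\]
The extra term is bounded by $\|f\|$ in the expanding case, and by $(1-\tau)^{-1}\|f\|$ in the collapsing case. Its mean still vanishes, since $M(f)=0$ follows from $M(H)=0$ via the $\mu=0$ mode equation, whose regular solution is $0$. Induction on $\beta$ then gives \cref{bd:lemmapara} with $\sum_{j\le\beta}\sup A_j$. In the collapsing case it gives the extra factors $(1-\tau)^{-\beta}$ and the weights $(1-r)^{j+1}$, obtained as above by absorbing one factor $(1-\sigma\tau)$ per application. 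The $\partial_\xi^\alpha$ derivatives commute with the weighted kernel up to lower-order polynomial terms via Mehler's formula, which affects only $N_{\alpha,\beta}$.

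The main obstacle will be the uniform-in-$s$ weighted $\cS_*$ estimate for $e^{s\cL}$ on mean-zero functions, including derivatives, because the gain $e^{-s/2}$ must be extracted pointwise rather than in $\cY$. We plan to handle it by writing $h=\nabla\cdot V$ with $V\in\cZ$, which is possible exactly when $M(h)=0$, and commuting: $e^{s\cL}\nabla\cdot=e^{-s/2}\nabla\cdot e^{s(\cL-\frac12)}$. The remaining semigroup is a contraction preserving Gaussian-times-polynomial bounds by the explicit kernel.
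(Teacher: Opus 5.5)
Your representation formula is the paper's: your $\sigma^{-1}K_q$ is exactly $\mathcal W_{\varsigma,q}(\tau,\sigma)$. Your route to the $\sqrt\sigma$ gain on mean-zero data is a valid alternative to the paper's splitting of the Mehler kernel into $\mathcal I_1+\mathcal I_2+\mathcal I_3$. Writing $h=\nabla\cdot V$, one has $e^{s\cL}\partial_i=\partial_i e^{s(\cL-\frac12)}=e^{-s/2}\partial_i e^{s\cL}$; your formula carries a spurious extra factor $e^{-s/2}$. Moreover, $e^{s\cL}$ preserves Gaussian-times-polynomial bounds uniformly in $s$. You still need to construct $V\in\cZ$ with bounds linear in those of $h$. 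One way: subtract $G_1(\xi_1)\int_{\mathbb R}h\,\dd\xi_1$ and integrate in $\xi_1$, then integrate the zero-mean marginal in $\xi_2$. For $\varsigma=1$ your claim $K_q\le\sigma^q$ is false; only $K_q\le 1$ holds. Since $\sigma^{-1/2}$ is integrable, this does not matter. So item i) goes through, and your induction via the differentiated equation is a valid substitute for the paper's Leibniz differentiation of the formula.

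The genuine gap is in item ii).
\begin{itemize}
\item \textbf{Base case.} For $\beta=0$ you bound $A_0(\sigma\tau)\le(1-\sigma\tau)^{-1}\sup_{r\le\tau}(1-r)A_0(r)$ and then use $(1-\sigma\tau)^{-1}\le(1-\tau)^{-1}$. This gives $(1-\tau)^{-1}\sup_{r\le\tau}(1-r)A_0(r)$, which is \emph{weaker}, not stronger, than the statement: for $\beta=0$ the statement has no factor $(1-\tau)^{-1}$.
\item \textbf{Why it matters.} For $A_0(r)\simeq(1-r)^{-1}$, which is how the lemma is applied to $\Omega^{E_{\rm r}}_{4,i}$, the lemma gives $f=\cO_{\cZ}(1)$, while your argument only gives $\cO_{\cZ}((1-\tau)^{-1})$. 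The pointwise step throws away the strong damping $q/(1-\tau)$.
\item \textbf{The missing identity.} Substituting $u=\sigma(1-\tau)/(1-\sigma\tau)$ gives
\[
\int_0^1\frac{\mathcal W_{-1,q}(\tau,\sigma)}{1-\sigma\tau}\,\dd\sigma=\frac1q .
\]
Together with $\sqrt\sigma\le 1$, this yields the sharp base estimate.
\item \textbf{Induction coefficient.} The extra forcing in the equation for $\tau\partial_\tau f$ is $-\frac{q\tau}{(1-\tau)^2}f$, not $\cO((1-\tau)^{-1})f$. Combined with your lossy base estimate, each step loses further powers of $(1-\tau)^{-1}$. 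So the weights $(1-\tau)^{-\beta}(1-r)^{j+1}$ do not follow from your argument as written.
\item \textbf{Repair.} With the sharp base estimate the induction closes. Set $S_j:=\sup_{r\le\tau}(1-r)^{j+1}A_j(r)$. For $\beta=1$ the new forcing satisfies $(1-r)\bigl(A_1(r)+\frac{qr}{(1-r)^2}S_0\bigr)\lesssim\frac{S_1+S_0}{1-r}$. Its supremum over $r\le\tau$ is at most $(1-\tau)^{-1}(S_0+S_1)$, as stated, and higher $\beta$ follow the same way.
\end{itemize}
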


The proof of the lemma above is presented in Appendix \ref{app:LemmaLin}. 
\begin{remark}
Note that if $M(H(\tau,\cdot)\neq 0$ then it is not hard to see that the solution to \cref{eq:LinOm2} may actually grow like $\ln(1+\tau)$ when $\varsigma=1$. This is a consequence of the fact that the Lamb-Oseen vortex is in the kernel of $\mathcal{L}$. Indeed, taking the time independent forcing $H(\tau, \xi)=G(\xi)$ we achieve the sharp growth. This growth would be essentially harmless within our time-scale, but since the approximate solution naturally satisfies the zero average constraint it is nicer to have a better uniform control on it.
\end{remark}
\subsubsection*{{\rm ii)} 
\textbf{The Lamb-Oseen operator}}
The operator arising from the linearization of the $2D$ Euler equation at the Lamb-Oseen vortex is denoted by  $\Lambda:D(\Lambda)\to \cY$ and defined as 
\begin{equation}\label{def:Lambda2}
  \Lambda f \,=\, \bigl\{\Psi_G\,,\,f\bigr\} + \bigl\{\Delta^{-1}f\,,\,G\}\,,
  \qquad f \in D(\Lambda)\,, 
\end{equation}
where $G$ is defined in \cref{def:G} and $\Psi_G=\Delta^{-1}G$. The operator $\Lambda$ is considered in its maximal domain
$$  D(\Lambda) \,=\, \bigl\{f\in \cY \, : \, \{\Psi_G, f\} \in \cY\bigr\}\,$$ and it commutes with the  direct sum decomposition
\cref{Ydecomp}. Moreover, if $f \in \cZ$ then $\Lambda f \in \cZ$. 
It is  well known that there exists a functional relationship between the streamfunction and vorticity of the Lamb-Oseen vortex, given by
\begin{align}
\label{def:functionalLO}
    \Psi_G+F(G)=0, \qquad F(s)=\frac{1}{4\pi}\Big(\gamma_E-{\rm Ein}(\log\big(\frac{1}{4\pi s}\big))\Big) , \qquad 0< s\leq \frac{1}{4\pi},
\end{align}
where $\gamma_E$ is the Euler-Mascheroni constant and ${\rm Ein}(x)=\int_0^xs^{-1}(1-\e^{-s}) \dd s$ is the exponential integral function. Thus, introducing 
\begin{align}
\label{def:AH}
    A(\xi):=F'(G)(\xi)=\frac{4}{|\xi|^2}\big(\e^{|\xi|^2/4}-1\big), \qquad \mathsf{H}:=A+\Delta^{-1},
\end{align}
where $\mathsf{H}$ is self-adjoint in $L^2(\mathbb{R}^2)$,
we can rewrite $\Lambda$ in the Hamiltonian form 
\begin{align}
\label{eq:LambdaA}
    \Lambda f=\{\mathsf{H}f,G\}.
\end{align}
We then recall the following known properties of $\Lambda$, whose proof can be found for instance in \cite{Gallay2011,dolce2026viscousvortexcrystals}.
\begin{proposition}\label{prop:Lambda}
The following properties hold true:
\begin{enumerate}[label=\arabic*)]
    \item The operator $\Lambda$ is skew-adjoint in the Hilbert space $\cY$ with
kernel 
\begin{equation}\label{kerLam}
  \mathrm{Ker}(\Lambda) \,=\, \cY_0 \,\oplus\, \bigl\{\beta_1\partial_1 G +
  \beta_2\partial_2 G  \,:\, \beta_1, \beta_2 \in \RR\bigr\}\,.
\end{equation}
\item If $h \in \mathrm{Ker}(\Lambda)^\perp \cap \cZ$ or $h\in \cZ$ with $\cP_0(h)=\mrm^1_1(h)=\mrm^1_2(h)=0$ the equation $\Lambda f = h$
has a unique solution $f \in \mathrm{Ker}(\Lambda)^\perp \cap \cZ$.
\label{it:solvability}
\item If $\mathcal{P}_0\varphi=0$ then the unique solution with $\mathcal{P}_0 f =0$ satisfies
\begin{align}
  \Lambda f=\{\varphi,G\}\quad \iff \quad  \mathsf{H}f=\varphi.
\end{align}
\label{it:equivalence}
\item $\mathsf{H}(\partial_i G)=0$ for $i=1,2$.
\label{it:kernel}
\end{enumerate}
\end{proposition}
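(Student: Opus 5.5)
The plan is to work angular mode by angular mode, using the Hamiltonian form \cref{eq:LambdaA} and the explicit radial structure of $G$, $\Psi_G$ and $A$. Writing $f=\sum_n \cP_n f$ in polar coordinates, the radial symmetry of $\Psi_G$ gives $\{\Psi_G,f\}=-\frac{\Psi_G'(r)}{r}\partial_\theta f$. It also gives $\{\Delta^{-1}f,G\}=\frac{G'(r)}{r}\partial_\theta\Delta^{-1}f$. Hence $\Lambda$ commutes with each $\cP_n$ and acts on $\cY_0$ as zero, because both brackets contain $\partial_\theta$.

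\textbf{Step 1 (skew-adjointness).} For skew-adjointness in $\cY$, I would use the identity $e^{|\xi|^2/4}=\frac{1}{4\pi G}$. This makes the $\cY$ weight a function of $G$ alone, and the functional relation \cref{def:functionalLO} gives $\Psi_G=-F(G)$. Then $\langle\{\Psi_G,f\},g\rangle_\cY+\langle f,\{\Psi_G,g\}\rangle_\cY=\int\{\Psi_G,fg\}\,w(G)\,\dd\xi=0$, since $\{\Psi_G,w(G)\}=0$. For the second term, $\langle\{\Delta^{-1}f,G\},g\rangle_\cY=\int\{\Delta^{-1}f,G\}\frac{g}{4\pi G}$. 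Here $\frac{\nabla^\perp G}{G}=-\frac{1}{2}\xi^\perp$, so the term reduces to $-\frac18\int \partial_\theta(\Delta^{-1}f)\, g\cdot\frac{1}{\pi}$ up to constants. This is $\int g\,\partial_\theta\Delta^{-1}f$, which is antisymmetric in $(f,g)$ because $\partial_\theta$ is skew and commutes with the self-adjoint $\Delta^{-1}$. Domain issues (maximal domain) are handled as in \cite{GalWay2005}, by showing $\Lambda$ is a bounded perturbation of the skew-adjoint part $\{\Psi_G,\cdot\}$.

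\textbf{Step 2 (kernel and $\mathsf H(\partial_iG)=0$).} For item 4, I would differentiate $\Psi_G+F(G)=0$ in $\xi_i$ to get $\partial_i\Psi_G+A\,\partial_iG=0$. Since $\partial_i\Psi_G=\Delta^{-1}\partial_iG$, this says exactly $\mathsf H(\partial_iG)=0$. Consequently $\Lambda\partial_iG=\{\mathsf H\partial_iG,G\}=0$, which gives the inclusion $\supseteq$ in \cref{kerLam}. For the reverse inclusion, on $\cY_n$ with $n\geq1$ the kernel equation reads $\mathsf Hf\in\ker\{\cdot,G\}$. Since $\nabla^\perp G\cdot\nabla h=0$ forces $h$ to be radial, $\mathsf H f\in\cY_0$ and so $\mathsf Hf=0$ for $n\ge1$. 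Writing $\phi=\Delta^{-1}f$ in mode $n$, this becomes the ODE $-\phi''-\frac1r\phi'+\frac{n^2}{r^2}\phi=-\frac{\phi}{A}$. This is a Schrödinger-type equation with nonnegative potential, so a maximum-principle/Sturm argument shows no decaying solution exists for $n\ge2$. For $n=1$, the only decaying solution is the radial profile of $\partial_iG$. This ODE analysis is the main obstacle, and I would follow \cite{Gallay2011} for it.

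\textbf{Step 3 (solvability and equivalence).} Item 3 follows by writing $\{\mathsf Hf-\varphi,G\}=0$. This forces $\mathsf Hf-\varphi$ to be radial, and projecting off $\cY_0$ gives equality. For item 2, on each $\cY_n$ with $n\ge1$, I would solve $\Lambda f=h$ by writing $h=\{\varphi,G\}$, that is, $\varphi=\frac{4}{\xi^\perp}$-type division: in mode $n$, $\hat\varphi_n=\frac{2\hat h_n}{\mathrm{i} n G}$ up to constants. This lies in $\cS_*$ when $h\in\cZ$. Then I would solve $\mathsf Hf=\varphi$ via the ODE of Step 2, which is uniquely invertible for $n\ge2$ by the positivity argument. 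For $n=1$, it is invertible modulo $\partial_iG$ under the orthogonality supplied by ${\rm m}^1(h)=0$, which is the Fredholm condition since $\langle h,\partial_iG\rangle_\cY\propto{\rm m}^1_i(h)$. Finally, $\cZ$-regularity of $f$ follows from Gaussian-decay ODE estimates.
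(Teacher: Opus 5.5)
The paper gives no proof of this proposition; it cites Gallay and Dolce et al. Your route is the standard one from that literature: decompose into angular modes and use $\Lambda f=\{\mathsf Hf,G\}$. Several parts are fine as written:
\begin{itemize}
\item item 4, by differentiating $\Psi_G+F(G)=0$;
\item item 3, since $\{\mathsf Hf-\varphi,G\}=0$ forces $\mathsf Hf-\varphi$ to be radial;
\item the mode-one compatibility condition, via $\langle h,\partial_iG\rangle_{\cY}=-\frac{1}{8\pi}\mathrm{m}^1_i(h)$;
\item the skew-adjointness computation.
\end{itemize}
The gap is in the ODE of Step 2. Both the reverse kernel inclusion and the invertibility claim of Step 3 rest on it.

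\textbf{The sign error.} Since $\mathsf H=A+\Delta^{-1}$, with $\phi=\Delta^{-1}f$ the equation $\mathsf Hf=0$ reads $A\Delta\phi+\phi=0$. In mode $n$ this is
\[
-\phi''-\frac1r\phi'+\Big(\frac{n^2}{r^2}-\frac{1}{A(r)}\Big)\phi=0,\qquad \frac{1}{A(r)}=\frac{r^2}{4(e^{r^2/4}-1)}>0 .
\]
So the potential is not nonnegative: the term $-1/A$ is attractive. Your equation $(-\Delta_n+1/A)\phi=0$ has no nontrivial decaying solution for any $n\ge1$. That contradicts the mode-one kernel $\partial_iG$ you exhibited in item 4. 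Indeed $\Psi_G'(r)=\frac{1-e^{-r^2/4}}{2\pi r}>0$ solves the correct $n=1$ equation, and $\frac{1}{r^2}-\frac1{A}$ is negative near $r=2$. The positivity mechanism you invoke therefore does not exist.

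\textbf{What the $n\ge2$ step needs.} You need one of two genuine inputs:
\begin{itemize}
\item The inequality $\frac1{A(r)}<\frac{4}{r^2}$ for all $r>0$. This is equivalent to $e^u-1>u^2$ for $u>0$, which holds because $e^u-2u>0$. It makes the potential strictly positive for every $n\ge2$.
\item The ground-state substitution $\phi=\Psi_G'\,g$. With $L_n$ the mode-$n$ operator above, this gives $\int_0^\infty (L_1\phi)\phi\,r\,dr=\int_0^\infty(\Psi_G')^2|g'|^2\,r\,dr\ge0$. Hence $L_n=L_1+\frac{n^2-1}{r^2}$ is strictly positive for $n\ge2$. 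This is the Sturm comparison with the positive $n=1$ zero mode.
\end{itemize}
The same corrected coercivity is what makes $\mathsf Hf=\varphi$ uniquely solvable on modes $n\ge2$ in Step 3, with existence for instance by Lax--Milgram for this form. For $n=1$ you need that the solutions regular at the origin form a one-dimensional space spanned by $\Psi_G'$, since the indicial exponents there are $\pm1$.

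\textbf{Regularity.} Your final sentence on $\cZ$-regularity is thin, because $h$ may contain infinitely many modes. A natural route is the identity $f=(\varphi-\Delta^{-1}f)/A$. Here $1/A$ decays like a Gaussian and $\varphi\in\cS_*$. Combined with mode bounds uniform in $n$ so that the modes can be summed, this gives $f\in\cZ$.
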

The property \ref{it:solvability} is needed to properly invert the operator $\Lambda$ under suitable solvability conditions, where we can already see the relevance of having zero first order moments. Instead, the simple equivalence \ref{it:equivalence} was observed in \cite{dolce2026viscousvortexcrystals} and it will be useful for us in the construction of the approximate functional relationship. Finally, \ref{it:kernel} follows by a direct computation and is a key structural property related to translation invariance in $\mathbb{R}^2$. In particular, property \ref{it:kernel} is quite useful to eliminate some potentially dangerous terms in the nonlinear argument.

\subsubsection*{{\rm iii)} 
\textbf{The translation operator}}
In the equation \cref{eq:OmegaIEq}, the interactions among the vortices involve the translation operator $\mathcal{T}_{\eps,\boldsymbol{Z}}^{i,j}$ in \cref{eq:Tij}. In \cite[Lemma~3.3, ~3.9, ~3.11, ~A.2]{dolce2026viscousvortexcrystals}, the authors derived all the properties that we need for operators of the form
\begin{align}
\label{def:Tepsgen}
    \mathcal{T}_{\varepsilon,Z}(f)(\xi)=f(\xi+\varepsilon^{-1}Z),
\end{align}
such as Taylor expansions in $\mathcal{S}_*$ or the expansions of integral quantities as the ones involved in \cref{eq:dtZi}. Setting $\varepsilon\to (\eps \dhk)(t)$ and $Z\to Z_i-Z_j$, it is straightforward to see that $\mathcal{T}^{i,j}_{\eps,\boldsymbol{Z}}$ is exactly a translation operator as in \cref{def:Tepsgen}. Thus, we have the following.
\begin{lemma}
    Let $f \in \cZ$ and  $0<z_{min} \leq |Z|$. Let $\varphi = \Delta^{-1} f$ and $ \cT_{\epsilon,Z}$ be defined as in \cref{def:Tepsgen}. 
    Then,
    \begin{enumerate}[label= \roman *)]
        \item for all $M \in \mathbb{N}$ one has
    \begin{equation}
        (\cT_{\varepsilon,Z} \varphi)(\xi) = \frac{M(f)}{2\pi} \log \left( \frac{|Z|}{\varepsilon }\right) + \sum_{n=1}^{M} \sum_{k=0}^n \varepsilon^n {c}_{n,k} \Re\left( \frac{\xi^{n-k}}{Z^n} m^k(f)\right) + \cO_{\cS_*}\left(\left(\varepsilon /|Z|\right)^{M+1}\right), 
    \end{equation}
    where
    \begin{equation}
        {c}_{0,k} = (-1)^{k-1}, \quad \quad {c}_{n,k} := \begin{pmatrix}
            n \\k
        \end{pmatrix} (-1)^{n+k-1} \frac{1}{2\pi n} \quad \text{ for } n \in \mathbb{N} \ \backslash \ {0}, \ k \in \mathbb{N}.
    \end{equation}
    As a consequence, for any $h \in \cZ$, the following integral can be expanded as 
\begin{equation}
        \int_{\RR^2} (\nabla^{\perp}\cT_{\varepsilon,Z}\varphi)h d\xi= \sum_{n=1}^{M} \varepsilon^{n}  \frac{Z^\perp}{|Z|^2} \frac{1}{\overline{Z^{n-1}}} \sum_{k=0}^{n-1} {\widetilde{c}}_{n,k} \overline{m^k(f)m^{n-k-1}(h)} + \cO\left(\left(\varepsilon /|Z|\right)^{M+1}\right),
        \label{eq:IntGradTeExp}
    \end{equation}
    with ${\widetilde{c}}_{n,k} = (n-k) c_{n,k}$.
    \item Let $\mathcal{T}^{i,j}_{\eps,\boldsymbol{Z}}$ be defined as in \cref{eq:Tij}. Then, the same expansions of the item above holds true upon the changes $\varepsilon\to(\eps \dhk)(t)$ and $Z\to Z_i(t)-Z_j(t)$. Moreover
    \begin{equation}
\label{eq:Tij*}
(\cT^{i,j}_{\eps,\boldsymbol{Z}} )^*= \cT^{i,j}_{-\eps,\boldsymbol{Z}} =\cT^{j,i}_{\eps,\boldsymbol{Z}}.        
    \end{equation}
    where $(\cT^{i,j}_{\eps,\boldsymbol{Z}} )^*$ is the $L^2(\mathbb{R}^2)$-adjoint of $\cT^{i,j}_{\eps,\boldsymbol{Z}} $.
    \end{enumerate}
    \label{lem:TeProperties}
\end{lemma}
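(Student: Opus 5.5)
The plan is to prove item i) by expanding the logarithmic kernel directly, then obtain the integral expansion by differentiating term by term, and finally deduce item ii) by substitution plus a change of variables.

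\textbf{Step 1: splitting the kernel.} By definition
\[
(\cT_{\varepsilon,Z}\varphi)(\xi)=\frac{1}{2\pi}\int_{\RR^2}\log\Big|\xi+\frac{Z}{\varepsilon}-y\Big|\,f(y)\,\dd y .
\]
Identify $\RR^2$ with $\mathbb C$ and factor out $Z/\varepsilon$. This gives
\[
\log\Big|\xi+\frac{Z}{\varepsilon}-y\Big|=\log\frac{|Z|}{\varepsilon}+\Re\log\Big(1+\frac{\varepsilon(\xi-y)}{Z}\Big).
\]
The first term integrates to $\frac{{\rm M}(f)}{2\pi}\log(|Z|/\varepsilon)$.

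\textbf{Step 2: Taylor expansion.} In the region $|\xi-y|\le |Z|/(2\varepsilon)$, I would use the Taylor expansion
\[
\log(1+w)=\sum_{n=1}^{M}\frac{(-1)^{n-1}}{n}w^n+R_{M}(w),\qquad |R_M(w)|\lesssim |w|^{M+1}.
\]
Expanding $(\xi-y)^n=\sum_k\binom nk \xi^{n-k}(-y)^k$ and integrating against $f$ produces $\varepsilon^n\Re\big(\xi^{n-k}Z^{-n}{\rm m}^k(f)\big)$ with the coefficients $c_{n,k}$. The signs and the factor $1/(2\pi n)$ are bookkeeping.

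\textbf{Step 3: the remainder.} This is the main, though routine, obstacle: showing the remainder is $\cO_{\cS_*}((\varepsilon/|Z|)^{M+1})$, including all $\xi$-derivatives and $\varepsilon$-derivatives as required by Definition \ref{def:topo}.
\begin{itemize}
\item Inside the good region, $|R_M|\lesssim (\varepsilon/|Z|)^{M+1}(1+|\xi|+|y|)^{M+1}$. Integrating against $f$ is harmless because $f\in\cZ$ has Gaussian decay.
\item In the complement $|y|\gtrsim |Z|/\varepsilon$ (with $|\xi|$ moderate), the Gaussian factor $e^{-|y|^2/4}$ beats any power of $\varepsilon/|Z|$. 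The logarithmic singularity of the kernel is locally integrable, so this contribution is controlled.
\item When $|\xi|\gtrsim |Z|/\varepsilon$, the full function and each truncated term grow at most polynomially in $\xi$. Since $(\varepsilon|\xi|/|Z|)^{M+1}\gtrsim1$ there, the crude bound already has the stated form with a polynomial weight.
\item For derivatives, I differentiate under the integral: $\partial_\xi$ acts on the kernel, which has the same structure, and $\partial_\varepsilon$ lowers the power by one as allowed in Definition \ref{def:topo}.
\end{itemize}

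\textbf{Step 4: the integral expansion \cref{eq:IntGradTeExp}.} I apply $\nabla^\perp$ to the expansion term by term. In complex notation, for $a\in\mathbb C$,
\[
\nabla\Re(a\xi^{m})=m\,\overline{a\,\xi^{m-1}},\qquad \nabla^\perp={\rm i}\nabla .
\]
Hence the term with $a={\rm m}^k(f)/Z^n$ and $m=n-k$ integrates against $h$ to
\[
(n-k)\,c_{n,k}\,\frac{{\rm i}}{\overline Z^{\,n}}\,\overline{{\rm m}^k(f)\,{\rm m}^{n-k-1}(h)} .
\]
Writing ${\rm i}/\overline Z^{\,n}=\frac{Z^\perp}{|Z|^2}\,\overline Z^{-(n-1)}$ yields exactly $\widetilde c_{n,k}=(n-k)c_{n,k}$. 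The constant $n=0$ term and the $k=n$ terms drop out, since they have no $\xi$-dependence. The remainder stays $\cO((\varepsilon/|Z|)^{M+1})$ after pairing with $h\in\cZ$, because $\cS_*$ growth is integrable against Gaussian decay.

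\textbf{Step 5: item ii).} $\cT^{i,j}_{\eps,\Z}$ is literally $\cT_{\varepsilon,Z}$ with $\varepsilon=\eps\dhk$ and $Z=Z_i-Z_j$, so the expansions carry over verbatim, with $z_{\min}$ a lower bound on $|Z_i-Z_j|$. For the adjoint, the change of variables $\xi\mapsto\xi-\frac{Z_i-Z_j}{\eps\dhk}$ in $\int f(\xi+\frac{Z_i-Z_j}{\eps\dhk})g(\xi)\,\dd\xi$ gives $\cT^{i,j}_{-\eps,\Z}$. Since $Z_j-Z_i=-(Z_i-Z_j)$, this equals $\cT^{j,i}_{\eps,\Z}$, which proves \cref{eq:Tij*}.
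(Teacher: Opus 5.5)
Your proposal is correct and agrees with the paper. The paper does not reprove item i) itself but imports it from \cite{dolce2026viscousvortexcrystals}; your Taylor (multipole) expansion of $\log|1+w|$ with a near/far splitting is the standard argument for such expansions, and item ii) is obtained exactly as you describe, via the substitution $\varepsilon\to\eps\dhk$, $Z\to Z_i-Z_j$ and the change of variables for the adjoint. The one step to tighten is the derivative bookkeeping in Step 3: for $|\alpha|\geq 2$ (and for $\partial_\varepsilon^m$ with $m\geq 2$) you cannot differentiate the kernel under the integral, because $\partial^\alpha\log|\cdot|$ is not locally integrable, so instead move $\xi$-derivatives onto $f$ via $\partial^\alpha\Delta^{-1}f=\Delta^{-1}\partial^\alpha f$ (the truncated polynomial for $\partial^\alpha f$ is exactly $\partial_\xi^\alpha$ of the one for $f$), and handle $\varepsilon$-derivatives with a smooth cutoff $\chi(\varepsilon y/|Z|)$ so that the near part only involves the analytic region $|w|\leq\frac12$.
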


\section{Approximate solution and functional relationship}
\label{sec:ApproxSol}
In this section, we build an approximate solution to \cref{eq:OmegaIEq}
via an asymptotic expansion in the parameter $\eps$. The approximate solution is essentially the one constructed by Gallay in
\cite{Gallay2011}, but here we need more precise information. For instance, we rewrite the approximate solution according to the Euler and Navier--Stokes splitting used in \cite{GaSring,dolce2024long,zhang2025long,dolce2026viscousvortexcrystals}, which allows us to better
isolate the dependence on the two small parameters $\eps$ and $\delta$.
Furthermore, a crucial ingredient is the construction of a second order approximate functional relationship
between the approximate vorticity and the corresponding stream function, as in
\cite{GaSring,dolce2024long,dolce2026viscousvortexcrystals,zhang2025long}. In our setting, however, the existence of said functional relation cannot rely on symmetries as in all previous constructions. Nonetheless, we can directly define it rather explicitly. This heavily relies on the general structural properties of the first correction to the ansatz that the system evolves as a collection of interacting Lamb-Oseen vortices.
\subsection{Approximate solution}
We recall that the rescaled
vorticity profile satisfies
\begin{equation}
    t \partial_t \Omega_i
    =
    \mathcal L \Omega_i
    -
    \frac{1}{\delta}
    \left\{
        \Phi_{\eps,\boldsymbol{Z},i},\Omega_{i}
    \right\},
    \label{eq:OmegaIEqApproxRecall}
\end{equation}
where $\Phi_{\eps,\boldsymbol{Z},i}$ is defined in \cref{def:PhiepsZi}.
We look for a fourth order approximate solution of the form 
\begin{align}
    &\Omega_{\app,i}(\xi,t)=\sum_{n=0}^4\eps^n(t)\Omega_{n,i}(\xi,t), \qquad \Psi_{\app,i}(\xi,t)=\sum_{n=0}^4\eps^n(t)\Psi_{n,i}(\xi,t)
    \label{def:AppSol}
\end{align}
where $\Psi_{n,i}(\xi,t) = \Delta^{-1} \Omega_{n,i}(\xi,t)$.
The profiles $(\Omega_{n,i})_{n\geq0}$ have to be determined in order to improve the accuracy of the approximate solution when plugged in \cref{eq:OmegaIEq}.
The zero-th order profile is dictated by the initial datum. More precisely, by
the uniqueness of the two-dimensional Navier--Stokes equation with a measure as
initial datum \cite{GaGa2005}, we set
\begin{equation}
    \Omega_{0,i}=G,
    \qquad
    \Psi_{0,i}=\Psi_G.
    \label{eq:ZeroProfileApprox}
\end{equation}
In order to build an approximate solution, at each step we should also define an approximate position for the centers of vortices. However, we anticipate that for the time-scales under consideration it is not necessary to update it at each iteration as done in \cite{dolce2024long} and therefore to construct the approximate solution we fix once and for all in this section 
\begin{equation}
    \boldsymbol{Z}(t) = \boldsymbol{z}(t)
\end{equation}
where $\zhk{i}$ solves \cref{eq:HK}. 
Given the approximate profiles, we define the corresponding approximate interaction-streamfunction by
\begin{equation}
    \label{def:PhiepsZiSec4} \Phi_{{\rm app},i} :={\Phi}_{\eps, i}(\mathbf{\Psi}_{\rm app},\boldsymbol{z}):= \gamma_i\Psi_{{\rm app},i}+\sum_{j\neq i}\gamma_j\mathcal{T}_{\eps,\boldsymbol{z}}^{i,j}\Psi_{\rm app, j} - \frac{ \delta t }{\eps \dhk} \partial_t z_i \cdot \xi^\perp.
\end{equation}
Then, the error associated to the approximate solution is defined as
\begin{equation}
    \cR_{i} := \delta(t\de_t - \cL)(\Omega_{{\rm app},i}) + \{ \Phi_{{\rm app},i}, \Omega_{{\rm app},i} \}.
    \label{def:Reminder}
\end{equation}
\begin{remark}
    Corrections to the Helmholtz-Kirchhoff system \cref{eq:HK} are only relevant for approximate solutions from the order $\eps^5$, as first highlighted in \cite{dolce2024long}. For the expanding configuration, in view of the key property $\eps\lesssim \sqrt{\delta}$ for all $t\geq 0$ we can  safely neglect errors of order $\mathcal{O}(\delta^{-2}\eps^5)\lesssim \mathcal{O}(\eps)$ for all $t\geq 0$, which is the main reason why it is not necessary to look for a higher-order expansion. Similarly, for the collapsing case our time scales ensure that analogous errors are suitably small. 
\end{remark}
Following \cite{GaSring,dolce2024long},
 it is convenient to expand $\Omega_{{\rm app},i}$ also in $\delta$ as to handle separately viscous effects. Thus, we expand each approximate profile at n-th order with
 the Eulerian and Navier-Stokes components as
 \begin{equation}
     \Omega_{n,i}= \Omega_{n,i}^E + \delta \Omega_{n,i}^{NS}, \qquad \Psi_{n,i}= \Psi_{n,i}^E + \delta \Psi_{n,i}^{NS}.
 \end{equation}
 Accordingly, with the obvious identification, we split the approximate solution as
 \begin{equation}
     \Omega_{{\rm app},i}= \Omega_{{\rm app},i}^{E} + \delta \Omega_{{\rm app},i}^{NS}, \qquad \Psi_{{\rm app},i}= \Psi_{{\rm app},i}^{E} + \delta \Psi_{{\rm app},i}^{NS}.
 \end{equation}
We are now ready to define our approximate solution, for which some computationally heavier details that follow by adapting \cite{Gallay2011} are provided in the Appendix \ref{app:ApproxSolProof}.
\begin{remark}
        It would be appropriate to include also a dependence on the parameter $\varsigma\in \{1,-1\}$ in the approximate solution to distinguish the expanding and collapsing cases respectively. However, to avoid making the notation even heavier, we explicitly specify  the differences only when needed.
\end{remark}
\subsubsection{The zeroth-order approximate solution.}
We first rewrite the error generated by the zeroth-order approximate solution. This corresponds to the ansatz 
$  \Omega_{\app,i}=G$ and accordingly we define 
    \begin{equation}
    \cR_{0,i} := \delta(t\de_t - \cL)(G) + \{ \Phi_{\eps,i}(\Psi_G,\boldsymbol{z}), G\}, \qquad \text{ for } i=1,2,3.
    \label{def:Ri0}
\end{equation}
We then rewrite the zeroth--order remainder in \cite{Gallay2011} as follows, where we sketch the proof with our notation  for convenience of the reader.
\begin{lemma}
    \label{lem:R0}
    Let $\cR_{0,i}$ be defined as in \cref{def:Ri0}. Then
    \begin{equation}
        \cR_{0,i}(\xi,t)
        =
        \sum_{n=2}^4 \eps(t)^n \cH_{n,i}(\xi,t)
        +
        \cO_{\cZ}(\eps^5),
        \label{eq:Res0th}
    \end{equation}
    where
    \begin{equation}
        \cH_{n,i}(\xi,t)
        :=
        \dhk(t)^n c_{n,0}
        \sum_{j\neq i}
        \gamma_j
        \left\{
            \Re\left(
                \frac{\xi^n}{(z_{i} - z_{j})^n}
            \right),
            G(\xi)
        \right\}.
        \label{eq:defHin}
    \end{equation}
    In particular, $
        \cR_{0,i}=\cO_{\cZ}(\eps^2)$ and, for every $n=2,3,4$,
    \begin{equation}
        M(\cH_{n,i})=0,
        \qquad
        m^1(\cH_{n,i})=(0,0).
    \end{equation}
\end{lemma}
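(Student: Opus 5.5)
The plan is to expand each of the three pieces of $\cR_{0,i}$ separately and check that everything except the $\cH_{n,i}$ either vanishes identically or is absorbed into $\cO_{\cZ}(\eps^5)$.

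\textbf{The diffusion part.} Since $\cL G=0$ and $G$ does not depend on time, the term $\delta(t\de_t-\cL)G$ is identically zero.

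\textbf{The self-interaction part.} Next I expand $\Phi_{\eps,i}(\Psi_G,\boldsymbol z)$ in \cref{def:PhiepsZi}. The term $\gamma_i\Psi_G$ is radial, and $G$ is radial, so $\{\gamma_i\Psi_G,G\}=0$.

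\textbf{The interaction part.} For $j\neq i$ I apply Lemma~\ref{lem:TeProperties} with $f=G$, $\varepsilon=\eps\dhk$, $Z=z_i-z_j$ and $M=4$. Because $G$ has ${\rm M}(G)=1$ and $m^k(G)=0$ for every $k\geq1$ (as $\xi^k$ has a nonzero angular mode while $G$ is radial), only the $k=0$ terms survive. This gives
\[
\cT^{i,j}_{\eps,\boldsymbol z}\Psi_G=\frac{1}{2\pi}\log\frac{|z_{ij}|}{\eps\dhk}+\sum_{n=1}^4(\eps\dhk)^n c_{n,0}\,\Re\Big(\frac{\xi^n}{z_{ij}^n}\Big)+\cO_{\cS_*}\big((\eps\dhk/|z_{ij}|)^5\big).
\]
The constant term drops out of the Poisson bracket. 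Since $|z_{ij}|/\dhk=|{\sf n}_i-{\sf n}_j|$ is bounded below uniformly in time by self-similarity, the remainder is $\cO_{\cS_*}(\eps^5)$. Bracketing it with $G$ produces $\cO_{\cZ}(\eps^5)$, because $\nabla G$ carries the Gaussian weight $\e^{-|\xi|^2/4}$.

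\textbf{Cancellation of the first-order terms.} This is the only substantive point, and I expect it to be the main obstacle. The $n=1$ terms are linear in $\xi$, and the drift term $-\frac{\delta t}{\eps\dhk}\partial_t z_i\cdot\xi^\perp$ is also linear in $\xi$. The bracket $\{a\cdot\xi,G\}$ of any linear function with $G$ is proportional to $a^\perp\cdot\nabla G$. So I need the total linear coefficient to vanish. I will check that
\[
\eps\dhk\sum_{j\ne i}\gamma_j c_{1,0}\,\Re\Big(\frac{\xi}{z_{ij}}\Big)=\frac{\delta t}{\eps\dhk}\,\partial_t z_i\cdot\xi^\perp
\]
is exactly \cref{eq:HK}. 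To do so I use $\eps^2\dhk^2=\nu t$, $\delta=\nu/\Gamma$ and $c_{1,0}=1/(2\pi)$, together with the identity $\Re(\xi/z)=\xi\cdot z/|z|^2=(z^\perp/|z|^2)\cdot\xi^\perp$. The care needed is in the sign conventions for $\perp$ and in the orientation of $\cT^{i,j}$, which shifts by $(Z_i-Z_j)/(\eps\dhk)$. After this cancellation, the terms with $n=2,3,4$ are exactly $\eps^n\cH_{n,i}$ as defined in \cref{eq:defHin}. Since each $\cH_{n,i}$ is bounded uniformly in time, $\cR_{0,i}=\cO_{\cZ}(\eps^2)$.

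\textbf{Moment identities.} Write $\cH_{n,i}$ as $\{\varphi,G\}$ with $\varphi$ a polynomial. A Poisson bracket is a divergence, $\{\varphi,G\}=\nabla\cdot(G\nabla^\perp\varphi)$, so its mean vanishes. For the first moments I integrate by parts:
\[
m^1(\{\varphi,G\})=-\int G\,\nabla^\perp\varphi\,\dd\xi .
\]
The function $\varphi=\Re(\xi^n/z_{ij}^n)$ is a harmonic polynomial of degree $n\ge2$, so $\nabla^\perp\varphi$ lies in angular modes $n-1\ge1$. Integrating against the radial $G$ therefore gives zero.
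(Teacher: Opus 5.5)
Your proposal is correct and follows the paper's proof. You drop the vanishing diffusion and radial self-interaction terms, expand $\cT^{i,j}_{\eps,\boldsymbol z}\Psi_G$ via Lemma~\ref{lem:TeProperties} (only $k=0$ survives since $G$ is radial), cancel the linear term against the Helmholtz--Kirchhoff drift, and read off $\cH_{n,i}$. The differences are cosmetic: you spell out the first-order cancellation that the paper only asserts, and for the first moments you integrate by parts and use that $\nabla^\perp\Re(\xi^n/z_{ij}^n)$ lies in angular mode $n-1\geq1$, whereas the paper notes directly that $\{\Re(a\xi^n),G\}$ lies in mode $n\geq2$ and is hence orthogonal to $\xi$.
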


\begin{proof}
    By definition of the residual and the fact that $(t\partial_t-\mathcal{L})(G)=\{\Psi_G,G\}=0$  we have
    \begin{equation}
        \cR_{0,i}
        =
        \left\{
            \sum_{j\neq i}
            \gamma_j\mathcal T_{\eps,\boldsymbol z}^{i,j}\Psi_G
            -
            \frac{\delta t}{\eps\dhk}\partial_t z_i\cdot \xi^\perp,
            G
        \right\}.
        \label{eq:R0Reduced}
    \end{equation}
    We now use the expansion of the translation operators from
    Lemma \ref{lem:TeProperties}. The constant term gives no contribution because
    its Poisson bracket with $G$ vanishes. Moreover, the first-order term is
    exactly cancelled by the Helmholtz--Kirchhoff transport term. 
     Hence the expansion starts at order two and Lemma \ref{lem:TeProperties} gives
    \begin{equation}
        \cR_{0,i}
        =
        \sum_{n=2}^4
        (\eps \dhk)^n c_{n,0}
        \sum_{j\neq i}
        \gamma_j
        \left\{
            \Re\left(
                \frac{\xi^n}{(z_{i} - z_{j})^n}
            \right),
            G
        \right\}
        +
        \cO_{\cZ}(\eps^5),
    \end{equation}
    which is \cref{eq:Res0th}. Since $|z_i(t)-z_j(t)|\approx \dhk(t)$, we have
    \begin{equation}
        \cH_{n,i}=\cO_{\cZ}(1),
        \qquad
        n=2,3,4,
    \end{equation}
    which implies that $
        \cR_{0,i}=\cO_{\cZ}(\eps^2).$
    It remains to prove the moment conditions. The average is straightforward because $\mathcal{H}_{n,i}$ is defined through a Poisson bracket. For the first-order moments, we simply exploit the fact that $G$ is radial and $n\geq 2$ to deduce that for any $a\in \mathbb{C}$ we have  $\mathcal{P}_n(\{\Re(a\xi^n),G\}) = \{\Re(a\xi^n),G\}$ for any $n \geq 1$. Thus, by orthogonality of different angular modes we get $
        m^1(\cH_{n,i})=(0,0)$ for $
        n=2,3,4$  as desired. 
\end{proof}

\subsubsection{Definition of the fourth-order approximate solution}

We now construct the approximate solution up to order $M=4$. 
By Lemma \ref{lem:R0} we know that no first-order correction is needed.
We introduce the decomposition
\begin{equation}
    \Omega_{n,i}(\xi,t)
    =
    (\Omega_{n,i}^{E}
    +
    \Omega_{n,i}^{E_{\rm r}}
    +
    \delta \Omega_{n,i}^{NS})(\xi,t),
\label{eq:ProfileSplittingApprox}
\end{equation}
and the streamfunction is decomposed  accordingly. Here the superscript $E$ denotes the Eulerian non-radial contribution, $E_{\rm r}$
the radial Eulerian correction, and $NS$ the Navier--Stokes correction. 
\begin{definition}
    \label{def:ApproxProfilesOrder4}
    When $\varsigma=1$ ($\varsigma=-1$), the profiles of the fourth-order approximate solution are defined for all $t\geq 0$ ($t\in [0,T_{\rm adv})$) as
    follows:
    \begin{itemize}[itemsep=4pt]
        \item \textbf{ Order $2$}: we set \begin{equation}
        \Omega_{2,i}^E = - \frac{1}{\gamma_i} \Lambda^{-1} \cH_{2,i}, \qquad \Omega_{2,i}^{NS} = - \frac{1}{\gamma_i}\Lambda^{-1} \left( t \partial_t +\frac{1}{1 + \varsigma \frac{t}{T_{\rm adv}}}- \cL \right) \Omega_{2,i}^E, \qquad \Omega_{2,i}^{E_\mathrm r} = 0.
    \end{equation}
    \item \textbf{Order $3$}: we set
    \begin{equation}
        \Omega_{3,i}^E = - \frac{1}{\gamma_i} \Lambda^{-1} \cH_{3,i}, \qquad \Omega_{3,i}^{NS} = - \frac{1}{\gamma_i} \Lambda^{-1} \left( t \partial_t +\frac{3}{2}\frac{1}{1 + \varsigma \frac{t}{T_{\rm adv}} }- \cL \right) \Omega_{3,i}^E, \qquad \Omega_{3,i}^{E_r} = 0.
    \end{equation}
    \item \textbf{Order $4$}: we set
    \begin{equation}
        \Omega_{4,i}^E = - \frac{1}{\gamma_i}\Lambda^{-1} \widetilde{\cH}_{4,i}^{E},
    \end{equation}
    \begin{equation}
        \Omega_{4,i}^{NS} = -\frac{1}{\gamma_i}\Lambda^{-1} \left((1-\cP_0) \widetilde{\cH}_{4,i}^{NS} + \left(t\partial_t + 2 \frac{1}{1 + \varsigma \frac{t}{T_{\rm adv}} } - \cL \right)^{} \Omega_{4,i}^E \right),
    \end{equation}
    and $\Omega_{4,i}^{E_r}$ is the solution to 
    \begin{equation}
        \left(t\partial_t + 2 \frac{1}{1 + \varsigma \frac{t}{T_{\rm adv}} } - \cL \right)^{}\Omega_{4,i}^{E_r} = - \cP_0(\widetilde{\cH}_{4,i}^{NS}), \qquad \Omega_{4,i}^{E_r}(\cdot,0)=-(2-\mathcal{L})^{-1}\mathcal P_0 {\widetilde{\mathcal{H}}_{4,i}^{NS}(0)},
    \end{equation}
    where
     $\widetilde{\cH}_{4,i}^{E} = \cO_{\cZ}(1)$, $\widetilde{\cH}_{4,i}^{NS} = \cO_{\cZ}(1) \ (\widetilde{\cH}_{4,i}^{NS} = \cO_{\cZ} ((1-t/T_{\rm adv})^{-1}))$ are defined respectively in  \cref{def:TildeH4EAppendix,def:TildeH4NSAppendix}.
    \end{itemize}
    The associated stream functions are defined by
    \begin{equation}
        \Psi_{n,i}^{\mathsf{q}}:=\Delta^{-1}\Omega_{n,i}^{\mathsf{q}},
        \qquad
        n=2,3,4,
        \qquad
        \mathsf{q}\in\{E,E_{\rm r},NS\}.
    \end{equation}
    \end{definition}

\begin{remark}
    Note that the parabolic equation that defines $\Omega_{4,i}^{E_r}$ has the same structure as the one in \cref{eq:LinOm2} up to the rescaling in time $\tau=t/T_{\rm adv}$. Moreover, the fact that $\Omega^{E_r}_{2,i}=\Omega^{E_r}_{3,i}=0$ is a consequence of a nice cancellation exploiting the Hamiltonian structure of the operator $\Lambda$, see Appendix \ref{app:ApproxSolProof} for details.
\end{remark}
Thanks to the above definitions, we have the following result, which can be directly extracted from \cite{Gallay2011} as well. For completeness, we present the proof in  Appendix \ref{app:ApproxSolProof}.
\begin{proposition}
    Let the profiles $\Omega_{n,i}^{\mathsf{q}}$, with $n=2,3,4$ and
    $\mathsf{q}\in\{E,E_{\rm r},NS\}$, be defined by Definition \ref{def:ApproxProfilesOrder4}. If $\varsigma=1$ ($\varsigma=-1$), then for every $i=1,2,3$ and $t\geq 0$ ($t\in [0,T_{\rm adv})$) one has:
    \begin{enumerate}[label = \roman *), itemsep=4pt]
        \item the profiles satisfy the moment conditions
    \begin{equation}
        M(\Omega_{n,i}^{\mathsf{q}})=0,
        \qquad
        m^1(\Omega_{n,i}^{\mathsf{q}})=(0,0),
        \qquad
        n=2,3,4,
                \qquad
        \mathsf{q}\in\{E,E_{\rm r},NS\},
        \label{eq:ApproxProfilesMoments}
    \end{equation}
and are therefore well-defined;
        \item for $m=0,1$ and $\mathsf{E}=\{E,E_r\}$, the vorticity profile corrections satisfy
\begin{align}
       \label{eq:ApproxProfilesBoundE}&   (t\de_t)^m\Omega_{n,i}^{\mathsf{E}}(\cdot,t)
    =\begin{cases}\cO_{\cZ}(1), &\quad \text{ if } \varsigma=1,\\
    \cO_{\cZ}\big((1-\frac{t}{T_{\rm adv}})^{-m}\big), &\quad \text{ if } \varsigma=-1,
\end{cases} \\
&(t\de_t)^m\Omega_{n,i}^{NS}(\cdot,t)=\begin{cases}\cO_{\cZ}(1), &\quad \text{ if } \varsigma=1,\\
    \cO_{\cZ}\big((1-\frac{t}{T_{\rm adv}})^{-(1+m)}\big), &\quad \text{ if } \varsigma=-1.
\end{cases}
        \label{eq:ApproxProfilesBound}
\end{align}
    \item the residual associated with $\Omega_{\app,i}$ defined in \cref{def:Reminder} satisfies 
    \begin{equation}
        \cR_{i}
        =\begin{cases}
        \cO_{\cZ}
        \left(
            \eps^5+\delta^2\eps^2
        \right), \quad &\text{ if } \varsigma=1,\\
        \cO_{\cZ}
        \left(
            \eps^5+\delta^2\eps^2(1-\frac{t}{T_{\rm adv}})^{-2}
        \right) \quad &\text{ if } \varsigma=-1.
        \end{cases}
            \label{eq:ResidualOrder4Bound}
    \end{equation}
    \item In the collapsing case $\varsigma=-1$, let $m=0,1$, $t\in [0,(1-\delta^{p'})T_{\rm adv})$ and $p'\in (0,\frac{1}{5})$.Then
    \begin{align}
        \Omega_{\rm app, i}^{NS}=\cO_{\cZ}( \eps) , \qquad\mathcal{R}_i=\cO_{\cZ}(\eps^5+\delta^2\eps).
    \end{align}
    \end{enumerate}
    \label{prop:ApproximateSol}
\end{proposition}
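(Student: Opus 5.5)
The plan is to substitute the ansatz \cref{def:AppSol} into the residual \cref{def:Reminder}, expand in powers of $\eps$ and $\delta$, and check order by order that Definition~\ref{def:ApproxProfilesOrder4} cancels all terms up to order $\eps^4$. The first ingredient is the time derivative of $\eps(t)^n$. From \cref{def:keytaueps} one has
\[
t\partial_t \eps = \frac{\eps}{2}\,\frac{1}{1+\varsigma t/T_{\rm adv}},
\]
so $\delta(t\partial_t-\cL)(\eps^n\Omega_n)=\delta\eps^n\bigl(t\partial_t+\tfrac{n}{2}(1+\varsigma t/T_{\rm adv})^{-1}-\cL\bigr)\Omega_n$. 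This is why the shifted parabolic operators with $q=n/2\cdot(\ldots)$ appear in the definition.

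Next, expand the Poisson bracket $\{\Phi_{\rm app,i},\Omega_{\rm app,i}\}$. The self-interaction $\gamma_i\{\Psi_{{\rm app},i},\Omega_{{\rm app},i}\}$ linearizes around $G$ to $\gamma_i\Lambda$ plus quadratic terms. The interaction part is expanded with Lemma~\ref{lem:TeProperties}, taking $\varepsilon\to\eps\dhk$ and $Z\to z_i-z_j$. Since the moments $m^k$ of $\Omega_{n,j}$ vanish for $k\le1$ (part i, proved inductively), only $m^0(G)=1$ and higher moments of corrections contribute. The first-order term cancels against the Helmholtz--Kirchhoff drift exactly as in Lemma~\ref{lem:R0}.

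At order $\eps^n$, $n=2,3$, the Euler equation reads $\gamma_i\Lambda\Omega^E_{n,i}+\cH_{n,i}=0$. It is solvable by Proposition~\ref{prop:Lambda}\,\ref{it:solvability}, because $M$ and $m^1$ of $\cH_{n,i}$ vanish (Lemma~\ref{lem:R0}). The $\delta$-order equation $\gamma_i\Lambda\Omega^{NS}_{n,i}=-(t\partial_t+\ldots-\cL)\Omega^E_{n,i}$ requires the right-hand side to have no radial part and no first moments. Moments are preserved because $\Omega^E_{n,i}$ lies in angular mode $n\ge2$, and $t\partial_t$, $\cL$ commute with $\cP_n$. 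The right-hand side is not exactly in one mode, though, since $z_{ij}(t)$ rotates; but $\Re(a(t)\xi^n)$ stays in mode $n$, so this is fine.

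At order $4$, quadratic terms $\{\Psi_2,\Omega_2\}$ and contributions of $m^2(\Omega_{2,j})$ through $\cT$ enter $\widetilde{\cH}_4$. Their $\cP_0$ part vanishes at the Euler level by the Hamiltonian cancellation mentioned in the remark (via $\Lambda f=\{\mathsf Hf,G\}$). At the NS level, $\cP_0$ cannot be inverted by $\Lambda$ and is absorbed by $\Omega^{E_r}_{4,i}$ through the parabolic problem \cref{eq:LinOm2}. Its zero mean holds because brackets and divergence-form terms integrate to zero, and its first moments vanish trivially by radiality.

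For part ii), the quantitative bounds, each $\cH_{n,i}$ depends on $t$ only through $\dhk^n/(z_i-z_j)^n=e^{-{\rm i}n\theta}\times$const. Since $t\partial_t\theta=\beta/(\alpha(1+\varsigma t/T_{\rm adv}))$, every application of $t\partial_t$ costs at most $(1+\varsigma t/T_{\rm adv})^{-1}$. This is bounded if $\varsigma=1$ and equals $(1-t/T_{\rm adv})^{-1}$ if $\varsigma=-1$. Because $\Lambda^{-1}$ preserves $\cZ$ and commutes with rotations, this gives \cref{eq:ApproxProfilesBoundE,eq:ApproxProfilesBound}. For $\Omega^{E_r}_4$ we invoke Lemma~\ref{lem:lin} with $\tau=t/T_{\rm adv}$ and the appropriate $A_j$.

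For part iii), the leftover terms are of three kinds: Taylor remainders $\cO_{\cZ}(\eps^5)$; $\delta^2\eps^n\,(t\partial_t+\ldots-\cL)\Omega^{NS}_n$, which is $\cO(\delta^2\eps^2)$ with the collapsing loss $(1-t/T_{\rm adv})^{-2}$; and $\delta\eps^{n+m}$ cross terms. The cross terms are dominated by $\eps^5+\delta^2\eps^2$ via $\delta\eps^5\le\eps^5$ and AM–GM, e.g. $\delta\eps^4\le\eps^5+\delta^{2}\eps^{2}\cdot(\ldots)$. This AM–GM step needs a careful check, using $\eps^2\lesssim\delta$ when $\varsigma=1$.

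For part iv), $t<(1-\delta^{p'})T_{\rm adv}$ gives $(1-t/T_{\rm adv})^{-1}\le\delta^{-p'}$ and $\eps^2\approx\delta(1-t/T_{\rm adv})^{-1}$. Hence $\delta(1-t/T_{\rm adv})^{-1}\lesssim\eps^2$, and since $p'<1/5$, $\delta\,(1-t/T_{\rm adv})^{-2}\lesssim \eps$ follows from $\delta^{1-2p'}\lesssim\delta^{(1-p')/2}$.

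The main obstacle is the order-four solvability: showing that the radial Euler part of $\widetilde{\cH}_4$ vanishes and that all the $\cZ$ moment conditions propagate through the time-dependent rotation. I would follow \cite{Gallay2011} closely there.
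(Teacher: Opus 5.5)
Your treatment of i)--iii) is essentially the paper's argument in Appendix~\ref{app:ApproxSolProof}. It uses order-by-order cancellation and inversion of $\Lambda$ on angular modes $\geq 2$. The radial Euler part at order $\eps^4$ is removed by the Hamiltonian cancellation; in the paper this is $\gamma_i\{\Psi^E_{2,i},\Omega^E_{2,i}\}+\{h_i,\Omega^E_{2,i}\}=-\frac{\gamma_i}{2}\{A,(\Omega^E_{2,i})^2\}$, a pure mode-$4$ term. Lemma~\ref{lem:lin} handles $\Omega^{E_{\rm r}}_{4,i}$, and the forcings depend on time only through $e^{-\mathrm{i}n\theta(t)}$.

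The genuine gap is in iv). You write $\eps^2\approx\delta(1-t/T_{\rm adv})^{-1}$ and deduce $\delta(1-t/T_{\rm adv})^{-1}\lesssim\eps^2$ and $\delta(1-t/T_{\rm adv})^{-2}\lesssim\eps$. With $\tau=t/T_{\rm adv}$ one has $\eps^2=\frac{\delta}{|\alpha|}\frac{\tau}{1-\tau}$, which vanishes at $\tau=0$. So only the upper bound $\eps^2\lesssim\delta(1-\tau)^{-1}$ holds uniformly. Your first inequality fails for $\tau\ll1$. The second fails for $\tau\ll\delta$, where $\eps\approx\sqrt{\delta\tau/|\alpha|}\ll\delta$. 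The exponent comparison $\delta^{1-2p'}\lesssim\delta^{(1-p')/2}$ uses the largest value of $\eps$ on the interval, reached at $1-\tau=\delta^{p'}$, as if it were a pointwise lower bound. That comparison would only need $p'\leq 1/3$, so it is not where the hypothesis $p'<1/5$ enters. You also never address the first claim of iv), $\Omega^{NS}_{{\rm app},i}=\cO_{\cZ}(\eps)$.

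The repair is to use only the uniform upper bound $\eps\lesssim\delta^{1/2}(1-\tau)^{-1/2}$ together with $1-\tau\geq\delta^{p'}$:
\[
\eps\,(1-\tau)^{-2}\lesssim\delta^{1/2}(1-\tau)^{-5/2}\leq\delta^{\frac{1-5p'}{2}}\leq 1 .
\]
This is where $p'<1/5$ is used. Then iii) gives $\delta^2\eps^2(1-\tau)^{-2}\lesssim\delta^2\eps$, hence $\cR_i=\cO_{\cZ}(\eps^5+\delta^2\eps)$. Likewise ii) gives $(t\partial_t)^m(\eps^n\Omega^{NS}_{n,i})=\cO_{\cZ}\bigl(\eps^2(1-\tau)^{-1-m}\bigr)$ for $m=0,1$. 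Since $\eps^2(1-\tau)^{-1-m}\leq\eps\cdot\eps(1-\tau)^{-2}\lesssim\eps$, this yields $\Omega^{NS}_{{\rm app},i}=\cO_{\cZ}(\eps)$. This is the ``direct calculation'' the paper refers to after noting $\eps^2(1-\tau)^{-2}\ll1$.

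Two harmless slips elsewhere:
\begin{enumerate}
\item $t\partial_t\theta=\frac{\beta}{\alpha}\frac{\varsigma\tau}{1+\varsigma\tau}$, not $\frac{\beta}{\alpha(1+\varsigma\tau)}$. Your conclusion on the cost of each $t\partial_t$ is unaffected.
\item At order $\eps^4$, the $m^2(\Omega_{2,j})$ term of $\cT^{i,j}_{\eps,\boldsymbol z}\Psi_{2,j}$ is constant in $\xi$ and drops out of the bracket. Translated corrections first act at order $\eps^5$.
\end{enumerate}
Also, Definition~\ref{def:ApproxProfilesOrder4} cancels every $\delta\eps^n$ term with $n\leq 4$ exactly, so no AM--GM step is needed in iii).
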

\begin{remark}
In the collapsing case $\varsigma=-1$, note that the first Navier-Stokes correction at $t=(1-\delta^{p'})T_{\rm adv}$ has size
\[
    \delta\eps(t)^2
    \left(1-\frac{t}{T_{\rm adv}}\right)^{-1}
    =
    \cO\left(\delta^{2(1-p')}\right).
\]
The formal  threshold here is therefore $p'=1$. This corresponds to vortices at distance $d\sqrt{\delta}$, which is exactly where one should not be able to ignore viscous effects. 
\end{remark}
\subsection{Functional relationship}
\label{sec:FunctionalRelationship}
We now construct an approximate functional relationship between the Eulerian part of the approximate vorticity and its stream function. This step is crucial in order to apply Arnold's energy method in the nonlinear argument. In contrast with the symmetric configurations considered in \cite{dolce2024long, dolce2026viscousvortexcrystals,zhang2025long}, here finding this functional relation  is more delicate. However, we only need to construct it explicitly up to order $\mathcal{O}(\eps^2)$ (in a sense specified below).

We first introduce the second-order
Eulerian approximation
\begin{equation}
    \Omega_{{\rm app},i}^{(2),E} := G +
    \eps^2 \Omega_{2,i}^{E},\qquad \Psi_{{\rm app},i}^{(2),E} :=\Psi_G+\eps^2 \Psi_{2,i}^{E},
\label{eq:OmegaPsiApp2E}
\end{equation}
where the streamfunction is defined according to Definition \ref{def:ApproxProfilesOrder4}.
The associated interaction-streamfunction is defined by
    \begin{equation}
        \Phi_{\app,i}^{(2),E}
        :=
        \gamma_i\Psi_{\app,i}^{(2),E}
        +
        \sum_{\ell\neq i}
        \gamma_\ell
        \cT_{\eps,\boldsymbol z}^{i,\ell}
        \Psi_{\ell,\app}^{(2),E}
        -
        \frac{\delta t}{\eps\dhk}
        \partial_t z_i
        \cdot \xi^\perp
        -
        \sum_{\ell\neq i}
        \frac{\gamma_\ell}{2\pi}
        \log\left(
        \frac{|z_i-z_\ell|}{\eps\dhk}
        \right),
    \label{eq:PhiApp2EFunctional}
    \end{equation}
where the last terms are additive constants inserted to remove the zeroth-order terms in the expansions of the translated stream functions.
Recall that the Lamb--Oseen vortex satisfies the exact functional relationship in \cref{def:functionalLO}.
As  in \cite{dolce2024long}, we introduce the following definition.
\begin{definition}
    We say that a smooth function $F:(0, + \infty) \to \R$ belongs to the class $\mathsf{K}$ if $F(G) \in \cS_*(\R^2)$.
\end{definition}
We will use the following Lemma, whose proof can be found in \cite{dolce2024long}.
\begin{lemma}
\label{lem:KclassFunctional}
Let \(H\in\cS_*(\R^2)\). There exists \(F\in\mathsf K\) such that
\(F(G)=H\) if and only if \(H\) is radial. Moreover, if \(F\in\mathsf K\),
then \(F^{(m)}(G)G^m\in\cS_*(\R^2)\) for every \(m\in\mathbb N\).
\end{lemma}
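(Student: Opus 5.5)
The lemma is Lemma \ref{lem:KclassFunctional}; it has two parts, and I would prove each directly using polar coordinates. Throughout I write $r=|\xi|$ and use that $G$ is radial and strictly decreasing in $r$, with $G(\xi)=\frac{1}{4\pi}e^{-r^2/4}$. Hence $G$ maps $[0,\infty)$ bijectively onto $(0,\frac{1}{4\pi}]$, and $r^2=4\log\frac{1}{4\pi s}$ when $s=G(\xi)$.

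\emph{Characterization.} If $F\in\mathsf K$ and $H=F(G)$, then $H$ depends on $\xi$ only through $G(\xi)$, so $H$ is radial. For the converse, a smooth radial $H\in\cS_*$ can be written $H(\xi)=h(r^2)$ with $h$ smooth on $[0,\infty)$. This is the classical Whitney fact that smooth even functions of $r$ are smooth functions of $r^2$. Since $H$ and all its derivatives grow at most polynomially, the same holds for $h$ and its derivatives. I then set
\[
F(s):=h\Big(4\log\frac{1}{4\pi s}\Big)\quad \text{for } 0<s\le\frac{1}{4\pi},
\]
which gives $F(G)=H$ by construction. To make $F$ defined on all of $(0,\infty)$, I extend it smoothly to $s>\frac{1}{4\pi}$. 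This is possible because $h$ is smooth up to $0$, so Borel's lemma lets me extend $h$ smoothly to negative arguments, and $\log\frac{1}{4\pi s}$ is smooth at $s=\frac{1}{4\pi}$. Finally, $F(G)=H\in\cS_*$, so $F\in\mathsf K$.

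\emph{Derivative bounds.} The key observation is that $s\partial_s$ acts nicely on compositions with $G$. Writing $F(s)=h(u)$ with $u=4\log\frac{1}{4\pi s}$, we have $s\partial_s=-4\partial_u$. I would prove by induction on $m$ that $s^mF^{(m)}(s)$ is a finite linear combination, with constant coefficients, of $(s\partial_s)^kF$ for $k\le m$. This follows from the identity $s^{m+1}F^{(m+1)}=(s\partial_s-m)(s^mF^{(m)})$. Evaluating at $s=G(\xi)$ then gives
\[
F^{(m)}(G)\,G^m=\sum_{k\le m}c_{m,k}\,(-4)^k\,h^{(k)}(r^2).
\]
So it remains to show $h^{(k)}(r^2)\in\cS_*$. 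Here the function $F$ is arbitrary in $\mathsf K$, and we have just seen that $F(G)$ must be radial, so $F(G)(\xi)=h(r^2)$ with $h$ as before. The function $h^{(k)}(r^2)$ is smooth, because $h^{(k)}$ is smooth on $[0,\infty)$, and it has polynomially bounded derivatives. Note that $\partial_\xi$ of $g(r^2)$ produces $2\xi\,g'(r^2)$, so each $\xi$-derivative brings in a factor of $\xi$ and one more derivative of $h$.

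\emph{Main obstacle.} The delicate step is showing that $h$ and its derivatives $h^{(k)}$ have at most polynomial growth at infinity, given only that all $\xi$-derivatives of $H$ grow at most polynomially. For large $r$ this is straightforward: using $\partial_r=2r\,\partial_{r^2}$ and inverting, $h^{(k)}$ is a combination of $\partial_r^jH$ with coefficients $r^{-(2k-j)}$, and these are bounded for $r\ge1$. For $r\le1$ the Whitney smoothness gives local bounds. Putting the two regions together yields the needed estimates.
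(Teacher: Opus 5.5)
Your proof is correct. The paper gives no argument of its own for this lemma and only refers to \cite{dolce2024long}. Your route is the natural one: write $F(G)=h(|\xi|^2)$ via $u=4\log\frac{1}{4\pi s}$, use $s\partial_s=-4\partial_u$ together with $s^{m+1}F^{(m+1)}=(s\partial_s-m)(s^mF^{(m)})$ to reduce $G^mF^{(m)}(G)$ to the functions $h^{(k)}(|\xi|^2)$, and control their growth through $h^{(k)}(r^2)=\sum_{j\le k}a_{k,j}r^{j-2k}\partial_r^jH$ for $r\ge 1$.

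One small simplification: in the ``moreover'' part you do not need Whitney's theorem again. For any $F\in\mathsf K$, the function $h(u)=F(\tfrac{1}{4\pi}e^{-u/4})$ is automatically smooth on all of $\mathbb R$, because $F$ is smooth on $(0,\infty)$.
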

For a smooth function $f = f(\eps, s,t)$ we denote the Taylor polynomial up to order $M$ by
\begin{equation}
    \Pi_M f= \sum_{k=0}^M \frac{\eps^k}{k!} \frac{\textnormal{d}^k f }{\textnormal{d}\eps^k} \Bigg |_{\eps = 0}.
\end{equation}
Note that the Taylor polynomial will always be computed at the Gaussian $G$. This is important because $\Omega_{{\rm app},i}^{(2),E}$ is not granted to be always positive on the whole domain as required by the definition of the class $\mathsf{K}$. To measure the accuracy of the functional relationship, it is convenient to define 
    \begin{equation}
        \Theta_i := \Phi_{{\rm app},i}^{(2),E} + F_{i}(\Omega_{{\rm app},i}^{(2),E}),
        \label{eq:Thetai}
    \end{equation}
in the region where $\Omega_{{\rm app},i}^{(2),E}>0$.
This is the quantity that we need to be small on a suitable inner region.
Finally, we have
\begin{proposition}
    \label{prop:functionalRel}
    Let $\Omega_{{\rm app},i}^{(2),E}$, $\Phi_{{\rm app},i}^{(2),E}$ and $\Theta_i$ be defined as in \cref{eq:OmegaPsiApp2E,eq:Thetai}. If $\varsigma=1$ ($\varsigma=-1$), then for $t\geq0$ ($t\in[0,T_{\rm adv})$) and for every $i=1,2,3$ setting 
    \begin{equation}
    \label{def:formFi}
        F_i(s) := \gamma_i F(s),
    \end{equation}
    it holds true that
    \begin{equation}
        \Pi_2 (\Theta_i) = 0.
        \label{eq:FunctionalRelation}
    \end{equation}
      Moreover, there exist $\sigma \in (0,1)$, $N \in \mathbb{N}$ and $C>0$, independent of $\eps$ such that for all $|\xi|\leq 2\eps^{-\sigma}$ one has
    \begin{align}
        \left | \nabla\Theta_i(\xi,t)\right| &\leq C \eps^{3}\left(1+|\xi|^N\right) 
        \label{eq:NablaThetai}\\
        \label{bd:tdetF2}\left|t\de_t\left(F_i'\left( \Omega_{{\rm app},i}^{(2),E}\right)\right)\right|&\leq C(1+|\xi|)^NG(\xi)^{-1}\begin{cases}
            \eps^2 \quad &\text{ if } \varsigma=1,\\
            \eps^2(1-\frac{t}{T_{\rm adv}})^{-1}\quad &\text{ if } \varsigma=-1.
        \end{cases}
    \end{align}
\end{proposition}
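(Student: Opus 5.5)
The plan is to expand $\Theta_i$ in powers of $\eps$ at fixed $\xi$, using Lemma~\ref{lem:TeProperties} for the translated stream functions and a Taylor expansion of $F_i$ around $G$. The additive constants in \cref{eq:PhiApp2EFunctional} remove the logarithmic zeroth-order terms. By \cref{eq:ApproxProfilesMoments}, $M(\Omega^E_{2,j})=0$ and $m^1(\Omega^E_{2,j})=0$, so the translated correction $\cT^{i,\ell}_{\eps,\boldsymbol z}(\eps^2\Psi^E_{2,\ell})$ is $\cO(\eps^4)$. Its leading nonconstant piece comes from the quadrupole moment paired with $\eps^2\xi^0/Z^2$, giving a constant plus $\cO(\eps^5)$ in $\cS_*$. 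Hence, up to $\cO_{\cS_*}(\eps^3)$ terms that carry an $\eps^3$ prefactor,
\begin{equation}
\Phi^{(2),E}_{\app,i}=\gamma_i\Psi_G+\eps^2\gamma_i\Psi^E_{2,i}+\sum_{n=1}^{2}(\eps\dhk)^n c_{n,0}\sum_{\ell\neq i}\gamma_\ell\Re\Big(\frac{\xi^n}{z_{i\ell}^n}\Big)-\frac{\delta t}{\eps\dhk}\partial_t z_i\cdot\xi^\perp .
\end{equation}
The order-one terms cancel, exactly as in the proof of Lemma~\ref{lem:R0}, because $z_i$ solves \cref{eq:HK}. On the other side,
\begin{equation}
F_i(G+\eps^2\Omega^E_{2,i})=\gamma_iF(G)+\eps^2\gamma_iF'(G)\Omega^E_{2,i}+\cO(\eps^4).
\end{equation}
Combining the two expansions and using $\Psi_G+F(G)=0$, the order-$\eps^2$ coefficient of $\Theta_i$ becomes
\begin{equation}
\gamma_i\,\mathsf H\Omega^E_{2,i}+\dhk^2c_{2,0}\sum_{\ell\neq i}\gamma_\ell\Re\big(\xi^2/z_{i\ell}^2\big).
\end{equation}
This vanishes by Proposition~\ref{prop:Lambda}\ref{it:equivalence}. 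The reason is that $\gamma_i\Lambda\Omega^E_{2,i}=-\cH_{2,i}=\{-\varphi,G\}$, where $\varphi$ is that $n=2$ sum. The function $\varphi$ is purely in the mode $\cY_2$, so $\cP_0\varphi=0$. The mode-2 solution $\Omega^E_{2,i}$ has $\cP_0\Omega^E_{2,i}=0$, and therefore $\gamma_i\mathsf H\Omega^E_{2,i}=-\varphi$. This gives $\Pi_2\Theta_i=0$, with the choice $F_i=\gamma_iF$ forced at order zero. Time dependence of $z_i$, $\dhk$ and $\eps$ only enters as parameters, so the identity holds for every $t$.

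For \cref{eq:NablaThetai}, I would write $\Theta_i-\Pi_2\Theta_i$ as a remainder with three parts. The first is the Taylor remainder of the translation expansion, which is $\cO_{\cS_*}(\eps^3)$ with all derivatives and has polynomial growth. The second is the $F$-Taylor remainder $\eps^4(\Omega^E_{2,i})^2\int_0^1(1-s)F''(G+s\eps^2\Omega^E_{2,i})\,\dd s$. The third is the cross term $\eps^2\gamma_i(F'(\Omega)-F'(G))$-type correction already included there. The difficulty is that $F'(s)\sim 1/s$ near $s=0$, and $G$ decays like a Gaussian. However, $\Omega^E_{2,i}\in\cZ$, so $|\Omega^E_{2,i}|\le C(1+|\xi|)^NG$. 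For $|\xi|\le 2\eps^{-\sigma}$ with $\sigma$ small, this gives $\eps^2|\Omega^E_{2,i}|/G\le C\eps^{2-N\sigma}\le 1/2$. Hence $\Omega^{(2),E}_{\app,i}\ge G/2>0$ in that region, and $\Theta_i$ is well defined there. Lemma~\ref{lem:KclassFunctional} then gives $F^{(m)}(\theta G)G^m$ of polynomial size uniformly for $\theta\in[1/2,3/2]$, so after differentiating once the remainder is bounded by $\eps^4(1+|\xi|)^{N'}\le\eps^3(1+|\xi|)^N$. The main obstacle is this inner-region control: tracking the $G^{-1}$ factors from derivatives of $F$ against the Gaussian decay of $\Omega^E_{2,i}$. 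It is also why the estimate is claimed only for $|\xi|\le 2\eps^{-\sigma}$.

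For \cref{bd:tdetF2}, compute $t\partial_t F_i'(\Omega)=\gamma_iF''(\Omega)\,t\partial_t(\eps^2\Omega^E_{2,i})$. Here $t\partial_t\eps^2=\eps^2/(1+\varsigma t/T_{\rm adv})$ by \cref{def:keytaueps}. Moreover $(t\partial_t)\Omega^E_{2,i}=\cO_{\cZ}(1)$, or $\cO_{\cZ}((1-t/T_{\rm adv})^{-1})$ in the collapsing case, by \cref{eq:ApproxProfilesBoundE}. With $|F''(\Omega)|\lesssim(1+|\xi|)^NG^{-2}$ in the inner region and $|t\partial_t\Omega^E_{2,i}|\lesssim(1+|\xi|)^NG$, this yields the stated $\eps^2G^{-1}$ bound, with the extra $(1-t/T_{\rm adv})^{-1}$ factor when $\varsigma=-1$.
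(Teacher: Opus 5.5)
Your proposal is correct and follows essentially the same route as the paper. You expand $\Phi^{(2),E}_{\app,i}$ via Lemma~\ref{lem:TeProperties}, with the Helmholtz--Kirchhoff cancellation at order $\eps$, reduce the order-$\eps^2$ coefficient to $\gamma_i\mathsf H\Omega^E_{2,i}+h_i$, and kill it with Proposition~\ref{prop:Lambda}\ref{it:equivalence}; for both estimates you then use $\Omega^{(2),E}_{\app,i}\ge G/2$ on $|\xi|\le 2\eps^{-\sigma}$ together with the $\mathsf K$-class bounds on $F''$. You are slightly more explicit than the paper (integral Taylor remainder, $t\partial_t\eps^2=\eps^2/(1+\varsigma t/T_{\rm adv})$), though the ``cross term'' part of your remainder decomposition is redundant, since the integral form already contains it.
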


\begin{remark}
    Actually, any radial correction to the second order profiles would have still allowed us to provide an approximate functional relationship at second order introducing a time-dependent correction to $F_i$.
    Such a time-dependent functional relationship would then satisfy Proposition \ref{prop:functionalRel} uniformly in time, upon suitable hypotheses on the radial correction.
\end{remark}

\begin{proof}
We first expand $\Phi_{\app,i}^{(2),E}$. By the expansion of the translation
operators in Lemma \ref{lem:TeProperties}, applied to $G$, and by the cancellation
of the order $\eps$ term with the Helmholtz--Kirchhoff transport term, we get
\begin{equation}
\Phi_{\app,i}^{(2),E}
=
\gamma_i\Psi_G
+
\eps^2
\left[
\gamma_i\Psi_{2,i}^{E}
+
h_i
\right]
+
\cO_{\cS_*}(\eps^3),
\label{eq:PhiApp2EExpansion}
\end{equation}
where
\begin{equation}
h_i(\xi,t)
:=
\dhk(t)^2 c_{2,0}
\sum_{\ell\neq i}
\gamma_\ell
\Re\left(
\frac{\xi^2}{(z_i(t)-z_\ell(t))^2}
\right).
\label{eq:defhiFunctional}
\end{equation}
By Lemma \ref{lem:TeProperties} the translated stream functions $\eps^2 \cT_{\eps,\boldsymbol{z}}^{i,\ell}\Psi_{\ell,2}^{E}$ with $\ell\neq i$ in the definition of $\Phi_{\app,i}^{(2),E}$ do not contribute at
order $\eps^2$ since the corresponding vorticity profiles have zero mass
and zero first order moments. By the definition of $\cH_{2,i}$ in \cref{eq:defHin}, we have that 
\begin{equation}
\cH_{2,i}=\{h_i,G\}.
\label{eq:H2hiFunctional}
\end{equation}
Recall the second order Eulerian vorticity profile expansion in \cref{eq:OmegaPsiApp2E}.
Since we look for a functional relationship of the form \cref{def:formFi}, imposing that the second order polynomial of $\Theta_i$ at $G$  vanishes is equivalent to
\begin{equation}
    \gamma_i \Psi_G + \eps^2 \left(
    \gamma_i\Psi_{2,i}^{E} + h_i \right) + \gamma_i F(G) + \eps^2 \gamma_i F'(G)\Omega_{2,i}^{E}= 0.
    \label{eq:FullTylor2Initial}
\end{equation}
Appealing to \cref{def:AH,def:functionalLO}, since $F'(G)(\xi)=A(|\xi|)$ with $A$ in \cref{def:AH}, we end up with
\begin{equation}
     \left(
    \gamma_i\Psi_{2,i}^{E} + h_i  \right)  + \gamma_i A\Omega_{2,i}^{E}= 0.
    \label{eq:FullTylor2}
\end{equation}
By Definition \ref{def:ApproxProfilesOrder4}, recall that\begin{equation}
\gamma_i\Lambda\Omega_{2,i}^{E}
+
\cH_{2,i}
=
0.
\end{equation}
Using $\Lambda f = \{{\sf H} f, G \}$ with \cref{eq:H2hiFunctional}, noting that $\mathcal{P}_0 h_i = 0$ and recalling that both $\Omega_{2,i}^{E}$ and $h_i$
have only angular mode $2$, by Proposition \ref{prop:Lambda} we have
\begin{equation}
\label{eq:iffFunctional}
    \gamma_i\Lambda\Omega_{2,i}^{E}
+
\cH_{2,i}
=
0\quad \iff \quad \gamma_i \mathsf{H} \Omega_{2,i}^E = - h_i  \quad \iff  \quad \gamma_i \Psi_{2,i}^E + \gamma_i A\Omega_{2,i}^E + h_i = 0,
\end{equation}
which shows \cref{eq:FunctionalRelation}.
We now justify the estimate
\cref{eq:NablaThetai}. Since
$\Omega_{2,i}^{E}=\cO_\cZ(1)$, there exist
$C>0$ and $N_0\in\mathbb N$ such that
\begin{equation}
\left|
\frac{
\Omega_{2,i}^{E}
}{G}
\right|
\leq
C(1+|\xi|)^{N_0}.
\label{eq:ProfileRatioFunctional}
\end{equation}
Thus, if $|\xi|\leq 2\eps^{-\sigma}$, then
\begin{equation}
\eps^2
\left|
\frac{
\Omega_{2,i}^{E}
}{G}
\right|
\leq
C\eps^{2-\sigma N_0}.
\end{equation}
Choosing $\sigma>0$ sufficiently small, for instance
$
\sigma<\frac{2}{N_0},
$
we obtain
$\Omega_{\app,i}^{(2),E}>0$ in the region
$|\xi|\leq 2\eps^{-\sigma}$, when $\eps$ is sufficiently small. In this region
Taylor's formula is legitimate, and the remainder in the expansion of
$\Theta_i$, together with its first $\xi$-derivative, is bounded by
$
C\eps^3(1+|\xi|^N).
$
This proves \cref{eq:NablaThetai}.
 It remains to estimate the time derivative $F_i'$ along $\Omega_{\rm{app},i}^{(2),E}$.
Let us introduce 
\begin{equation}
    l_{\varsigma}(t) := \begin{cases}
           1 \quad &\text{ if } \varsigma=1,\\
             \left(1-\frac{t}{T_{\rm adv}}\right)^{-1}\quad &\text{ if } \varsigma=-1.
        \end{cases}
\end{equation}
To prove \eqref{bd:tdetF2}, we start by computing
\begin{align}
    \label{def:tftFi2'}
        t\de_t\left(F_i'\left( \Omega_{{\rm app},i}^{(2),E}\right)\right) & = \gamma_i F''\left(\Omega_{{\rm app},i}^{(2),E}\right) t \de_t \Omega_{{\rm app},i}^{(2),E}.  
    \end{align}
First of all, note that
\begin{equation}
    |t \partial_t(\eps^2)| \lesssim  \eps(t)^2 l_\varsigma(t),
    \label{eq:tdteps2}
\end{equation}
and, by Proposition \ref{prop:ApproximateSol} $(ii)$,
we have
\begin{equation}
    |t \de_t \Omega_{{\rm app},i}^{(2),E}| \leq C \eps^2 l_\varsigma (1+|\xi|)^N G(\xi).
    \label{eq:tdtOmega2E}
\end{equation}
In the region
$|\xi|\leq2\eps^{-\sigma}$ and for $\eps$ sufficiently small,
\begin{equation}
    \frac12G(\xi)
    \leq
    \Omega_{\app,i}^{(2),E}(\xi,t)
    \leq
    \frac32G(\xi).
    \label{eq:XiComparableG}
\end{equation}
Arguing as in \cite[Proposition~4.4]{dolce2024long}, the bound
\cref{eq:XiComparableG} and Lemma \ref{lem:KclassFunctional} imply,
possibly increasing $N$, that
\begin{equation}     \left|F_{}''(\Omega_{\app,i}^{(2),E})\right|
    \leq
    C(1+|\xi|)^NG(\xi)^{-2}.
    \label{eq:FSecondXiBound}
\end{equation}
Combining \cref{def:tftFi2'}, \cref{eq:tdtOmega2E} and  \cref{eq:FSecondXiBound}, we get 
\begin{equation}
    \left|
        t\partial_t
        \left(
            F_i'
            \left(\Omega_{\app,i}^{(2),E}
            \right)
        \right)
    \right|
    \leq
    C\eps^2 l_\varsigma(t)
    (1+|\xi|)^NG(\xi)^{-1}
    \label{eq:dtFiPrimeFinal}
\end{equation}
which is \cref{bd:tdetF2}.
\end{proof}
\begin{remark}
    A rough estimate, carrying on the power of $|\xi|$ lost in the bounds above, suggests that $\sigma< 1/4$. 
\end{remark}

\section{The nonlinear correction}
\label{sec:NonlinearCorrection}

In this section we prove that the approximate solution constructed in
\cref{sec:ApproxSol} remains close to the exact solution of
\cref{eq:OmegaIEq}. We recall that, for each permutation
$(i,j,k)$ of $(1,2,3)$, the  vorticity  profile $\Omega_i$ satisfies
\begin{equation}
    t\partial_t \Omega_i
    - \cL \Omega_i
    + \frac1\delta
    \left\{\Phi_{\epsilon,\Z,i},\Omega_i\right\}
    =0,
    \label{eq:OmegaINonlinearRecall}
\end{equation}
where we recall that  $\Phi_{\eps,\boldsymbol{Z},i}$ is defined in \cref{def:PhiepsZi}.
We write the solution as a perturbation of the fourth order approximate
solution,
\begin{equation}
    \Omega_i=\Omega_{\app,i}+\delta w_i,
    \qquad
    \Psi_i=\Psi_{\app,i}+\delta\varphi_i, \qquad Z_i(t)=z_i(t)+\rho_i(t),
        \label{eq:NonlinearAnsatz}
\end{equation}
where $\varphi_i=\Delta^{-1}w_i$, $Z_i(t)$ is chosen according to Lemma \ref{lem:PositionChoice} which, thanks to the properties of the approximate solution, guarantees that 
\begin{align}
\label{eq:zeromomNL0}M(w_i(t))=m^1_1(w_i(t))=m^1_2(w_i(t))=0, \qquad \text{ for all } t\geq0 \ (t\in[0,T_{\rm adv}]),\quad i=1,2,3.
\end{align}
The goal is to control the correction
\[
    \boldsymbol w=(w_1,w_2,w_3), \qquad \boldsymbol{ \varphi}=(\varphi_1,\varphi_2,\varphi_3),\qquad \boldsymbol{\rho}=(\rho_1,\rho_2,\rho_3)
\]
on the time interval 
\begin{equation}
    0<\frac{t}{T_{\rm adv}}<\begin{dcases}
        \exp\left(\frac{1}{\delta^p}\right) \quad & \text{ when } \varsigma=1 \text{ and } p\in (0,1) \text{ (expanding)},\\
        (1-\delta^{p'}) \quad & \text{ when } \varsigma=-1 \text{ and } p' \in (0,\tfrac15) 
        \text{ (collapsing)}.
    \end{dcases}
    \label{eq:NonlinearTimeScale}
\end{equation} Before introducing the detailed technical setup, let us summarize the main result of this section here.
\begin{theorem}
    Let $\varsigma=1$ ($\varsigma=-1$) and $w_i$ and $\rho_i$ be the respectively solution of \cref{eq:PDEwi} and \cref{eq:PDErhoi} with zero initial data. There exists $C>1$ and sufficiently small constants $c_0,\delta_0, p_0\in (0,1/2)$ ($p_0'\in (0,1/5)$) such that for any $\Gamma, \ \nu, \ d$ the following holds true: for all $\delta=\nu/\Gamma \leq \delta_0$,  let $T_{\rm adv},\ \eps(t)$ be defined as in   \cref{eq:QtyDef}. Then for all $p\in (0,p_0)$ ($p'\in (0,p_0')$) and $t$ as in \cref{eq:NonlinearTimeScale} one has
    \begin{equation}
        \| \boldsymbol{w}(t) \|_{\Xg} \leq \begin{dcases}
            C \eps(t), &\quad \text{ if } \varsigma=1\\
            C\eps(t)^{1-q(p')} &\quad \text{ if } \varsigma=-1
        \end{dcases} \qquad \frac{|\boldsymbol{\rho}(t)|}{\dhk(t)}\leq \begin{dcases}
            C\delta^{1-2p}\eps(t)^2, &\quad \text{ if } \varsigma=1\\
            C\delta^{1-\tilde{q}(p')}\eps(t)^2 &\quad \text{ if } \varsigma=-1
        \end{dcases}, 
    \end{equation}
    where $q(p'),\tilde{q}(p')\in (0,1)$ are sufficiently small constants depending on $p'$ and $\mathcal{Y}\subset\mathcal{X}_{\eps, i}\subset L^1(e^{|\xi|^{c_0}}d\xi)$  is defined below in \cref{def:Xieps}.
    \label{thm:NonLinControl}
\end{theorem}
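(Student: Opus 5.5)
We argue by a continuity (bootstrap) argument on the pair $(\boldsymbol w,\boldsymbol\rho)$, with energy estimates for $\boldsymbol w$ and the Duhamel bound of Corollary~\ref{lem:transfer_matrix} for $\boldsymbol\rho$.

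\textbf{Equations and bootstrap.} Substituting \cref{eq:NonlinearAnsatz} into \cref{eq:OmegaINonlinearRecall} gives the equation for $w_i$. Its linear part is $\delta^{-1}\{\Phi^{(2),E}_{{\rm app},i},w_i\}+\delta^{-1}\{\gamma_i\varphi_i+\sum_{j\ne i}\gamma_j\cT^{i,j}\varphi_j,\Omega^{(2),E}_{{\rm app},i}\}$. The forcing consists of $\delta^{-2}\cR_i$, the translation terms $\mathtt T_{\rho_i}$ produced by $Z_i-z_i=\rho_i$ inside $\cT^{i,j}$ and inside the drift $t\partial_t Z_i\cdot\xi^\perp$, higher-order linear pieces of size $\delta^{-1}\eps^3$, and the quadratic term $\{\gamma_i\varphi_i+\cdots,w_i\}$. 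Expanding \cref{eq:dtZi} with Lemma~\ref{lem:TeProperties} and \cref{eq:ExactHKExpansionSec2} gives
\[
\partial_t\boldsymbol\rho=\mathfrak L(t)\boldsymbol\rho+F,
\]
where $F$ collects the $\cO(t^{-1}\dhk\eps^4)$ contribution of the approximate profiles and the $w$-dependent moments. Since $w_i$ has vanishing mass and first moments, its leading contribution enters through $\delta\,\mrm^2(w)$ and reads $\delta\,t^{-1}\dhk\eps^3\|w\|$. Nonlinear terms in $\rho$ are of size $|\rho|^2/\dhk^3$.

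We fix a maximal time $T_*$ up to which
\[
\|\boldsymbol w\|_{\Xg}\le 2C\eps,\qquad \frac{|\boldsymbol\rho|+|t\partial_t\boldsymbol\rho|}{\dhk}\le 2C\delta^{1-2p}\eps^2,
\]
and then improve both bounds.

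\textbf{Energy estimate for $w$.} Define $\Xg$ with weight $\gamma_i F_i'(\Omega^{(2),E}_{{\rm app},i})$ on the inner region $|\xi|\le\eps^{-\sigma}$, glued to the Gaussian weight $e^{|\xi|^2/4}$ on the outer region, following \cite{GaSarnold,dolce2024long}. Add the term $\langle\varphi_i,w_i\rangle$ together with the cross terms $\sum_{j\ne i}\gamma_i\gamma_j\langle\cT^{i,j}\varphi_j,w_i\rangle$, which are symmetric by \cref{eq:Tij*}.

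Proposition~\ref{prop:functionalRel} makes the inviscid linear operator skew in this energy up to two errors. The first comes from $\nabla\Theta_i$, of size $\delta^{-1}\eps^3\lesssim\sqrt\delta$ in the inner region. The second comes from the outer region, where Gaussian decay together with the $\cL$ coercivity \cite{GalWay2002} gives exponentially small leakage. The time derivative of the weight costs $\eps^2$ by \cref{bd:tdetF2}. The diffusion term yields the coercive bound
\[
\tfrac{d}{d\log t}E\le-cE+\cdots
\]
on the zero-moment subspace, using \cref{eq:zeromomNL0} and Proposition~\ref{prop:Lambda}\ref{it:kernel} to kill the translational modes.

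We then estimate the forcings. The residual gives $\delta^{-2}\cR=\cO(\delta^{-2}\eps^5+\eps^2)\lesssim\eps$. For the translation terms, the $\rho$-dependence of $\cT^{i,j}$ produces, at first order in $\rho$, $\{(\rho_{ij}/(\eps\dhk))\cdot\nabla\cT\Psi,G\}$. The pieces involving $\nabla\Psi_G$ tested against $\partial_k G$ vanish by Proposition~\ref{prop:Lambda}\ref{it:kernel}. After cancellation with the drift, the remainder is of size $\delta^{-1}(|\rho|/\dhk+|t\partial_t\rho|/\dhk)\eps$, which the bootstrap bounds as $\lesssim\delta^{-2p}\eps^3\le\eps$. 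The nonlinear term is bounded by $\|w\|^2$. The $w$ bound therefore closes with constant $C$, and because of the exponential damping in $\log t$ it is uniform in time.

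\textbf{Modulation estimate (main obstacle).} We apply \cref{eq:BoundForcingHKLin} in the expanding case, using $|F(s)|\lesssim s^{-1}\dhk(s)\eps(s)^2\sqrt\delta$:
\[
\frac{|\rho(t)|}{\dhk(t)}\lesssim\int_0^t\Bigl(1+\log\frac{\dhk(t)}{\dhk(s)}\Bigr)\frac{\eps(s)^2\sqrt\delta}{s}\,ds .
\]
Near $s=0$ we have $\eps^2\sim s$, so the integral converges there. For $s\gtrsim T_{\rm adv}$ the integrand is $\sim\delta^{3/2}/s$, and integrating up to $T_{\rm adv}e^{\delta^{-p}}$ gives $\delta^{3/2}\log^2(t/T_{\rm adv})\lesssim\delta^{3/2-2p}$. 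Since $\eps^2\sim\delta$ in this range, this is $\lesssim\delta^{1-2p}\eps^2$, and the logarithmic loss from the Jordan block is exactly what limits $t$ to $\exp(\delta^{-p})$. The same bound for $t\partial_t\rho$ follows from the equation, because $t\,\mathfrak L$ has norm $\cO(1)$ relative to $\dhk$.

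The delicate point is that the $w$-moment forcing must really be $\lesssim t^{-1}\dhk\eps^2\sqrt\delta$. I would verify this carefully using zero first moments and the expansion \cref{eq:IntGradTeExp}, where the $k=0,1$ terms drop.

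In the collapsing case the same scheme applies with the weights $(1-t/T_{\rm adv})^{-1}$ from Propositions~\ref{prop:ApproximateSol} and~\ref{prop:functionalRel} and the growth factor $\dhk(t)/\dhk(s)$ replaced by its inverse. This produces the small losses $q(p'),\tilde q(p')$, which are absorbed for $p'<p_0'$.
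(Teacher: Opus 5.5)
Your architecture matches the paper's: a bootstrap, the Arnold energy $\frac12\langle\boldsymbol w,\cM_\eps\boldsymbol w\rangle$ with translated cross terms, $\delta^{-2}\cR\lesssim\eps$, removal of the $\partial_kG$ part of $\mathsf T_i(\boldsymbol\rho)$, and Duhamel with Corollary~\ref{lem:transfer_matrix}. The gap is the functional framework, which does not close. Gluing $\gamma_iF_i'(\Omega^{(2),E}_{{\rm app},i})$ to the Gaussian $e^{|\xi|^2/4}$ outside $|\xi|\le\eps^{-\sigma}$ is not what \cite{dolce2024long} does, and it fails because of the other vortices. In the frame of vortex $i$, vortex $j$ sits at $\xi_j=(z_j-z_i)/(\eps\dhk)$ with $|\xi_j|\approx\eps^{-1}$. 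There $\gamma_j\cT^{i,j}_{\eps,\boldsymbol z}\Psi_{{\rm app},j}$ carries an $\cO(1)$ velocity rotating about $\xi_j$. Since $\nabla A=\frac{\xi}{2}A$, the transport term $\tfrac{1}{2\delta}\bigl\langle\{\gamma_j\cT^{i,j}_{\eps,\boldsymbol z}\Psi_{{\rm app},j},A\}\,w_i,w_i\bigr\rangle$ has size $\delta^{-1}\eps^{-1}\int_{B(\xi_j,1)}A\,w_i^2$ and no sign. The Gaussian-weighted diffusion gives at most $\frac1{16}|\xi|^2\approx\eps^{-2}$ there, which is not enough once $t\gg\delta T_{\rm adv}$, since then $\eps\gg\delta$; for $t\gtrsim T_{\rm adv}$ the ratio is $\eps/\delta\approx\delta^{-1/2}$. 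Between $\eps^{-\sigma}$ and $\eps^{-1}$ there is a second problem. The strain $\eps^2h_i$ is no longer compensated by the functional relation and produces a term $\approx\delta^{-1}\eps^2|\xi|^2$ against the diffusive $|\xi|^2/16$, while $\delta^{-1}\eps^2\to1/\alpha$ is not small. Since $w_i$ is the unknown, you have no a priori localization, so there is no ``exponentially small leakage''.

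The paper's remedy is the three-region weight \cref{def:Aieps}, and this is where the expanding case is genuinely new. The weight is spatially constant on $\IIi$, which contains $|\xi|\approx\eps^{-1}$ because $\sigma_1<1<\sigma_2$, so every bracket with $A_{\eps,i}$ vanishes there. Beyond $\eps^{-\sigma_2}$ the weight is the stretched exponential $e^{|\xi|^{2\widetilde\sigma}/4}$, where one gains $\delta^{-1}\eps^{1+\sigma_2}$. On the plateau, coercivity can only come from $-\tfrac12 t\partial_tA_{\eps,i}$. With the usual height $\frac14\eps^{-2\sigma_1}$ this equals $\frac{\sigma_1}{8}\eps^{-2\sigma_1}(1+t/T_{\rm adv})^{-1}A_{\eps,i}$. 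Because $t\partial_t\eps\to0$, this is $o(1)$ once $t\gg T_{\rm adv}\delta^{-\sigma_1}$, far before $T_{\rm adv}e^{\delta^{-p}}$. The paper therefore uses the corrected height $\mathsf h_\eps=\frac14\eps^{-2\sigma_1}-\sigma_3^2\log(1+t/T_{\rm adv})$ from \cref{def:hepst}. It gives the fixed gain $\sigma_3^2/4$ in Proposition~\ref{prop:EnDiss}, and it stays comparable to $\eps^{-2\sigma_1}$ only if $p<\sigma_1$. This constraint, not only the Jordan-block logarithm, is the most restrictive one on $p$. Neither ingredient appears in your proposal.

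There is also a smaller slip in the modulation step. With $|F|\lesssim s^{-1}\dhk\eps^2\sqrt\delta$ you get $\delta^{3/2-2p}\approx\delta^{1/2-2p}\eps^2$, not $\delta^{1-2p}\eps^2\approx\delta^{2-2p}$, so your $\boldsymbol\rho$-bootstrap does not close as written. Use the sharper forcing $s^{-1}\dhk\eps^4$. For the $\boldsymbol w$ part, the $\delta$ in $\delta\boldsymbol w$ cancels the $\delta^{-1}$ in $\mathsf P^i_\eps$, so its contribution to $t\partial_t\rho_i$ is $\dhk\eps\cdot\eps^3\|\boldsymbol w\|_{\Xg}$, not $\delta\dhk\eps^3\|\boldsymbol w\|_{\Xg}$. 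Then $\int_0^t\eps(s)^4\,\frac{ds}{s}\lesssim\delta\eps(t)^2\log(e+t/T_{\rm adv})$. The extra logarithm from Corollary~\ref{lem:transfer_matrix} then gives $\delta\eps^2\log^2(e+t/T_{\rm adv})\lesssim\delta^{1-2p}\eps^2$.
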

The proof of the theorem above builds upon the Arnold-type functional framework used in \cite{GaSring,dolce2024long,dolce2026viscousvortexcrystals,zhang2025long,donati2025fast,nguyen2026inviscid}. Here we have to face new challenges related to the delicate control of the self-similar motion of the positions and their stability properties. In the previous works \cite{GaSring,dolce2024long,dolce2026viscousvortexcrystals,zhang2025long} symmetries were crucially exploited to control the nonlinear correction $\boldsymbol{\rho}$. 

To explain the main ideas and difficulties, we have to first carefully rewrite the equations and isolate the leading order contributions.

\subsection{Derivation of the equations}
Recall the vector notation introduced in \cref{sec:notation}. 
To derive the equations for $\boldsymbol{w},$ we recall the streamfunction introduced in \cref{def:PhiepsZi} and we write that
\begin{align}
    \Phi_{\eps,\boldsymbol{Z},i}&=\Phi_{\epsilon,i}(\boldsymbol{\Psi}_{\rm app}+\delta \boldsymbol{\varphi},\boldsymbol{z}+\boldsymbol{\rho})\\
    &=\underbrace{\Phi_{\epsilon,i}(\boldsymbol{\Psi}_{\rm app},\boldsymbol{z})}_{\Phi_{{\rm app},i}}
    +\delta\Big(
        \gamma_i\varphi_i+
        \sum_{j\neq i}\gamma_j\cT_{\eps,\boldsymbol{z}}^{i,j}\varphi_j\Big)
    -\frac{\delta t}{\eps\dhk}\partial_t\rho_i\cdot\xi^\perp
    \notag\\
    &\quad
    +\sum_{j\neq i}\gamma_j
\big(\cT_{\eps,\boldsymbol{z}+\boldsymbol{\rho}}^{i,j}-\cT_{\eps,\boldsymbol{z}}^{i,j}\big)
    (\Psi_{\app,j}+\delta \varphi_j).
    \label{eq:PhiNLDecomposition}
\end{align}
Then, we isolate the Eulerian approximate interaction-streamfunction up to order $2$ modulo constants and we write, 
\begin{align}
 \Phi_{{\rm app},i}
    &=
    \Phi^{(2),E}_{\app,i}
    +\delta\Phi^{(2),NS}_{\app,i}
    +\eps^3\widetilde\Phi_{\app,i},
    \label{eq:PhiAppSplit}
\end{align}
where
\begin{align}
   \label{def:tildePhiapp} \widetilde\Phi_{\app,i}
    &:=
    \gamma_i(\Psi_{3,i}+\eps\Psi_{4,i})
    +\sum_{j\neq i}\gamma_j\cT_{\eps,\boldsymbol{z}}^{i,j}
        (\Psi_{3,j}+\eps\Psi_{4,j})=\mathcal{O}_{\cS_*}(1),
\end{align}
where the last identity readily follows by Definition \ref{def:ApproxProfilesOrder4} and Proposition \ref{prop:ApproximateSol}.
Hence, substituting \cref{eq:NonlinearAnsatz} and
\cref{eq:PhiNLDecomposition} into \cref{eq:OmegaINonlinearRecall}, recalling the definition of the remainder $\mathcal{R}_i$ in \cref{def:Reminder}, we obtain
\begin{align}
    &t\partial_t w_i-\cL w_i
    +\frac1\delta
    \Big(
        \Lambda^i_{\app}w_i
        +\sum_{j\neq i}\Upsilon^{i,j}_{\app}w_j
    \Big)
    =
    -\frac1{\delta^2}
    \big(\cR_i+\mathsf{T}_i(\boldsymbol{\rho})\big)
    +\mathsf{L}^i(\boldsymbol{w},\boldsymbol{\rho})
    +\mathsf{N}^i(\boldsymbol{w},\boldsymbol{\rho}).
    \label{eq:PDEwi}\\
\label{eq:PDErhoi}&t\de_t\rho_i=\mathsf{P}_{\eps}^i(\boldsymbol{\Psi}_{\rm app}+\delta\boldsymbol{\varphi},\boldsymbol{\Omega}_{\rm app}+\delta\boldsymbol{w},\boldsymbol{z}+\boldsymbol{\rho})-t\de_tz_i\\
    &w_i|_{t=0}=0, \qquad \rho_i|_{t=0}=(0,0),
\end{align}
where we start with zero initial data for $w_i$ thanks to the uniqueness result in \cite{GaGa2005} and we define:
\begin{itemize}
    \item the linear operators due to the linearization at the Eulerian approximate solution:
    \begin{equation}
        \Lambda^i_{\app}f
        :=
        \left\{\Phi^{(2),E}_{\app,i},f\right\}
        +
        \left\{\gamma_i\Delta^{-1}f,\Omega_{\app,i}^{(2),E}\right\},
        \label{eq:LambdaIAppCorrected}
    \end{equation}
    and, for $j\neq i$,
    \begin{equation}
        \Upsilon^{i,j}_{\app}f
        :=
        \gamma_j
        \left\{
            \cT_{\eps,\boldsymbol{z}}^{i,j}\Delta^{-1}f,
            \Omega_{\app,i}^{(2),E}
        \right\}.
        \label{eq:UpsilonIJApp}
    \end{equation}

    \item the \emph{translation} forcing terms due to the correction of the position:
    \begin{align}
    \label{def:translation_forcing}
        \mathsf T_i(\boldsymbol{\rho})
        &:= -
         \frac{\delta t}{\eps\dhk}\left\{
        \partial_t\rho_i\cdot\xi^\perp,
            \Omega_{\app,i}
        \right\}+\sum_{j\neq i}
        \gamma_j
        \left\{
\big(\cT_{\eps,\boldsymbol{z}+\boldsymbol{\rho}}^{i,j}
            -\cT_{\eps,\boldsymbol{z}}^{i,j}\big)
            \Delta^{-1}\Omega_{\app,j},
            \Omega_{\app,i}
        \right\}        \label{eq:TijkDefinitionCorrected}
    \end{align}

    \item the linear error terms for $\boldsymbol w$:
    \begin{align}
        \mathsf{L}^i(\boldsymbol{w},\boldsymbol{\rho})
        &:=
        \mathsf L_{1}^i(\boldsymbol{w})+\mathsf L_{2}^i(\boldsymbol{w})+\mathsf L_{3}^i(\boldsymbol{w},\boldsymbol{\rho})+\mathsf L_{4}^i(\boldsymbol{w},\boldsymbol{\rho}),
        \label{eq:LijkDefinition}
    \end{align}
    where
    \begin{align}
        &\mathsf L_{1}^i(\boldsymbol{w})
        :=
        \left\{
            w_i,\Phi^{(2),NS}_{\app,i}
            +\delta^{-1}\eps^3\widetilde\Phi_{\app,i}
        \right\},
        \label{eq:L1Definition}
        \\
        &\mathsf L_{2}^i(\boldsymbol{w})
        :=
        \Big\{\Omega_{\app,i}^{(2),NS}
            +\delta^{-1}\eps^3(\Omega_{3,i}+\eps\Omega_{4,i}),
            \gamma_i\Delta^{-1}w_i
            +\sum_{j\neq i}\gamma_j\cT_{\eps,\boldsymbol{z}}^{i,j}\Delta^{-1}w_j
        \Big\},
        \label{eq:L2Definition}
        \\
        &\mathsf L_{3}^i(\boldsymbol{w},\boldsymbol{\rho})
        :=
        \delta^{-1}\sum_{j\neq i}\gamma_j
        \left\{
\big(\cT_{\eps,\boldsymbol{z}}^{i,j}-\cT_{\eps,\boldsymbol{z}+\boldsymbol{\rho}}^{i,j}\big)\Delta^{-1}w_j,
            \Omega_{\app,i}
        \right\},
         \label{eq:L3Definition}
        \\
        &\mathsf L_{4}^i(\boldsymbol{w},\boldsymbol{\rho})
        :=
        \delta^{-1}\sum_{j\neq i}\gamma_j
        \left\{
\big(\cT_{\eps,\boldsymbol{z}}^{i,j}-\cT_{\eps,\boldsymbol{z}+\boldsymbol{\rho}}^{i,j}\big)\Delta^{-1}\Omega_{\app,j},
            w_i
        \right\};
        \label{eq:L4Definition}
    \end{align}

        \item the nonlinear error terms:
    \begin{align}
        \mathsf{N}^i(\boldsymbol{w}, \boldsymbol{\rho})
        &:=
\mathsf{N}_{1}^i(\boldsymbol{w},\boldsymbol{\rho})+\mathsf{N}_{2}^i(\boldsymbol{w},\boldsymbol{\rho}),
        \label{eq:NijkDefinition}
    \end{align}
    where
    \begin{align}
        &\mathsf N_{1}^i(\boldsymbol{w},\boldsymbol{\rho})
         :=
        \Big\{
            w_i,\gamma_i\varphi_i
            +\sum_{j\neq i}\gamma_j\cT_{\eps,\boldsymbol{z}+\boldsymbol{\rho}}^{i,j}\varphi_j
        \Big\},
        \label{eq:N1Definition}
        \\
        &\mathsf N_{2}^i(\boldsymbol{w},\boldsymbol{\rho})
         :=
        \frac{ t}{\eps\dhk}
        \left\{
        \partial_t\rho_i\cdot\xi^\perp,
            w_i
        \right\},
        \label{eq:N2Definition}
    \end{align}
    \item the position operator 
    \begin{equation}
\mathsf{P}_{\eps}^i(\boldsymbol{f},\boldsymbol{g},\boldsymbol{q}):=\frac{\dhk\eps}{\delta}\sum_{j\neq i}\gamma_j\big\langle \nabla^\perp \big(\mathcal{T}_{\eps,\boldsymbol{q}}^{i,j}\, f_j\big),g_i\big\rangle
        \label{eq:PpositionOp}
    \end{equation}
\end{itemize}
It is useful to rewrite \cref{eq:PDEwi,eq:PDErhoi} in vector form, for which we denote
\begin{equation}
    \mathcal J_{\app}
    :=
    \begin{pmatrix}
        \Lambda^1_{\app} & \Upsilon^{12}_{\app} & \Upsilon^{13}_{\app} \\
        \Upsilon^{21}_{\app} & \Lambda^2_{\app} & \Upsilon^{23}_{\app} \\
        \Upsilon^{31}_{\app} & \Upsilon^{32}_{\app} & \Lambda^3_{\app}
    \end{pmatrix}.
    \label{eq:JappDefinition}
\end{equation}
Then, with obvious vectorial identifications for the other terms, we rewrite \cref{eq:PDEwi,eq:PDErhoi} as
\begin{align}
\label{eq:wvec}&t\partial_t\boldsymbol w
    -\cL \boldsymbol w
    +\frac1\delta\mathcal J_{\app}\boldsymbol w
    =
    -\frac1{\delta^2}
    \big(
        \boldsymbol{\cR}
        +\boldsymbol{\mathsf T}(\boldsymbol{\rho})
    \big)
    +
    \boldsymbol{\mathsf L}(\boldsymbol w,\boldsymbol{\rho})
    +
    \boldsymbol{\mathsf N}(\boldsymbol w,\boldsymbol{\rho}),
    \\
\label{eq:rhovec}&t\partial_t\boldsymbol{\rho}=\boldsymbol{\mathsf{P}}_{\eps}(\boldsymbol{\Psi}_{\rm app}+\delta\boldsymbol{\varphi},\boldsymbol{\Omega}_{\rm app}+\delta\boldsymbol{w},\boldsymbol{z}+\boldsymbol{\rho})-t\de_t\boldsymbol{z},\\
&\boldsymbol{w}|_{t=0}=0, \quad \boldsymbol{\rho}|_{t=0}=0.
\end{align}
The zero first order moment condition in \cref{eq:zeromomNL0} (ensured by the choice of $\rho$) reads as
\begin{equation}
\label{eq:momwvec}
M(\boldsymbol{w}(t))=\mrm^1(\boldsymbol{w}(t))=0, \quad \text{ for all }t\geq0.
\end{equation}
\begin{remark}
    From now on, we focus our attention to the expanding case $\varsigma=1$ and in Section \ref{sec:collapsing} we explain which changes need to be done to cover the collapsing case as well. In fact, the main difference lies on different bounds when integrating in time, which in the expanding case lead to a logarithmic loss $\log(1+\frac{t}{T_{\rm adv}})(1+\log(1+\frac{t}{T_{\rm adv}}))$ whereas in the collapsing one to an algebraic loss $(1-\frac{t}{T_{\rm adv}})^{-1}$. This changes slightly the bootstrap assumptions but essentially all estimates can be translated very easily upon changing the logarithmic loss with the algebraic one.
\end{remark}
To prove Theorem \ref{thm:NonLinControl} in the expanding case $\varsigma=1$,  we rely on a standard bootstrap argument: first of all, by a standard short-time estimate\footnote{In the short-time regime one has $\delta^{-1}\eps^2\ll 1$ and the  arguments are much easier. In fact, one can use the detailed short-time proof in \cite{donati2025fast}, which handles the case of an externally forced point vortex in which the space $\mathcal{X}_{\eps}$ can be replaced with $\mathcal{Y}$. For our problem, in the short-time regime one can safely consider the action of the other vortices as an external forcing and therefore the desired short-time estimates even holds in the stronger space $\mathcal{Y}\subset \mathcal{X}_{\eps}$.} we know that there exists constants $\tilde{C}>0$, $\tilde{c}\in (0,1)$ such that
\begin{align}
     \| \boldsymbol{w} \|_{\Xg} \leq  \widetilde{C} \eps(t) \qquad \text{ for  all} \ t \in [0,\tilde{c}\,T_{\rm adv}].
\end{align}

Hence, by continuity, we can assume that there exists  some time $T\geq \tilde{c}T_{\rm adv}$ such that
\begin{equation}
    \| \boldsymbol{w} \|_{\Xg} \leq 4 \widetilde{C} \eps(t) \qquad \text{ for  all} \ t \in [0,T), \quad T \leq T_{\rm adv} \exp(\delta^{-p}),
    \tag{B}
    \label{eq:BootstrapHP}
\end{equation}
Our goal is to improve the factor $4$ in \cref{eq:BootstrapHP}. 
\subsection{Main  difficulties} \label{sec:maindiff}
 The first key ingredient for the Arnold functional framework is to construct a suitable functional space where $\mathcal{J}_{\rm app}$ is approximately skew-adjoint. In our case,  we can allow for errors up to $\delta^{-1}\eps^3$. In particular, at least in the region where we have the functional relationship between $\boldsymbol{\Omega}^{(2),E}_{\rm app}$ and $\boldsymbol{\Phi}^{(2),E}_{\rm app}$, defining the self-adjoint operator
\begin{equation}
    \cM_{\rm app} = \begin{pmatrix}
    \vspace{4pt}
        \gamma_1(F_1'(\Omega_{{\rm app}, 1}^{(2),E}) + \gamma_1 \Delta^{-1})  & \gamma_1\gamma_2 \cT_{\eps,\boldsymbol{z}}^{1,2} \Delta^{-1} & \gamma_1\gamma_3 \cT_{\eps,\boldsymbol{z}}^{1,3} \Delta^{-1} \\
            \vspace{4pt}
\gamma_2\gamma_1 \cT_{\eps,\boldsymbol{z}}^{2,1} \Delta^{-1}& \gamma_2(F_2'(\Omega_{{\rm app}, 2}^{(2),E}) + \gamma_2 \Delta^{-1}) & \gamma_2\gamma_3 \cT_{\eps,\boldsymbol{z}}^{2,3} \Delta^{-1}\\
    \vspace{4pt}
\gamma_3\gamma_1 \cT_{\eps,\boldsymbol{z}}^{3,1} \Delta^{-1}& \gamma_3\gamma_2 \cT_{\eps,\boldsymbol{z}}^{3,2} \Delta^{-1} & \gamma_3(F'_3(\Omega_{{\rm app}, 3}^{(2),E}) + \gamma_3 \Delta^{-1})
    \end{pmatrix}
    \label{eq:Mapp}
\end{equation}
 one can informally expect that        \begin{equation}
        \delta^{-1}\langle \cJ_{\rm app} (\boldsymbol{f}), \cM_{\rm app}(\boldsymbol{f}) \rangle \approx \cO(\delta^{-1}\epsilon^3)\lesssim \sqrt{\delta}.
    \end{equation}
    Moreover,  by suitably modifying the operator $\mathcal{M}_{\rm app}$ in the region where the functional relationship cannot be justified, we can construct  a good coercive energy functional thanks to the first-order moment constraints \cref{eq:momwvec}. Overall, the control of the left-hand side of \cref{eq:wvec} is analogous to \cite{dolce2024long,dolce2026viscousvortexcrystals,zhang2025long} and  therefore we do not comment  more on this part.
\subsubsection{\textbf{Correction to the position in absence of symmetries}}
    The first new key difficulty is instead  related to the nonlinear correction of the position $\boldsymbol{\rho}.$ Indeed, the analogues of the translation forcing terms in \cref{def:translation_forcing} were controlled by crucially exploiting the symmetries in the previous problems \cite{dolce2024long,GaSring,dolce2026viscousvortexcrystals,zhang2025long}. Moreover, in the cases studied in \cite{dolce2026viscousvortexcrystals,zhang2025long}, if one chooses the nonlinear correction $\boldsymbol{\rho}$ to fix the moment condition \cref{eq:momwvec} it will encounter technical difficulties that are analogous to what we face here. Namely, the presence of errors involving $\mathcal{T}^{i,j}_{\eps,\boldsymbol{z}+\boldsymbol{\rho}}-\mathcal{T}^{i,j}_{\eps,\boldsymbol{z}}$, whose smallness is directly related to the smallness of $\boldsymbol{\rho}$.  In \cite{dolce2026viscousvortexcrystals,zhang2025long} the authors implement different strategies (both of them heavily relying on the underlying symmetries) that allow them to actually use a very accurate approximate position instead of a fully nonlinear one, meaning that in both cases one can set $\boldsymbol{\rho}=0$ and control with different arguments the first order moments. 
    
    On the other hand, here we introduce a more direct study of the modulation parameter $\boldsymbol{\rho}$ solving \cref{eq:rhovec} and we highlight  how the stability properties of the Helmholtz-Kirchhoff dynamics enter in these viscous problems. Indeed, expanding in $\eps$ and isolating the leading order linear terms as $\eps,\delta\to0$, from  \cref{eq:rhovec} we deduce schematically that
    \begin{align}
\de_t\boldsymbol{\rho}=\mathfrak{L}(t)\boldsymbol{\rho}+\mathcal{O}(t^{-1}\dhk\eps^4)+\text{lower order errors,} 
    \end{align}
    where $\mathfrak{L}(t)$ is the  linearization of Helmholtz-Kirchhoff \cref{eq:HK} at the Gr\"obli triangle \cref{def:zjGrobli}. Denoting with $\mathcal{U}_{\mathfrak{L}}(t,s)$ the solution operator associated to $\mathfrak{L}$, since $\boldsymbol{\rho}(0)=0$, we can expect that
    \[
    \boldsymbol{\rho}(t)=\int_0^t\mathcal{U}_{\mathfrak{L}}(t,s)\mathcal{O}(s^{-1}(\dhk \eps^4)(s))ds+\text{lower order terms}.
    \]
   Then, Aref characterized  the properties of $\mathcal{U}_{\mathfrak{L}}(t,s)$ in \cite{ArefSelfSimilar}. As we recall in Section \ref{sec:selfsimilar_HK}, the self-similar structure of the Gr\"obli triangle allow for a change of variables in which $\mathfrak{L}(t)$ is mapped into a \emph{time-independent} operator possessing \emph{unstable} eigenvalues. This spectral instability is reflected in the bound
    \begin{align}
        \|\mathcal{U}_{\mathfrak{L}}(t,s)\|\lesssim \left(1+\log \left(\frac{\dhk(t)}{\dhk(s)}  \right)\right)\frac{\dhk(t)}{\dhk(s)}.
    \end{align}
    Hence, recalling that $\eps^2\lesssim \delta$ and ignoring the lower order terms, we expect that at leading order 
    \begin{align}
        \frac{|\boldsymbol{\rho}(t)|}{\dhk(t)} \lesssim \log \left(e+\frac{t}{T_{\rm adv}} \right)\int_0^t \frac{\eps^4(s)}{s}ds 
        \lesssim  \delta \eps^2(t) \left(\log\left(e+\frac{t}{T_{\rm adv}}\right)\right)^2.
    \end{align} 
    Thus, at best we can hope to propagate
    \begin{align}
    \label{bd:expectedrho}
     \frac{|\boldsymbol{\rho}(t)|}{\dhk(t)}+\frac{|t\de_t\boldsymbol{\rho}(t)|}{\dhk(t)}\lesssim \delta\eps^2  \left(\log\left(e+\frac{t}{T_{\rm adv}}\right)\right)^2.
\end{align}
    With the bounds \cref{bd:expectedrho} at hand, it remains to understand how we one can improve the bootstrap bounds \cref{eq:BootstrapHP}. Here, the size of the remainder $$\delta^{-2}\boldsymbol{\mathcal{R}}=\mathcal{O}(\delta^{-2}\eps^5+\eps^2)=\mathcal{O}(\eps)$$ 
    is clearly consistent with the desired bound. However, looking at the translation errors applied to the approximate solution in $\boldsymbol{\mathsf{T}}(\boldsymbol{\rho})$, see \cref{def:translation_forcing}, we need to face terms like
    \begin{align}
    \label{eq:mainideas1non}
     &  \delta^{-2}\Big(\delta \frac{|t\de_t\boldsymbol{\rho}|}{\eps \dhk(t)}\Big)=\mathcal{O}\big(\eps(\log(e+t/T_{\rm adv}))^2\big),\\
\label{eq:mainideas2non}&\delta^{-2}{\nabla}(\mathcal{T}^{i,j}_{\eps,\boldsymbol{z}+\boldsymbol{\rho}}-\mathcal{T}^{i,j}_{\eps,\boldsymbol{z}})\Delta^{-1}G=\mathcal{O}\Big(\frac{\delta^{-2}\eps |\rho|}{\dhk}\Big)=\mathcal{O}\big(\eps(\log(e+t/T_{\rm adv}))^2\big),
    \end{align}
 meaning that if we treat these terms naively we encounter a logarithmic loss that can destroy the bootstrap argument. Fortunately,  we observe that the terms containing these losses are linear in $\xi$, and therefore the translation errors in reality looks like
 \begin{align}
     \delta^{-2}\boldsymbol{\mathsf{T}}(\boldsymbol{\rho})\approx C\big(\eps(\log(e+t/T_{\rm adv}))^2\big) \nabla G+ \mathcal{O}\big(\eps^2(\log(e+t/T_{\rm adv}))^2\big).
 \end{align}
 But derivatives of the Lamb-Oseen vortex are in the kernel of $\mathsf{H}$ (recall that $\Lambda=\{\mathsf{H}\cdot,G\}$) and a direct inspection reveals that $\mathsf{H}$ is the $\mathcal{O}(1)$ operator appearing in $\mathcal{M}_{\rm app}$. Thus, within the Arnold's functional framework we can basically ignore derivatives of the Lamb-Oseen profile, and this is a consequence of the translation symmetries in $\mathbb{R}^2$ that are naturally built in the problem. 

For the terms in $\boldsymbol{\mathsf{L}}(\boldsymbol{w})$ we can either exploit the fact that we are taking Poisson brackets with terms of order $\mathcal{O}(\eps^2+\delta^{-1}\eps^3)$ or that we can gain from the smallness of $\boldsymbol{\rho}$ when the operator $\mathcal{T}^{i,j}_{\eps, \boldsymbol{z}}-\mathcal{T}^{i,j}_{\eps, \boldsymbol{z}+\boldsymbol{\rho}}$ is involved. The nonlinear terms are instead standard and easier to handle.
\begin{remark}
        The bound in \cref{bd:expectedrho} for $\boldsymbol{\rho}$ can also be deduced via the simpler geometric type arguments involving the control on the ratios between the sides of the Gr\"obli triangle, see \cite{MarPul94} for instance. However, to control $t\de_t\boldsymbol{\rho}$ one has to anyway extract the linearized operator $\mathfrak{L}$ to exploit cancellations with $t\de_t\boldsymbol{z}$ in \cref{eq:rhovec}. But once the operator $\mathfrak{L}$ has been isolated, we prefer to rely on the previously studied properties of the solution operator. We believe that this argument is more robust and flexible. Moreover, it is worth stressing that one can improve the powers of $\eps$ in \cref{bd:expectedrho} by defining a more accurate approximate position (whereas $\boldsymbol{z}$ is  only taking care of the Lamb-Oseen vortex). This would slightly simplify the treatment of the all the terms where we have to use the smallness of $\boldsymbol{\rho}$ (as $\boldsymbol{\mathsf{T}}(\boldsymbol{\rho})$), but it would slightly complicate the construction of the approximate solution. If one wants to push to larger time-scales to improve the range of $p$, then it is necessary to take a more accurate approximate solution and modulate accordingly the approximate position as well.  For our purposes, we prefer to keep clean the approximate solution and exploit the nice properties of $\mathsf{H}$ instead of correcting the Helmholtz-Kirchhoff position.
\end{remark}

\subsubsection{\textbf{Coercivity in intermediate regions}}
Finally, there is another subtle technical point to handle in the expanding case. What we have discussed before works perfectly in the interior region $|\xi|\leq \eps^{-\sigma_1}$, which is where we can rigorously justify the functional relationship and introduce a weighted $L^2$-space through $F'_i$. In the region $|\xi|\geq \eps^{-\sigma_1}$, the usual strategy put forward by Gallay in \cite{Gallay2011}, and used in \cite{dolce2024long,GaSring,dolce2026viscousvortexcrystals,zhang2025long}, is to first fix a \emph{spatial-plateau}  by using a simple weight like 
\[
\exp(\eps(t)^{-2\sigma_1}/4),  \text{ in the intermediate region } \eps(t)^{-\sigma_1}\lesssim |\xi|\lesssim \eps(t)^{-\sigma_2}\text{ with } 0<\sigma_1<1<\sigma_2.
\]
For $|\xi|\gtrsim \eps^{-\sigma_2}$ a weight of the type $\exp(|\xi|^{\widetilde{\sigma}})$ is more than enough to close the estimates, because if $\sigma_2$ is sufficiently large then one gains the desired smallness from the decay of the velocity and vorticity of the approximate solution.
The plateau in the intermediate region is  useful because, with the standard aspect ratio $\varepsilon(t)=\sqrt{\nu t}/d$, the weight {decreases} monotonically in time and when we apply $-t \de_t$ to the weight we gain a good contribution that is fundamental in proving coercivity properties in this intermediate region. On the other hand, our effective aspect ratio satisfies 
\begin{align}
t\de_t\eps(t)=\frac12\frac{1}{1+\frac{t}{T_{\rm adv}}}\eps(t),
\end{align}
meaning that we lose good properties when $t\gg T_{\rm adv}$. This is because the time-dependent weight defined with the $\eps(t)$ is reaching a plateau also in time, so we are not considering  strong enough norms for very large times.\footnote{Note that in the collapsing case $t\de_t\eps$ explodes as $t\to T_{\rm adv}$ and this is analogous to what happens in previous papers for $t\to \infty$.}
To overcome this obstacle in the expanding case, we add a large logarithmic correction to the weight, namely we will use
\[
\exp\big(\eps(t)^{-2\sigma_1}/4-\sigma_3^2\log(1+\tfrac{t}{T_{\rm adv}})\big)  \text{ in the intermediate region}.
\]
This is well-defined within our time-scale  $t\leq T_{\rm adv} \exp(\delta^{-p})$ if we take $p<\sigma_1$. Now, this weight is useful because from the $-t \de_t $ of this weight we gain the constant $\sigma_3^2,$ which can be chosen large if $\delta$ is sufficiently small. Instead, in \cite{dolce2024long,dolce2026viscousvortexcrystals} one gains the large constant $\eps^{-2\sigma_1}$, which is however not strictly necessary to get the desired coercivity properties (any large constant does the job).

\begin{remark}
    This modified plateau gives the most restrictive technical condition on $p$. Indeed, in the proof we need $\sigma_1$ sufficiently small and we do not quantify this  parameter precisely, even though a more careful track of constants should give a nice lower bound on $\sigma_1$ (related for instance to the $\sigma$ appearing in the region of validity of the functional relationship in Proposition \ref{prop:functionalRel}). However, reaching any $p<1$ might be related to obtaining $\sigma_1$ close to $1$, which is instead a more structural obstacle of the weights used in the proofs of all the results in \cite{dolce2024long,dolce2026viscousvortexcrystals,zhang2025long,GaSring}. Interestingly enough, in the simpler setting of an externally forced viscous point vortex, Gallay and Donati in \cite{donati2025fast} can actually handle an interior region defined all the way up to $|\xi|= \eps^{-1}/C$ with $C\gg 1$, which is probably the sharpest region where one can hope to justify the functional relationship (namely  $\sigma=1$ in Proposition \ref{prop:functionalRel}, in the sense that we restrict to $|\xi|\leq \eps^{-1}/C$). 

    There is of course the other possibility of setting a spatial dependence of the weight in the intermediate region as well. However, the inner region is well adapted to the streamlines geometry of the deformed vortex, meaning that one should be careful in gluing continuously the complicated $F'(\Omega_{{\rm app},i}^{(2),E},t)$ with a simpler radial object for instance. This is a strategy that, combined with the idea of the non-radial weight in \cite{Gallay2011}, can help in bypassing the need of a functional relationship all the way up $\eps^{-1}$ and still cover a nice Gaussian-type localization of the vortices. However, the end result will not be qualitatively different from the statement we can obtain with the simpler spatial-plateau strategy.
\end{remark}
Our task is to now make rigorous the bounds built on the intuitions discussed in this section.
\subsection{The functional setting} Let us introduce the functional setting needed to prove Theorem \ref{thm:NonLinControl}, which is built upon the tools used in \cite{dolce2024long, GaSring,Gallay2011,dolce2026viscousvortexcrystals,zhang2025long}. We recall that we consider the expanding case $\varsigma=1$, but all the results adapt easily to $\varsigma=-1$ where in fact some technicality is easier.
\subsubsection{\textbf{Weights and functional spaces}}
\label{sec:weights}
First, recall the constant $\sigma\in(0,1)$ appearing in Proposition \ref{prop:functionalRel}, related to the region of validity of the functional relationship. We introduce the following parameters to be fixed in the proof
\begin{equation}
    \sigma_1 \in (0,\sigma), \quad \sigma_2>\frac32, \quad \sigma_3>1, \quad {\widetilde{\sigma}}=\frac{\sigma_1}{\sigma_2}.
\end{equation}
Define the time-dependent height
\begin{align}
\label{def:hepst}
    \mathsf{h}_{\eps}(t ):=\frac14\eps(t)^{-2\sigma_1}-\sigma_3^2\log(1+\tfrac{t}{T_{\rm adv}})=\frac14\Big(\frac{\delta}{|\alpha|} \frac{\tfrac{t}{T_{\rm adv}}}{1+\tfrac{t}{T_{\rm adv}}}\Big)^{-\sigma_1}-\sigma_3^2\log(1+\tfrac{t}{T_{\rm adv}})
\end{align}
and observe that, if $\delta$ is sufficiently small we have
\begin{align}
\label{bd:hepst}\frac18\eps(t)^{-2\sigma_1}\leq \mathsf{h}_\eps(t)\leq \frac 14 \eps(t)^{-2\sigma_1}  \text{ for all } t\in [0 ,T_{\rm adv}\exp(\delta^{-p})) \text{ if } p<\sigma_1.
\end{align}
Hence, from now on we assume that $p_0,\delta_0$ are sufficiently small so the condition above holds true in our time-scale, meaning that we can safely  define the three regions
\begin{align}
   \vspace{4pt} \Ii &= \big\{ \xi \in \R^2 : |\xi| \leq 2 \eps^{-\sigma_1},  \  \gamma_i F_i'\big(\Omega_{{\rm app},i}^{(2),E},t\big) < \gamma_i^2\exp(\mathsf{h}_\eps(t)) \big\}, \qquad &\text{(Inner)} \\
   \vspace{4pt} 
    \IIi & = \big\{ \xi \in \R^2 : \xi \not\in \Ii, |\xi| \leq \epsilon^{-\sigma_2} \big(4\eps^{2\sigma_1}\mathsf{h}_\eps\big)^{1/(2{\widetilde{\sigma}})}\big\}, \qquad &\text{(Intermediate)}\\
    \vspace{4pt} 
    \IIIi & = \big\{ \xi \in \R^2 : |\xi| >\epsilon^{-\sigma_2}\big(4\eps^{2\sigma_1}\mathsf{h}_\eps\big)^{1/(2\widetilde{\sigma})} \big\} \qquad &\text{(Outer)}
\end{align}
We work in the weighted $L^2$-based spaces
\begin{equation}
\label{def:Xieps}
    \cX_{\epsilon,i} := \left \{ f \in L^2(\R^2) : \| f\|_{\cX_{\epsilon,i}} = \sqrt{\langle f, A_{\epsilon,i} f \rangle} < \infty \right \},
\end{equation}
where the weight is defined as
\begin{equation}
\label{def:Aieps}
          \vspace{4pt}
  A_{\epsilon,i}=A_{\eps, i}(\xi,t) := \begin{cases}
\gamma_iF_i'(\Omega_{{\rm app},i}^{(2),E},t) & \text{in I}_{\epsilon,i},
        \vspace{4pt}
\\
        \vspace{4pt}
    \gamma_i^2\exp\big(\mathsf{h}_{\eps}(t)\big) & \text{in II}_{\epsilon,i},\\ 
        \gamma_i^2\exp(|\xi|^{2 {\widetilde{\sigma}}}/4) & \text{in III}_{\epsilon,i}.
    \end{cases}
\end{equation}
\begin{figure}[tbp]
    \centering    \includegraphics[width=0.5\textwidth]{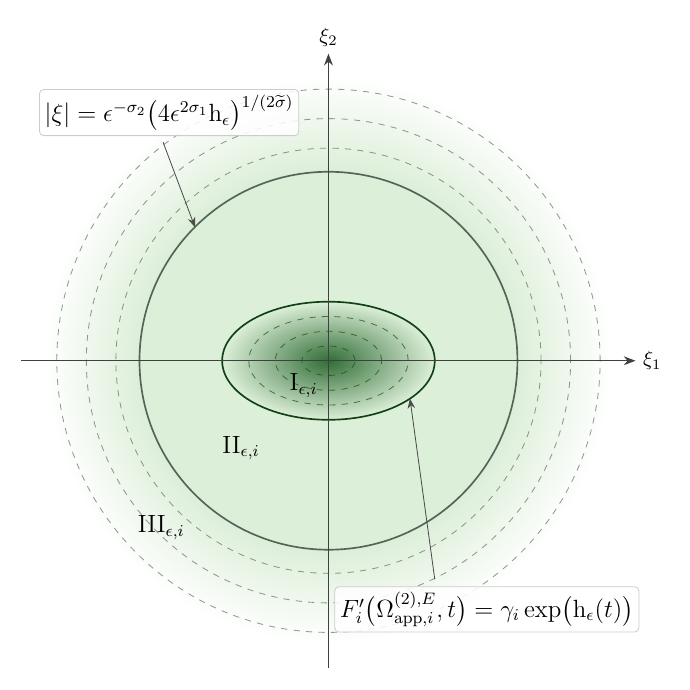} 
    \caption{A schematic picture of the regions introduced above. The intensity of the color represents the size of $A_{\eps, i}^{-1}$ defined in \cref{def:Aieps}, which is constant on the level sets defining the interior region and decreases like a Gaussian towards the boundary of the interior region. Then it is constant (in space) on the intermediate region and enjoys the simpler radial stretched--exponential decay on the outer region.}
    \label{fig:regions}
\end{figure}

The constants $\gamma_i$ in the weight are needed to ensure that the weight in $\mathrm{I}_{\eps, i}$ is always positive since $F_i'(\Omega_{{\rm app},i}^{(2),E})\approx \gamma_iF'(\Omega_0)$ in the region of interest. Following the conventions in Section \ref{sec:notation} we have
\begin{equation}
    \|\boldsymbol f\|_{\Xg}^2
    :=
    \sum_{i=1}^3
    \|f_i\|_{\cX_{\eps, i}}^2.
\end{equation}
As $\epsilon \to 0$, we informally have $A_{\epsilon,i}(\xi) \to \gamma_i^2F'_{}(G) = \gamma_i^2 A(\xi) $, with $A$ in \cref{def:AH}, and therefore we denote
\begin{equation}
    \cX_{0,i} := L^2(A(\xi) d\xi).
\end{equation}

\begin{remark}\label{rem:continuity_weight}
Note that the weight $A_{\eps, i}$ is continuous and piecewise smooth by definition, meaning that we can safely integrate by parts to get  standard identities like 
\begin{align}
\label{eq:intbypart}
    \langle \{f,g\},A_{\eps, i}g\rangle=-\frac12\langle \{f,A_{\eps, i}\},g^2\rangle, \qquad \langle \Delta f, A_{\eps, i}g\rangle=-\langle \nabla f,\nabla(A_{\eps, i}g)\rangle.
\end{align}
Without continuity, we would need to take care of jump-type terms in the first identity, whereas  the second identity also requires the piecewise smoothness of $A_{\eps, i}$ and good properties of $\partial \mathrm{I}_{\eps, i}$. For the latter point,  since $F'_i(\Omega_{{\rm app},i}^{(2),E})$ is a perturbation of $F_{}'(G)$ in the region of interest (thanks to Proposition \ref{prop:ApproximateSol}), as in \cite[Proposition 4.4]{dolce2024long} it holds that $\mathrm{I}_{\eps, i}$ is diffeomorphic to an open disk and 
\begin{align}
    \{|\xi|\leq \eps^{-\sigma_1}/16\}\subset\mathrm{I}_{\eps, i}\subset\big\{|\xi|\leq 16\eps^{-\sigma_1}\big\},
\end{align}
for $\delta$ sufficiently small. In view of these facts, in the sequel we will perform the integrations by parts as in \cref{eq:intbypart} without explicitly mentioning the properties of $A_{\eps, i}$ that makes them true.
\end{remark}
\begin{remark}
    We have defined the intermediate and outer regions in a slightly different way compared to \cite{dolce2024long,dolce2026viscousvortexcrystals}. However,  the property \cref{bd:hepst} guarantees that  our regions are always comparable to the ones appearing in \cite{dolce2024long,dolce2026viscousvortexcrystals}. Hence,  we will quote results in these papers without a proof when the only differences are due to the irrelevant constants in the definitions of the regions.
\end{remark}
We then state the following properties of $\mathcal{X}_{\eps, i}$, whose proofs are the same as   in \cite{dolce2024long}.
\begin{proposition}
\label{prop:weights}
    For any $\epsilon>0$ sufficiently small, the following holds true:
    \begin{enumerate}[label=\roman*),itemsep=4pt]
        \item $\cY \hookrightarrow \cX_{\eps, i} \hookrightarrow L^p$ for any $p \in [1,2]$.
        \item $\gamma_i^2\exp(|\xi|^{2{\widetilde{\sigma}}}/4) \lesssim A_{\eps, i}(\xi) \lesssim \gamma_i^2A(\xi)$ for all $\xi \in \R^2$.
        \item $\| A_{\eps, i} - \gamma_i^2A \|_{C^1(\I)} \lesssim \epsilon^{\eta_2} A$ for any $\eta_2 < 2$.
        \item $|m^k(f)| \lesssim \| f\|_{\X}$ for any $f \in \X$.
    \end{enumerate}
    \label{prop:PropertiesSpaces}
\end{proposition}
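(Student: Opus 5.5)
The plan is to compare the time-dependent weight $A_{\eps,i}$ with the limiting Lamb--Oseen weight $\gamma_i^2A$ region by region, following \cite[Proposition 4.4]{dolce2024long}. The only new feature is the shifted height $\mathsf h_\eps(t)$, which by \cref{bd:hepst} is comparable to $\eps^{-2\sigma_1}/4$ on our time-scale. So the geometry of the regions $\Ii,\IIi,\IIIi$ is comparable to the one used there. I would prove ii) first, since i) and iv) follow from it.

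For ii), I treat each region in turn.

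\emph{Inner region.} Here $|\xi|\le 2\eps^{-\sigma_1}$, and \cref{eq:XiComparableG} gives $\Omega^{(2),E}_{\app,i}\approx G$. Since $F'$ is increasing and $\gamma_iF_i'=\gamma_i^2F'$, this yields $A_{\eps,i}\approx \gamma_i^2F'(G)=\gamma_i^2A$. This bound dominates $\gamma_i^2\exp(|\xi|^{2\widetilde\sigma}/4)$ because $2\widetilde\sigma<2$.

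\emph{Intermediate region.} The weight is the constant $\gamma_i^2e^{\mathsf h_\eps}$. Outside $\Ii$ either $|\xi|\ge 2\eps^{-\sigma_1}$ or $\gamma_i^2F'(\cdot)\ge\gamma_i^2e^{\mathsf h_\eps}$ there. In both cases $A(\xi)\gtrsim e^{|\xi|^2/4}/|\xi|^2\gtrsim e^{\mathsf h_\eps}$, using $\mathsf h_\eps\le \eps^{-2\sigma_1}/4$. For the lower bound, the outer radius $R=\eps^{-\sigma_2}(4\eps^{2\sigma_1}\mathsf h_\eps)^{1/(2\widetilde\sigma)}$ is chosen exactly so that $R^{2\widetilde\sigma}/4=\mathsf h_\eps$. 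Hence $\exp(|\xi|^{2\widetilde\sigma}/4)\le e^{\mathsf h_\eps}$ in $\IIi$.

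\emph{Outer region.} There $e^{|\xi|^{2\widetilde\sigma}/4}\le A(\xi)$ trivially. The same identity for $R$ makes the weight continuous across $\partial\IIIi$.

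With ii) in hand, the remaining items follow directly.

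\emph{Item i).} The embedding $\cY\hookrightarrow\X$ follows from $A\lesssim e^{|\xi|^2/4}$. The embedding into $L^p$ uses Cauchy--Schwarz/Hölder with the stretched-exponential lower bound: $\|f\|_{L^1}\le \|f\|_{\X}\|A_{\eps,i}^{-1/2}\|_{L^2}$, and $A_{\eps,i}^{-1}\lesssim e^{-|\xi|^{2\widetilde\sigma}/4}$ is integrable uniformly in $\eps$. The $L^2$ bound follows since $A_{\eps,i}\gtrsim 1$. Interpolation then gives $p\in[1,2]$.

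\emph{Item iv).} This is the same Cauchy--Schwarz argument with $|\xi|^k e^{-|\xi|^{2\widetilde\sigma}/8}\in L^2$.

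\emph{Item iii).} In $\Ii$, Taylor expand $F'(G+\eps^2\Omega^{(2),E}_{2,i})-F'(G)=\eps^2F''(\tilde G)\Omega_{2,i}^E$. By \cref{eq:FSecondXiBound} and \cref{eq:ProfileRatioFunctional} this is bounded by $C\eps^2(1+|\xi|)^N G^{-1}\lesssim \eps^{2-\sigma_1 N}A$, using $|\xi|\lesssim\eps^{-\sigma_1}$. The gradient is handled the same way via Lemma \ref{lem:KclassFunctional}. Choosing $\sigma_1$ small relative to $2-\eta_2$ gives any $\eta_2<2$.

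The main obstacle I expect is the interface bookkeeping in ii). One must check that $A_{\eps,i}\lesssim A$ holds uniformly near $\partial\Ii$, where the level set of $F'(\Omega^{(2),E}_{\app,i})$ is non-radial. This needs the sandwich $\{|\xi|\le\eps^{-\sigma_1}/16\}\subset\Ii\subset\{|\xi|\le16\eps^{-\sigma_1}\}$ from Remark \ref{rem:continuity_weight}, together with the lower bound $\mathsf h_\eps\ge\eps^{-2\sigma_1}/8$. The latter holds only when $p<\sigma_1$, which is where the logarithmic correction to the height enters.
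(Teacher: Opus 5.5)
Your proposal is correct and follows the same region-by-region comparison of $A_{\eps,i}$ with $\gamma_i^2A$ as \cite{dolce2024long}, to which the paper defers; as you note, the only new inputs are \cref{bd:hepst} and the choice of outer radius, which gives $R^{2\widetilde\sigma}/4=\mathsf h_\eps$. One small slip: $F'$ is decreasing in $s$ (it is $A=F'(G)$ that increases in $|\xi|$), but you only use the comparability $F'(cG)\approx F'(G)$ for $c\in[\tfrac12,\tfrac32]$, so the argument is unaffected.
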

We also recall the following functional inequalities proved in \cite[Lemma 4.5 - 4.6]{dolce2024long} (some of which already proved in \cite{GalWay2002}).
\begin{lemma}
Let $f,\nabla f \in \X$ and let $\phi=\Delta^{-1}f$.
Then there exists a constant $C>0$ such that the following estimates hold: 
\begin{equation}
\|(1+|\cdot|)^{-1}\phi\|_{L^q}
+
\|\nabla \phi\|_{L^q}
\le
C \|f\|_{\X}, \qquad \text{ for all }q\in(2,\infty)
\end{equation}
and
\begin{equation}
\|(1+|\cdot|)\nabla\phi\|_{L^\infty}
\le
C \big( \|\nabla f\|_{\X}^{1/2}
+
\|f\|_{\X}^{1/2} \big)
\|f\|_{\X}^{1/2}.
\label{eq:InfNormNablaPhi}
\end{equation}
If in addition
$M[f]=m^1_1[f]=m^1_2[f]=0$,
then 
\begin{equation}
\|(1+|\cdot|)^2 \phi\|_{L^\infty}
+
\|(1+|\cdot|)^2 \nabla\phi\|_{L^q}
\le
C \|f\|_{\X},  \qquad \text{ for all }q\in(2,\infty),
\end{equation}
and
\begin{equation}
\|(1+|\cdot|)^3 \nabla\phi\|_{L^\infty}
\le
C \big( \|\nabla f\|_{\X}^{1/2}
+
\|f\|_{\X}^{1/2} \big)
\|f\|_{\X}^{1/2}.
\end{equation}
\label{lem:SobolevEstimates}
\end{lemma}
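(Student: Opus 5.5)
The statement to prove is Lemma~\ref{lem:SobolevEstimates}. The plan is to reduce everything to classical estimates for the Biot--Savart kernel and weighted $L^p$ bounds. The central input is Proposition~\ref{prop:PropertiesSpaces}. By item ii), the weight satisfies $A_{\eps,i}(\xi)\gtrsim \exp(|\xi|^{2\widetilde\sigma}/4)$. Hence Cauchy--Schwarz gives
\[
\|(1+|\cdot|)^k f\|_{L^p}\lesssim \|f\|_{\X}
\]
for every $k\ge0$ and $p\in[1,2]$, with constants independent of $\eps$. So each estimate reduces to the corresponding estimate for $f$ in polynomially weighted $L^1\cap L^2$, which is the setting of \cite[Lemma 4.5--4.6]{dolce2024long} and \cite{GalWay2002}.

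\textbf{Step 1: the $L^q$ bounds.} Since $\nabla\phi=\frac{1}{2\pi}\frac{x^\perp}{|x|^2}*f$, the Hardy--Littlewood--Sobolev inequality gives $\|\nabla\phi\|_{L^q}\lesssim \|f\|_{L^p}$ with $1/p=1/q+1/2$, and $p\in(1,2)$ whenever $q\in(2,\infty)$. For $\phi$ itself, I would split the logarithmic kernel as $\log|x-y|=\log(1+|x|)+\big(\log|x-y|-\log(1+|x|)\big)$. The first piece contributes at most $\log(1+|x|)\,|M(f)|$. The second piece is controlled using the moment bound $\|(1+|y|)f\|_{L^1}$. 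Dividing by $(1+|x|)$ then makes the result lie in $L^q$ for every $q>2$.

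\textbf{Step 2: the $L^\infty$ bound \cref{eq:InfNormNablaPhi}.} I would split the convolution at radius $R$. The near part is bounded by $R\|f\|_{L^\infty}$ and the far part by $R^{-1}\|f\|_{L^1}$. Optimizing in $R$ gives $|\nabla\phi|\lesssim \|f\|_{L^1}^{1/2}\|f\|_{L^\infty}^{1/2}$, and on $\R^2$ one has $\|f\|_{L^\infty}\lesssim \|f\|_{H^1}$, which gives the mixed product form. To obtain the factor $(1+|x|)$, I would distinguish $|y|<|x|/2$, where the kernel is bounded by $C|x|^{-1}$, from $|y|\ge|x|/2$. In the second case I transfer the weight onto $f$ and repeat the argument with $(1+|y|)f$, using $|\nabla((1+|y|)f)|\lesssim |\nabla f|+|f|$.

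\textbf{Step 3: the zero-moment case.} When $M(f)=m^1(f)=0$, I would Taylor expand the kernel in the far region $|y|<|x|/2$. The zeroth and first order terms cancel because of the vanishing moments, and the second order remainder is bounded by $|y|^2|x|^{-2}$ for $\phi$ and by $|y|^2|x|^{-3}$ for $\nabla\phi$. These are controlled by the second moment $\|(1+|y|)^2f\|_{L^1}$. The near region $|y|\ge|x|/2$ is handled as in Step 2, now with the weight $(1+|y|)^3$ placed on $f$. This yields the improved decay $(1+|x|)^{-2}$ for $\phi$ and $(1+|x|)^{-3}$ for $\nabla\phi$, and the latter also gives the $L^q$ version with weight $(1+|\cdot|)^2$.

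\textbf{Main obstacle.} The delicate point is uniformity in $\eps$ and $t$, since the weight $A_{\eps,i}$ varies with both. This is settled by the uniform lower bound in item ii) of Proposition~\ref{prop:PropertiesSpaces}. For the gradient bounds one additionally needs $\|(1+|\cdot|)^k\nabla f\|_{L^2}\lesssim\|\nabla f\|_{\X}$, which follows from the same comparison of weights. Once these comparisons are in place, the remaining steps are the classical arguments referenced above.
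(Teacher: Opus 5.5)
Your reduction to polynomially weighted $L^1\cap L^2$ bounds, via the uniform lower bound in item ii) of Proposition~\ref{prop:PropertiesSpaces}, is the right mechanism. The paper gives no proof and only quotes \cite[Lemma 4.5--4.6]{dolce2024long}. Step 1 is fine.

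\textbf{Step 2 is wrong as written.} It relies on the inequality $\|f\|_{L^\infty}\lesssim\|f\|_{H^1}$, which is false on $\R^2$, since $H^1(\R^2)$ is the borderline case of the Sobolev embedding. For example, take $f(\xi)=\chi(\xi)\log\log(e+|\xi|^{-1})$ with $\chi$ a smooth cutoff. Then $f$ is unbounded, while $f,\nabla f\in L^2$. Since $A_{\eps,i}\lesssim\gamma_i^2A$ is bounded on compact sets, also $f,\nabla f\in\X$. So the $L^1$--$L^\infty$ splitting cannot be closed using only $\|f\|_{\X}+\|\nabla f\|_{\X}$.

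The fix is to split at radius $R$ with the pair $L^{4/3}$--$L^4$:
\[
|\nabla\phi(x)|\le\frac{1}{2\pi}\int_{\R^2}\frac{|f(y)|}{|x-y|}\,dy\lesssim R^{1/2}\|f\|_{L^4}+R^{-1/2}\|f\|_{L^{4/3}}.
\]
Optimize in $R$, then use Ladyzhenskaya's inequality $\|f\|_{L^4}^2\lesssim\|f\|_{L^2}\|\nabla f\|_{L^2}$. This gives
\[
\|\nabla\phi\|_{L^\infty}\lesssim\|f\|_{\X}^{3/4}\|\nabla f\|_{\X}^{1/4}\le\big(\|\nabla f\|_{\X}^{1/2}+\|f\|_{\X}^{1/2}\big)\|f\|_{\X}^{1/2}.
\]
For the weights $(1+|x|)$ and $(1+|x|)^3$, apply the same estimate to $(1+|y|)^kf$ in the region $|y|\ge|x|/2$, where $1+|x|\le 2(1+|y|)$. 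There you use Ladyzhenskaya for $(1+|y|)^kf$ together with $|\nabla((1+|y|)^kf)|\lesssim(1+|y|)^k(|\nabla f|+|f|)$.

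\textbf{A smaller point in Step 3.} The weighted $L^q$ bound for $(1+|\cdot|)^2\nabla\phi$ has only $C\|f\|_{\X}$ on the right. So it cannot be read off from the pointwise $(1+|x|)^{-3}$ decay, which costs a factor $\|\nabla f\|_{\X}^{1/2}$. Prove it directly instead.
\begin{enumerate}
\item On $\{|y|<|x|/2\}$, use the second order Taylor remainder. When you invoke $M(f)=m^1(f)=0$, you also produce the tails $\int_{|y|\ge|x|/2}f\,dy$ and $\int_{|y|\ge|x|/2}yf\,dy$. These truncated moments do not vanish; they are only small, bounded by $|x|^{-2}$, resp.\ $|x|^{-1}$, times $\int|y|^2|f|$. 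Together this gives the pointwise bound $|x|^{-3}\|(1+|y|)^2f\|_{L^1}$, and $(1+|x|)^{-1}\in L^q$ for $q>2$.
\item On $\{|y|\ge|x|/2\}$, move $(1+|y|)^2$ onto $f$ and apply Hardy--Littlewood--Sobolev as in Step 1.
\end{enumerate}
The same truncated-moment tails appear for $\phi$. There the factor $\log|x|$ requires a third moment, which is available. With these changes your argument proves the lemma.
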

As in \cite{dolce2024long}, a direct corollary of the result above is the following.
\begin{corollary}
    \label{cor:EstTZapp}
    Let $f \in \X$ be such that $\textnormal{M}[f] = \textnormal{m}^1_1[f] = \textnormal{m}^1_2[f] = 0$ and denote $\phi = \Delta^{-1} f$. Then, for all $q \in (2, \infty)$, we have
    \begin{equation}
        \| \mathbbm{1}_{\I} \cT_{\eps, \boldsymbol{z}}^{i,j} \phi \|_{L^\infty} \leq C \epsilon^2 \| f\|_{\X}, \qquad  \| \mathbbm{1}_{\I}(1+|\cdot|)^{-1} \nabla\cT_{\eps, \boldsymbol{z}}^{i,j} \phi \|_{L^q} \leq C \epsilon^3 \| f\|_{\X}.
    \end{equation}
\end{corollary}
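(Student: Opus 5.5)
The plan is to apply the translation expansion of Lemma~\ref{lem:TeProperties} to $\phi=\Delta^{-1}f$. We take $\varepsilon\to(\eps\dhk)(t)$ and $Z\to z_i(t)-z_j(t)$, and use that $|z_i-z_j|\approx\dhk(t)$ uniformly in time, since the triangle keeps its normalized shape. The effective small parameter is then $\eps\dhk/|z_i-z_j|\approx\eps$. Because $M(f)=0$, the logarithmic term vanishes. Because $m^1(f)=0$, every contribution with $k=1$ disappears.

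In the sum $\sum_{n\ge1}\sum_{k=0}^n$, the coefficient $m^0(f)=M(f)$ is zero, so the first surviving term is $n=2$, $k=2$. Up to a constant, this term is $\eps^2\dhk^2\,\Re\bigl(m^2(f)/(z_i-z_j)^2\bigr)$, which is independent of $\xi$ and bounded by $C\eps^2|m^2(f)|\lesssim\eps^2\|f\|_{\X}$ by Proposition~\ref{prop:PropertiesSpaces}(iv). The $\xi$-dependent terms with $n\ge2$ carry $\eps^n|\xi|^{n-k}$ with $k\ge2$. On $\I\subset\{|\xi|\le16\eps^{-\sigma_1}\}$, each such term is at most $\eps^{n-(n-k)\sigma_1}$, and since $k\ge 2$ and $\sigma_1<1$ this is $\lesssim\eps^2$. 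For the gradient, differentiating in $\xi$ removes the constant $n=2$ term. The leading piece is then $n=3$, $k=2$, which is $\eps^3\cdot O(1)$ times $m^2(f)$ and is linear in $\xi$ before differentiation. The factor $(1+|\cdot|)^{-1}$ together with $|\I|\lesssim\eps^{-2\sigma_1}$ keeps the $L^q$ norm of order $\eps^3$ after the higher-order terms are absorbed; the region-size factor is harmless for $q$ large, and otherwise one keeps the weight.

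The main obstacle is that Lemma~\ref{lem:TeProperties} gives a remainder only in $\cO_{\cS_*}$ form, which has polynomial growth in $\xi$ and no explicit dependence on $f$. I would therefore avoid a truncated expansion and argue directly. Write
\[
(\cT^{i,j}_{\eps,\boldsymbol z}\phi)(\xi)=\phi\bigl(\xi+(z_i-z_j)/(\eps\dhk)\bigr).
\]
For $\xi\in\I$, the shifted point has modulus $\ge c\eps^{-1}-16\eps^{-\sigma_1}\ge c'\eps^{-1}$. By Lemma~\ref{lem:SobolevEstimates} with vanishing mass and first moments, $|\phi(y)|\le C(1+|y|)^{-2}\|f\|_{\X}$, and this immediately gives the $L^\infty$ bound $C\eps^2\|f\|_{\X}$.

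For the gradient I would use the Biot--Savart representation. Since $M(f)=m^1(f)=0$, the kernel $\nabla\log|y-\eta|$ can be Taylor expanded to second order in $\eta$, which gives $|\nabla\phi(y)|\lesssim|y|^{-3}\int|\eta|^2|f|$ for $|\eta|\le|y|/2$. The far part $|\eta|>|y|/2$ is controlled by the Gaussian-type decay of $A_{\eps,i}^{-1}$ from Proposition~\ref{prop:PropertiesSpaces}(ii) and Cauchy--Schwarz; it is stretched-exponentially small in $|y|\sim\eps^{-1}$. This gives $|\nabla\cT\phi|\le C\eps^3\|f\|_{\X}$ pointwise on $\I$. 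The $L^q$ norm with weight $(1+|\xi|)^{-1}$ then holds because $(1+|\xi|)^{-1}\in L^q$ for $q>2$. The pointwise $L^\infty$ argument for $\phi$ uses the same decomposition, so the corollary follows.
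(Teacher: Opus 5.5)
Your reduction is the same as the paper's: on $\I$ one has $|\xi|\le 2\eps^{-\sigma_1}$, so the shifted point $y=\xi+(z_i-z_j)/(\eps\dhk)$ satisfies $c_1\eps^{-1}\le|y|\le c_2\eps^{-1}$. The first bound then follows at once from $\|(1+|\cdot|)^2\phi\|_{L^\infty}\le C\|f\|_{\X}$ in Lemma~\ref{lem:SobolevEstimates}. The paper records exactly this and defers the rest to Corollary~4.7 of \cite{dolce2024long}.

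The gap is in your gradient bound. You assert $|\nabla\cT^{i,j}_{\eps,\boldsymbol z}\phi|\le C\eps^3\|f\|_{\X}$ pointwise on $\I$, and you control the piece $|\eta|>|y|/2$ of the Biot--Savart integral by Cauchy--Schwarz against $A_{\eps,i}^{-1}$. That step fails, because $\int_{|\eta|>|y|/2}|y-\eta|^{-2}A_{\eps,i}(\eta)^{-1}\,d\eta=+\infty$: the integral diverges logarithmically at $\eta=y$, however small $A_{\eps,i}^{-1}$ is there. In fact no such pointwise bound can hold for general $f\in\X$, since $\nabla\Delta^{-1}$ maps $L^2$ into $\dot H^1(\mathbb R^2)\not\subset L^\infty$. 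For example, take a local piece $|\eta-y_0|^{-1}\bigl(\log|\eta-y_0|^{-1}\bigr)^{-3/4}\cos\theta$ (polar angle $\theta$ about $y_0$) near a point with $|y_0|\sim\eps^{-1}$, and fix the moments away from $y_0$. This lies in $\X$, while $\nabla\phi(y_0)$ diverges. This is why Lemma~\ref{lem:SobolevEstimates} needs $\|\nabla f\|_{\X}$ for its $L^\infty$ gradient bounds, and why the corollary is stated in $L^q$ rather than $L^\infty$.

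The fix keeps your decomposition but treats the far piece in $L^q$ instead of pointwise.
\begin{itemize}
\item \textbf{Far piece.} For $\xi\in\I$, the condition $|\eta|>|y|/2$ forces $|\eta|\ge c_1\eps^{-1}/2$. So the far piece is dominated by $\bigl(|\cdot|^{-1}*g\bigr)(y)$, with the fixed function $g:=|f|\mathbbm 1_{\{|\eta|\ge c_1\eps^{-1}/2\}}$. Hardy--Littlewood--Sobolev with $1/q=1/p-1/2$, $p\in(1,2)$, gives $\bigl\||\cdot|^{-1}*g\bigr\|_{L^q}\lesssim\|g\|_{L^p}$. By H\"older and Proposition~\ref{prop:PropertiesSpaces}(ii), $\|g\|_{L^p}=\cO(\eps^\infty)\|f\|_{\X}$. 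Since $(1+|\xi|)^{-1}\le1$, this piece is negligible.
\item \textbf{Near piece.} This one is genuinely pointwise. Taylor expand the kernel to second order, and write the truncated mass and first moments as minus tail integrals, which are again $\cO(\eps^\infty)\|f\|_{\X}$. You get a bound $\lesssim|y|^{-3}\int|\eta|^2|f|+\cO(\eps^\infty)\|f\|_{\X}\lesssim\eps^3\|f\|_{\X}$ on $\I$. Only here do you need $(1+|\cdot|)^{-1}\in L^q$ for $q>2$.
\end{itemize}
With this change your argument is complete.
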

    \begin{proof}
        Setting $\eta := \xi +\frac{z_{i}(t) - z_{j}(t)}{\eps \dhk}$,  since $\xi \in \Ii$ we have $|\xi| \leq 2 \eps^{-\sigma_1 } \ll \eps^{-1}$. Thus, for $\eps$ small enough, since $|z_i-z_j|\approx \dhk$, it holds true $c_1 \eps^{-1} \leq |\eta| \leq c_2 \eps^{-1}$ for some $c_1,c_2>0.$ Thus, the proof follows by arguing exactly as in the proof of \cite[Corollary 4.7]{dolce2024long}.
    \end{proof}    

\subsubsection{\textbf{The main operator}}
Now, following \cite{dolce2026viscousvortexcrystals}, we make rigorous the intuition built in Section \ref{sec:maindiff} and we define the operator
\begin{equation}
\label{def:Meps}
    \cM_{\eps} = \begin{pmatrix}
    \vspace{4pt}
        \left( A_{\eps,1} + \gamma_1^2 \Delta^{-1}\right) & \gamma_1 \gamma_2 \cT_{\eps,\boldsymbol{z}}^{1,2} \Delta^{-1} & \gamma_1 \gamma_3 \cT_{\eps,\boldsymbol{z}}^{1,3} \Delta^{-1} \\ \vspace{4pt}\gamma_2\gamma_1 \cT_{\eps,\boldsymbol{z}}^{2,1} \Delta^{-1}& \left( A_{\eps,2} + \gamma_2^2 \Delta^{-1}\right) & \gamma_2 \gamma_3 \cT_{\eps,\boldsymbol{z}}^{2,3} \Delta^{-1}\\
        \vspace{4pt}\gamma_3 \gamma_1 \cT_{\eps,\boldsymbol{z}}^{3,1} \Delta^{-1}& \gamma_3 \gamma_2 \cT_{\eps,\boldsymbol{z}}^{3,2} \Delta^{-1} &  \left( A_{\eps,3} + \gamma_3^2 \Delta^{-1}\right)
    \end{pmatrix}.
\end{equation}
Notice that on the inner region $\mathbf{I}_\eps$, $\mathcal{M}_{app} = \mathcal{M}_\eps$. It is also convenient to split further $\cM_\eps$ as follows.
\begin{equation}
    \cM_{\eps} =  \cA_\eps  + \cK_\eps,
    \label{eq:Meps}
\end{equation}
where
\begin{equation}
    \cA_\eps = \begin{pmatrix}
       \left( A_{\eps,1} + \gamma_1^2 \Delta^{-1} \right) & 0 & 0 \\ 0 &  \left( A_{\eps,2} + \gamma_2^2 \Delta^{-1}\right) & 0\\ 0& 0 &  \left( A_{\eps,3} + \gamma_3^2 \Delta^{-1}\right) 
    \end{pmatrix}
\end{equation}
and $\mathcal{K}_\eps$ contains only the off-diagonal elements of $\mathcal{M}_\eps$.
Then, as in \cite[Lemma 5.7]{dolce2026viscousvortexcrystals}, we have the following.
\begin{lemma}
    \label{lem:estimatesMepsKeps}
    The operators $\cM_\eps, \cK_\eps: \Xg \to \Xg$ are self-adjoint with respect to the standard $L^2$-inner product. Moreover, under the hypothesis of Proposition \ref{prop:PropertiesSpaces} and Lemma \ref{lem:SobolevEstimates}, for any $\boldsymbol{g} \in \Xg$ and any $\boldsymbol{f} \in \Xg $ such that $\textnormal{M}(\boldsymbol{f}) = \textnormal{m}_1^1(\boldsymbol{f}) = \textnormal{m}_2^1(\boldsymbol{f}) = 0$, there exists a constant $C>0$ such that
    \begin{align}
        &| \langle \boldsymbol{g}, \cM_{\eps}(\boldsymbol{f}) \rangle| +| \langle \boldsymbol{g}, \cA_{\eps}(\boldsymbol{f}) \rangle| \leq C \| \boldsymbol{f}\|_{\Xg}\| \boldsymbol{g}\|_{\Xg}, \\
        & | \langle \boldsymbol{g}, \cK_{\eps}(\boldsymbol{f}) \rangle| + \eps^{-1}| \langle \boldsymbol{g}, \nabla\cK_{\eps}(\boldsymbol{f}) \rangle| \leq C \eps^2  \| \boldsymbol{f}\|_{\Xg}\| \boldsymbol{g}\|_{\Xg}.
    \end{align}
\end{lemma}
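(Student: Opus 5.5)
The proof splits into the self-adjointness claims and the two bilinear bounds. For self-adjointness, the diagonal entries $A_{\eps,i}+\gamma_i^2\Delta^{-1}$ are a real multiplication operator plus $\Delta^{-1}$, which is symmetric in $L^2$ (its kernel $\frac1{2\pi}\log|\xi-\eta|$ is symmetric). For the off-diagonal entries, the $(i,j)$ block is $\gamma_i\gamma_j\cT^{i,j}_{\eps,\boldsymbol z}\Delta^{-1}$. By \cref{eq:Tij*}, its adjoint is $\gamma_i\gamma_j\Delta^{-1}\cT^{j,i}_{\eps,\boldsymbol z}$. Since translations commute with $\Delta^{-1}$, this equals the $(j,i)$ block. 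This gives self-adjointness of $\cK_\eps$, and hence of $\cM_\eps=\cA_\eps+\cK_\eps$.

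For the bound on $\cA_\eps$, I treat each component separately. The term $\langle g_i,A_{\eps,i}f_i\rangle$ is bounded by $\|f_i\|_{\X}\|g_i\|_{\X}$ by Cauchy--Schwarz in the weighted space. For $\langle g_i,\Delta^{-1}f_i\rangle$, I write $\phi_i=\Delta^{-1}f_i$. Since $f_i$ has zero mass and zero first moments, Lemma~\ref{lem:SobolevEstimates} gives $\|(1+|\cdot|)^2\phi_i\|_{L^\infty}\lesssim\|f_i\|_{\X}$. Then $|\langle g_i,\phi_i\rangle|\le\|\phi_i\|_{L^\infty}\|g_i\|_{L^1}$, and $\|g_i\|_{L^1}\lesssim\|g_i\|_{\X}$ by Proposition~\ref{prop:weights}(i). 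The bound for $\cM_\eps$ then follows from those for $\cA_\eps$ and $\cK_\eps$.

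The main work is the $\eps^2$ gain for $\cK_\eps$, which must hold for $g$ with no moment conditions. I split $\langle g_i,\cT^{i,j}_{\eps,\boldsymbol z}\phi_j\rangle$ into the inner region $\Ii$ and its complement.
\begin{itemize}
\item On $\Ii$, Corollary~\ref{cor:EstTZapp} gives $\|\mathbbm 1_{\Ii}\cT^{i,j}\phi_j\|_{L^\infty}\lesssim\eps^2\|f_j\|_{\X}$, which I pair with $\|g_i\|_{L^1}$.
\item Outside $\Ii$, I have $|\xi|\gtrsim\eps^{-\sigma_1}/16$ and $A_{\eps,i}\gtrsim\exp(c\,\eps^{-2\sigma_1\widetilde\sigma})$ by Proposition~\ref{prop:weights}(ii). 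Using $\|\phi_j\|_{L^\infty}\lesssim\|f_j\|_{\X}$ and Cauchy--Schwarz, $\int_{\Ii^c}|g_i|\le\|g_i\|_{\X}\big(\int_{\Ii^c}A_{\eps,i}^{-1}\big)^{1/2}$. This last factor decays like $\exp(-c\eps^{-\sigma'})$ on the region $|\xi|\lesssim\eps^{-\sigma_2}(\cdot)$ and in the outer region. It is therefore $\ll\eps^2$.
\end{itemize}
A point to check in the second item is that $\int A_{\eps,i}^{-1}$ over the intermediate region is still superpolynomially small despite the $\log$ correction in $\mathsf h_\eps$. This holds by \cref{bd:hepst}.

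For the gradient term, I use $\nabla\cT^{i,j}\phi_j=\cT^{i,j}\nabla\phi_j$ and need a factor $\eps^3$. On $\Ii$, the second estimate of Corollary~\ref{cor:EstTZapp} gives an $L^q$ bound $\eps^3\|f_j\|_{\X}$ weighted by $(1+|\xi|)^{-1}$. I pair it with $\|(1+|\cdot|)g_i\|_{L^{q'}}$, which is $\lesssim\|g_i\|_{\X}$ by the Gaussian-type weight. Off $\Ii$ I argue exactly as before, using $\|\nabla\phi_j\|_{L^q}\lesssim\|f_j\|_{\X}$ and the superpolynomial smallness of $A_{\eps,i}^{-1}$ there.

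I expect the only delicate step to be these off-inner-region tails. The distance $|z_i-z_j|/(\eps\dhk)\approx\eps^{-1}$ keeps the translated point far from the $j$-th core only on $\Ii$; elsewhere the smallness must come entirely from the weight, which is why the lower bound on $A_{\eps,i}$ outside $\Ii$ is needed.
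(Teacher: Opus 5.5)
Your proposal is correct and follows essentially the same route as the paper. Self-adjointness comes from the adjoint identity for $\cT^{i,j}_{\eps,\boldsymbol z}$ in Lemma~\ref{lem:TeProperties} together with the translation invariance of $\Delta^{-1}$. For $\cK_\eps$ you use the same split into $\Ii$ and $\Ii^c$: Corollary~\ref{cor:EstTZapp} on the inner region, and off it the weighted Cauchy--Schwarz bound $\|\mathbbm{1}_{\Ii^c}A_{\eps,i}^{1/2}g_i\|\,\|\mathbbm{1}_{\Ii^c}A_{\eps,i}^{-1/2}\cT^{i,j}_{\eps,\boldsymbol z}\Delta^{-1}f_j\|$ combined with the decay of $A_{\eps,i}^{-1}$. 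The only difference is that you write out the $\cA_\eps$ and $\cM_\eps$ bound via the zero-moment $L^\infty$ estimate on $\Delta^{-1}f_i$, whereas the paper omits it and refers to \cite{dolce2026viscousvortexcrystals}.
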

\begin{proof}
    The self-adjointness follows from the properties of the translated
    operators stated in Lemma \ref{lem:TeProperties} and a direct calculation.
    The proof of the two inequalities is similar to \cite[Lemma 5.7]{dolce2026viscousvortexcrystals}. In particular the first one can be proven in the exact same way, so we omit the proof, while we keep the proof of the second one for convenience of the reader.
    For simplicity, in proving the bounds, we consider just one of the terms in $\mathcal{K}_\eps$ since the others are treated similarly.
    By H\"older's inequality, for a fixed $q>2$ we get that
    \begin{align}
        \left|\langle g_i, \cT_{\eps,\boldsymbol{z}}^{i,j} \Delta^{-1} f_j\rangle \right| \lesssim \, \| \mathbbm{1}_{\Ii} g_i\|_{L^1} \| \mathbbm{1}_{\Ii}\cT_{\eps,\boldsymbol{z}}^{i,j} \Delta^{-1} f_j\|_{L^\infty} 
        + \| \mathbbm{1}_{\Ii^c} A_{\eps, i}^{\frac{1}{2}} g_i\| \| \mathbbm{1}_{\Ii^c}A_{\eps, i}^{-\frac{1}{2}}\cT_{\eps,\boldsymbol{z}}^{i,j} \Delta^{-1} f_j\|.
    \end{align}
    Using Lemma \ref{lem:SobolevEstimates}, Corollary \ref{cor:EstTZapp} and the decay properties of $A_{\eps, i}$ we have
    \begin{equation}
        \left| \langle g_i, \cT_{\eps,\boldsymbol{z}}^{i,j} \Delta^{-1} f_j\rangle \right| \lesssim \eps^2 \| \boldsymbol{f}\|_{\Xg}\| \boldsymbol{g}\|_{\Xg}.
        \label{eq:gTijf}
    \end{equation}
    We are left to bound 
    $|\langle \boldsymbol{g}, \nabla\cK_{\eps}(\boldsymbol{f}) \rangle|$. Again, it is enough to consider
    one off-diagonal contribution. Splitting inner and outer region, for $1/p+1/q=1$ with $q>2$ we have
    \begin{align}
        |\langle g_i, \nabla\cT_{\eps,\boldsymbol{z}}^{i,j} \Delta^{-1}f_j \rangle| \lesssim \, &\| \mathbbm{1}_{\Ii}\langle \cdot \rangle g_i\|_{L^p} \| \mathbbm{1}_{\Ii}\langle \cdot \rangle ^{-1} \nabla \cT_{\eps,\boldsymbol{z}}^{i,j} \Delta^{-1} f_j\|_{L^q} \\
        &+ \| \mathbbm{1}_{\Ii^c} A_{\eps, i}^{\frac{1}{2}} g_i\| \| \mathbbm{1}_{\Ii^c}A_{\eps, i}^{-\frac{1}{2}}\nabla\cT_{\eps,\boldsymbol{z}}^{i,j} \Delta^{-1} f_j\|
    \end{align}
    Using again Lemma \ref{lem:SobolevEstimates} and Corollary \ref{cor:EstTZapp} we get
        \begin{align}
        \label{bd:TeijNON}
        |\langle g_i, \nabla\cT_{\eps,\boldsymbol{z}}^{i,j} \Delta^{-1}f_j \rangle| \lesssim \eps^3 \| \boldsymbol{f}\|_{\Xg}\| \boldsymbol{g}\|_{\Xg}
    \end{align}
the desired bound and therefore the proof is concluded.
\end{proof}
\subsubsection{\textbf{Energy and dissipation functionals}}
With $\mathcal{M}_\eps$ defined as in \cref{def:Meps}, we are ready to introduce the energy functional which is related to the Arnold quadratic form, namely
\begin{equation}
    E_\eps(\boldsymbol{w}) = \frac{1}{2} \langle \boldsymbol{w}, \mathcal{M}_\eps(\boldsymbol{w}) \rangle \qquad \boldsymbol{w} \in \Xg.
    \label{eq:EepsW}
\end{equation}
We also introduce the diffusive quadratic form
\begin{equation}
    D_\eps(\boldsymbol{w}) := -\frac{1}{2} 
    \left \langle
    t\de_t\mathcal{A}_\eps\boldsymbol{w}, \boldsymbol{w} \right \rangle - \langle \boldsymbol{\mathcal{L}}(\boldsymbol{w}), \cM_\eps(\boldsymbol{w}) \rangle
    \label{eq:defDeps}
\end{equation}
and the radially symmetric weight
\begin{equation}
    \mathrm{r}_\eps(\xi) = \begin{cases}
        |\xi| & \textnormal{if } |\xi| < \eps^{-\sigma_1}/16, \\ 
        \sigma_3 & \textnormal{if } \eps^{-\sigma_1}/16\leq |\xi| < C_1 \eps^{-\sigma_2}, \\ 
        | \xi|^{\widetilde{\sigma}} & \textnormal{if } |\xi| \geq C_1\eps^{-\sigma_2},
    \end{cases}
    \label{def:WeightPho}
\end{equation}
where $C_1>0$ is a constant implicitly defined in the proof (related to the bounds on $\mathsf{h}_\eps$ in \cref{bd:hepst}).
Then, in view of Lemma \ref{lem:estimatesMepsKeps}, we can argue as in \cite[Proposition 5.11,Proposition 5.13]{dolce2026viscousvortexcrystals} to use the properties proved in \cite{dolce2024long} and obtain the following.
\begin{proposition}
\label{prop:EnDiss}
Let $\varsigma=1$. If $ \delta_0,\sigma_1> 0$ are sufficiently small and $\sigma_3$ sufficiently large, there exists $\kappa_E,\kappa_D, \ C>0$ independent of $\eps$, such that the following holds true:  let $\boldsymbol{w} \in \Xg$ with $\mathrm{r}_\eps \boldsymbol{w} \in \Xg$, $\nabla w_i \in \X$ for $i=1,2,3$ and assume that $M(\boldsymbol{w}) =m_1^1(\boldsymbol{w}) = m_2^1(\boldsymbol{w}) = 0 $. Then 
    \begin{align}
    \label{prop:EepsCoercivity}    &E_{\eps}(\boldsymbol{w})\geq \kappa_E\|\boldsymbol{w}\|_{\boldsymbol{\mathcal{X}}_\eps}^2, \\
    \label{prop:DepsCoercivity}    &D_\eps(\boldsymbol{w}) \geq \kappa_D \left( \| \nabla \boldsymbol{w}\|_{\Xg} ^2+ \| \mathrm{r}_\eps\boldsymbol{w} \|_{\Xg}^2 + \| \boldsymbol{w}\|_{\Xg} ^2\right).
    \end{align}
\end{proposition}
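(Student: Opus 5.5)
The plan is to prove \cref{prop:EepsCoercivity} and \cref{prop:DepsCoercivity} by reducing both to the single-vortex estimates of \cite{dolce2024long}, treating the inter-vortex coupling $\cK_\eps$ as a perturbation. The only genuinely new ingredient is the logarithmic correction $-\sigma_3^2\log(1+t/T_{\rm adv})$ in the plateau height $\mathsf{h}_\eps$ from \cref{def:hepst}, and that is where I would spend the effort.

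\emph{Energy.} I would write $E_\eps(\boldsymbol w)=\frac12\langle\boldsymbol w,\cA_\eps\boldsymbol w\rangle+\frac12\langle\boldsymbol w,\cK_\eps\boldsymbol w\rangle$. By Lemma \ref{lem:estimatesMepsKeps}, the second term is bounded by $C\eps^2\|\boldsymbol w\|_{\Xg}^2$, which is absorbed once $\delta_0$ is small, since $\eps^2\lesssim\delta$. Each diagonal block $\langle w_i,(A_{\eps,i}+\gamma_i^2\Delta^{-1})w_i\rangle$ equals $\gamma_i^2$ times the single-vortex Arnold form with weight $\gamma_i^{-2}A_{\eps,i}$. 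Proposition \ref{prop:weights} iii) shows that in $\Ii$ this weight is a $C^1$-perturbation of size $\eps^{\eta_2}$ of $A$. Outside $\Ii$ the weight satisfies the same plateau and stretched-exponential bounds as in \cite{dolce2024long}, because \cref{bd:hepst} makes $\mathsf h_\eps$ comparable to $\eps^{-2\sigma_1}$. The coercivity of $\langle f,\mathsf H f\rangle$ on functions with vanishing mass and first moments (recall $\mathsf H\partial_iG=0$ and $\mathsf H>0$ on the orthogonal complement), combined with the outer-region estimate $|\langle f,\Delta^{-1}f\rangle|\lesssim\|f\|_{L^1}^2$ from the weight decay, then gives $\kappa_E$ exactly as in \cite[Proposition 5.11]{dolce2026viscousvortexcrystals}.

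\emph{Dissipation.} I would split $D_\eps=D^{\rm diag}+D^{\rm off}$. The off-diagonal part is $-\langle\mathcal L\boldsymbol w,\cK_\eps\boldsymbol w\rangle$. I would integrate by parts once, using $\mathcal L=\Delta+\frac12\xi\cdot\nabla+1$ and the translation structure, and then bound it via the $\nabla\cK_\eps$ estimate of Lemma \ref{lem:estimatesMepsKeps}, obtaining
\[
|D^{\rm off}|\lesssim\eps^2\bigl(\|\nabla\boldsymbol w\|_{\Xg}^2+\|\mathrm r_\eps\boldsymbol w\|_{\Xg}^2+\|\boldsymbol w\|_{\Xg}^2\bigr).
\]
The factor $\xi$ in $\xi\cdot\nabla$ is controlled by $\mathrm r_\eps$ in $\Ii$ and by the weights elsewhere. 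For the diagonal part I would follow \cite[Lemma 4.9--4.12]{dolce2024long} region by region. In $\Ii$ one compares with the Lamb--Oseen quadratic form, whose dissipation dominates $\|\nabla f\|^2+\||\xi|f\|^2$ on the zero-moment subspace. The term $-\frac12\langle t\de_t A_{\eps,i}w_i,w_i\rangle$ there is $O(\eps^2)$ by \cref{bd:tdetF2} and is absorbed. In $\IIIi$ the weight $e^{|\xi|^{2\widetilde\sigma}/4}$ yields $|\xi|^{2\widetilde\sigma}$ through the drift term, which gives $\mathrm r_\eps=|\xi|^{\widetilde\sigma}$.

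\emph{Main obstacle: the intermediate region $\IIi$.} There $\nabla A_{\eps,i}=0$, so the drift term contributes only $-\frac12\|w\|^2_{\X}$ after integration by parts, which has the wrong sign. Previously this was compensated by $-t\de_t\log A=-t\de_t(\tfrac14\eps^{-2\sigma_1})\gtrsim\eps^{-2\sigma_1}$, but now $t\de_t\eps\to0$ as $t\to\infty$. The computation I would carry out is
\[
-t\de_t\mathsf h_\eps=\frac{\sigma_1}{4}\,\eps^{-2\sigma_1}\frac{1}{1+t/T_{\rm adv}}+\sigma_3^2\,\frac{t/T_{\rm adv}}{1+t/T_{\rm adv}}.
\]
For $t\geq\tilde cT_{\rm adv}$ this is at least $c\,\sigma_3^2$, and for small times the first term is at least $c\,\eps^{-2\sigma_1}\gg\sigma_3^2$. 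So $-\frac12\langle t\de_t A_{\eps,i}w,w\rangle\geq c\sigma_3^2\|\mathbbm 1_{\IIi}w\|_{\X}^2$, which matches $\mathrm r_\eps=\sigma_3$ there and, for $\sigma_3$ large, absorbs both the $-\frac12$ drift loss and the $\gamma_i^2\Delta^{-1}$ cross terms. I also need to handle the moving boundaries of $\IIi$. Since the weight is continuous (Remark \ref{rem:continuity_weight}), no boundary terms appear; the sign of $\de_t A$ on the moving inner boundary is controlled by the $\min$-type definition of $\Ii$, as in \cite{dolce2024long}. The time scale restriction $p<\sigma_1$ enters only to guarantee \cref{bd:hepst}. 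Collecting the three regions and absorbing the $O(\eps^{\eta_2})$ and $O(\eps^2)$ errors yields $\kappa_D$.
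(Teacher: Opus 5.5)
Your proposal is correct and follows essentially the same route as the paper. You treat $\cK_\eps$ perturbatively via Lemma \ref{lem:estimatesMepsKeps} and reduce the diagonal blocks to the single-vortex coercivity of \cite{dolce2024long,GaSarnold}; in the intermediate region you recover coercivity from $-\frac12 t\partial_t A_{\eps,i}=\bigl(\frac{\sigma_1}{8}\frac{\eps^{-2\sigma_1}}{1+t/T_{\rm adv}}+\frac{\sigma_3^2}{2}\frac{t/T_{\rm adv}}{1+t/T_{\rm adv}}\bigr)A_{\eps,i}\geq\frac{\sigma_3^2}{4}A_{\eps,i}$, which is exactly the paper's computation. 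One minor imprecision: because of the polynomial factor in \cref{bd:tdetF2}, the inner-region error is of size $\eps^{2-N\sigma_1}$ rather than $\eps^2$, though it is still absorbed once $\sigma_1$ is small.
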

\begin{proof}
The proof follows by the same line of reasoning used in \cite[Proposition 5.11,5.13]{dolce2026viscousvortexcrystals}, which is itself based on reducing the proof to the coercivity bounds proved in detail in \cite{dolce2024long}. The only relevant difference is on how we gain coercivity in the intermediate region. Before detailing this important point, let us  briefly  sketch the key points behind the desired estimates. First use the splitting \cref{eq:Meps} and bound the pieces with $\mathcal{K}_\eps$ thanks to Lemma \ref{lem:estimatesMepsKeps} (for the diffusion functional integrate by part once the terms involving $\mathcal{L}$). 
Then,  to handle the terms involving $\mathcal{A}_\eps$, we observe that we have the same structure as in \cite{dolce2024long}. In particular,  the most technically challenging part  in \cite[Proposition 4.9, 4.13]{dolce2024long} is to properly extract the desired lower bounds from the leading order pieces at $\eps=0$. For the energy, it is enough to split 
\begin{align}
w_i=\mathbbm{1}_{\mathrm{I}_{\eps, i}}w_i+(1-\mathbbm{1}_{\mathrm{I}_{\eps, i}})w_i.
\end{align}
Then, for the piece supported on $\mathrm{I}_{\eps, i} $, one can use property \textit{iii)} in Proposition \ref{prop:weights} and exploit the coercivity properties with $\eps\to 0$ in the space $\mathcal{X}_{0,i}$, which were proved in \cite{GaSarnold}. The other piece is much easier to handle in view of the strong decay properties of $A_{\eps, i}$ outside the inner region.

On the other hand, for the diffusion functional we can follow the Step 1 in the proof of \cite[Proposition 4.13]{dolce2024long} (combined with Lemma \ref{lem:estimatesMepsKeps}) to get that 
\begin{align}
\label{bd:Deps}
    D_{\eps}(\boldsymbol{w})\geq \sum_{i=1}^3\left(-\frac12\langle\mathbbm{1}_{\mathrm{II}_{\eps, i}} t \de_t A_{\eps, i}w_i,w_i\rangle+\mathcal{D}_{\mathcal{L},\eps}[w_i]\right)-C_2\eps\big(\|\boldsymbol{w}\|^2_{\boldsymbol{\mathcal X}_\eps}+\|\nabla\boldsymbol{w}\|^2_{\boldsymbol{\mathcal X}_\eps}\big),
\end{align}
where 
\begin{align}
\label{eq:DLeps}
\mathcal{D}_{\mathcal{L},\eps}[w_i]=\|\nabla w_i\|_{\mathcal{X}_{\eps, i}}^2+\frac14\langle w_i,w_i(\xi\cdot\nabla A_{\eps, i})\rangle+\langle w_i,\nabla w_i\cdot \nabla A_{\eps, i}\rangle-\|w_i\|^2_{L^2}-\frac12 \|w_i\|^2_{\mathcal{X}_{\eps, i}}.
\end{align}
Please note that in the interior region we also rely on the property \cref{bd:tdetF2} to absorb in the error term  contribution from $t\de_t F'_i$ (this holds also in the $\varsigma=-1$ case).

Then, here we can proceed with the  smoother splitting of $w_i$ done in Step 2 of \cite[Proposition 4.13]{dolce2024long}, which  gives 
\[
w_i=w_{{\rm int},i}+w_{{\rm out},i}, \quad \operatorname{supp}(w_{{\rm int},i})\subset \{|\xi|\leq \eps^{-\sigma_1}/16\}, \quad \operatorname{supp}(w_{{\rm out},i})\subset\{|\xi|\geq\eps^{-\sigma_1}/32\}.
\]
For the inner part the whole Step 3 of \cite[Proposition 4.13]{dolce2024long} goes through, where the idea is to exploit the nice coercivity properties available with the $A_{0,i}$ weight proved in \cite{GaSarnold}. For the outer part, the key point  is  to gain from the inner product terms in \cref{eq:DLeps} to absorb the last two terms in \cref{eq:DLeps}. This can be done by either exploiting the properties of the weight $A_{0,i}$ in $\{|\xi|\geq \eps^{-\sigma_1}/32\}\cap \mathrm{I}_{\eps, i}$ or by using the nice exponential radial growth in the outer region. However, in region $\mathrm{II}_{\eps, i}$ one cannot gain anything from the two inner products since the weight is constant. Therefore, one has to absorb differently the contributions from the negative terms in \cref{eq:DLeps}. For the intermediate region $\mathrm{II}_{\eps, i}$, from the definition of $\mathsf{h}_\eps$ 
in \cref{def:hepst} we compute that
\begin{align}
-\frac12\mathbbm{1}_{\mathrm{II}_{\eps, i}} t\de_t A_{\eps, i}=\Big(\frac{\sigma_1}{8}\frac{1}{1+\tfrac{t}{T_{\rm adv}}}\eps(t)^{-2\sigma_1}+\frac{\sigma_3^2}{2}\frac{\tfrac{t}{T_{\rm adv}}}{1+\tfrac{t}{T_{\rm adv}}}\Big)\mathbbm{1}_{\mathrm{II}_{\eps, i}} A_{\eps, i}\geq \frac{\sigma_3^2}{4}\mathbbm{1}_{\mathrm{II}_{\eps, i}} A_{\eps, i},
\end{align}
where the last inequality holds for $\delta$ sufficiently small. Hence  we get 
\begin{align}
    -\frac12\langle\mathbbm{1}_{\mathrm{II}_{\eps, i}} t \de_t A_{\eps, i}w_i,w_i\rangle\geq \frac{\sigma_3^2}{4}\|\mathbbm{1}_{\mathrm{II}_{\eps, i}}w_i\|^2_{\mathcal{X}_{\eps, i}}=\frac14\|\mathbbm{1}_{\mathrm{II}_{\eps, i}}\mathsf{r}_{\eps}w_i\|^2_{\mathcal{X}_{\eps, i}}.
\end{align}
Note that in the setting of \cite{dolce2024long} one gains a good large term of order $\varepsilon(t)^{-2\sigma_1}$ (with $\varepsilon(t)^2=\nu t/d^2$) instead of our fixed constant $\sigma_3^2/4$. However, choosing $\sigma_3$ large enough we can conclude exactly with the same arguments. Indeed, we observe that 
\begin{align}
\|\mathbbm{1}_{\mathrm{II}_{\eps, i}}w_i\|_{\mathcal{X}_{\eps, i}}^2= \frac{1}{\sigma_3^2}\|\mathbbm{1}_{\mathrm{II}_{\eps, i}}\mathsf{r}_{\eps}w_i\|_{\mathcal{X}_{\eps, i}}^2.
\end{align}
For the $L^2$ norm of the outer piece we instead get that
\begin{align}
    \|\mathbbm{1}_{\{|\xi|\geq \eps^{-\sigma_1}/16\}}w_i\|_{L^2}\leq \mathcal{O}(\eps^\infty)\|\mathsf{r}_{\eps}w_i\|_{\mathcal{X}_{\eps, i}}.
\end{align}
Thus, overall, we can follow  all the steps of the proof  in \cite{dolce2024long} and conclude  
\begin{align}
    D_{\eps}(\boldsymbol{w})\geq (\tilde{\kappa}-C\eps^{2\sigma_1})\left( \| \nabla \boldsymbol{w}\|_{\Xg}^2 + \| \mathrm{r}_\eps\boldsymbol{w} \|_{\Xg}^2 + \| \boldsymbol{w}\|_{\Xg}^2 \right)-\frac{1}{\sigma_3^2}\| \boldsymbol{w}\|_{\Xg}^2.
\end{align}
Choosing $\sigma_3$ large enough and $\delta$ sufficiently small, the proof of \cref{prop:DepsCoercivity} is over. Note that in the whole proof in \cite{dolce2024long} one only requires $\sigma_1$ sufficiently small, whereas $\sigma_2$ large is never used to gain any smallness, which is the reason why we do not need any condition on $\sigma_2$ in this result.
\end{proof}
\subsection{Bound on the corrections of the positions}
We now control the correction of the exact vortex centers with respect to the Helmholtz-Kirchhoff trajectories. 
We use the position operator $\mathsf{P}_{\eps}^i$ introduced in \cref{eq:PpositionOp} that allows to rewrite the equation for the vortex centers \cref{eq:dtZi} as
\begin{equation}
t\partial_t Z_i
=
\mathsf P_\eps^i
\left(
\boldsymbol\Psi,
\boldsymbol\Omega,
\boldsymbol Z
\right).
\label{eq:PositionEquationBeps}
\end{equation}
Accounting for the mild instability of the Helmholtz-Kirchhoff system, we are able to prove the following result that holds for slightly more general perturbations of the Lamb-Oseen vortex.
\begin{proposition}
\label{prop:ControlZZhk}
    Let $\varsigma=1$ and $\Omega_i = G+ \delta \widetilde{\Omega}_i$, where $G$ is defined in \cref{def:G} and $\widetilde{\Omega}_i \in L^\infty_t(\X)$. Assume that  $\textnormal{M}(\widetilde{\Omega}_i) = \textnormal{m}^1_1(\widetilde{\Omega}_i) = \textnormal{m}^1_2(\widetilde{\Omega}_i) =0$ and $\sup_{0 \leq t \leq T}\|\widetilde{\Omega}_i\|_{\cX_{\eps,i}}\leq \widetilde{\rm C}$ where $\widetilde{ \rm C}$ is a fixed constant independent from $\delta$. If $\delta$ is sufficiently small, then there exists a constant $C>0$ independent of $\delta$ such that
    \begin{align}
    &\frac{|t \partial_t \rho_i |(t)}{\dhk(t)} \leq C \delta \epsilon^2(t)\,  \left(\log\Big(e+\frac{t}{T_{\rm adv}}\Big)\right)^2 \sup_{0 \leq t \leq T} \|\widetilde{\boldsymbol{\Omega}}\|_{\boldsymbol{\cX}_\eps} + \cO( \eps^\infty),\\
        \label{eq:ControlZZhk}    \vspace{2pt}
        &\frac{|\rho_i|(t)}{\dhk(t)}\leq C \delta^2 \,  \left(\log\Big(e+\frac{t}{T_{\rm adv}}\Big)\right)^2 \sup_{0 \leq t \leq T} \|\widetilde{\boldsymbol{\Omega}}\|_{\boldsymbol{\cX}_\eps} + \cO( \eps^\infty),
    \end{align}
    for $0\leq t \leq T$ with $T\leq T_{\rm adv} \exp(\delta^{-p})$ and $0 < p < 1/2$.
\end{proposition}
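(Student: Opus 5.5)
The plan is to write the modulation equation \cref{eq:PositionEquationBeps} as a forced linearized Helmholtz--Kirchhoff system and then apply Corollary \ref{lem:transfer_matrix}. With $\Omega_i=G+\delta\widetilde\Omega_i$ and $\Psi_j=\Psi_G+\delta\Delta^{-1}\widetilde\Omega_j$, we expand $\mathsf P^i_\eps(\boldsymbol\Psi,\boldsymbol\Omega,\boldsymbol Z)$ via \cref{eq:IntGradTeExp} in Lemma \ref{lem:TeProperties}. The parameters are $\varepsilon\to\eps\dhk$ and $Z\to Z_i-Z_j$, and we truncate at a fixed large order $M$ so that the remainder is $\cO(\eps^\infty)$.

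Because $\mathrm M(G)=1$ and all first moments of $G$ and $\widetilde\Omega_i$ vanish, the $n=1$ term reproduces exactly the Helmholtz--Kirchhoff velocity evaluated at the modulated centers $\boldsymbol Z$. The $n=2$ term involves products $\mrm^0\mrm^1$ and $\mrm^1\mrm^0$, so it vanishes. The first nontrivial terms come from $n=3$: products $\mrm^0(\cdot)\mrm^2(\cdot)$ and $\mrm^2(\cdot)\mrm^0(\cdot)$. The radial part of $G$ has $\mrm^2(G)=0$ in the complex sense $\int\xi^2G=0$, so these terms carry at least one factor $\delta\,\mrm^2(\widetilde\Omega)$. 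By Proposition \ref{prop:PropertiesSpaces}(iv), $|\mrm^k(\widetilde\Omega_i)|\lesssim\|\widetilde\Omega_i\|_{\X}$. After multiplying by the prefactor $\dhk\eps/\delta$, we obtain
\begin{equation}
t\partial_t Z_i=t\,\mathrm{HK}_i(\boldsymbol Z)+\cO\Big(\dhk\,\eps^4\,\|\widetilde{\boldsymbol\Omega}\|_{\Xg}\Big)+\cO(\eps^\infty),
\end{equation}
where $\mathrm{HK}_i$ denotes the right-hand side of \cref{eq:HK}. The $\eps^3\dhk^3/|Z_{ij}|^3$ scaling, together with $\eps\dhk=\sqrt{\nu t}$, makes this $\dhk\eps^4$ times a bounded coefficient, since $|Z_{ij}|\approx\dhk$ under a bootstrap. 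Higher $n$ terms are smaller by powers of $\eps$.

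Next we subtract $t\partial_t z_i=t\,\mathrm{HK}_i(\boldsymbol z)$ and use the exact expansion \cref{eq:ExactHKExpansionSec2}. This gives
\begin{equation}
\partial_t\boldsymbol\rho=\mathfrak L(t)\boldsymbol\rho+F,\qquad |F(s)|\lesssim \frac{\dhk(s)\eps(s)^4}{s}\|\widetilde{\boldsymbol\Omega}\|+\frac{\Gamma|\boldsymbol\rho|^2}{\dhk^3}+\cO(\eps^\infty).
\end{equation}
We set up a bootstrap assumption $|\boldsymbol\rho|/\dhk\le \delta^{2-p}$, say, valid on $[0,T]$. It keeps $|Z_{ij}|\approx\dhk$ and makes the quadratic term perturbative. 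Applying \cref{eq:BoundForcingHKLin} with $\varsigma=1$ gives
\begin{equation}
\frac{|\boldsymbol\rho(t)|}{\dhk(t)}\lesssim\Big(1+\log\tfrac{\dhk(t)}{\dhk(0)}\Big)\int_0^t\frac{\eps(s)^4}{s}\,ds\;\sup\|\widetilde{\boldsymbol\Omega}\|+\dots
\end{equation}
From \cref{def:keytaueps}, $\eps^4(s)\lesssim\delta^2\min(s/T_{\rm adv},1)^2$. The integral is therefore $\lesssim\delta^2\log(e+t/T_{\rm adv})$, which produces the $\delta^2(\log)^2$ bound. The quadratic term contributes $\int \Gamma\dhk^{-2}(|\rho|/\dhk)^2\cdot(\text{same kernel})\,ds\lesssim\delta^{4-2p}\log^3$. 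This is dominated by the main term because $\log(e+t/T_{\rm adv})\le\delta^{-p}$ with $p<1/2$, so the bootstrap closes.

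For $t\partial_t\rho$ we do not integrate; we read it directly from the equation: $t\partial_t\rho=t\mathfrak L(t)\rho+tF$. We have $|tF|/\dhk\lesssim\eps^4\|\widetilde\Omega\|\le\delta\eps^2\|\widetilde\Omega\|$, using $\eps^2\lesssim\delta$. Also $t|\mathfrak L|\lesssim t\Gamma/\dhk^2\lesssim 1$ for $t\gtrsim T_{\rm adv}$, so $|t\mathfrak L\rho|/\dhk\lesssim\delta^2\log^2\|\widetilde\Omega\|$. Since $\delta\lesssim\eps^2$ once $t\gtrsim T_{\rm adv}$, this term is also $\lesssim\delta\eps^2\log^2$. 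For $t\lesssim T_{\rm adv}$ we instead use the sharper integral bound $|\rho|/\dhk\lesssim\delta^2(t/T_{\rm adv})^2$, which again gives $\delta\eps^2$.

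The main obstacle is the moment bookkeeping in the expansion of $\mathsf P^i_\eps$. Every term must be either cancelled by the Helmholtz--Kirchhoff transport or carry the full $\eps^4\dhk$ smallness. In particular, the mixed terms in which $\mrm^k(G)$ pairs with higher moments of $\widetilde\Omega$ must be checked to be $\delta$-weighted so that they cancel the $1/\delta$ prefactor. The expansion also has to be valid uniformly for the perturbed separations $Z_i-Z_j$, which is why the bootstrap on $\boldsymbol\rho$ is needed.
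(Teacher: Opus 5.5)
Your overall architecture matches the paper's: a bootstrap on $\boldsymbol\rho$; the pure Lamb--Oseen interaction at $\boldsymbol z+\boldsymbol\rho$ reproduces Helmholtz--Kirchhoff up to $\dhk\cO(\eps^\infty)$; the $\widetilde{\boldsymbol\Omega}$-terms are $\cO(\dhk\eps^4)$; then linearization, Duhamel with Corollary~\ref{lem:transfer_matrix}, and reading $t\partial_t\rho_i$ off the equation. However, the bootstrap as you set it up does not close. Set
\[
S:=\sup_{0\le t\le T}\|\widetilde{\boldsymbol\Omega}\|_{\Xg}.
\]
The linear forcing alone gives the upper bound
\[
\frac{|\boldsymbol\rho(t)|}{\dhk(t)}\lesssim \delta^2\log^2\bigl(e+\tfrac{t}{T_{\rm adv}}\bigr)\,S .
\]
Near $t=T_{\rm adv}\exp(\delta^{-p})$ this is of size $\delta^{2-2p}\gg\delta^{2-p}$. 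So what you derive is weaker than your hypothesis $|\boldsymbol\rho|/\dhk\le\delta^{2-p}$ as soon as $\log(e+t/T_{\rm adv})\gtrsim\delta^{-p/2}$. You checked that the quadratic term is dominated by the main term, but not that the main term lies below the threshold. A threshold $B$ independent of $S$ has a second problem: it leaves a quadratic contribution $B^2\log^2$ that is not of the form $\delta^2\log^2 S$ claimed in the statement. The repair is the paper's bootstrap \cref{eq:BootstrapHyp}, with threshold $C_0\bigl(\delta^2\log^2(e+t/T_{\rm adv})\,S+\eps^{N_0}\bigr)$. The quadratic term then contributes $C_0^2\delta^4\log^6 S^2$. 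Closing requires $C_0\widetilde{\rm C}\,\delta^2\log^4\ll1$, i.e.\ $\delta^{2-4p}\ll1$, which is exactly where $p<1/2$ is used.

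A second point concerns rigor. \cref{eq:IntGradTeExp} in Lemma~\ref{lem:TeProperties} is stated for $f,h\in\cZ$, while $\widetilde\Omega_j$ only lies in $\cX_{\eps,j}$, which has a plateau weight in the intermediate region and a stretched exponential outside. Truncating the multipole series at large $M$ with a remainder that is $\cO(\eps^\infty)$ and linear in $\|\widetilde{\boldsymbol\Omega}\|$ therefore needs an argument you do not give. It is also unnecessary. The moment bookkeeping you single out as the main obstacle is immediate, since $\mrm^k(G)=0$ for all $k\ge1$ and $\mathrm{M}(\widetilde\Omega_i)=\mrm^1(\widetilde\Omega_i)=0$. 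All you need is that the $\widetilde{\boldsymbol\Omega}$-dependent part of $\mathsf P^i_\eps$ is $\lesssim\dhk\eps^4\|\widetilde{\boldsymbol\Omega}\|_{\Xg}$. The paper gets this directly from the weighted estimate \cref{bd:TeijNON}, via Lemma~\ref{lem:SobolevEstimates} and Corollary~\ref{cor:EstTZapp}, which use only the vanishing mass and first moments. It uses the series essentially only for the $G$--$G$ interaction, which lies in $\cZ$ and where all terms beyond $n=1$ vanish. Finally, your case split $t\lessgtr T_{\rm adv}$ for $t\partial_t\rho_i$ is not needed: $t\Gamma/\dhk^2=\eps^2/\delta$ identically, so $|t\,\mathfrak L(t)\boldsymbol\rho|/\dhk\lesssim\delta^{-1}\eps^2|\boldsymbol\rho|/\dhk$ gives $\delta\eps^2\log^2 S$ at once.
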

\begin{remark}
    In the expanding case, using the fact that $\delta^{-1}\eps^2 $ is bounded, we can write our solution as
    \begin{align}
        \Omega_i  = \Omega_{{\rm app},i} + \delta w_i &= G+ \delta  (w_i + \delta^{-1} \eps^2 (\Omega_{2,i} + \eps  \Omega_{3,i} + \eps^2  \Omega_{4,i})) \\ & 
        =: G + \delta \, \widetilde{\Omega}_i,
    \end{align}
    which has the same form as the one in Proposition \ref{prop:ControlZZhk}. We comment in Section \ref{sec:collapsing} about how to adapt these bounds to the collapsing case $\varsigma=-1$.
\end{remark}
\begin{proof}
It is convenient to denote 
\begin{align}
\label{notation_fij}
    {\sf f}_{ij}:={\sf f}_i-{\sf f}_j \quad \text{for } {\sf f}\in \{Z,z,\rho\}
\end{align}
    Since $Z_i = z_i + \rho_i$, we have that
    \begin{equation}
        t \partial_t \rho_i = t \partial_t Z_i - t \partial_t z_i.
    \end{equation}
    We aim at proving via a bootstrap argument the estimate
    \begin{equation}
        \frac{|\rho_i|(t)}{\dhk(t)} \leq C_0  \left(\delta^2  \left(\log\Big(e+\frac{t}{T_{\rm adv}}\Big)\right)^2\sup_{0 \leq t \leq T} \|\widetilde{\boldsymbol{\Omega}}\|_{\boldsymbol{\cX}_\eps} + \eps(t)^{N_0} \right)
        \tag{B$_\rho$}
        \label{eq:BootstrapHyp},
    \end{equation}
    where $N_0>10$ is an arbitrary fixed number (used to replace more precisely the $\cO(\eps^\infty)$).
    Let \(t_0\) be the supremum of the times \(T>0\) such that
    \cref{eq:BootstrapHyp} holds for every \(t\in[0,T]\).
    Since \(Z_i(0)=z_i(0)\), by standard arguments  we can assume that \(t_0>0\).
    
    Note that, thanks to the bootstrap hypothesis, taking $\delta$ small enough we know that for all \(i\neq j\) and
all \(t\in[0,t_0]\)
    \begin{equation}
        \frac c2 \dhk(t) \leq|Z_{ij}(t)| \leq 2 \dhk(t),
        \label{eq:DistanceComparable}
    \end{equation}
    where $c \in (0,1)$ is a positive constant that depends only on the initial normalized triangle.
    Using \cref{eq:PositionEquationBeps},
    we rewrite
        
    \begin{equation}
        t\partial_t\rho_i
        =
        \left[
        \mathsf{P}_{\eps}^i
        \left(
        \boldsymbol\Psi_G,
        \boldsymbol G,
        \boldsymbol z+\boldsymbol\rho
        \right)
        -
        \mathsf{P}_{\eps}^i
        \left(
        \boldsymbol\Psi_G,
        \boldsymbol G,
        \boldsymbol z
        \right)
        \right]
        +
        \mathcal I_i,
        \label{eq:rhoEquationSplit}
        \end{equation}
        where the error term is defined by
        \begin{equation}
            \mathcal I_i
            := \mathcal I_{1,i}+ \mathcal I_{2,i}
            \label{eq:IiDefinition}
        \end{equation}
        with
        \begin{equation}
        \mathcal I_{1,i} := \mathsf{P}_{\eps}^i
        \left(
        \boldsymbol\Psi,
        \boldsymbol\Omega,
        \boldsymbol z+\boldsymbol\rho
        \right)
        -
        \mathsf{P}_{\eps}^i
        \left(
        \boldsymbol\Psi_G,
        \boldsymbol G,
        \boldsymbol z+\boldsymbol\rho
        \right),
        \qquad 
        \mathcal I_{i ,2} :=
       \mathsf{P}_{\eps}^i
        \left(
        \boldsymbol\Psi_G,
        \boldsymbol G,
        \boldsymbol z
        \right)
        -
        t\partial_t z_i .
        \end{equation}
    For the $\mathcal{I}_{2,i}$ term, combining the definition of the position operator in \cref{eq:PpositionOp} with the expansions of the translation operators in Lemma \ref{lem:TeProperties} evaluated at the Lamb-Oseen vortex, we get
    \begin{equation}
        | \mathcal{I}_{2,i}| \leq \dhk \cO(\eps^\infty).
        \label{eq:Ii2}
    \end{equation}
On the other hand, for $\mathcal{I}_{1,i}$ we aim at applying Lemma \ref{lem:estimatesMepsKeps}, which is effectively replacing the Taylor expansion of integral quantities in Lemma \ref{lem:TeProperties}.
    But first, since $\mathsf{P}_{\eps}^i$ is bilinear in the first two components once the position is fixed, expanding the vorticity profile and the corresponding streamfunctions, we note that 
    \begin{equation}
    \label{def:splitIi1}
        \mathcal{I}_{1,i} = \delta \left[\mathsf{P}_{\eps}^i
        \left(
        \boldsymbol{\widetilde{\Psi}},
        \boldsymbol G,
        \boldsymbol z+\boldsymbol\rho
        \right)+\mathsf{P}_{\eps}^i
        \left(
        \boldsymbol\Psi_G,
        \boldsymbol{\widetilde{\Omega}},
        \boldsymbol z+\boldsymbol\rho
        \right)  + \delta \mathsf{P}_{\eps}^i
        \left(
        \boldsymbol{\widetilde{\Psi}},
        \boldsymbol{\widetilde{\Omega}},
        \boldsymbol z+\boldsymbol\rho
        \right) \right].
    \end{equation}
    Then, considering for instance the first term, by the definition \cref{eq:PpositionOp} and the zero first order moment conditions for $\widetilde{\Omega},$ we can argue as done to prove \cref{bd:TeijNON} in Lemma \ref{lem:estimatesMepsKeps} and get  that 
    \begin{align}
       \delta\left|\mathsf{P}_{\eps}^i
        \left(
    \boldsymbol{\widetilde{\Psi}},
        \boldsymbol G,
        \boldsymbol z+\boldsymbol\rho
        \right)\right|\leq \dhk\eps\sum_{j\neq i}\left|\big\langle \nabla^\perp \big(\mathcal{T}_{\eps,\boldsymbol{\boldsymbol{z}+ \boldsymbol{\rho}}}^{i,j}\, \widetilde{\Psi}_j\big),G\big\rangle\right|\lesssim \dhk \eps^4\sup_{0 \leq t \leq T}{ \|\boldsymbol{\widetilde{\Omega}}\|_{\cX_\eps}}.
    \end{align}
    For the last term in \cref{def:splitIi1} we can argue analogously whereas for the term in the middle we could even exploit Lemma \ref{lem:TeProperties}. Overall, we conclude that
    \begin{equation}
        |\mathcal I_{1,i}| \leq C_I \dhk \eps^4 \sup_{0 \leq t \leq T}{ \|\boldsymbol{\widetilde{\Omega}}\|_{\cX_\eps}}, 
        \label{eq:Ii1}
    \end{equation}
    where we note that $C_I>0$ does not depend on the $C_0$ in \cref{eq:BootstrapHyp} (we only need \cref{eq:DistanceComparable}).
    Therefore, combining \cref{eq:Ii2} and \cref{eq:Ii1}, we have
    \begin{equation}
        |\mathcal{I}_i| \leq C_I  \dhk \eps^4 \sup_{0 \leq t \leq T} \| \boldsymbol{\widetilde{\Omega}}\|_{\boldsymbol{\cX}_\eps}+  \dhk \cO(\eps^\infty) .
        \label{eq:sizeIterm}
    \end{equation}
    Going back to \cref{eq:rhoEquationSplit} we now linearize $
\mathsf{P}_{\eps}^i
\left(
\boldsymbol\Psi_G,
\boldsymbol G,
\boldsymbol z+\boldsymbol\rho
\right)
-
\mathsf{P}_{\eps}^i
\left(
\boldsymbol\Psi_G,
\boldsymbol G,
\boldsymbol z
\right)$ around $\boldsymbol{z}$.
Indeed, by the expansion of the translation operators in Lemma \ref{lem:TeProperties}, we have 
\begin{equation}
    \mathsf{P}_{\eps}^i
    \left(
    \boldsymbol\Psi_G,
    \boldsymbol G,
    \boldsymbol z+\boldsymbol\rho
    \right)
    -
    \mathsf{P}_{\eps}^i
    \left(
    \boldsymbol\Psi_G,
    \boldsymbol G,
    \boldsymbol z
    \right)
    =
    \frac{(\eps \dhk)^2}{\delta}
    \frac{\rm i}{2\pi}
    \sum_{j\neq i}
    \gamma_j
    \left(
    \frac{1}{\overline Z_{ij}}
    -
    \frac{1}{\overline z_{ij}}
    \right)
    +
    \mathcal O(\dhk\eps^\infty).
\label{eq:BepsPureDifference}
\end{equation}
Since $ Z_{ij}=z_{ij}+\rho_{ij}$, 
using the identity
\begin{equation}
    \frac{1}{\overline Z_{ij}}
    -
    \frac{1}{\overline z_{ij}}
    =
    -\frac{\overline\rho_{ij}}{\overline z_{ij}^{2}}
    +
    \frac{\overline\rho_{ij}^{2}}
    {
    \overline z_{ij}^{2}
    \overline Z_{ij}
    } ,
    \label{eq:ExactHKExpansion}
\end{equation}
we can rewrite \cref{eq:rhoEquationSplit} as follows,
\begin{equation}
    t\partial_t\rho_i
    =
    \frac{(\eps \dhk)^2}{\delta}
    \left[
    -\frac{\rm i}{2\pi}
    \sum_{j\neq i}
    \gamma_j
    \frac{\overline\rho_{ij}}{\overline z_{ij}^{2}}
    +
    \mathcal N_i(\boldsymbol\rho)
    \right]
    +
    \mathcal I_i + \dhk\cO( \eps^\infty),
    \label{eq:rhoLinearizedNonlinearError}
\end{equation}
where
\begin{equation}
    \mathcal N_i(\boldsymbol\rho)
    :=
    \frac{\rm i}{2\pi}
    \sum_{j\neq i}
    \gamma_j
    \frac{\overline\rho_{ij}^{2}}
    {
    \overline z_{ij}^{2}
    \overline Z_{ij}
    } .
    \label{eq:NiDefinition}
\end{equation}
Combining the bootstrap assumption \cref{eq:BootstrapHyp} with \cref{eq:DistanceComparable}, we obtain
\begin{equation}
    \left|\mathcal N_i(\boldsymbol\rho)\right|
    \lesssim 
    \sum_{j\neq i}
    \frac{|\rho_{ij}|^2}
    {|z_{ij}|^2 |Z_{ij}|}
    \leq C_0^2 
    \frac{\delta^4}{\dhk}
    \left(\log\Big(e+\frac{t}{T_{\rm adv}}\Big)\right)^4
    \sup_{0 \leq t \leq T}
    \|\widetilde{\boldsymbol\Omega}\|_{\boldsymbol \cX_\eps}^2 +  \frac{1}{\dhk}\cO(\eps^\infty).
    \label{eq:boundN}
\end{equation}
Recalling that $\Gamma=\frac{(\dhk\epsilon)^2}{\delta t}$ and $\gamma_i=\Gamma_i/\Gamma$,
we can rewrite \cref{eq:rhoLinearizedNonlinearError} as
\begin{equation}
    \partial_t\rho_i
    =
    -\frac{\rm i}{2\pi}
    \sum_{j\neq i}
    \Gamma_j
    \frac{\overline\rho_{ij}}{\overline z_{ij}^{2}}
    +
    \Gamma\mathcal N_i(\boldsymbol\rho)
    +
    \frac{1}{t}\big(\underbrace{\mathcal{I}_i+\left(\dhk+\frac{\Gamma t}{\dhk}\right)\cO(\eps^\infty)}_{:=\widetilde{\mathcal I}_i}\big), \qquad
    \label{eq:rhoLinearized}
\end{equation}
The first term on the right hand side is precisely the linearized Helmholtz--Kirchhoff system around
$\boldsymbol z(t)$, as
detailed in \cref{sec:selfsimilar_HK}.
Introducing the enlarged variables $\widetilde{\boldsymbol\rho}:=\left(
        \boldsymbol\rho,
        \overline{\boldsymbol\rho}
    \right)$,
the previous system \cref{eq:rhoLinearized} can be written in the form
\begin{equation}
    \partial_t \widetilde{\boldsymbol\rho}
    =
    \mathfrak{L}(t)\widetilde{\boldsymbol\rho}
    +
    \boldsymbol{\widetilde{\mathcal N}}(t),
    \label{eq:rhoExpandedSystem}
\end{equation}
where \(\mathfrak{L}(t)\) is the linearized Helmholtz--Kirchhoff operator and
\begin{equation}
    \boldsymbol{\widetilde{\mathcal N}}(t)
    :=
    \left(
        \Gamma\boldsymbol{\mathcal N}(\boldsymbol\rho)
        +
        \frac{1}{t}\widetilde{\boldsymbol{\mathcal I}},
        \,
        \Gamma\overline{\boldsymbol{\mathcal N}(\boldsymbol\rho)}
        +
        \frac{1}{t}\overline{\widetilde{\boldsymbol{\mathcal I}}}
    \right).
\end{equation}
Since \(\boldsymbol\rho(0)=0\), Duhamel's formula gives
\begin{equation}
    \widetilde{\boldsymbol\rho}(t)
    =
    \int_0^t
    \mathcal{U}_\mathfrak{L}(t,s)
    \widetilde{\boldsymbol{\mathcal N}}(s)
    \,ds,
    \label{eq:DuhamelRhoExpanded}
\end{equation}
where \(\mathcal{U}_\mathfrak{L}(t,s)\) denotes the solution operator associated with the
homogeneous system $\partial_t f=\mathfrak{L}(t)f$.
As shown in \cref{sec:selfsimilar_HK}, under the change of variables
\begin{equation}
\label{def:sstimetau}
    \tau
    :=
    T_{\rm adv}
    \log\left(1+\frac{t}{T_{\rm adv}}\right),
    \qquad
    g(\tau)
    :=
    \begin{pmatrix}
        e^{-\mathrm{i}\upomega\tau}\boldsymbol f(t(\tau))\\[2mm]
        e^{\mathrm{i}\upomega\tau}\overline{\boldsymbol f(t(\tau))}
    \end{pmatrix},
\end{equation}
for a suitable parameter \(\upomega \in \mathbb R\), the homogeneous system is transformed into
\begin{equation}
    \partial_\tau g = \mathfrak{B}g,
\end{equation}
 where $\mathfrak{B}$ is \emph{time independent}.
Applying Corollary \ref{lem:transfer_matrix} to \cref{eq:rhoExpandedSystem} gives
\begin{equation}
    \left|\mathcal U_\mathfrak{L}(t,s)\right|
    \lesssim \left( 1+ \log \frac{\dhk(t)}{\dhk(s)}\right)
    \frac{\dhk(t)}{\dhk(s)} .
    \label{eq:TransferMatrixBoundRho}
\end{equation}
Taking the first three components in \cref{eq:DuhamelRhoExpanded}, we then get
\begin{equation}
    |\boldsymbol\rho(t)|
    \lesssim
    \int_0^t \left( 1+ \log \frac{\dhk(t)}{\dhk(s)}\right)
    \frac{\dhk(t)}{\dhk(s)}
    \left(
        \Gamma
        \left|\boldsymbol{\mathcal N}(\boldsymbol\rho)(s)\right|
        +
        \frac{1}{s}
        |\widetilde{\boldsymbol{\mathcal I}}(s)|
    \right)
    \,ds .
    \label{eq:rhoDuhamelBound}
\end{equation}
We are left to estimate the two contributions on the right-hand
side of \cref{eq:rhoDuhamelBound}, for which we need the explicit formula for $\dhk$  defined in \cref{def:dhk}.
We first estimate the nonlinear Helmholtz--Kirchhoff remainder. By
\cref{eq:boundN}, we have
\begin{equation}
    \left|\mathcal N_i(\boldsymbol\rho)(s)\right|
    \lesssim C_0^2
    \frac{\delta^4}{\dhk(s)}
    \left(\log\Big(e+\frac{s}{T_{\rm adv}}\Big)\right)^4
    \sup_{0 \leq t \leq T}
    \|\widetilde{\boldsymbol\Omega}\|_{\boldsymbol \cX_\epsilon}^2 .
\end{equation}
Therefore,
\begin{align}
    \int_0^t \left( 1+ \right .& \left . \log \frac{\dhk(t)}{\dhk(s)}\right)   \frac{\dhk(t)}{\dhk(s)}
    \Gamma
    \left|\mathcal N_l(\boldsymbol\rho)(s)\right|
    \,ds
     \\ &  \lesssim C_0^2
    \delta^4
    \dhk(t)
    \sup_{0 \leq t \leq T}
    \|\widetilde{\boldsymbol\Omega}\|_{\boldsymbol \cX_\epsilon}^2
    \left(\log\Big(e+\frac{t}{T_{\rm adv}}\Big)\right)^3 \int_0^t
    \frac{\Gamma}{\dhk(s)^2}
    \log^2\left(e+\frac{s}{T_{\rm adv}}\right)
    \,ds \\ & 
    \lesssim C_0^2
    \delta^4
    \dhk(t)
    \left(\log\Big(e+\frac{t}{T_{\rm adv}}\Big)\right)^6
    \sup_{0 \leq t \leq T}
    \|\widetilde{\boldsymbol\Omega}\|_{\boldsymbol \cX_\epsilon}^2,
    \label{eq:DuhamelNonlinearRho}
\end{align}
where we used
\begin{equation}
    \frac{\Gamma}{\dhk(s)^2}
    =
    \frac{\epsilon(s)^2}{\delta s} \lesssim 
    \frac{1}{s+ T_{\rm adv}}.
\end{equation}
We now estimate the perturbative error $\mathcal I_i $. By \cref{eq:sizeIterm}, keeping the constant $C_I$ explicit, 
 we have
\begin{align}
    \int_0^t \left( 1+ \right .&  \left . \log \frac{\dhk(t)}{\dhk(s)}\right)
    \frac{\dhk(t)}{\dhk(s)}
    \frac{1}{s}
    |\widetilde{\mathcal I}_i(s)|
    \,ds
    \\ & \leq C C_I
    \dhk(t) \left(\log\Big(e+\frac{t}{T_{\rm adv}}\Big)\right)\sup_{0 \leq t \leq T} \|\widetilde{\boldsymbol\Omega}\|_{\boldsymbol \cX_\epsilon}
    \int_0^t
    \frac{\epsilon(s)^4}{s}
    \,ds + \dhk(t) \int_0^t \frac{\cO(\eps^{\infty}(s))}{s} ds
    \\
    &\leq CC_I
    \delta^2
    \dhk(t)
    \left(\log\Big(e+\frac{t}{T_{\rm adv}}\Big)\right)^2
        \sup_{0 \leq t \leq T}
        \|\widetilde{\boldsymbol\Omega}\|_{\boldsymbol \cX_\epsilon} + C \dhk(t) \eps^{\infty}(t),
        \label{eq:DuhamelErrorRho}
\end{align}
where we used again $\eps(t)^2 = \frac{\delta}{|\alpha|} \frac{t}{t+T_{\rm adv}}$ to get that
\begin{equation}
    \frac{\epsilon(t)^4}{t}
    \lesssim
    \frac{\delta^2}{t+T_{\rm adv}}.
\end{equation}
Combining \cref{eq:DuhamelNonlinearRho} and
\cref{eq:DuhamelErrorRho}, on our time interval, we thus obtain
\begin{align}
    |\rho_i(t)|
    & \leq \widetilde{C} \left[ 
    \delta^2
    \dhk(t)\left(\log\Big(e+\frac{t}{T_{\rm adv}}\Big)\right)^2
    \sup_{0 \leq t \leq T} \|\widetilde{\boldsymbol\Omega}\|_{\boldsymbol \cX_\epsilon} + \dhk\eps(t)^{N} \right. \\ & \left.\quad+ C_0^2
    \delta^4
    \dhk(t)
    \left(\log\Big(e+\frac{t}{T_{\rm adv}}\Big)\right)^6
    \sup_{0 \leq t \leq T} \|\widetilde{\boldsymbol\Omega}\|_{\boldsymbol \cX_\epsilon}^2 \right],
    \label{eq:rhoFinalBound}
\end{align}
where the constant $\widetilde{C}$ does not depend on $C_0$ and $N\geq N_0$ is an arbitrary large number replacing the $\cO(\eps^\infty)$ errors.
Taking then $C_0$ sufficiently large and $\delta$ sufficiently small, we improve the bootstrap estimate and this proves \cref{eq:ControlZZhk}.

Finally, we estimate \(t\partial_t\rho_i\). Returning to \cref{eq:rhoEquationSplit},
we first bound the linear contribution. Since $|z_{ij}(t)|\sim \dhk(t)$,
\cref{eq:rhoFinalBound} gives
\begin{align}
    \frac{(\dhk(t)\epsilon(t))^2}{\delta}
    \sum_{j\neq i}
    \frac{|\rho_{ij}(t)|}{|z_{ij}(t)|^2}
    &\lesssim
    \delta\epsilon(t)^2
    \dhk(t)
    \left(\log\Big(e+\frac{t}{T_{\rm adv}}\Big)\right)^2
    \sup_{0 \leq t \leq T}
    \|\widetilde{\boldsymbol\Omega}\|_{\boldsymbol \cX_\epsilon}.
\end{align}
The nonlinear contribution is of higher order. Indeed, by \cref{eq:boundN},
\begin{align}
    \frac{(\dhk(t)\epsilon(t))^2}{\delta}
    \left|\mathcal N_i(\boldsymbol\rho)(t)\right|
    &\lesssim
    \delta^3
    \epsilon(t)^2
    \dhk(t)
    \left(\log\Big(e+\frac{t}{T_{\rm adv}}\Big)\right)^4
    \sup_{0 \leq t \leq T}
    \|\widetilde{\boldsymbol\Omega}\|_{\boldsymbol{ \mathcal{X}}_\epsilon}^2  + \dhk(t) \cO(\eps(t)^\infty).
\end{align}
Moreover, by \cref{eq:sizeIterm} and $\eps^2 = \frac{\delta}{|\alpha|} \frac{s}{s+T_{\rm adv}}$,
\begin{equation}
    |\mathcal I_i(t)|
    \lesssim
    \delta\epsilon(t)^2
    \dhk(t) \left(\log\Big(e+\frac{t}{T_{\rm adv}}\Big)\right)^2
    \sup_{0 \leq t \leq T}
    \|\widetilde{\boldsymbol\Omega}\|_{\boldsymbol{ \mathcal{X}}_\epsilon} + \dhk(t)\cO(\eps(t)^\infty).
\end{equation}
Therefore,
\begin{equation}
    |t\partial_t\rho_l(t)|
    \lesssim
    \delta\epsilon(t)^2
    \dhk(t)
    \left(\log\Big(e+\frac{t}{T_{\rm adv}}\Big)\right)^2
    \sup_{0 \leq t \leq T}
    \|\widetilde{\boldsymbol\Omega}\|_{\boldsymbol{ \mathcal{X}}_\epsilon} +  \dhk(t)\cO(\eps(t)^\infty).
\end{equation}
that concludes the proof.
\end{proof}

\subsection{Bound on the nonlinear profile.}
\label{sec:NLbounds}
It remains to control the nonlinear correction $\boldsymbol{w}$, for which we need to improve the bootstrap assumption \cref{eq:BootstrapHP} by proving that, under the hypothesis \cref{eq:BootstrapHP}, we can actually show that
\begin{equation}
\label{bd:goalw}
    \| \boldsymbol{w}(t) \|_{\Xg} \leq 2 \widetilde{C} \eps(t) \qquad \text{ for  all} \ t \in [0,T), \quad T \leq T_{\rm adv}\exp(\delta^{-p}).
\end{equation}
The proof of this bound occupies the rest of the section.
\subsubsection{\textbf{Energy identity}} To prove \cref{bd:goalw}, we first compute the time-derivative of the energy.
\begin{lemma}
    Let $\varsigma=1$ and $\boldsymbol{w}$ be the solution to \cref{eq:wvec} and $E_\eps(\boldsymbol{w})$ be defined as in \cref{eq:EepsW}. Then 
    \begin{equation}
    \label{eq:tdtE}
        t \partial_t E_\eps(\boldsymbol{w}) + D_\eps(\boldsymbol{w}) = \textnormal{F} + \textnormal{A} + \textnormal{L} + \textnormal{N}
    \end{equation}
    where $D_\eps(\boldsymbol{w})$ is defined in \cref{eq:defDeps} and we define:
    \begin{itemize}
        \item the forcing term
        \begin{equation}
            \textnormal{F}:= -\frac{1}{\delta^2} \big(\langle \boldsymbol{\cR}, \mathcal{M}_\eps(\boldsymbol{w}) \rangle +  \langle \boldsymbol{\mathsf{T}}(\boldsymbol{\rho}), \mathcal{M}_\eps(\boldsymbol{w}) \rangle\big);
                \label{eq:forcing}
        \end{equation}
        \item the advection term
            \begin{equation}
             \textnormal{A}:=  -\frac{1}{\delta}\langle \cJ_{\mathrm{app}} (\boldsymbol{w}), \mathcal{M}_\eps(\boldsymbol{w}) \rangle;
             \label{eq:advection}
        \end{equation}
        \item the linear error term
        \begin{equation}
             \textnormal{L}:=  \langle \boldsymbol{\mathsf{L}}(\boldsymbol{w},\boldsymbol{\rho}), \mathcal{M}_\eps(\boldsymbol{w}) \rangle;
             \label{eq:linear}
        \end{equation}
        \item the commutator and non-linear error terms
        \begin{equation}
             \textnormal{N}:=  \langle \boldsymbol{\mathsf{N}}(\boldsymbol{w},\boldsymbol{\rho}), \mathcal{M}_\eps(\boldsymbol{w})\rangle + \frac{1}{2}\langle \boldsymbol{w}, [t\partial_t,\cK_\eps](\boldsymbol{w}) \rangle.
             \label{eq:NLterms}
        \end{equation}
    \end{itemize}
    \label{lem:energyTimeDerivative}
\end{lemma}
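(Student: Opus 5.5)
The identity is a direct computation, and I would carry it out by differentiating $E_\eps(\boldsymbol w)=\frac12\langle \boldsymbol w,\mathcal M_\eps \boldsymbol w\rangle$ along the flow \cref{eq:wvec}. Using the splitting $\mathcal M_\eps=\cA_\eps+\cK_\eps$ from \cref{eq:Meps}, the derivative splits into two contributions:
\begin{equation}
t\de_t E_\eps(\boldsymbol w)=\langle t\de_t\boldsymbol w,\mathcal M_\eps\boldsymbol w\rangle+\frac12\langle (t\de_t\cA_\eps)\boldsymbol w,\boldsymbol w\rangle+\frac12\langle \boldsymbol w,[t\de_t,\cK_\eps]\boldsymbol w\rangle.
\end{equation}
\begin{itemize}
\item The first term collects the contribution of $t\de_t\boldsymbol w$, and uses that $\mathcal M_\eps$ is self-adjoint (Lemma \ref{lem:estimatesMepsKeps}).
\item The second term comes from the time dependence of the weights $A_{\eps,i}$. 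Only $A_{\eps,i}$ depends on $t$ in $\cA_\eps$, since $\Delta^{-1}$ is time independent.
\item The third term comes from the time dependence of the translation operators inside $\cK_\eps$, through $\eps(t)$, $\dhk(t)$ and $\boldsymbol z(t)$. I would write it as a commutator $[t\de_t,\cK_\eps]=t\de_t\cK_\eps-\cK_\eps t\de_t$, acting as multiplication by the derivative of the operator.
\end{itemize}
The self-adjointness of $\cK_\eps$ needed for this rewriting follows from \cref{eq:Tij*}.

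Next I would substitute \cref{eq:wvec} for $t\de_t\boldsymbol w$ in the first term:
\begin{equation}
\langle t\de_t\boldsymbol w,\mathcal M_\eps\boldsymbol w\rangle=\langle \cL\boldsymbol w,\mathcal M_\eps\boldsymbol w\rangle-\frac1\delta\langle\cJ_{\app}\boldsymbol w,\mathcal M_\eps\boldsymbol w\rangle-\frac1{\delta^2}\langle\boldsymbol\cR+\boldsymbol{\mathsf T}(\boldsymbol\rho),\mathcal M_\eps\boldsymbol w\rangle+\langle\boldsymbol{\mathsf L}+\boldsymbol{\mathsf N},\mathcal M_\eps\boldsymbol w\rangle.
\end{equation}
Each piece is then matched with the terms in the statement:
\begin{itemize}
\item The diffusion term $\langle\cL\boldsymbol w,\mathcal M_\eps\boldsymbol w\rangle$ combines with $\frac12\langle (t\de_t\cA_\eps)\boldsymbol w,\boldsymbol w\rangle$ to give exactly $-D_\eps(\boldsymbol w)$, by definition \cref{eq:defDeps}.
\item The transport term is the advection term $\textnormal{A}$ in \cref{eq:advection}.
\item The forcing term is $\textnormal{F}$ in \cref{eq:forcing}.
\item The linear error term is $\textnormal{L}$ in \cref{eq:linear}.
\item The nonlinear term, together with the commutator, gives $\textnormal{N}$ in \cref{eq:NLterms}.
\end{itemize}
Moving $D_\eps$ to the left-hand side yields \cref{eq:tdtE}.

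The main point requiring care is justification rather than algebra, for two reasons.
\begin{itemize}
\item The weight $A_{\eps,i}$ is only continuous and piecewise smooth, and the region boundaries $\partial\Ii,\partial\IIi$ move in time. So $t\de_t A_{\eps,i}$ must be interpreted piecewise. Since $A_{\eps,i}$ is continuous across the moving interfaces (Remark \ref{rem:continuity_weight}), the boundary contributions from differentiating domain-dependent integrals cancel, and no jump terms appear.
\item One needs enough regularity and decay of $\boldsymbol w$ for the pairings to make sense. I would obtain this from the parabolic smoothing of \cref{eq:wvec}, working with the regularized solution given by \cite{GaGa2005}, and then pass to the limit.
\end{itemize}
With these points checked, the lemma is a bookkeeping identity.
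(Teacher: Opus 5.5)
Your proposal is correct and matches the paper's proof. The paper also differentiates $E_\eps$ using the self-adjointness of $\mathcal M_\eps$, puts the diagonal weight derivatives $t\partial_t A_{\eps,i}$ into $D_\eps$ and the commutator $[t\partial_t,\cK_\eps]$ into $\textnormal{N}$, and then substitutes the equation for $\boldsymbol w$. Your remarks on the moving interfaces, where continuity of $A_{\eps,i}$ removes the boundary terms, and on regularity add justification that the paper leaves implicit.
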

\begin{proof}
Computing the time derivative of $E_\eps$, we have
\begin{align}
    t\partial_t E_\eps(\boldsymbol{w}) & = \frac{1}{2} 
    \langle  t\partial_t \boldsymbol{w},  \mathcal{M}_\eps(\boldsymbol{w})\rangle + \frac{1}{2} 
    \langle   \boldsymbol{w},  \mathcal{M}_\eps(t\partial_t\boldsymbol{w})\rangle + \frac{1}{2} 
    \langle   \boldsymbol{w}, [t\partial_t,\mathcal{M}_\eps] (\boldsymbol{w})\rangle \\ 
    & = \langle  t\partial_t \boldsymbol{w},  \mathcal{M}_\eps(\boldsymbol{w})\rangle + \frac{1}{2} 
    \langle   \boldsymbol{w}, [t\partial_t,\mathcal{M}_\eps] (\boldsymbol{w})\rangle,
    \label{eq:tdtEeps}
\end{align}
where we used that $\cM_{\eps}$ is self-adjoint. 
Moreover,
\begin{equation}
    [t\partial_t,\mathcal{M}_\eps] = \begin{pmatrix}
        t\partial_tA_{\eps,1} & 0 & 0 \\ 0 & t\partial_tA_{\eps,2}  & 0\\ 0& 0 & t\partial_tA_{\eps,3}  
    \end{pmatrix} + [t\partial_t,\cK_\eps]. 
\end{equation}
The first term is part of the definition of $D_\eps(\boldsymbol{w})$, while the second one is absorbed into the definition of the term $\textnormal{N}$. All the other terms are obtained by using \cref{eq:wvec} in the first term in \cref{eq:tdtEeps}.
\end{proof}
We now have to estimate all the terms in the right-hand side of \cref{eq:tdtE}. We shall use the following convention. Given any term $\mathrm{Q}\in \{\mathrm{F},\mathrm{A},\mathrm{L},\mathrm{N}\}$, we will split it in different terms of the form
\begin{align}
    \mathrm{Q}=\sum_{n=1}^M\mathrm{Q}_n= \sum_{n=1}^M\sum_{i=1}^3 \Big( \mathrm Q^{\mathrm h}_{n;i}+ Q^{\mathrm e}_{n;i}+ \sum_{j\neq i} \Big( \mathrm Q^{\mathrm h}_{n;i,j} + \mathrm Q^{\mathrm e}_{n;i,j} + \sum_{k\neq i} \mathrm Q^{\mathrm e}_{n;i,j,k} \Big) \Big). 
\end{align}
Here the indices after the semicolon identify the vortex and its interactions, while the superscripts $\mathrm h$ and $\mathrm e$ stand respectively for hard and easy terms. We will only detail the hard terms and we will explain why the easy terms are in fact lower order.
\subsubsection{\textbf{Bound of the forcing terms}}
We start with the bounds for the forcing terms in \cref{eq:forcing}. 
We prove the following.

\begin{lemma}
\label{lem:forcing}
    Fix $p \in [0,1/4)$. Then,  for $\sigma_1 >0$ small enough and $\sigma_2>1$, under the bootstrap hypothesis \cref{eq:BootstrapHP} there exists a constant $C>0$, independent of $\eps,\delta$ such that 
    \begin{equation}
        |\textnormal F(t)| \leq C \eps^2 + \frac{1}{10}  D_\eps(\boldsymbol{w})(t), \qquad \text{ for all } t\in[0,T), \quad T\leq T_{\rm adv}\exp(\delta^{-p}).
    \end{equation}
\end{lemma}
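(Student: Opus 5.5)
We split $\mathrm F=\mathrm F_{\cR}+\mathrm F_{\mathsf T}$ according to \cref{eq:forcing}. For the residual part we use Lemma \ref{lem:estimatesMepsKeps}, which gives $|\langle\boldsymbol\cR,\cM_\eps\boldsymbol w\rangle|\lesssim\|\boldsymbol\cR\|_{\Xg}\|\boldsymbol w\|_{\Xg}$. Here we use that $\cR_i=\cO_\cZ(\eps^5+\delta^2\eps^2)$ by Proposition \ref{prop:ApproximateSol}, and that $\cO_\cZ$ functions lie in $\cY\hookrightarrow\X$ with uniform constants. Since $\eps^2\lesssim\delta$ in the expanding case, this yields
\[
\delta^{-2}|\langle\boldsymbol\cR,\cM_\eps\boldsymbol w\rangle|\lesssim(\delta^{-2}\eps^5+\eps^2)\|\boldsymbol w\|_{\Xg}\lesssim\eps\|\boldsymbol w\|_{\Xg}.
\]
Young's inequality then gives at most $C\eps^2+\frac1{40}\kappa_D\|\boldsymbol w\|_{\Xg}^2$. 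The last term is absorbed by $\frac1{20}D_\eps$ via Proposition \ref{prop:EnDiss}.

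The translation part is the main obstacle, since the naive size $\eps(\log(e+t/T_{\rm adv}))^2$ from \cref{eq:mainideas1non,eq:mainideas2non} is not square-summable against $D_\eps$. We expand $\mathsf T_i(\boldsymbol\rho)$ in powers of $\xi$.
\begin{itemize}
\item The first term contributes $-\frac{\delta t}{\eps\dhk}\{\partial_t\rho_i\cdot\xi^\perp,G\}$, which is $c\,\frac{\delta\, t\partial_t\rho_i}{\eps\dhk}\cdot\nabla G$. Its remaining part, involving $\Omega_{\app,i}-G=\cO_\cZ(\eps^2)$, carries an extra $\eps^2$.
\item For the difference of translations, we Taylor expand $\cT^{i,j}_{\eps,\boldsymbol z+\boldsymbol\rho}-\cT^{i,j}_{\eps,\boldsymbol z}$ using Lemma \ref{lem:TeProperties}. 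The $\xi$-independent piece drops out in the bracket. The linear-in-$\xi$ piece is of order $\eps\dhk\,|\rho_{ij}|/|z_{ij}|^2$ and again produces a multiple of $\nabla G$. Higher modes carry at least an extra factor $\eps$ times $|\rho|/\dhk$.
\end{itemize}
Proposition \ref{prop:ControlZZhk} applies under \cref{eq:BootstrapHP} and the remark following it, since $\|\widetilde{\boldsymbol\Omega}\|_{\Xg}\lesssim1$. It gives $|\rho|/\dhk+|t\partial_t\rho|/\dhk\lesssim\delta\eps^2(\log(e+t/T_{\rm adv}))^2\lesssim\delta^{2}\cdot\delta^{-2p}$ on $t\le T_{\rm adv}e^{\delta^{-p}}$. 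With this, the non-$\nabla G$ remainder contributes $\delta^{-2}\cdot\delta\,\eps^2\cdot\eps\,(\log)^2\lesssim\eps^2\delta^{-2p}\lesssim\eps^{2-4p}$ times $\|\boldsymbol w\|$. This is $o(\eps)$ when $p<1/4$, so Young's inequality closes it.

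For the dangerous terms $\delta^{-2}\langle a(t)\cdot\nabla G,\cM_\eps\boldsymbol w\rangle$ with $|a|\lesssim\eps\delta^{-2p}$, we use self-adjointness and write $\langle\cM_\eps(\nabla G\,\mathbf e_i),\boldsymbol w\rangle$. On $\Ii$ we have $A_{\eps,i}+\gamma_i^2\Delta^{-1}=\gamma_i^2\mathsf H+(A_{\eps,i}-\gamma_i^2A)$. Proposition \ref{it:kernel} gives $\mathsf H\partial_kG=0$, and Proposition \ref{prop:weights}(iii) bounds the correction by $\eps^{\eta_2}$ with $\eta_2<2$ arbitrarily close to $2$. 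Outside $\Ii$, Gaussian decay of $\nabla G$ against $A_{\eps,i}$ gives $\cO(\eps^\infty)$. The off-diagonal $\cK_\eps$ entries give $\eps^2$ by Lemma \ref{lem:estimatesMepsKeps}, because $\nabla G$ has zero mean, although its first moment is nonzero. Here we would use \cref{eq:gTijf} on the $w_j$ side, where the moments of $\boldsymbol w$ vanish, and self-adjointness to move $\cT$ onto $\boldsymbol w$. The $\Delta^{-1}$-cutoff mismatch between $A\mathbbm 1_{\Ii}$ and the full $\mathsf H$ is also Gaussian small. Altogether this contribution is $\lesssim\eps^{1+\eta_2}\delta^{-2p}\|\boldsymbol w\|\lesssim\eps^{2}\|\boldsymbol w\|$ for $p$ small and $\eta_2$ near $2$, which Young's inequality handles.

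Summing the residual part, the non-$\nabla G$ translation remainder and the $\nabla G$ terms gives $|\mathrm F|\le C\eps^2+\frac1{10}D_\eps$.
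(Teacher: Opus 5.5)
Your proposal is correct and follows essentially the paper's proof. The residual is handled by Lemma \ref{lem:estimatesMepsKeps} together with $\eps^2\lesssim\delta$. In $\boldsymbol{\mathsf T}(\boldsymbol\rho)$ you isolate the $\partial_\ell G$ components, coming from $t\partial_t\rho_i$ and from the linear Taylor term of $\cT^{i,j}_{\eps,\boldsymbol z+\boldsymbol\rho}-\cT^{i,j}_{\eps,\boldsymbol z}$. These are removed via $\mathsf H\partial_\ell G=0$, up to the $\eps^{\eta_2}$ weight error and Gaussian tails. The off-diagonal and higher-order pieces are then bounded with Proposition \ref{prop:ControlZZhk} and $p<1/4$. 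Moving $\cM_\eps$ onto $\partial_\ell G\,\mathbf e_i$ by self-adjointness is just a repackaging of the paper's use of $\langle\partial_\ell G,\mathsf H w_i\rangle=0$.

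Two small slips do not affect the argument. First, the second-order Taylor remainder is $\delta^{-2}\eps^2|\rho_{ij}|/\dhk$; your intermediate product $\delta^{-2}\cdot\delta\eps^2\cdot\eps(\log)^2$ is missing a factor $\eps$. Second, the $\delta^{-2}$ in front of $\langle a\cdot\nabla G,\cM_\eps\boldsymbol w\rangle$ should already be absorbed into $|a|\lesssim\eps\delta^{-2p}$. The sizes $\eps^2\delta^{-2p}$ and $\eps^{1+\eta_2}\delta^{-2p}$ that you actually use are the right ones.
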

\begin{proof}
We split 
    \begin{equation}
        \textnormal{F}=:-(\textnormal{F}_1 + \textnormal{F}_2)
    \end{equation}
    where
    \begin{equation}
            \textnormal{F}_1:= \frac{1}{\delta^2} \langle \boldsymbol{\cR}, \mathcal{M}_\eps(\boldsymbol{w}) \rangle\qquad \textnormal{F}_2 :=\frac{1}{\delta^2} \langle \boldsymbol{\mathsf{T}}(\boldsymbol{\rho}), \mathcal{M}_\eps(\boldsymbol{w}) \rangle
                \label{eq:forcing2}.
    \end{equation}
We now proceed estimating each one of these terms.

\smallskip
\paragraph{$\blackinwhitediamond$ \textit{Bound on $\textnormal{F}_1$}}
Using Lemma \ref{lem:estimatesMepsKeps} and the estimates on the remainder of the approximate solution in Proposition \ref{prop:ApproximateSol}, we have
    \begin{equation}
        \delta^{-2}| \langle \boldsymbol{\cR}, \mathcal{M}_\eps(\boldsymbol{w}) \rangle| \lesssim \delta^{-2} \| \boldsymbol{\cR}\|_{\Xg} \| \boldsymbol{w}\|_{\Xg} \lesssim \delta^{-2} (\eps^5+\delta^2 \eps^2)  \| \boldsymbol{w}\|_{\Xg}\lesssim \eps \| \boldsymbol{w}\|_{\Xg},
    \end{equation}
    where in the last inequality we used that $\eps^2 \lesssim \delta$.
    Then, by Proposition \ref{prop:EnDiss} and Young's inequality, we get
    \begin{equation}
        |\textnormal{F}_1| \lesssim  \eps \, \sqrt{D_\eps(\boldsymbol{w})} \leq C \eps^2 + \frac{1}{100}D_\eps(\boldsymbol{w}),
    \end{equation}
which is consistent with the desired bound.

\smallskip
\paragraph{$\blackinwhitediamond$ \textit{Bound on $\textnormal{F}_2$}}    
    We consider now the other term 
    \begin{equation}
        \mathrm{F}_2=\delta^{-2} \sum_{i=1}^3\langle \mathsf{T}_i(\boldsymbol{\rho}), \mathcal{M}_\eps(\boldsymbol{w})_i \rangle 
    \end{equation}
    First of all, we recall that 
    \begin{align}
\mathcal{M}_{\eps}(\boldsymbol{w})_i=(A_{\eps, i}+\gamma_i^2\Delta^{-1})w_i+\gamma_i\sum_{j\neq i}\gamma_j\mathcal{T}^{i,j}_{\eps, \boldsymbol{z}}\Delta^{-1}w_j.
    \end{align}
    Thus, we can split
    \begin{align}
        \mathrm{F}_2=\sum_{i=1}^3 \Big(\mathrm{F}_{2;i}^{\mathrm h}+\sum_{j\neq i} \Big(\mathrm{F}^{\mathrm h}_{2;i,j}+\mathrm{F}^{\mathrm e}_{2;i,j}+\sum_{k\neq i}\mathrm{F}_{2;i,j,k}^{\mathrm e}\Big)\Big)
        \end{align}
    where, recalling  \cref{def:translation_forcing}, we define 
    \begin{align}
&\mathrm{F}_{2;i}^{\mathrm h} :=\delta^{-1}\frac{1}{\eps \dhk} \left \langle \left\{  t\partial_t \rho_{i} \cdot \xi^\perp , \Omega_{{\rm app},i} \right\},(A_{\eps, i}+\gamma_i^2\Delta^{-1})w_i\right \rangle,\\
&\mathrm{F}_{2;i,j}^{\mathrm h} := \delta^{-2}
        \gamma_j\left \langle
        \left\{
\big(\cT_{\eps,\boldsymbol{z}+\boldsymbol{\rho}}^{i,j}
            -\cT_{\eps,\boldsymbol{z}}^{i,j}\big)
            \Delta^{-1}\Omega_{\app,j},
            \Omega_{\app,i}
        \right\},    (A_{\eps, i}+\gamma_i^2\Delta^{-1})w_i\right \rangle,\\
        &\mathrm{F}_{2;i,j}^{\mathrm e} :=\delta^{-1}\frac{\gamma_i\gamma_j}{\eps \dhk} \left \langle \left\{  t\partial_t \rho_{i} \cdot \xi^\perp , \Omega_{{\rm app},i} \right\},\mathcal{T}^{i,j}_{\eps,\boldsymbol{z}}\Delta^{-1}w_j\right \rangle,\\
        &\mathrm{F}_{2;i,j,k}^{\mathrm e} := \delta^{-2}
        \gamma_i\gamma_j\gamma_k\left \langle
        \left\{
\big(\cT_{\eps,\boldsymbol{z}+\boldsymbol{\rho}}^{i,j}
            -\cT_{\eps,\boldsymbol{z}}^{i,j}\big)
            \Delta^{-1}\Omega_{\app,j},
            \Omega_{\app,i}
        \right\},    \mathcal{T}^{i,k}_{\eps,\boldsymbol{z}}\Delta^{-1}w_k\right \rangle
    \end{align}
We now estimate each term separately, but we first observe that a naive application of Lemma \ref{lem:estimatesMepsKeps} and Proposition \ref{prop:ControlZZhk}, would yield a bound of the form
    \begin{equation}
        |\mathrm{F}_{2;i}^{\mathrm h} | \leq \frac{1}{100} D_\eps(\boldsymbol{w}) + C \eps^2   \left(\log\left(e+\frac{t}{T_{\rm adv}}\right)\right)^4 
    \end{equation}
    which would not allow us to close the bootstrap argument. Therefore, a more accurate analysis is needed. Consider $\mathrm{F}_{2;i}^{\mathrm h}$, which we rewrite as 
    \begin{equation}
        \mathrm{F}_{2;i}^{\mathrm h} = \frac{1}{\delta}\frac{ 1}{\eps \dhk} \left \langle t \partial_t \rho_{i} \cdot \nabla \Omega_{{\rm app},i} ,(A_{\eps, i} + \gamma_i^2 \Delta^{-1}) w_i\right \rangle.
    \end{equation}
    Since the approximate solution is of the form $\Omega_{{\rm app},i} = G + \eps^2 \widetilde{\Omega}_i$, with $\widetilde{\Omega}_i \in \cX_{\eps, i}$ where $M(\widetilde{\Omega}_i) = m_{1}^1(\widetilde{\Omega}_i) =m_{2}^1(\widetilde{\Omega}_i) = 0$, we have 
    \begin{equation}
        \left|\mathrm{F}_{2;i}^{\mathrm h}\right| \lesssim \left| \frac{1}{\delta}\frac{1}{\eps \dhk} \left \langle t \partial_t \rho_{i} \cdot \nabla G , (A_{\eps, i} + \gamma_i^2 \Delta^{-1}) w_i\right \rangle\right| + \eps^3 \delta^{-2p}\|{\boldsymbol{w}}\|_{\boldsymbol{X}_\eps}
    \end{equation}
    Therefore, it remains to estimate the Lamb-Oseen contribution. 
    Set
    $$
a_i(t)
:=
\frac{t\partial_t\rho_i(t)}
{\delta\eps \dhk(t)}
=
\bigl(a_{i,1}(t),a_{i,2}(t)\bigr),
$$
and note that, by Proposition \ref{prop:ControlZZhk}, it holds that
\begin{equation}
    |a_i(t)| \lesssim \eps(t) \delta^{-2p}.
    \label{eq:aitBound}
\end{equation}
Then
\begin{equation}
\left|
\left\langle
a_i(t)\cdot\nabla G,
\left(
A_{\eps, i}
+
\gamma_i^2\Delta^{-1}
\right)w_i
\right\rangle
\right| \leq
\sum_{\ell=1}^2
|a_{i,\ell}(t)|
\left|
\left\langle
\partial_\ell G,
\left(
A_{\eps, i}
+
\gamma_i^2\Delta^{-1}
\right)w_i
\right\rangle
\right|.
\label{eq:FirstCancel}
\end{equation}

We estimate each component separately.
Splitting the inner and outer region, we have
    \begin{align}
        \left|\left \langle \de_l G, (A_{\eps, i} + \gamma_i^2 \Delta^{-1}) w_i\right \rangle \right| & \lesssim \left| \left \langle \de_l G, \mathbbm{1}_{\Ii} (A_{\eps, i} + \gamma_i^2 \Delta^{-1}) w_i\right \rangle \right| +  \eps^{\infty}  \| \boldsymbol{w}\|_{\Xg},
    \end{align}
    since outside the inner region we get
    \begin{equation}
        \left|\left \langle \de_l G, \mathbbm{1}_{\Ii^c}A_{\eps, i}  w_i\right \rangle \right| = \left|\left \langle A_{\eps, i}^{1/2}\de_l G, \mathbbm{1}_{\Ii^c}A_{\eps, i}^{1/2}  w_i\right \rangle \right| \lesssim \eps^{\infty} \| \boldsymbol{w}\|_{\Xg},
    \end{equation}
    \begin{equation}
        \left|\left \langle  \de_l G, \mathbbm{1}_{\Ii^c}  \Delta^{-1} w_i\right \rangle\right| \lesssim \| \mathbbm{1}_{\Ii^c}A_{\eps, i}^{1/2}  \de_l G  \| \| A_{\eps, i}^{-1/2} \Delta^{-1} w_i\| \lesssim \eps^{\infty} \| \boldsymbol{w}\|_{\Xg}.
    \end{equation}
    In the inner region, the weight coincides with the functional relationship obtained previously in Section \ref{sec:FunctionalRelationship}. Moreover, using Proposition \ref{prop:weights} we have
    \begin{equation}
        |F'_i(\Omega_{{\rm app},i}^{(2),E},t) - \gamma_iF'_{} (G)| \lesssim \eps^{\eta_2}F'_{} (G) ,
    \end{equation}
    for any $\eta_2 <2$. Thus,
    \begin{align}
        \left|\left \langle \de_l G, \mathbbm{1}_{\Ii} (\gamma_i  F'_i(\Omega_{{\rm app},i}^{(2),E}) + \gamma_i^2 \Delta^{-1}) w_i\right \rangle \right| & \lesssim \left|\left \langle \de_l G, \mathbbm{1}_{\Ii} (\gamma_i^2 F_{}'(G) + \gamma_i^2 \Delta^{-1}) w_i\right \rangle \right| + \eps^{\eta_2} \| \boldsymbol{w}\|_{\Xg} \\ 
        & \leq  C \gamma_i^2 \left|\left \langle \de_l G, \mathbbm{1}_{\Ii} \mathsf{H} w_i\right \rangle \right| + \eps^{\eta_2} \| \boldsymbol{w}\|_{\Xg}.
    \end{align}
    By property \ref{it:kernel} of Proposition \ref{prop:Lambda} for the operator $\mathsf{H}$ defined in \cref{def:AH}, we have 
    \begin{equation}
        0 = \left \langle \de_l G, \mathsf{H} w_i\right \rangle \implies \left |\left \langle \de_l G, \mathbbm{1}_{\Ii} \mathsf{H} w_i\right \rangle \right| =  \left |\left \langle \de_l G, \mathbbm{1}_{\Ii^c} \mathsf{H} w_i\right \rangle \right|
    \end{equation}
    Therefore,
        \begin{align}
        \left|\left \langle \de_l G, \mathbbm{1}_{\Ii} (\gamma_i  F'_i(\Omega_{{\rm app},i}^{(2),E}) + \gamma_i^2 \Delta^{-1}) w_i\right \rangle \right| & \lesssim   \left|\left \langle \de_l G, \mathbbm{1}_{\Ii^c} \gamma_i ^2\mathsf{H} w_i\right \rangle \right| + \eps^{\eta_2} \| \boldsymbol{w}\|_{\Xg}
        \\
        & \lesssim \eps^{\eta_2} \| \boldsymbol{w}\|_{\Xg}.
    \end{align}
    Thus, combining the above estimates with \cref{eq:aitBound}, we get that
    \begin{equation}
        \left| \mathrm{F}_{2;i}^{\mathrm h}\right| \lesssim \eps^{\eta_2+1}  \delta^{-2p}\|{\boldsymbol{w}}\|_{\boldsymbol{X}_\eps}.
    \end{equation}
    By Young's inequality, taking $\eta_2 \in (1,2)$ and $p< 1/4$, we get 
    \begin{equation}
        \left| \mathrm{F}_{2;i}^{\mathrm h}\right| \leq \eta D_\eps (\boldsymbol{w}) + C_\eta \eps^2,
        \label{eq:F_1}
    \end{equation}
    with $\eta> 0$ sufficiently small.
     Now, let us consider $\mathrm{F}_{2;i,j}^{\mathrm h}$. Using Lemma \ref{lem:TeProperties}, we have that
    \begin{equation}
        \left( \cT_{\eps,\Z}^{i,j} - \cT_{\eps,\boldsymbol{z}}^{i,j}\right) \Delta^{-1} \Omega_{j,app} = C_\eps + \frac{1}{2 \pi} \eps \dhk \left[ \textnormal{Re} \left( \xi \frac{z_{ij} - Z_{ij}}{Z_{ij} z_{ij}}\right)\right] + \cO_{\cS_{*}}(\eps^2) \frac{|z_{ij} - Z_{ij}|}{\dhk}
    \end{equation}
where the constant $C_\eps$ cancels out due to the Poisson bracket. Thus, using \cref{eq:gTijf} and expanding the approximate solution, we get
\begin{equation}
    |\mathrm{F}_{2;i,j}^{\mathrm h}| \lesssim \left|\delta^{-2} \eps \left \langle \left \{  \dhk \left[ \textnormal{Re} \left( \xi \frac{z_{ij} - Z_{ij}}{Z_{ij} z_{ij}}\right)\right], G \right\},(A_{\eps, i} + \gamma_i^2 \Delta^{-1}) w_i\right \rangle \right| + \delta^{-2} \eps^2 \frac{|z_{ij}-Z_{ij}|}{\dhk} \| \boldsymbol{w}\|_{\Xg}.
\end{equation}
Note that 
\begin{equation}
    \left \{ \dhk \left[ \textnormal{Re} \left( \xi \frac{z_{ij} - Z_{ij}}{Z_{ij} z_{ij}}\right)\right], G \right\} =  f_1(t) \partial_1 G + f_2(t) \partial_2 G
\end{equation}
with
\begin{equation}
    |f_1| + |f_2| \lesssim \frac{|Z_{ij} - z_{ij}|}{\dhk}.
\end{equation}
Therefore, also this term has the same structure as \cref{eq:FirstCancel}. Thus, combining Proposition \ref{prop:ControlZZhk} with the argument adopted above, we get 
\begin{align}
    |\mathrm{F}_{2;i,j}^{\mathrm h}|  & \lesssim \left(\eps^{1+\eta_2} + \eps^{2} \right)  \delta^{-2p} \|{\boldsymbol{w}}\|_{\boldsymbol{X}_\eps} \\ &  \lesssim \eps^{2} \delta^{-2p} \|{\boldsymbol{w}}\|_{\boldsymbol{X}_\eps}
\end{align}
which, using Young's inequality and the fact that $p<1/4$ gives
\begin{equation}
    |\mathrm{F}_{2;i,j}^{\mathrm h}| \leq \eta D_\eps (\boldsymbol{w}) + C_\eta\eps^2.
    \label{eq:F_2}
\end{equation}
For the two easy terms $\mathrm{F}_{2;i,j}^e$ and $\mathrm{F}_{2;i,j,k}^e$ it is enough to combine Proposition \ref{prop:ControlZZhk} with Lemma \ref{lem:estimatesMepsKeps} to get the bound 
\begin{equation}
    |\mathrm{F}_{2;i,j}^e | + |\mathrm{F}_{2;i,j,k}^e| \lesssim \eps^3 \delta^{-2p} \|{\boldsymbol{w}}\|_{\boldsymbol{X}_\eps},
\end{equation}
and again a Young's inequality gives 
\begin{equation}
    |\mathrm{F}_{2;i,j}^e | + |\mathrm{F}_{2;i,j,k}^e| \leq \eta D_\eps (\boldsymbol{w}) + C_\eta \eps^2.
    \label{eq:F_3}
\end{equation}
Hence, combining \cref{eq:F_1},     \cref{eq:F_2} and \cref{eq:F_3}, taking $\eta$ small enough, we get
\begin{equation}
    |\mathrm{F}_2| \leq \frac{1}{20} D_\eps(\boldsymbol{w}) + C \eps^2,
\end{equation}
which is coherent with the desired estimate.
\end{proof}
\subsubsection{\textbf{Bound of the linear terms}}
We now bound the linear error term $\textnormal L$ defined in \cref{eq:linear}. In particular we prove that

\begin{lemma}
\label{lem:linear}
   Let $\varsigma=1$ and fix $p \in [0,1/4)$.
    Then,  for $\sigma_1 >0$ small enough and $\sigma_2>\frac32$, under the bootstrap hypothesis \cref{eq:BootstrapHP} there exists $\sigma_\star \in (0,1)$ and a constant $C>0$, independent of $\eps$ and $\delta$ such that 
    \begin{equation}
        |\mathrm L(t)| \leq C \eps(t)^{\sigma_\star}D_\eps(\boldsymbol{w})(t) \qquad \text{for all } t \in [0,T), \quad T \leq T_{\rm adv} \exp( \delta^{-p}).
    \end{equation}
\end{lemma}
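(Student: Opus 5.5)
We split $\mathrm L=\sum_{m=1}^4\langle \boldsymbol{\mathsf L}_m,\mathcal M_\eps(\boldsymbol w)\rangle$ according to \cref{eq:L1Definition,eq:L2Definition,eq:L3Definition,eq:L4Definition}. Each piece is further split, via $\mathcal M_\eps=\mathcal A_\eps+\mathcal K_\eps$, into a diagonal part (hard) and an off-diagonal part. The off-diagonal part is easy, because Lemma \ref{lem:estimatesMepsKeps} gives an extra factor $\eps^2$. In every case the aim is a bound of the form $\eps^{\sigma_\star}\big(\|\nabla\boldsymbol w\|_{\Xg}^2+\|\mathrm r_\eps \boldsymbol w\|_{\Xg}^2+\|\boldsymbol w\|_{\Xg}^2\big)$, which is then absorbed into $D_\eps$ via Proposition \ref{prop:EnDiss}. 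The key inputs are the following:
\begin{itemize}
\item $\delta^{-1}\eps^3\lesssim\sqrt\delta\approx\eps$ in the expanding regime;
\item $\Phi^{(2),NS}_{\app,i}$ and $\Omega^{(2),NS}_{\app,i}$ are $\cO_{\cS_*}(\eps^2)$ modulo constants (Proposition \ref{prop:ApproximateSol} and Lemma \ref{lem:TeProperties}), so the relevant velocity fields are $\cO(\eps^2+\delta^{-1}\eps^3)$;
\item Proposition \ref{prop:ControlZZhk} gives $|\rho_{ij}|/\dhk\lesssim \delta^{2-2p}$ up to $\cO(\eps^\infty)$, and hence $\delta^{-1}|\rho_{ij}|/\dhk\lesssim\delta^{1-2p}$.
\end{itemize}

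For $\mathsf L_1^i$, the pairing with $A_{\eps,i}w_i$ is handled by the integration by parts \cref{eq:intbypart}, which gives $-\tfrac12\langle\{\Phi^{(2),NS}_{\app,i}+\delta^{-1}\eps^3\widetilde\Phi_{\app,i},A_{\eps,i}\},w_i^2\rangle$. In the inner region $\nabla A_{\eps,i}\lesssim(1+|\xi|)A_{\eps,i}$. The velocity grows at most polynomially, with size $\eps$, so the integrand is bounded by $\eps(1+|\xi|)^N A_{\eps,i}w_i^2$ on $|\xi|\le2\eps^{-\sigma_1}$, which is at most $\eps^{1-N\sigma_1}\|\mathrm r_\eps w_i\|^2_{\X}$. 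In region II, $A_{\eps,i}$ is constant and the term vanishes. In region III, $\nabla A_{\eps,i}\lesssim|\xi|^{2\widetilde\sigma-1}A_{\eps,i}$. There we need the decay of the velocity of the approximate profiles and of the translated streamfunctions ($|\xi|\gtrsim\eps^{-\sigma_2}$ with $\sigma_2>3/2$), which gives a gain of a power of $\eps$ against $\mathrm r_\eps^2=|\xi|^{2\widetilde\sigma}$. The $\Delta^{-1}$ part $\langle\{w_i,\cdot\},\gamma_i^2\Delta^{-1}w_i\rangle$ is bounded by moving the bracket onto $\Delta^{-1}w_i$ and using Lemma \ref{lem:SobolevEstimates} together with the zero moments \cref{eq:momwvec}. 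This yields $\eps\|\boldsymbol w\|_{\Xg}\|\nabla\boldsymbol w\|_{\Xg}$.

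$\mathsf L_2^i$ has a bracket of a small vorticity $\cO_\cZ(\eps^2+\delta^{-1}\eps^3)$ with $\nabla\Delta^{-1}w$, which is controlled in $L^\infty$ or $L^q$ with weights by Lemma \ref{lem:SobolevEstimates} and Corollary \ref{cor:EstTZapp}. Pairing with $A_{\eps,i}w_i$ uses that the vorticity factor decays like $G$. Concretely, $|A_{\eps,i}\nabla\Omega^{NS}|\lesssim\eps(1+|\xi|)^N$ in the inner region, which is Gaussian-decaying relative to the weight. This gives $\eps\|\boldsymbol w\|_{\Xg}^2$ up to harmless polynomial factors.

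For $\mathsf L_3^i$ and $\mathsf L_4^i$, Taylor expansion of $\cT^{i,j}_{\eps,\boldsymbol z+\boldsymbol\rho}-\cT^{i,j}_{\eps,\boldsymbol z}$ (as in the treatment of $\mathrm F^{\mathrm h}_{2;i,j}$) produces a factor $|\rho_{ij}|/\dhk$ times gradients of translated streamfunctions, evaluated at distance $\sim\eps^{-1}$. For $\mathsf L_4$ the leading term is the affine field $\eps\dhk\Re(\xi\,\rho_{ij}/(Z_{ij}z_{ij}))$. This corresponds to a constant velocity $\delta^{-1}\eps|\rho|/\dhk$, which is integrated by parts against $A_{\eps,i}$ exactly as in $\mathsf L_1$, giving $\delta^{1-2p}\eps(1+|\xi|)$-type bounds. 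The quadratic remainder carries an extra $\eps$. For $\mathsf L_3$ we use Corollary \ref{cor:EstTZapp} for the difference, which gives $\delta^{-1}\eps^3|\rho|/\dhk$.

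The main obstacle I expect is region III in $\mathsf L_1$ and $\mathsf L_4$. The interaction streamfunctions $\cT^{i,j}\Psi_{\app,j}$ are not small there: $\xi$ approaches the other vortex when $|\xi|\sim\eps^{-1}$. Their velocity is then $\cO(1)$ in self-similar units, multiplied by $\delta^{-1}\cdot$ (the derivative of the height), while $A_{\eps,i}$ has growth $|\xi|^{2\widetilde\sigma-1}$. I would first try to exploit that $\{\Phi^{(2),E}_{\app,i},A_{\eps,i}\}$ enters through $\Lambda^i_{\app}$ and not through $\mathsf L$, so that only the $NS$ and $\eps^3$ parts appear in $\mathsf L$; these carry the extra smallness $\delta$ or $\eps^3\delta^{-1}$. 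I would check that the factor $\eps^{2\widetilde\sigma-1}$ loss at $|\xi|\sim\eps^{-1}$, compensated by $\mathrm r_\eps^2$, still leaves a positive power $\eps^{\sigma_\star}$, with $\sigma_\star=\min(1-N\sigma_1,\,1-2p,\dots)>0$ for $\sigma_1$ small and $p<1/4$.
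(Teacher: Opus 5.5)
Your plan is correct and is essentially the paper's proof. It uses the same split into $\mathrm L_1,\dots,\mathrm L_4$ and the smallness $\eps^2+\delta^{-1}\eps^3\lesssim\eps$ for $\mathsf L_1,\mathsf L_2$. For $\mathsf L_3$ it uses vanishing moments plus Corollary~\ref{cor:EstTZapp}. For $\mathsf L_4$ it uses the affine Taylor expansion of $B_{ij}=(\cT^{i,j}_{\eps,\boldsymbol z}-\cT^{i,j}_{\eps,\boldsymbol z+\boldsymbol\rho})\Delta^{-1}\Omega_{\app,j}$ in $\Ii$, integration by parts since $\nabla^\perp B_{ij}$ is divergence-free, no contribution from the plateau $\IIi$, and a decay argument in $\IIIi$. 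There are two minor deviations.
\begin{itemize}
\item For $\mathsf L_1$ the paper does not integrate by parts. It bounds $\|\{w_i,\Phi\}\|_{\X}\le\|\nabla\Phi\|_{L^\infty}\|\nabla w_i\|_{\X}$ and uses the boundedness of $\mathcal M_\eps$ from Lemma~\ref{lem:estimatesMepsKeps}. Your integration by parts also works: the velocity is $\cO(\eps)$ uniformly and $|\xi|^{2\widetilde\sigma-1}\le\mathrm r_\eps^2$ in $\IIIi$, so no decay is needed there.
\item For $\mathsf L_3$ the paper uses no smallness of $\boldsymbol\rho$. Corollary~\ref{cor:EstTZapp} alone does not give your extra factor $|\rho_{ij}|/\dhk$; that would need second-derivative decay of $\Delta^{-1}w_j$. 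It is unnecessary anyway, since $\delta^{-1}\eps^3\lesssim\eps$ already suffices.
\end{itemize}

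The obstacle you single out at the end is not real. It is worth seeing why, because your proposed fallback would not cover $\mathsf L_4$: there the full $\Delta^{-1}\Omega_{\app,j}$ enters, including its Lamb--Oseen part, not just the NS and $\eps^3$ pieces. Since $\sigma_2>3/2>1$, the region $\IIIi$ is $|\xi|\gtrsim\eps^{-\sigma_2}\gg\eps^{-1}$. The other vortices sit at $|\xi|\approx|b_{ij}|=|z_{ij}|/(\eps\dhk)\sim\eps^{-1}$, i.e.\ inside $\IIi$. There $A_{\eps,i}$ is spatially constant, so the integrated-by-parts term vanishes identically. In $\IIIi$ one has $|\xi+b_{ij}|\gtrsim|\xi|$, so the velocity of the translated unit-mass profiles satisfies $|\nabla B_{ij}|\lesssim|\xi|^{-1}$. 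Combining this with $|\nabla A_{\eps,i}|\lesssim|\xi|^{2\widetilde\sigma-1}A_{\eps,i}$ and $\eps^2\lesssim\delta$ gives $\delta^{-1}\eps^{2\sigma_2}\|\mathrm r_\eps\boldsymbol w\|_{\Xg}^2\lesssim\eps^{2\sigma_2-2}D_\eps(\boldsymbol w)$. This is exactly where $\sigma_2\ge 3/2$ is used.
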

\begin{proof}
We split the linear term as follows
\begin{equation}
    \mathrm{L} =: \mathrm{L}_1 + \mathrm{L}_2 + \mathrm{L}_3 + \mathrm{L}_4
\end{equation}
with
\begin{equation}
    \mathrm{L}_m :=\begin{cases}
        \displaystyle
        \sum_{i=1}^3 \langle \mathsf{L}^i_m(\boldsymbol{w}), \mathcal{M}_{\eps}(\boldsymbol{w})_i\rangle, \quad \text{for} \ m =1,2  \\[6pt]
        \displaystyle
        \sum_{i=1}^3 \langle \mathsf{L}^i_m(\boldsymbol{w},\boldsymbol{\rho}), \mathcal{M}_{\eps}(\boldsymbol{w})_i\rangle, \quad \text{for}  \ m = 3,4. 
    \end{cases}  
\end{equation}
where $\mathsf{L}^i_m$ are defined in \cref{eq:LijkDefinition}.
We now estimate each of these terms.
\smallskip
\paragraph{$\blackinwhitediamond$ \textit{Bound on $\mathrm{L}_1$}}
By \cref{eq:L1Definition}, we have that
\begin{equation}
    \mathrm{L}_1 = \sum_{i=1}^3 \left\langle \left\{
            w_i,\Phi^{(2),NS}_{\app,i}
            +\delta^{-1}\eps^3\widetilde\Phi_{\app,i}
        \right\}, \mathcal{M}_{\eps}(\boldsymbol{w})_i \right\rangle .
\end{equation}
Since the Navier--Stokes terms in the approximate solution enters at the level of the second order correction, we know that $\Phi_{{\rm app},i} ^{(2),NS} = \cO_{\cS_*}(\eps^2)$. 
By \cref{eq:ApproxProfilesBound}, the vorticity profiles
$\Omega_{n,i}^q$, $n=2,3,4$ and
$\mathsf{q}\in\{E,E_{\rm r},NS\}$, are uniformly bounded in
$\mathcal Z$, together with their spatial derivatives.
Moreover, $\delta^{-1} \eps^3 \lesssim \eps$ and $\widetilde{\Phi}_{{\rm app},i}= \cO_{\cS_*}(1)$. Therefore, using Lemma \ref{lem:SobolevEstimates}, 
we have 
\begin{equation}
    \left\|
        \nabla\Phi_{\app,i}^{(2),NS}
    \right\|_{L^\infty}
    +
    \delta^{-1}\eps^3
    \left\|
        \nabla\widetilde\Phi_{\app,i}
    \right\|_{L^\infty}
    \lesssim
    \eps^2+\delta^{-1}\eps^3
    \lesssim
    \eps,
    \label{eq:L1ProfileBound}
\end{equation}     
and, combining \cref{eq:L1ProfileBound} with Lemma \ref{lem:estimatesMepsKeps}, we obtain 
\begin{align}
    |\mathrm{L}_1|\lesssim
\eps
\left\|
\nabla\boldsymbol w
\right\|_{\boldsymbol{\mathcal X}_\eps}
\left\|
\boldsymbol w
\right\|_{\boldsymbol{\mathcal X}_\eps}
    \lesssim
    \eps \, D_\eps(\boldsymbol{w}).
    \label{eq:L1L2LinearBound}
\end{align}
which is coherent with the desired bound.
\paragraph{$\blackinwhitediamond$ \textit{Bound on $\mathrm{L}_2$}}
Recalling \cref{eq:L2Definition}, we have that
\begin{equation}
    \mathrm{L}_2 = \sum_{i=1}^3 \left\langle \Big\{\Omega_{\app,i}^{(2),NS}
            +\delta^{-1}\eps^3(\Omega_{3,i}+\eps\Omega_{4,i}),
            \gamma_i\Delta^{-1}w_i
            +\sum_{j\neq i}\gamma_j\cT_{\eps,\boldsymbol{z}}^{i,j}\Delta^{-1}w_j
        \Big\}, \mathcal{M}_{\eps}(\boldsymbol{w})_i \right\rangle .
\end{equation}
Note that, setting 
\begin{equation}
    \widetilde{\Psi}_i := \gamma_i\Delta^{-1}w_i
            +\sum_{j\neq i}\gamma_j\cT_{\eps,\boldsymbol{z}}^{i,j}\Delta^{-1}w_j,
\end{equation}
we have that
\begin{equation}
    \mathrm{L}_2 = \sum_{i=1}^3 \left\langle \Big\{\Omega_{\app,i}^{(2),NS}
            +\delta^{-1}\eps^3(\Omega_{3,i}+\eps\Omega_{4,i}),
            \widetilde{\Psi}_i
        \Big\}, A_{\eps, i} w_i + \gamma_i \widetilde{\Psi}_i\right\rangle.
\end{equation}
Since $\langle \{f,g\}, g\rangle = 0$, we get
\begin{equation}
    \mathrm{L}_2 = \sum_{i=1}^3 \left\langle \Big\{\Omega_{\app,i}^{(2),NS}
            +\delta^{-1}\eps^3(\Omega_{3,i}+\eps\Omega_{4,i}),
            \widetilde{\Psi}_i
        \Big\}, A_{\eps, i} w_i\right \rangle.
\end{equation}
Using the fact that $\eps^2 + \delta^{-1}\eps^3 \lesssim \eps$ as done for $\mathrm{L}_1$, splitting inner and outer region to exploit the decay properties of the approximate solution and appealing to Lemma \ref{lem:SobolevEstimates}, it is not hard to get
\begin{equation}
    | \mathrm{L}_2| \lesssim \eps D_{\eps}(\boldsymbol{w}).
\end{equation}
\smallskip
\paragraph{$\blackinwhitediamond$ \textit{Bound on $\mathrm{L}_3$}}
Recalling the expression of $\mathcal M_\eps(\boldsymbol w)_i$ and \cref{eq:L3Definition}, we split
\begin{equation}
    \mathrm L_3 = \sum_{i = 1}^3 \sum_{j \neq i}\left(\mathrm{L}_{3;i,j}^{\mathrm{h},1} + \mathrm{L}_{3;i,j}^{\mathrm{h},2} + \sum_{k \neq i} \mathrm{L}_{3;i,j,k}^\mathrm{e}\right)
\end{equation}
where
\begin{align}
    &\mathrm{L}_{3;i,j}^{\mathrm{h},1}:=\delta^{-1} \gamma_i^2 \gamma_j 
\left\langle 
        \left\{
\big(\cT_{\eps,\boldsymbol{z}}^{i,j}-\cT_{\eps,\boldsymbol{z}+\boldsymbol{\rho}}^{i,j}\big)\Delta^{-1}w_j,
            \Omega_{\app,i}
        \right\},
\Delta^{-1}w_i
\right\rangle, \\
&\mathrm{L}_{3;i,j}^{\mathrm{h},2}:=\delta^{-1} \gamma_j 
\left\langle
        \left\{
\big(\cT_{\eps,\boldsymbol{z}}^{i,j}-\cT_{\eps,\boldsymbol{z}+\boldsymbol{\rho}}^{i,j}\big)\Delta^{-1}w_j,
            \Omega_{\app,i}
        \right\},
A_{\eps, i}w_i
\right\rangle,\\
&\mathrm{L}_{3;i,j,k}^\mathrm{e} : = \delta^{-1} \gamma_i \gamma_j \gamma_k \left\langle
    \left\{
    \big(\cT_{\eps,\boldsymbol{z}}^{i,j}-\cT_{\eps,\boldsymbol{z}+\boldsymbol{\rho}}^{i,j}\big)\Delta^{-1}w_j,
            \Omega_{\app,i}
        \right\},
    \mathcal T_{\eps,\boldsymbol z}^{i,k}
    \Delta^{-1}w_k
    \right\rangle.
\end{align}
Let us start with $\mathrm{L}_{3;i,j}^{\mathrm{h},1}$. Since we do not exploit the smallness of $(\boldsymbol{z}+\boldsymbol{\rho}) - \boldsymbol{z} $, but the fact that $\boldsymbol{w}$ has vanishing moments, we can treat just one of the translating operators in the difference $\big(\cT_{\eps,\boldsymbol{z}}^{i,j}-\cT_{\eps,\boldsymbol{z}+\boldsymbol{\rho}}^{i,j}\big)$ since the other can be treated similarly. Therefore, we estimate
\begin{equation}
    \delta^{-1}
    \left\langle
        \nabla^\perp
        \mathcal{T}_{\eps,\boldsymbol{z}+\boldsymbol{\rho}}^{i,j}
        \Delta^{-1}w_j
        \cdot
        \nabla \Omega_{{\rm app},i}, \Delta^{-1}w_i
    \right\rangle .
    \label{eq:L3MainTerm}
\end{equation}
We split the domain into the
inner region \(\Ii\) and its complement \(\Ii^c\). In the inner region \(\Ii\), Hölder's inequality
with \(1/q_1+1/q_2=1\), \(q_2>2\), together with Proposition \ref{prop:PropertiesSpaces} and Lemma \ref{lem:SobolevEstimates}, gives
\begin{align}
    &
    \left|
    \delta^{-1}
    \left\langle
        \nabla^\perp
        \mathcal{T}_{\eps,\boldsymbol{z}+\boldsymbol{\rho}}^{i,j}
        \Delta^{-1}w_j
        \cdot
        \nabla \Omega_{{\rm app},i},
        \mathbbm{1}_{\Ii}\Delta^{-1}w_i
    \right\rangle
    \right| \\  &  \qquad \lesssim
    \delta^{-1}
    \left\| \left(1+|\xi|\right)
        \nabla\Omega_{{\rm app},i}
    \right\|_{L^{q_1}(\Ii)}
    \left\| \left(1+|\xi|\right)^{-1}
        \nabla^\perp
        \mathcal{T}_{\eps,\boldsymbol{z}+\boldsymbol{\rho}}^{i,j}
        \Delta^{-1}w_j
    \right\|_{L^{q_2}(\Ii)}
    \left\|
        \Delta^{-1}w_i
    \right\|_{L^\infty}
    \notag
    \\
    & \qquad
    \lesssim
    \delta^{-1}\eps^3
    \|\boldsymbol{w}\|_{\boldsymbol{\mathcal X}_\eps}^2
    \lesssim
    \eps
    \|\boldsymbol{w}\|_{\boldsymbol{\mathcal{X}}_\eps}^2 .
    \label{eq:L3DeltaInner}
\end{align}
On the complementary region \( \Ii^c \), the decay of the approximate profile gives,
for any \(N\geq 1\),
\begin{align}
    &
    \left|
    \delta^{-1}
    \left\langle
        \nabla^\perp
        \mathcal{T}_{\eps,\boldsymbol{z}+\boldsymbol{\rho}}^{i,j}
        \Delta^{-1}w_j
        \cdot
        \nabla \Omega_{{\rm app},i},\mathbbm{1}_{\Ii^c}
        \Delta^{-1}w_i
    \right\rangle
    \right|
    \notag
    \\
    &\qquad
    \lesssim
    \delta^{-1}
    \left\|
        A_{\eps, i}^{1/2}\nabla \Omega_{{\rm app},i}
    \right\|_{L^2(\Ii^c)}
    \left\|
        A_{\eps, i}^{-1/2}
        \nabla^\perp
        \mathcal{T}_{\eps,\boldsymbol{z}+\boldsymbol{\rho}}^{i,j}
        \Delta^{-1}w_j
    \right\|_{L^2(\Ii^c)}
    \left\|
        \Delta^{-1}w_i
    \right\|_{L^\infty}
    \notag
    \\
    &\qquad
    \lesssim
    \delta^{-1}\eps^N
    \|\boldsymbol{w}\|_{\boldsymbol{\mathcal{X}}_\eps}^2 .
    \label{eq:L3DeltaOuter}
\end{align}
Therefore, picking $N\geq 3$, we have
\begin{equation}
    | \mathrm{L}_{3;i,j}^{\mathrm{h},1}| \lesssim \eps D_{\eps}({\boldsymbol{w}}).
\end{equation}
We now treat $\mathrm{L}_{3;i,j}^{\mathrm{h},2}$ that contains \(A_{\eps, i}w_i\). 
As done for $\mathrm{L}_{3;i,j}^{\mathrm{h},1}$, we consider just one of the translation operator terms in $\big(\cT_{\eps,\boldsymbol{z}}^{i,j}-\cT_{\eps,\boldsymbol{z}+\boldsymbol{\rho}}^{i,j}\big)$.
In the inner region \( \Ii\),
by Cauchy--Schwarz and Hölder's inequality with
\(1/q_1+1/q_2=1/2\), \(q_2>2\), we obtain
\begin{align}
    &
    \left|
    \delta^{-1}
    \left\langle
        \nabla^\perp
        \mathcal{T}_{\eps,\boldsymbol{z}+\boldsymbol{\rho}}^{i,j}
        \Delta^{-1}w_j
        \cdot
        \nabla \Omega_{{\rm app},i},
        A_{\eps, i}w_i
    \right\rangle_{\Ii}
    \right|
    \notag
    \\
    &\qquad
    \lesssim
    \delta^{-1}
    \left\|
        (1+|\xi|)A_{\eps, i}^{1/2}
        \nabla \Omega_{{\rm app},i} \right \|_{L^{q_1}(\Ii)} \left \|
        (1+|\xi|)^{-1}\nabla^\perp
        \mathcal{T}_{\eps,\boldsymbol{z}+\boldsymbol{\rho}}^{i,j}
        \Delta^{-1}w_j
    \right\|_{L^{q_2}(\Ii)}
    \|\boldsymbol{w}\|_{\boldsymbol{X}_\eps}
    \notag
    \\
    &\qquad
    \lesssim
    \delta^{-1}\eps^3
    \|\boldsymbol{w}\|_{\boldsymbol{\mathcal X}_\eps}^2
    \lesssim
    \eps
    \|\boldsymbol{w}\|_{\boldsymbol{\mathcal X}_\eps}^2 .
    \label{eq:L3AInner}
\end{align}
In the outer region \(\Ii^c\), the decay of \(\nabla\Omega_{{\rm app},i}\), combined again with
Cauchy--Schwarz inequality, yields
\begin{align}
    \left|
    \delta^{-1}
    \left\langle
        \nabla^\perp
        \mathcal{T}_{\eps,\boldsymbol{z}+\boldsymbol{\rho}}^{i,j}
        \Delta^{-1}w_j
        \cdot
        \nabla \Omega_{{\rm app},i},
        A_{\eps, i}w_i
    \right\rangle_{\Ii^c}
    \right|
    \lesssim
    \delta^{-1}\eps^N
    \|\boldsymbol{w}\|_{\boldsymbol{\mathcal X}_\eps}^2 ,
    \label{eq:L3AOuter}
\end{align}
for any \(N\geq 1\). Thus,  picking $N \geq 3$,
\begin{equation}
    | \mathrm{L}_{3;i,j}^{\mathrm{h},2}| \lesssim \eps D_{\eps}({\boldsymbol{w}}).
\end{equation}

It remains to consider $\mathrm{L}^e_{3;i,j,k}$, which involve the additional translation term \(\mathcal{T}_{\eps,\boldsymbol{z}}^{i,k}\Delta^{-1}w_k\). It can actually be treated in the same way as before, with the advantage that in the inner region we have additional gain in terms of power of $\eps$ thanks to Corollary \ref{cor:EstTZapp}. The outer region is treated in the same way. Then, 
it is not hard to check that  
\begin{align}
    \left|\delta^{-1}\left\langle\nabla^\perp\mathcal{T}_{\eps,\boldsymbol{z}+ \boldsymbol{\rho}}^{i,j}
        \Delta^{-1}w_j
        \cdot
        \nabla \Omega_{{\rm app},i},\mathcal{T}_{\eps,\boldsymbol{z}}^{i,k}\Delta^{-1}w_k\right\rangle_{\Ii}\right|\lesssim\delta^{-1}\eps^5\|\boldsymbol{w}\|_{\boldsymbol{\mathcal X}_\eps}^2.\label{eq:L3OffDiagonalInner}
\end{align}
Thus, we conclude that
\begin{equation}
    \left|
        \mathrm{L}_3\right| 
    \lesssim
    \eps D_\eps(\boldsymbol{w}).
    \label{eq:L3FinalBound}
\end{equation}
which is consistent with the desired bound.
\smallskip
\paragraph{$\blackinwhitediamond$ \textit{Bound on $\mathrm{L}_4$}}
By \cref{eq:L4Definition}, we can decompose it as follows.
\begin{equation}
    \mathrm{L}_4 = \sum_{i = 1}^3 \sum_{j \neq i}\left(\mathrm{L}_{4;i,j}^{\mathrm{h},1} + \mathrm{L}_{4;i,j}^{\mathrm{h},2} + \sum_{k \neq i} \mathrm{L}_{4;i,j,k}^\mathrm{e}\right) 
\end{equation}
\begin{align}
    &\mathrm{L}_{4;i,j}^{\mathrm{h},1}:=\delta^{-1} \gamma_i^2 \gamma_j 
    \left\langle
            \left\{
\big(\cT_{\eps,\boldsymbol{z}}^{i,j}-\cT_{\eps,\boldsymbol{z}+\boldsymbol{\rho}}^{i,j}\big)\Delta^{-1}\Omega_{\app,j},
            w_i
        \right\},
    \Delta^{-1}w_i
    \right\rangle, \\
    &\mathrm{L}_{4;i,j}^{\mathrm{h},2}:=\delta^{-1} \gamma_j 
    \left\langle
            \left\{
\big(\cT_{\eps,\boldsymbol{z}}^{i,j}-\cT_{\eps,\boldsymbol{z}+\boldsymbol{\rho}}^{i,j}\big)\Delta^{-1}\Omega_{\app,j},
            w_i
        \right\},
    A_{\eps, i}w_i
    \right\rangle,\\
    &\mathrm{L}_{4;i,j,k}^\mathrm{e} : = \delta^{-1} \gamma_i \gamma_j \gamma_k \left\langle
        \left\{
\big(\cT_{\eps,\boldsymbol{z}}^{i,j}-\cT_{\eps,\boldsymbol{z}+\boldsymbol{\rho}}^{i,j}\big)\Delta^{-1}\Omega_{\app,j},
            w_i
        \right\},
        \mathcal T_{\eps,\boldsymbol z}^{i,k}
        \Delta^{-1}w_k
        \right\rangle.
\end{align}
Define 
\begin{equation}
    B_{ij}
    :=
    \left(
        \mathcal{T}_{\eps,\boldsymbol{z}}^{i,j}
        -
        \mathcal{T}_{\eps,\boldsymbol{z}+\boldsymbol{\rho}}^{i,j}
    \right)
    \Delta^{-1}\Omega_{j,app}.
\end{equation}
Using the Taylor
expansion of the translation operator in Lemma \ref{lem:TeProperties} and cutting it at order $2$, recalling the notation in \cref{notation_fij} we get that
\begin{equation}
    \left\|\nabla^\perp B_{ij}\right\|_{L^{\infty}(\Ii)} \lesssim \eps \dhk \frac{ |\rho_{ij}|}{|Z_{ij}||z_{ij}|} + \eps^2 \frac{|\rho_{ij}|}{\dhk} |\xi|.  
\end{equation}
Using the fact that in the inner region $\Ii$ we have $|\xi| \lesssim \eps ^{-\sigma_1}$, together with Proposition \ref{prop:ControlZZhk}, taking $\sigma_1$ small enough, we have
\begin{equation}
    \left\|\nabla^\perp B_{ij}\right\|_{L^{\infty}(\Ii)}
    \lesssim
    \eps\delta^2 
    \delta^{-2p}.
    \label{eq:L4BijInnerBound}
\end{equation}
We first estimate $\mathrm{L}_{4;i,j}^{\mathrm{h},1}$, which we rewrite as
\begin{equation}
    \mathrm{L}_{4;i,j}^{\mathrm{h},1} = \delta^{-1} \gamma_i \gamma_j 
    \left \langle\nabla^\perp B_{ij} \cdot \nabla
            w_i,
    \gamma_i\Delta^{-1}w_i
    \right\rangle
\end{equation}
By splitting the
domain into the inner region \(\Ii\) and its complement $\Ii^c$, we get
\begin{align}
    &
    \delta^{-1}
    \left|
    \left\langle
        \nabla^\perp B_{ij}\cdot \nabla w_i, 
        \Delta^{-1}w_i
    \right\rangle
    \right|
    \notag
    \\
    &\qquad
    \lesssim
    \delta^{-1}
    \left\|
        \nabla^\perp B_{ij}
    \right\|_{L^\infty(\Ii)}
    \|\nabla w_i\|_{L^2(\Ii)}
    \|\Delta^{-1}w_i\|_{L^2}
    +
    \delta^{-1}\eps^\infty D_\eps(\boldsymbol{w}),
    \label{eq:L4DeltaSplit}
\end{align}
where the second term comes from the decay properties of the weight $A_{\eps, i}^{-1/2}$ on \(\Ii^c\). 
Therefore, by Lemma \ref{lem:SobolevEstimates} and Proposition \ref{prop:EnDiss} we get
\begin{align}
    \left|
    \mathrm{L}_{4;i,j}^{\mathrm{h},1}
    \right|
    \lesssim
    \eps
    \delta^{1-2p}
    D_\eps(\boldsymbol{w}).
    \label{eq:L4DeltaBound}
\end{align}
that, for $p<1/4$ is consistent with the desired bound.

We now estimate $\mathrm{L}_{4;i,j}^{\mathrm{h},2}$. Being
\(\nabla^\perp B_{ij}\)  divergence-free, an integration by parts yields
\begin{align}
    \delta^{-1}
    \left\langle
        \nabla^\perp B_{ij}\cdot \nabla w_i,
        A_{\eps, i}w_i
    \right\rangle =
    -\frac{1}{2}\delta^{-1}
    \int
        w_i^2
        \nabla^\perp B_{ij}^{}
        \cdot
        \nabla A_{\eps, i}
    \,d\xi .
    \label{eq:L4IBP}
\end{align}
Since the weight is spatially constant in $\IIi$ only the inner region \(\Ii\) and the outer region
\(\IIIi\) contribute. In the inner region $\Ii$ by Proposition \ref{prop:PropertiesSpaces} we have 
\begin{align}
    |\nabla A_{\eps, i}| & \leq |\nabla A_{\eps, i} - \nabla A_{0,i}| + |\nabla A_{0,i}| \\ &
    \lesssim \eps^{\eta_2} A_{0,i} + \left( 1+ |\xi| \right) |A_{0,i}|  \\ & 
    \lesssim \eps^{- \sigma_1} |A_{\eps, i}|.\label{eq:nablaAiEst}
\end{align}
Combining \cref{eq:L4BijInnerBound} and \cref{eq:nablaAiEst}, we obtain
\begin{align}
    \delta^{-1}\left|
    \left\langle
        \nabla^\perp B_{ij}\cdot \nabla w_i, \mathbbm{1}_{\Ii}
        A_{\eps, i}w_i
    \right\rangle\right|
    \lesssim
    \eps^{1-\sigma_1}
    \delta^{1-2p}
    D_\eps(\boldsymbol{w}).
    \label{eq:L4InnerA}
\end{align}
In the outer region $\IIIi$, 
the weight is $A_{\eps, i} = \gamma_i^2 \exp\left( |\xi|^{2 {\widetilde{\sigma}}} /4\right)$ and, thus, we have that
\begin{equation}
    |\nabla A_{\eps, i}|
    \lesssim
    |\xi|^{2{\widetilde{\sigma}}-1} A_{\eps, i}.
\end{equation}
Moreover, 
the far field estimate for the approximate velocity field gives
\begin{equation}
    | \nabla B_{ij}(\xi)| \lesssim |\xi|^{-1}, \qquad \xi \in \IIIi.
\end{equation}
Then, since $|\xi| \gtrsim \eps^{-\sigma_2}$ 
and, by the definition of \(\mathrm{r}_\eps\) in \cref{def:WeightPho} and using \cref{prop:DepsCoercivity}, we obtain
\begin{align}
    \delta^{-1}\left|
    \left\langle
        \nabla^\perp B_{ij}\cdot \nabla w_i, \mathbbm{1}_{\IIIi}
        A_{\eps, i}w_i
    \right\rangle\right|& \lesssim
    \delta^{-1}  
    \left|
    \int_{\IIIi}
        w_i^2
        \nabla^\perp B_{ij}^{}
        \cdot
        \nabla A_{\eps, i}
    \,d\xi
    \right| \\
    &\lesssim
    \delta^{-1}\eps^{ 2 \sigma_2}
    \|\mathrm{r}_\eps \boldsymbol{w}\|_{\boldsymbol{\mathcal X}_\eps}^2
    \notag
    \\
    &\lesssim
    \eps^{2 \sigma_2 -2}
    D_\eps(\boldsymbol{w})
    \\
    &\lesssim
    \eps
    D_\eps(\boldsymbol{w}),
    \label{eq:L4OuterA}
\end{align}
provided \(\sigma_2 \geq \frac{3}{2}\).
Therefore, combining \cref{eq:L4InnerA} and \cref{eq:L4OuterA}, for $\sigma_2\geq \frac32$, $p<1/4$ and $\sigma_1$ small enough, we have that there exists $\sigma_\star \in (0,1)$ such that
\begin{equation}
    \left|
    \mathrm{L}_{4;i,j}^{\mathrm{h},2}\right| \lesssim \eps^{\sigma_\star} D_{\eps}(\boldsymbol w).
\end{equation}
The term $\left|\mathrm{L}_{4;i,j,k}^{\mathrm{e}}\right|$ is controlled similarly using the
bounds for the translated stream functions of $w_k$. Thus, the lemma is proved.
\end{proof}

\subsubsection{\textbf{Bound of the advection terms}}
We now estimate the advection term $\mathsf A$ defined in \cref{eq:advection}, for which we follow the strategy in \cite{dolce2026viscousvortexcrystals}. 
The main point is that the Arnold functional relation
cancels the leading part of the transport operator in the inner region,
whereas the weighted tails are negligible outside that region.
The goal is to show that this contribution is perturbative with respect to the dissipation functional. 
\begin{lemma}
\label{lem:advection}
Let $\varsigma=1$ and    fix $p $ such that $0<p<\sigma_1<\sigma$. For $\sigma_1 >0$ small enough and $\sigma_2>3/2$, there exists $\sigma_\star \in (0,1)$ and a constant $C$ independent of $\eps, \delta$ such that 
    \begin{equation}
        |\mathrm A(t)| \leq C\eps(t)^{\sigma_\star} D_\eps(\boldsymbol{w})(t) \qquad \textnormal{for all } t \in [0,T), \quad T \leq T_{\rm adv}\exp(\delta^{-p}).
    \end{equation}
\end{lemma}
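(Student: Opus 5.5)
We expand $\mathrm A=-\delta^{-1}\langle \cJ_{\app}\boldsymbol w,\cM_\eps\boldsymbol w\rangle$ using $\cM_\eps=\cA_\eps+\cK_\eps$ and the block structure of $\cJ_{\app}$. This produces three types of terms. The first are diagonal terms $\delta^{-1}\langle \Lambda^i_{\app}w_i,(A_{\eps,i}+\gamma_i^2\Delta^{-1})w_i\rangle$. The second are cross terms pairing $\Upsilon^{i,j}_{\app}w_j$ with $\cA_\eps$. The third pair any entry of $\cJ_{\app}$ with $\cK_\eps$. For the second and third types we count powers of $\eps$ directly. Both $\Upsilon^{i,j}_{\app}$ and $\cK_\eps$ involve a translation operator applied to $\Delta^{-1}$ of a zero-moment function, and Corollary \ref{cor:EstTZapp} and Lemma \ref{lem:estimatesMepsKeps} give a factor $\eps^2$ for the stream function and $\eps^3$ for its gradient on $\Ii$. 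Outside $\Ii$ we gain $\eps^\infty$ from the Gaussian decay of $\nabla\Omega^{(2),E}_{\app,i}$ against $A_{\eps,i}^{1/2}$. Each such term is therefore bounded by $\delta^{-1}\eps^{3}(\|\boldsymbol w\|^2_{\Xg}+\|\nabla \boldsymbol w\|^2_{\Xg})\lesssim \eps D_\eps(\boldsymbol w)$, using $\eps^2\lesssim\delta$ and Proposition \ref{prop:EnDiss}.

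There is one genuinely delicate cross term: the $\eps^2$-order contribution of $\Upsilon^{i,j}_{\app}w_j$ paired with $\gamma_i\gamma_j\cT^{i,j}\Delta^{-1}w_j$ from $\cK_\eps$, together with its symmetric partner. This is where the skew-symmetry of the full Arnold form is needed. We would follow \cite{dolce2026viscousvortexcrystals}. Using $(\cT^{i,j})^*=\cT^{j,i}$ from \cref{eq:Tij*}, we regroup the sum over pairs $(i,j)$ so that the leading pieces combine into $\langle\{\Phi,f\},f\rangle=0$ identities. What remains is controlled by $\delta^{-1}\eps^3$.

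The core step is the diagonal term. On $\Ii$ we have $A_{\eps,i}=\gamma_iF_i'(\Omega^{(2),E}_{\app,i})$. The operator $\Lambda^i_{\app}f=\{\Phi^{(2),E}_{\app,i},f\}+\{\gamma_i\Delta^{-1}f,\Omega^{(2),E}_{\app,i}\}$ is then handled as follows.

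1. The term $\langle\{\Phi^{(2),E}_{\app,i},w_i\},A_{\eps,i}w_i\rangle$ integrates by parts to $-\frac12\langle\{\Phi^{(2),E}_{\app,i},A_{\eps,i}\},w_i^2\rangle$.
2. On $\Ii$ we use $\Phi^{(2),E}_{\app,i}=-F_i(\Omega^{(2),E}_{\app,i})+\Theta_i$ modulo constants. The part $\{F_i(\Omega),F_i'(\Omega)\}$ vanishes, leaving $\{\Theta_i,A_{\eps,i}\}$.
3. By \cref{eq:NablaThetai} we have $|\nabla\Theta_i|\lesssim\eps^3(1+|\xi|^N)$, and $|\nabla A_{\eps,i}|\lesssim \eps^{-\sigma_1}A_{\eps,i}$. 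This gives a contribution of size $\delta^{-1}\eps^{3-(N+1)\sigma_1}\|\boldsymbol w\|^2_{\Xg}$.
4. The term $\langle\{\gamma_i\Delta^{-1}w_i,\Omega^{(2),E}_{\app,i}\},A_{\eps,i}w_i\rangle$ is rewritten via $A_{\eps,i}\nabla\Omega^{(2),E}_{\app,i}=\gamma_i^{-1}\nabla(F_i(\Omega))$, which equals $-\gamma_i^{-1}\nabla\Phi^{(2),E}_{\app,i}$ up to $\nabla\Theta_i$.
5. This cancels exactly against $\langle\{\Phi^{(2),E}_{\app,i},w_i\},\gamma_i^2\Delta^{-1}w_i\rangle$ after integration by parts. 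The last remaining piece, $\langle\{\Delta^{-1}w_i,\Omega\},\Delta^{-1}w_i\rangle$, vanishes identically.

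On $\IIi$ the weight is constant, so the transport part vanishes after integration by parts. Every surviving term carries a factor $\nabla\Omega^{(2),E}_{\app,i}$ or $\nabla\Phi^{(2),E}_{\app,i}$ localized outside $\Ii$. The first is Gaussian-small, $\lesssim e^{-c\eps^{-2\sigma_1}}$. It beats $A_{\eps,i}\sim e^{\mathsf h_\eps}$ because $\mathsf h_\eps\le\frac14\eps^{-2\sigma_1}$ by \cref{bd:hepst}, which is exactly where $p<\sigma_1$ enters. On $\IIIi$ we use $|\nabla A_{\eps,i}|\lesssim|\xi|^{2\widetilde\sigma-1}A_{\eps,i}$ and $|\nabla\Phi|\lesssim|\xi|^{-1}$, and gain $\eps^{2\sigma_2}$ against $\|\mathrm r_\eps w\|^2$ exactly as in the bound of $\mathrm L_4$; this requires $\sigma_2\ge3/2$.

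The main obstacle is to make the cancellation in steps 2–5 quantitative across the boundary of $\Ii$. The functional relationship holds only for $|\xi|\le 2\eps^{-\sigma}$, and the weight is merely continuous, so boundary contributions must be checked to be absent. Continuity of $A_{\eps,i}$ (Remark \ref{rem:continuity_weight}) makes the integrations by parts valid. The remaining task is to verify that the polynomial losses $(1+|\xi|)^N\eps^{-\sigma_1}$ are beaten by $\delta^{-1}\eps^3\lesssim\eps$. This fixes $\sigma_\star=1-(N+1)\sigma_1>0$ once $\sigma_1$ is small enough.
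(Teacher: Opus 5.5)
Your proof is correct, and its core is the paper's. On $\Ii$ the weight is $\gamma_iF_i'(\Omega^{(2),E}_{\app,i})$, so the self-transport term reduces to $\{\Theta_i,A_{\eps,i}\}$. The self-interaction term $\langle\{\gamma_i\Delta^{-1}w_i,\Omega^{(2),E}_{\app,i}\},A_{\eps,i}w_i\rangle$ cancels against $\langle\{\Phi^{(2),E}_{\app,i},w_i\},\gamma_i^2\Delta^{-1}w_i\rangle$ up to $\nabla\Theta_i$ and exterior terms. The exterior contributions are then controlled by the weight.

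Where you differ is the bookkeeping of the nonlocal terms. The paper puts \emph{all} $\Delta^{-1}$ pieces, including $\gamma_i^2\Delta^{-1}$, into $\cM_{nl}$. It then notes that the $i$-th rows of $\cJ_{nl}$ and $\cM_{nl}$ are $\{\mathcal Q_i,\Omega^{(2),E}_{\app,i}\}$ and $\gamma_i\mathcal Q_i$, where $\mathcal Q_i=\gamma_i\Delta^{-1}w_i+\sum_{j\neq i}\gamma_j\cT^{i,j}_{\eps,\boldsymbol z}\Delta^{-1}w_j$ is a single interaction stream function. Hence $\langle\cJ_{nl}\boldsymbol w,\cM_{nl}\boldsymbol w\rangle=0$ exactly. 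The mixed terms collapse to $\sum_i\gamma_i\langle\{\mathcal Q_i,\Theta_i\},\mathbbm{1}_{\Ii}w_i\rangle$ plus exterior terms. So self and cross interactions are handled at once by the functional relationship, and no cross term is ever power-counted.

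Your diagonal/off-diagonal split estimates each cross term separately, using the $\eps^3$ gain of Corollary~\ref{cor:EstTZapp} after moving the gradient onto the translated stream function, together with $\delta^{-1}\eps^3\lesssim\eps$. This is valid in the expanding regime and makes explicit that only the self-interaction needs the Arnold structure. It is longer, though, and relies on smallness where the paper uses an exact identity. In particular, the term you single out as delicate is identically zero within a single row: it equals $\gamma_i\gamma_j^2\langle\{g,\Omega^{(2),E}_{\app,i}\},g\rangle$ with $g=\cT^{i,j}_{\eps,\boldsymbol z}\Delta^{-1}w_j$. No adjointness of translations or regrouping over pairs is needed, and counting powers would give $\mathcal O(\delta^{-1}\eps^5)$ anyway.

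Three small corrections.
\begin{itemize}
\item On $\Ii$ one has $A_{\eps,i}\nabla\Omega^{(2),E}_{\app,i}=\gamma_i\nabla F_i(\Omega^{(2),E}_{\app,i})$, not $\gamma_i^{-1}\nabla F_i$. With your constant, the cancellation in step 5 would hold only when $|\gamma_i|=1$.
\item On $\IIIi$ the bound $|\nabla\Phi^{(2),E}_{\app,i}|\lesssim|\xi|^{-1}$ is false. The drift $-\frac{\delta t}{\eps\dhk}\partial_tz_i\cdot\xi^\perp$ has a gradient of size $\eps$ that does not decay. The correct bound $|\nabla\Phi^{(2),E}_{\app,i}|\lesssim\eps$ gives $\delta^{-1}\eps^{1+\sigma_2}\lesssim\eps^{\sigma_2-1}$ in front of $\|\mathrm{r}_\eps\boldsymbol w\|^2_{\Xg}$, which still suffices.
\item The hypothesis $p<\sigma_1$ enters through the lower bound $\mathsf{h}_\eps\geq\frac18\eps^{-2\sigma_1}$, not the upper one, which is automatic. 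The lower bound makes $A_{\eps,i}^{-1/2}\leq|\gamma_i|^{-1}e^{-\mathsf{h}_\eps/2}=\mathcal O(\eps^\infty)$ on the complement of $\Ii$. You need this for the exterior terms carrying $\nabla\Phi^{(2),E}_{\app,i}$ rather than $\nabla\Omega^{(2),E}_{\app,i}$.
\end{itemize}
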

\begin{proof}
    We split the operators $\cM_\eps$ and $\cJ_{\mathrm{app}}$ into their diagonal and nonlocal parts:
\begin{equation}
    \cM_\eps = \cM_d + \cM_{nl}, 
    \qquad 
    \cJ_{\mathrm{app}} = \cJ_d + \cJ_{nl}.
\end{equation}
The diagonal part of $\cM_\eps$ is given by
\begin{equation}
    \cM_d =
    \begin{pmatrix}
         A_{\eps,1} & 0 & 0\\
        0 &  A_{\eps,2} & 0\\
        0 & 0 &  A_{\eps,3}
    \end{pmatrix},
\end{equation}
whereas
\begin{equation}
    \cJ_d :=
    \begin{pmatrix}
        \left\{ \Phi_{{\rm app}, 1}^{(2),E}, \cdot \right\} & 0 & 0\\
        0 & \left\{ \Phi_{{\rm app}, 2}^{(2),E}, \cdot \right\} & 0\\
        0 & 0 & \left\{ \Phi_{{\rm app}, 3}^{(2),E}, \cdot \right\}
    \end{pmatrix}.
\end{equation}
The nonlocal part of $\cM_\eps$ is
\begin{equation}
    \cM_{nl} =
    \begin{pmatrix}
        \gamma_1^2 \Delta^{-1} &
        \gamma_1\gamma_2 \cT_{\eps,\boldsymbol{z}}^{1,2}\Delta^{-1} &
        \gamma_1\gamma_3 \cT_{\eps,\boldsymbol{z}}^{1,3}\Delta^{-1}
        \\
        \gamma_2\gamma_1 \cT_{\eps,\boldsymbol{z}}^{2,1}\Delta^{-1} &
        \gamma_2^2 \Delta^{-1} &
        \gamma_2\gamma_3 \cT_{\eps,\boldsymbol{z}}^{2,3}\Delta^{-1}
        \\
        \gamma_3\gamma_1 \cT_{\eps,\boldsymbol{z}}^{3,1}\Delta^{-1} &
        \gamma_3\gamma_2 \cT_{\eps,\boldsymbol{z}}^{3,2}\Delta^{-1} &
        \gamma_3^2 \Delta^{-1}
    \end{pmatrix}.
\end{equation}
Moreover,
\begin{equation}
    \cJ_{nl} :=
    \begin{pmatrix}
        \left\{\gamma_1 \Delta^{-1}(\cdot),\Omega_{{\rm app}, 1}^{(2),E}\right\}
        & \Upsilon_{\rm app}^{12}
        & \Upsilon_{\rm app}^{13}
        \\
        \Upsilon_{\rm app}^{21}
        & \left\{\gamma_2 \Delta^{-1}(\cdot),\Omega_{{\rm app}, 2}^{(2),E}\right\}
        & \Upsilon_{\rm app}^{23}
        \\
        \Upsilon_{\rm app}^{31}
        & \Upsilon_{\rm app}^{32}
        & \left\{\gamma_3 \Delta^{-1}(\cdot),\Omega_{{\rm app}, 3}^{(2),E}\right\}
    \end{pmatrix},
\end{equation}
where the off-diagonal operators $\Upsilon_{\rm app}^{i,j}$ are defined in \cref{eq:UpsilonIJApp}.
With this notation, the advection term in \cref{eq:advection} becomes
\begin{equation}
    \mathrm A = - (\mathrm A_{1} + \mathrm A_{2} + \mathrm A_{3})
\end{equation}
where 
\begin{align}
    & \mathrm A_{1} = \frac{1}{\delta}  \left\langle \cJ_d(\boldsymbol w),\cM_d(\boldsymbol w)\right\rangle,\\ 
    & \mathrm A_{2} = \frac{1}{\delta} \left\langle \cJ_{nl}(\boldsymbol w),\cM_{nl}(\boldsymbol w)\right\rangle, \\
    & \mathrm 
     A_3 = \frac{1}{\delta} \bigg(\left\langle \cJ_d(\boldsymbol w),\cM_{nl}(\boldsymbol w)\right\rangle
    + \left\langle \cJ_{nl}(\boldsymbol w),\cM_d(\boldsymbol w)\right\rangle
    \bigg).
\end{align}

\smallskip
\paragraph{$\blackinwhitediamond$ \textit{Bound on $\mathrm A_1$}}
Using the identity $\left\langle \{f,g\},hf\right\rangle
    =
    \frac{1}{2}
    \left\langle \{g,h\}f,f\right\rangle$,
with $f=w_i$, $g=\Phi_{{\rm app},i}^{(2),E}$ and $h=A_{\eps, i}$, we get
\begin{equation}
    \mathrm A_1
    = \frac{1}{\delta}
    \sum_{i=1}^3 \mathsf{A}_1^i, \qquad \mathsf{A}_1^i := 
    \frac{1}{2}
    \left\langle
        \left\{A_{\eps, i},\Phi_{{\rm app},i}^{(2),E}\right\}w_i,
        w_i
    \right\rangle.
\end{equation}
We recall that, in the inner region $\Ii$, the weight is chosen according to Arnold's functional relationship  $A_{\eps, i} = \gamma_i  F'_i\left(\Omega_{{\rm app},i}^{(2),E},t\right).$
Since the weight is constant in $\IIi$, by using Proposition \ref{prop:functionalRel}, it follows that
\begin{equation}
\begin{aligned}
    2 \mathsf A _{1}^i
    &= 
    \gamma_i \left\langle
        \mathbbm{1}_{\Ii}
        \left\{
             F'_i\left( \Omega_{{\rm app},i}^{(2),E},t\right),
            \Phi_{{\rm app},i}^{(2),E}
        \right\}w_i,
        w_i
    \right\rangle
    +
    \left\langle
        \mathbbm{1}_{\IIIi}
        \left\{
            A_{\eps, i},
            \Phi_{{\rm app},i}^{(2),E}
        \right\}w_i,
        w_i
    \right\rangle
    \\
    &= \gamma_i
    \left\langle
        \mathbbm{1}_{\Ii}
        \left\{F'_i\left(\Omega_{{\rm app},i}^{(2),E},t\right),
            \Theta_i
        \right\}w_i,
        w_i
    \right\rangle
    +
    \left\langle
        \mathbbm{1}_{\IIIi}
        \left\{
            A_{\eps, i},
            \Phi_{{\rm app},i}^{(2),E}
        \right\}w_i,
        w_i
    \right\rangle.
\end{aligned}
\end{equation}
The inner contribution is controlled by the approximate functional relationship. Indeed, using \cref{eq:NablaThetai}, for some $N \in \mathbb{N}$ we have 
\begin{equation}
    \left|
    \left\langle
        \mathbbm{1}_{\Ii}
        \left\{
            F_i'\left(\Omega_{{\rm app},i}^{(2),E}\right),
            \Theta_i
        \right\}w_i,
        w_i
    \right\rangle
    \right|
    \lesssim
    \eps^{3-N\sigma_1}
    \|\boldsymbol w\|_{\Xg}^2.
\end{equation}
In the outer region $\IIIi$, the weight is radial and satisfies $|\nabla A_{\eps, i}|
\lesssim|\xi|^{2{\widetilde{\sigma}}-1}A_{\eps, i}$.
Using the fact that $|\xi|\geq \eps^{-\sigma_2}$ in $\IIIi$, combined with the bound $\left|\nabla\Phi_{{\rm app},i}^{(2),E}\right| \lesssim \eps$ we get
\begin{equation}
    \left|
    \left\langle
        \mathbbm{1}_{\IIIi}
        \left\{
            A_{\eps, i},
            \Phi_{{\rm app},i}^{(2),E}
        \right\}w_i,
        w_i
    \right\rangle
    \right|
    \lesssim
    \eps^{1+\sigma_2}
    \|\mathrm{r}_\eps \boldsymbol w\|_{\Xg}^2.
\end{equation}
Hence,
\begin{equation}
    |\mathrm A_1|
    \lesssim \delta^{-1} \left(
    \eps^{3-N\sigma_1}
    \|\boldsymbol w\|_{\Xg}^2
    +
    \eps^{1+\sigma_2}
    \|\mathrm{r}_\eps \boldsymbol w\|_{\Xg}^2\right).
    \label{eq:AdvDiagTerm}
\end{equation}
Choosing $\sigma_1$ small enough such that $1 - \sigma_1 N > 0 $ and $\sigma_2>1$, we get
\begin{equation}
    |\mathrm A_1| \leq C \eps^s  D_\eps(\boldsymbol{w}),
\end{equation}
for $C>0$ and $s \in (0,1)$, which is consistent with the desired bound.
\smallskip
\paragraph{$\blackinwhitediamond$ \textit{Bound on $\mathrm A_2$}}
Setting 
\begin{equation}
    \mathcal{Q}_i(\boldsymbol w)
    :=
    \gamma_i \Delta^{-1}w_i
    + \sum_{j \neq i}
    \gamma_j \cT_{\eps,\boldsymbol{z}}^{i,j}\Delta^{-1}w_j,
    \label{eq:PiOmega}
\end{equation}
we can rewrite $\mathcal{J}_{nl}$ and $\mathcal{M}_{nl}$ as 
\begin{equation}
    \cJ_{nl}(\boldsymbol w)_i
    =
    \left\{\mathcal{Q}_i(\boldsymbol w),\Omega_{{\rm app},i}^{(2),E}\right\},
    \qquad
    \cM_{nl}(\boldsymbol w)_i
    =
    \gamma_i \mathcal{Q}_i(\boldsymbol w).
\end{equation}
Therefore, using the  identity $\left\langle \{f,g\},f\right\rangle = 0$,
we obtain
\begin{equation}
\begin{aligned}
    |\mathsf A_2|
    &= \frac{1}{\delta}
    \sum_{i=1}^3
    \gamma_i
    \left\langle
        \left\{\mathcal{Q}_i(\boldsymbol w),\Omega_{{\rm app},i}^{(2),E}\right\},
        \mathcal{Q}_i(\boldsymbol w)
    \right\rangle
    =0.
    \label{eq:AdvNLTerm}
\end{aligned}
\end{equation}

\smallskip
\paragraph{$\blackinwhitediamond$ \textit{Bound on $\mathrm A_3$}} Using $\left\langle \{f,g\},h\right\rangle =  \left\langle \{h,f\},g\right\rangle$, we have
\begin{equation}
\begin{aligned}
    \left\langle \cJ_d(\boldsymbol w),\cM_{nl}(\boldsymbol w)\right\rangle
    &=
    \sum_{i=1}^3
    \gamma_i
    \left\langle
        \left\{
            \mathcal{Q}_i(\boldsymbol w),
            \Phi_{{\rm app},i}^{(2),E}
        \right\},
        w_i
    \right\rangle.
\end{aligned}
\end{equation}
where $\mathcal{Q}_i(\boldsymbol w)$ is defined in \cref{eq:PiOmega}.
On the other hand, the second mixed term can be rewritten as follows.
\begin{equation}
\begin{aligned}
    \left\langle \cJ_{nl}(\boldsymbol w),\cM_d(\boldsymbol w)\right\rangle
    &=
    \sum_{i=1}^3
    \left\langle
        \left\{
            \mathcal{Q}_i(\boldsymbol w),
            \Omega_{{\rm app},i}^{(2),E}
        \right\},
        A_{\eps, i} w_i
    \right\rangle.
\end{aligned}
\end{equation}
Splitting the second term into  the inner region $\Ii$ (where $A_{\eps, i}= \gamma_i  F'_i\left(\Omega_{{\rm app},i}^{(2),E},t\right)$) and the complementary one, using $\nabla F_i\left( \Omega_{{\rm app},i}^{(2),E},t \right) =  F_{i}'\left(\Omega_{{\rm app},i}^{(2),E},t \right) \nabla  \Omega_{{\rm app},i}^{(2),E} $, we have
\begin{equation}
\begin{aligned}
    \left\langle \cJ_{nl}(\boldsymbol w),\cM_d(\boldsymbol w)\right\rangle
    &=
    \sum_{i=1}^3
    \gamma_i
    \left\langle
        \left\{
            \mathcal{Q}_i(\boldsymbol w),
            F_i\left(\Omega_{{\rm app},i}^{(2),E},t\right)
        \right\},
         \mathbbm{1}_{\Ii}w_i
    \right\rangle +
    \sum_{i=1}^3
    \left\langle
        \cJ_{nl}(\boldsymbol w)_i,
        \mathbbm{1}_{\Ii^c}\cM_d(\boldsymbol w)_i
    \right\rangle.
\end{aligned}
\end{equation}
Therefore, summing the two mixed terms we obtain
\begin{equation}
\begin{aligned}
    \left\langle \cJ_d(\boldsymbol w),\cM_{nl}(\boldsymbol w)\right\rangle
    +
    \left\langle \cJ_{nl}(\boldsymbol w),\cM_d(\boldsymbol w)\right\rangle & =
    \sum_{i=1}^3
    \gamma_i
    \left\langle
        \left\{
            \mathcal{Q}_i(\boldsymbol w),
            \Theta_i
        \right\},
         \mathbbm{1}_{\Ii}w_i
    \right\rangle 
    \\ & \quad + \sum_{i=1}^3
    \gamma_i
    \left\langle
        \left\{
            \mathcal{Q}_i(\boldsymbol w),
            \Phi_{{\rm app},i}^{(2),E}
        \right\}, \mathbbm{1}_{\Ii^c}
        w_i
    \right\rangle
    \\ & \quad +
    \sum_{i=1}^3
    \left\langle
        \cJ_{nl}(\boldsymbol w)_i,
        \mathbbm{1}_{\Ii^c}\cM_d(\boldsymbol w)_i
    \right\rangle.
\end{aligned}
\end{equation}
The first term is small because of the bound on the $\Theta_i$ in Proposition \ref{prop:functionalRel}, while the last two terms are supported outside the inner region and are controlled by the growth of the weight. 
Therefore,
\begin{equation}
\begin{aligned}
    &\left|
    \left\langle \cJ_d(\boldsymbol w),\cM_{nl}(\boldsymbol w)\right\rangle
    +
    \left\langle \cJ_{nl}(\boldsymbol w),\cM_d(\boldsymbol w)\right\rangle
    \right| \lesssim
    \eps^{3-N\sigma_1}
    \|\boldsymbol w\|_{\Xg}^2
    +
    \eps^{1+\sigma_2}
    \|\mathrm{r}_\eps\boldsymbol w\|_{\Xg}^2.
    \label{eq:AdvMixedTerms}
\end{aligned}
\end{equation}
Finally, combining \cref{eq:AdvNLTerm}, \cref{eq:AdvDiagTerm} and \cref{eq:AdvMixedTerms}, we have for the advection term $\mathsf{A}_3$ in \cref{eq:advection} that
\begin{equation}
    |\mathrm A_3| \lesssim \delta^{-1} \left( \eps^{3-\sigma_1N} \| \boldsymbol{w}\|_{\Xg}^2+ \eps^{1+\sigma_2} \| \mathrm{r}_\eps\boldsymbol{w}\|_{\Xg}^2 \right),
\end{equation}
and, choosing $\sigma_1$ small enough such that $1 - \sigma_1 N > 0 $ and $\sigma_2>1$  we get
\begin{equation}
    |\mathrm A_3| \leq C \eps^{\tilde{s}} D_\eps(\boldsymbol{w}),
\end{equation}
for $C>0$ and $\tilde{s} \in (0,1)$, which is consistent with the desired bound and therefore the lemma is proved upon defining $\sigma_\star$ properly.
\end{proof}

\subsubsection{\textbf{Bound of the nonlinear terms}}

We now estimate the terms that are nonlinear in the correction, as defined
in \cref{eq:NLterms}. We prove the following.
\begin{lemma}
\label{lem:nonlinear}
Let $\varsigma=1$ and    fix $p \in [0,1/4)$. Under the bootstrap hypothesis \cref{eq:BootstrapHP}, for $\sigma_1 >0$ small enough and $\sigma_2>1$ sufficiently large, there exists a constant $C>0$, independent of $\eps$ and $\delta$, such that 
    
    \begin{equation}
        |\mathrm N(t)| \leq \frac{1}{10} D_\eps(\boldsymbol{w})(t) + C \eps^3, \qquad \textit{for } t \in [0,T), \quad T \leq T_{\rm adv} \exp( \delta^{-p}).
        \label{eq:Nbound}
    \end{equation}
\end{lemma}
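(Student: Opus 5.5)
The plan is to split $\mathrm N = \mathrm N_1+\mathrm N_2+\mathrm N_3$, where
\[
\mathrm N_m=\sum_i\langle \mathsf N^i_m(\boldsymbol w,\boldsymbol\rho),\cM_\eps(\boldsymbol w)_i\rangle \quad (m=1,2), \qquad \mathrm N_3=\tfrac12\langle \boldsymbol w,[t\de_t,\cK_\eps]\boldsymbol w\rangle .
\]
Each piece will be bounded by $C\eps^{s}D_\eps(\boldsymbol w)$ with some $s>0$, or by $\frac1{30}D_\eps+C\eps^3$, using the bootstrap \cref{eq:BootstrapHP} ($\|\boldsymbol w\|_{\Xg}\lesssim\eps$) and Proposition \ref{prop:ControlZZhk}.

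\emph{Commutator term $\mathrm N_3$.} The off-diagonal entries of $\cK_\eps$ are $\gamma_i\gamma_j\cT^{i,j}_{\eps,\boldsymbol z}\Delta^{-1}$, and $\cT^{i,j}$ translates by $z_{ij}/(\eps\dhk)$. This shift has modulus $\approx\eps^{-1}$, and its time derivative is
\[
t\de_t\bigl(z_{ij}/(\eps\dhk)\bigr)=\mathcal O(\eps^{-1}),
\]
because $t\de_t z_{ij}=\mathcal O(\dhk)$, $t\de_t\dhk=\mathcal O(\dhk)$ and $t\de_t\eps=\mathcal O(\eps)$ by \cref{def:keytaueps}. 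Hence $[t\de_t,\cK_\eps]\boldsymbol w$ is a sum of terms of the form $\mathcal O(\eps^{-1})\cdot\nabla\cT^{i,j}_{\eps,\boldsymbol z}\Delta^{-1}w_j$. The gradient bound of Lemma \ref{lem:estimatesMepsKeps}, $|\langle g,\nabla\cK_\eps f\rangle|\lesssim\eps^3\|f\|\|g\|$, which uses the zero moments \cref{eq:momwvec}, then gives $|\mathrm N_3|\lesssim\eps^2\|\boldsymbol w\|_{\Xg}^2\lesssim\eps^2 D_\eps$.

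\emph{Term $\mathrm N_2$.} This term contains $\frac{t}{\eps\dhk}\{\de_t\rho_i\cdot\xi^\perp,w_i\}=a_i\cdot\nabla^\perp w_i$ up to sign, with coefficient $a_i$ a constant vector in $\xi$. By Proposition \ref{prop:ControlZZhk}, $|a_i|\lesssim\delta\,\eps\,\delta^{-2p}\lesssim\eps^{2-4p}$. Paired with $\gamma_i\mathcal Q_i(\boldsymbol w)$, the term is bounded by $|a_i|\,\|\nabla \boldsymbol w\|_{\Xg}\|\boldsymbol w\|_{\Xg}$ using Lemma \ref{lem:SobolevEstimates}. Paired with $A_{\eps,i}w_i$, integration by parts gives
\[
-\tfrac12\int w_i^2\,a_i^\perp\cdot\nabla A_{\eps,i}.
\]
Here $\nabla A_{\eps,i}$ vanishes on $\IIi$, is $\lesssim\eps^{-\sigma_1}A_{\eps,i}$ on $\Ii$ by \cref{eq:nablaAiEst}, and is $\lesssim|\xi|^{2\widetilde\sigma-1}A_{\eps,i}\le \mathrm r_\eps A_{\eps,i}$ on $\IIIi$. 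So $|\mathrm N_2|\lesssim\eps^{2-4p-\sigma_1}D_\eps$, which is fine for $p<1/4$ and $\sigma_1$ small.

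\emph{Genuinely quadratic term $\mathrm N_1$ (main obstacle).} This is $\langle\{w_i,\mathcal Q_i^{\boldsymbol\rho}\},\cM_\eps(\boldsymbol w)_i\rangle$, where $\mathcal Q_i^{\boldsymbol\rho}=\gamma_i\varphi_i+\sum_{j\neq i}\gamma_j\cT^{i,j}_{\eps,\boldsymbol z+\boldsymbol\rho}\varphi_j$. The nonlocal part of $\cM_\eps$ contains $\gamma_i\mathcal Q_i$, not $\mathcal Q_i^{\boldsymbol\rho}$, so the exact cancellation $\langle\{f,g\},g\rangle=0$ is lost. I would therefore write
\[
\mathcal Q_i=\mathcal Q_i^{\boldsymbol\rho}+\sum_{j\neq i}\gamma_j(\cT^{i,j}_{\eps,\boldsymbol z}-\cT^{i,j}_{\eps,\boldsymbol z+\boldsymbol\rho})\varphi_j .
\]
The first part cancels. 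The difference is small thanks to the $\eps^2$ gain of Corollary \ref{cor:EstTZapp}, giving a cubic term $\lesssim\eps^2\|\boldsymbol w\|\,\|\nabla\boldsymbol w\|\,\|\boldsymbol w\|$.

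For the weighted part, integration by parts gives
\[
-\tfrac12\int w_i^2\,\nabla^\perp\mathcal Q_i^{\boldsymbol\rho}\cdot\nabla A_{\eps,i}.
\]
I would bound $(1+|\xi|)|\nabla\mathcal Q_i^{\boldsymbol\rho}|$ in $L^\infty$ via \cref{eq:InfNormNablaPhi} by $(\|\nabla w\|^{1/2}+\|w\|^{1/2})\|w\|^{1/2}$. I would also use $|\nabla A_{\eps,i}|\lesssim(1+|\xi|)A_{\eps,i}$ on $\Ii$, zero on $\IIi$, and $\le(1+|\xi|)\mathrm r_\eps A_{\eps,i}$ on $\IIIi$, giving
\[
|\mathrm N_1|\lesssim\|\boldsymbol w\|_{\Xg}^{1/2}\bigl(\|\nabla\boldsymbol w\|_{\Xg}^{1/2}+\|\boldsymbol w\|^{1/2}_{\Xg}\bigr)D_\eps\lesssim\eps^{1/2}D_\eps^{1/2}D_\eps .
\]
Using the bootstrap to replace $\|\boldsymbol w\|\lesssim\eps$ is the delicate point: the factor $\|\nabla \boldsymbol w\|^{1/2}$ is controlled only through $D_\eps$. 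If a clean absorption fails, I would fall back on Young's inequality, writing $\eps^{1/2}D_\eps^{3/2}\le\frac1{30}D_\eps+\dots$ only after noting $\eps^{1/2}\|\nabla\boldsymbol w\|^{1/2}\|\boldsymbol w\|^{1/2}$ against the remaining factors. The residual $C\eps^3$ in \cref{eq:Nbound} is meant to absorb what is left after this step.
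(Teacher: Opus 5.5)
Your bound for $\mathrm N_3$ is the paper's argument. Your $\mathrm N_2$ argument also works; the paper simply uses the boundedness of $\cM_\eps$ from Lemma~\ref{lem:estimatesMepsKeps} rather than integrating by parts. One step there is false: $\delta^{1-2p}\eps\lesssim\eps^{2-4p}$ does not hold, since you only have $\eps^2\lesssim\delta$, and for $t\ll T_{\rm adv}$ one has $\eps\to0$ with $\delta$ fixed. This is harmless, because $|a_i|\lesssim\delta^{1-2p}\eps$ already gives $|\mathrm N_2|\lesssim\delta^{1-2p}\eps^{1-\sigma_1}D_\eps$.

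The genuine gap is the closing step for $\mathrm N_1$. From $|\mathrm N_1|\lesssim\|\boldsymbol w\|_{\Xg}^{1/2}\bigl(\|\nabla\boldsymbol w\|_{\Xg}^{1/2}+\|\boldsymbol w\|_{\Xg}^{1/2}\bigr)D_\eps$ you turn the bracket into a power of $D_\eps$ and reach $\eps^{1/2}D_\eps^{1+\theta}$ with $\theta>0$. Such a term cannot be absorbed into $\frac1{30}D_\eps+C\eps^3$. The bootstrap \cref{eq:BootstrapHP} controls only $\|\boldsymbol w\|_{\Xg}$, while $D_\eps\gtrsim\|\nabla\boldsymbol w\|_{\Xg}^2$ has no a priori upper bound. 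Hence an inequality like $\eps^{1/2}D_\eps^{3/2}\le\frac1{30}D_\eps+C\eps^3$ fails for large $D_\eps$, and your fallback does not say what replaces it.

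The fix is to keep the explicit factors of $\|\boldsymbol w\|_{\Xg}$ in this cubic term instead of trading them for $D_\eps$. Your weighted piece is in fact bounded by $\bigl(\|\nabla\boldsymbol w\|_{\Xg}^{1/2}+\|\boldsymbol w\|_{\Xg}^{1/2}\bigr)\|\boldsymbol w\|_{\Xg}^{5/2}$, because $|\nabla A_{\eps,i}|\lesssim(1+|\xi|)A_{\eps,i}$ on both $\Ii$ and $\IIIi$. Note that $(1+|\xi|)|\nabla\cT^{i,j}_{\eps,\boldsymbol z+\boldsymbol\rho}\varphi_j|$ is bounded only on $\operatorname{supp}\nabla A_{\eps,i}$, not globally: near the other vortices $|\xi|\sim\eps^{-1}$, but that region lies in $\IIi$ where $\nabla A_{\eps,i}=0$. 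Your difference piece is bounded by $\eps^2\|\boldsymbol w\|_{\Xg}^2\|\nabla\boldsymbol w\|_{\Xg}$ up to exponentially small tails. Young's inequality, keeping the power of $\|\nabla\boldsymbol w\|_{\Xg}$ at most two, then gives $\eta D_\eps+C\bigl(\|\boldsymbol w\|_{\Xg}^{3}+\|\boldsymbol w\|_{\Xg}^{10/3}+\|\boldsymbol w\|_{\Xg}^{4}\bigr)$. Only at this point do you insert $\|\boldsymbol w\|_{\Xg}\lesssim\eps$.

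This Young-then-bootstrap step is how the paper closes, and it needs none of your cancellation. The paper bounds $\mathsf N_{1;i}$ directly by $\|\nabla\mathcal Q_i^{\boldsymbol\rho}\|_{L^\infty}\|\nabla\boldsymbol w\|_{\Xg}\|\boldsymbol w\|_{\Xg}$ via the boundedness of $\cM_\eps$ and applies \cref{eq:InfNormNablaPhi}. Young then gives $\frac1{100}D_\eps+C\bigl(\|\boldsymbol w\|_{\Xg}^4+\|\boldsymbol w\|_{\Xg}^6\bigr)$. Since $\mathsf N_1^i$ is quadratic in $\boldsymbol w$, the smallness $\|\boldsymbol w\|_{\Xg}\lesssim\eps$ suffices, so losing the identity $\langle\{f,g\},g\rangle=0$ is not the obstacle you took it to be.
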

\begin{proof}
    Recalling \cref{eq:NLterms}, we write
    \begin{equation}
        \mathrm{N} =: \mathrm{N}_1 + \mathrm{N}_2 +  \mathrm{N}_3
    \end{equation}
    where
    \begin{align}
        &\mathrm{N}_1 :=\sum_{i=1}^3\langle {\mathsf{N}_1^i}(\boldsymbol{w},\boldsymbol{\rho}), \mathcal{M}_\eps(\boldsymbol{w})_i\rangle, \quad \mathrm{N}_2 :=\sum_{i=1}^3\langle {\mathsf{N}_2^i}(\boldsymbol{w},\boldsymbol{\rho}), \mathcal{M}_\eps(\boldsymbol{w})_i\rangle, \quad \mathrm{N_3} := \frac{1}{2}\langle \boldsymbol{w}, [t\partial_t,\cK_\eps](\boldsymbol{w}) \rangle.
    \end{align}
    with $\mathsf{N}_1^i$ and $\mathsf{N}_2^i$ defined respectively in \cref{eq:N1Definition} and \cref{eq:N2Definition}. We now bound each one of these terms.
    
    \smallskip
    \paragraph{$\blackinwhitediamond$ \textit{Bound on $\mathrm N_1$.}} 
    By \cref{eq:NijkDefinition}, we have 
    \begin{equation}
        \mathrm{N}_1 :=\sum_{i=1}^3\mathsf{N}_{1;i}
     \end{equation}
     where 
     \begin{equation}
          \mathsf{N}_{1;i}: =  \left\langle  \Big\{
            w_i,\gamma_i\varphi_i
            +\sum_{j\neq i}\gamma_j\cT_{\eps,\boldsymbol{z}+\boldsymbol{\rho}}^{i,j}\varphi_j
        \Big\}, \mathcal{M}_\eps(\boldsymbol{w})_i\right\rangle.
    \end{equation}
    By Lemma \ref{lem:estimatesMepsKeps}, 
    \begin{equation}
       \left | \mathsf{N}_{1;i}\right| \leq C  \Big(
        \|\nabla\varphi_i\|_{L^\infty}
        + \sum_{j\neq i}
        \|\nabla \cT_{\eps,\boldsymbol{z}+\boldsymbol{\rho}}^{i,j}\varphi_j\|_{L^\infty}
    \Big)
    \|\nabla \boldsymbol w\|_{\boldsymbol{\mathcal X}_\eps}
    \|\boldsymbol w\|_{\boldsymbol{ \mathcal{X}}_\eps}.
    \label{eq:N1FirstBound}
    \end{equation}

    Moreover, by Lemma \ref{lem:SobolevEstimates}, applied to
\(\varphi_i=\Delta^{-1}w_i\), together with the uniform bounds on the
translation operators, we have
\begin{align}
    \|\nabla\varphi_i\|_{L^\infty}
    +
    \sum_{j\neq i}
        \|\nabla \cT_{\eps,\boldsymbol{z}+\boldsymbol{\rho}}^{i,j}\varphi_j\|_{L^\infty}
    \leq
    C
    \left(
        \|\boldsymbol w\|_{\boldsymbol{ \mathcal{X}}_\eps}^{1/2}
        +
        \|\nabla \boldsymbol w\|_{\boldsymbol{ \mathcal{X}}_\eps}^{1/2}
    \right)
    \|\boldsymbol w\|_{\boldsymbol{ \mathcal{X}}_\eps}^{1/2}.
    \label{eq:VelocityN1Bound}
\end{align}
Combining \cref{eq:N1FirstBound} and \cref{eq:VelocityN1Bound}, we obtain
\begin{align}
    \left| \mathsf{N}_{1;i}
    \right|
    &\leq
    C
    \left(
        \|\boldsymbol w\|_{\boldsymbol{ \mathcal{X}}_\eps}^{1/2}
        +
        \|\nabla \boldsymbol w\|_{\boldsymbol{ \mathcal{X}}_\eps}^{1/2}
    \right)
    \|\boldsymbol w\|_{\boldsymbol{ \mathcal{X}}_\eps}^{3/2}
    \|\nabla\boldsymbol w\|_{\boldsymbol{ \mathcal{X}}_\eps}.
    \label{eq:N1PreYoung}
\end{align}
Applying Young's inequality and using the coercivity of the dissipation functional $D_\eps$ in Proposition \ref{prop:EnDiss}, we get
\begin{align}
    \left|\mathsf{N}_{1;i}
    \right|
    &\leq
    \frac{1}{100} D_\eps(\boldsymbol w)
    +
    C\left(
        \|\boldsymbol w\|_{\boldsymbol{ \mathcal{X}}_\eps}^{4}
        +
        \|\boldsymbol w\|_{\boldsymbol{ \mathcal{X}}_\eps}^{6}
    \right).
    \label{eq:EstimateN1}
\end{align}
    Therefore, 
    \begin{equation}
        |\mathrm{N}_1| \leq \frac{3}{100} D_{\eps} (\boldsymbol{w}) + C \left(
        \|\boldsymbol w\|_{\boldsymbol \cX_\eps}^{4}
        +
        \|\boldsymbol w\|_{\boldsymbol \cX_\eps}^{6}
    \right),
    \end{equation}
    which is consistent with the desired bound using the bootstrap hypothesis \cref{eq:BootstrapHP}.
    
    \smallskip
    \paragraph{$\blackinwhitediamond$ \textit{Bound on $\mathrm N_2$.}}
    By \cref{eq:NijkDefinition}, we can rewrite $\mathrm{N}_2$ as follows.
    \begin{equation}
        \mathrm{N}_2 :=\sum_{i=1}^3\mathsf{N}_{2;i}, \qquad \mathsf{N}_{2;i} := \left \langle\frac{ t}{\eps\dhk}
        \left\{
        \partial_t\rho_i\cdot\xi^\perp,
            w_i
        \right\}, \mathcal{M}_{\eps}(\boldsymbol{w})_i\right \rangle.
     \end{equation}
     By Lemma \ref{lem:estimatesMepsKeps} combined with Proposition \ref{prop:ControlZZhk}, we have
     \begin{equation}
         |\mathsf{N}_{2,i}| = \left | \left \langle \frac{ t}{\eps\dhk}
        \partial_t\rho_i\cdot
            \nabla w_i \,\mathcal{M}_{\eps}(\boldsymbol{w})_i\right\rangle\right| \lesssim \ \eps \, \delta^{1-2p} \| \nabla \boldsymbol{w} \|_{\boldsymbol{\mathcal{X}}_\eps} \| \boldsymbol{w}\|_{\boldsymbol{\mathcal{X}}_\eps}.
     \end{equation}
     By Young's inequality 
     \begin{equation}
         |\mathsf{N}_{2,i}| \leq\frac{1}{100} D_{\eps}(\boldsymbol{w}) + C \eps^2  \ \delta^{2-4p} \| \boldsymbol{w}\|_{\boldsymbol{\mathcal{X}}_\eps}^2,
     \end{equation}
     and, for $ p $ sufficiently small  we have that 
     \begin{equation}
         |\mathrm{N}_2| \leq \frac{3}{100} D_{\eps}(\boldsymbol{w}) + C \eps^3, 
     \end{equation}
     which is consistent with the desired bound under the bootstrap hypothesis \cref{eq:BootstrapHP}.
    \smallskip
    \paragraph{$\blackinwhitediamond$ \textit{Bound on $\mathrm N_3$.}}
    Finally, we control the commutator term $\mathrm N _3$. We rewrite it as 
    \begin{equation}
    \mathrm{N}_3 =\sum_{i = 1}^3 \mathsf \gamma_i N_{3; i}, \qquad \mathsf N_{3; i} := \frac{1}{2} \sum_{j \neq i}\gamma_j\left\langle w_i,
        [t\partial_t,\cT_{\eps,\boldsymbol{z}}^{i,j}\Delta^{-1}]( w_j)
    \right\rangle.
\end{equation}
For $ j \neq i$, we set 
\begin{equation}
    b_{ij}(t)
    :=
    \frac{z_i(t)-z_j(t)}{\eps(t)\dhk(t)}.
    \label{eq:CommutatorShiftDefinition}
\end{equation}
Then, 
\begin{equation}
    \cT_{\eps,\boldsymbol{z}}^{i,j}\Delta^{-1}w_j
    =
    \varphi_j\bigl(t,\xi+b_{ij}(t)\bigr),
\end{equation}
and, since $\Delta^{-1} (t \partial_t w_j) = t \partial_t \varphi_j$, we have
\begin{equation}
    [t\partial_t,
    \cT_{\eps,\boldsymbol{z}}^{i,j}\Delta^{-1}](w_j)
    =
    t\partial_t b_{ij}(t)\cdot
    \nabla\cT_{\eps,\boldsymbol{z}}^{i,j}\varphi_j,
    \label{eq:CommutatorExactFormula}
\end{equation}
where
\begin{equation}
    t\partial_t b_{ij}
    =
    \frac{
        t\partial_t z_{ij}\,\eps\dhk
        -
        z_{ij}\,t\partial_t\eps\,\dhk
        -
        z_{ij}\,\eps\,t\partial_t\dhk
    }{\eps^2\dhk^2}.
    \label{eq:ShiftDerivative}
\end{equation}
By the Helmholtz--Kirchhoff dynamics and the definitions of $\eps$ and
$\dhk$,
\begin{equation}
    |z_i-z_j|\lesssim\dhk,
    \qquad
    |t\partial_t(z_i-z_j)|\lesssim\frac{t\Gamma}{\dhk},
    \qquad
    |t\partial_t\eps|\lesssim\eps,
    \qquad
    |t\partial_t\dhk|\lesssim\dhk.
\end{equation}
Using also $t\Gamma/\dhk^2\lesssim1$, we deduce that
\begin{equation}
    |t\partial_t b_{ij}(t)|
    \lesssim
    \frac1\eps.
    \label{eq:CommutatorShiftBound}
\end{equation}
Then, by Lemma \ref{lem:estimatesMepsKeps}, we have
\begin{equation}
    | \mathrm{N}_3| \lesssim \eps^2 \| \boldsymbol{w} \|^2_{\boldsymbol{\mathcal{X}}_\eps},
\end{equation}
which agrees with the desired estimate thanks to the bootstrap hypothesis \cref{eq:BootstrapHP}.
\end{proof}
\subsubsection{\textbf{Improvement of the bootstrap}}
\label{sec:bootstrapimprovement}
We are finally ready to prove \cref{bd:goalw}. Combining the energy identity \cref{eq:tdtE} with Lemmas \ref{lem:forcing}, \ref{lem:linear}, \ref{lem:advection} and \ref{lem:nonlinear}, taking $\delta_0$ small enough, we get that 
\begin{align}
    t\de_tE_{\eps}(\boldsymbol{w})+\frac{1}{100}D_{\eps}(\boldsymbol{w})\leq C\eps^2,
\end{align}
where the constant $C$ does not depend on $\delta$ and $\widetilde{C}$ in \cref{eq:BootstrapHP}. 
Using Proposition \ref{prop:EnDiss} together with the
upper energy bound
\[
E_\eps(\boldsymbol w)
\leq C_E\|\boldsymbol w\|_{\Xg}^2
\]
from Lemma \ref{lem:estimatesMepsKeps}, we obtain
\[
D_\eps(\boldsymbol w)
\geq \frac{\kappa_D}{C_E}E_\eps(\boldsymbol w).
\]
Then, for 
$\kappa_\star$ small enough, we have
\begin{align}
    t\de_tE_{\eps}(\boldsymbol{w})+\kappa_\star E_{\eps}(\boldsymbol{w})\leq C\eps^2 \quad \Longrightarrow \quad E_{\eps}(\boldsymbol{w}(t))\leq\frac{C}{t^{\kappa_{\star}}}\int_{0}^t \frac{\eps^2(s)}{s^{1-\kappa_\star}}ds.
\end{align}
The integration from zero is justified by the short-time
bound, which gives 
\[r^{\kappa_\star}E_\eps(\boldsymbol w(r))
=\mathcal O(r^{\kappa_\star+1})\to0 \text{ as } r\to 0.
\]
Since $\eps(t)$ is an increasing function, we readily get that
\begin{align}
    E_{\eps}(\boldsymbol{w}(t))\leq \frac{C}{\kappa_\star} \eps(t)^2.
\end{align}
Therefore, taking 
$\widetilde C$ large enough leads to the desired improved bootstrap bound 
\cref{eq:BootstrapHP}.
This concludes the proof of Theorem \ref{thm:NonLinControl}
in the expanding case $\varsigma=1$.

\subsection{Adjustments for the collapsing case}
\label{sec:collapsing}
We finally consider the collapsing case $\varsigma=-1$, where we only highlight the main differences without repeating the whole proof.  We define 
\begin{equation}
    \label{def:relcollapsing}
    {\sf a}(t):=1-\frac{t}{T_{\rm adv}} \quad \Longrightarrow \quad \eps(t)^2
    =
    \frac{\delta}{|\alpha|}
    \frac{\frac{t}{T_{\rm adv}}}{{\sf a}(t)}, \qquad     t\partial_t\eps(t)
    =
\frac{1}{2{\sf a}(t)}\eps(t).
\end{equation}
and restrict the evolution to
\[
    0\leq t\leq(1-\delta^{p'})T_{\rm adv},
    \qquad
    0<p'<p'_0,
    \qquad
    p'_0<\frac1{10}.
\]
In particular,
\[
    {\sf a}(t)\geq\delta^{p'},
    \qquad
    \eps(t)^2\lesssim\delta^{1-p'}.
\]
With the notation in Theorem \ref{thm:NonLinControl}, we anticipate that we can fix
\[
    q(p')=5p',
    \qquad
    \tilde q(p')=2p'.
\]
This ensures that for $\delta$ sufficiently small we have
\[
    \frac{\delta^{-2}\eps^5}
         {\eps^{1-q}}
    \leq C
    \delta^{\frac{q(1-p')-4p'}{2}}
    =
    C\delta^{\frac{p'(1-5p')}{2}}\ll 1
\]
Moreover, by Proposition \ref{prop:ApproximateSol}, since $p'<1/5$ we have
$\eps^2{\sf a}^{-2}=\cO(\eps)$.  It follows that
\begin{equation}
\label{eq:collapsing-remainder}
    \delta^{-2}\cR_i
    =
    \cO_{\cZ}
    \left(
        \delta^{-2}\eps^5
        +
        \eps^2{\sf a}(t)^{-2}
    \right)
    =
    \cO_{\cZ}\bigl(\eps^{1-q}\bigr).
\end{equation}
Accordingly, the bootstrap assumption in the collapsing case becomes
\begin{equation}
\label{eq:collapsing-bootstrap}
    \|\boldsymbol w(t)\|_{\Xg}
    \leq
    4\widetilde C\eps(t)^{1-q}.
\end{equation}

We next reconsider the bounds for $\boldsymbol\rho$ in Proposition 
\ref{prop:ControlZZhk}. Let $\mathfrak B$ denote the autonomous matrix introduced in
\cref{sec:selfsimilar_HK}. Here the important observation is that, using  $\widetilde{\boldsymbol{\rho}}=(\overline{\boldsymbol{\rho}},\boldsymbol{\rho})$ we get that the 
linearized collapsing system of the analogue of \cref{eq:rhoExpandedSystem} takes the form
\[
    \partial_\tau g=  P \mathfrak B P g,
    \qquad
    \tau=-T_{\rm adv}\log{\sf a}(t), \qquad P = \begin{pmatrix}
        0 & I_3 \\ I_3 & 0
    \end{pmatrix}.
\]
Namely, we have simply considered negative times and changed the role of the first and second components in the enlarged system for the expanding case.
Since the spectrum of $\mathfrak B$ is symmetric, the spectral picture is unchanged.  Consequently,
\begin{equation}
\label{eq:collapsing-propagator}
    \left|\mathcal U_{\mathfrak L}(t,s)\right|
    \lesssim \left(1+\log \frac{\mathsf{a}(s)}{\mathsf{a}(t)} \right)
    \frac{\dhk(s)}{\dhk(t)},
    \qquad
    0\leq s\leq t<T_{\rm adv}.
\end{equation}
This is the reciprocal of the bounds in the expanding case. Then, writing
$\Omega_i=G+\delta\widetilde\Omega_i$, under
\cref{eq:collapsing-bootstrap} and exploiting \cref{eq:ApproxProfilesBound} we have
\[
    \|\widetilde{\boldsymbol\Omega}(t)\|_{\Xg}
    \lesssim
    \frac{\eps(t)^2}{\delta}
    +
    \frac{\eps(t)^2}{{\sf a}(t)}
    +
    \eps(t)^{1-q}.
\]
Notice also that
\[
    \delta
    \left(
        \frac{\eps^2}{\delta}
        +
        \frac{\eps^2}{{\sf a}}
        +
        \eps^{1-q}
    \right)
    \ll1
\]
on the time interval under consideration.  Hence the quadratic
contribution in $\widetilde{\boldsymbol\Omega}$ arising in the proof of Proposition \ref{prop:ControlZZhk} is perturbative. Repeating the bootstrap argument in that proof, now using
\cref{eq:collapsing-propagator}, gives
\begin{align}
    \frac{|\boldsymbol\rho(t)|}{\dhk(t)}
    \lesssim{}&
    \frac{1}{T_{\rm adv}{\sf a}(t)}
    \int_0^t \left(1+\log \frac{\mathsf{a}(s)}{\mathsf{a}(t)} \right)
    \left(
        \frac{|\boldsymbol\rho(s)|}{\dhk(s)}
    \right)^2
    \,\dd s
    \notag\\
    &+
    \frac{1}{{\sf a}(t)}
    \int_0^t \left(1+\log \frac{\mathsf{a}(s)}{\mathsf{a}(t)} \right)
    \frac{{\sf a}(s)\eps(s)^4}{s}
    \left(
        \frac{\eps(s)^2}{\delta}
        +
        \frac{\eps(s)^2}{{\sf a}(s)}
        +
        \eps(s)^{1-q}
    \right)
    \,\dd s
    +
    \cO(\eps^\infty).
\label{eq:collapsing-rho-integral}
\end{align}
Appealing to \cref{def:relcollapsing}, the leading order contribution arises from the $\delta^{-1}\eps^2$ term in parenthesis, which can be rewritten after a change of variable as
\begin{align}
    \frac{\delta^2}{{\sf a}(t)}
    \int_0^{t/T_{\rm adv}}&\left(1+\log \frac{1-x}{\mathsf{a}(t)} \right)
    \frac{x^2}{(1-x)^2}\,\dd x
    \\
    &=
    \frac{\delta^2}{{\sf a}(t)}
    \left(
        \frac{2}{{\sf a}(t)}
        +
        2\log{\sf a}(t)
        - 2 -(\log ({\sf a}(t)))^2
    \right)\lesssim
    \frac{\delta^2}{{\sf a}(t)^2}
    \left(\frac{t}{T_{\rm adv}}\right)^3.
\label{eq:collapsing-algebraic-loss}
\end{align}
This is the algebraic loss which replaces the logarithmic loss in the
expanding case.  The contributions of $\eps^2/{\sf a}$ and
$\eps^{1-q}$ in \cref{eq:collapsing-rho-integral}, as well as the
quadratic term in $\boldsymbol\rho$, are lower order when
${\sf a}(t)\geq\delta^{p'}$. From the bootstrap we therefore get
\begin{equation}
\label{eq:collapsing-rho}
    \frac{|\boldsymbol\rho(t)|}{\dhk(t)}
    \lesssim
    \delta^{1-2p'}\eps(t)^2, \qquad     \frac{|t\de_t\boldsymbol\rho(t)|}{\dhk(t)}
    \lesssim
    \delta^{1-2p'}
    \frac{\eps(t)^2}{{\sf a}(t)}.
\end{equation}
Consequently,
\begin{align}
    \frac{|t\de_t\boldsymbol\rho|}
         {\delta\eps\dhk}
    +
    \frac{\eps|\boldsymbol\rho|}
         {\delta^2\dhk}
    &\lesssim
    \delta^{-2p'}\frac{\eps}{{\sf a}}
    \lesssim
    \delta^{\frac{1-7p'}{2}}\ll 1
\label{eq:collapsing-translation-size}
\end{align}
These bounds are used to replace the estimates in \cref{eq:mainideas1non,eq:mainideas2non} to control the terms involving $\boldsymbol{\mathsf T}(\boldsymbol\rho)$ and those containing
$ \cT_{\eps,\boldsymbol z+\boldsymbol\rho}^{i,j}
    -
    \cT_{\eps,\boldsymbol z}^{i,j}.$ 
Since we can argue as in the expanding case to remove the leading contributions from $\de_1G$ and $\de_2G$  by  using
$\mathsf H(\de_\ell G)=0$, we can ensure that these terms are consistent with the bootstrap bound $\eps^{1-q}$ upon further reducing $p_0'$ if necessary. 

Regarding the weight, we note that we could use the standard one adopted in \cite{dolce2024long,dolce2026viscousvortexcrystals,zhang2025long}. Namely, we can define the regions as in Section~\ref{sec:weights}, 
but replace \cref{def:hepst} by the usual height
\[
    \mathsf h_\eps(t)
    =
    \frac14\eps(t)^{-2\sigma_1},
\]
without any logarithmic correction.  Indeed, in the intermediate
region,
\begin{equation}
    -\frac12
    \mathbbm{1}_{\IIi}
    t\de_tA_{\eps,i}
    =
    \frac{\sigma_1}{8{\sf a}(t)}
    \eps(t)^{-2\sigma_1}
    \mathbbm{1}_{\IIi}A_{\eps,i}.
\end{equation}
For $\delta$ sufficiently small, this provides more than the fixed
intermediate-region coercivity required in the proof of Proposition \ref{prop:EnDiss}.  In the inner region, we have the worst bounds in \cref{bd:tdetF2}, but these errors are still perturbative after reducing $\sigma_1,p_0'$ if necessary since
\[
    \frac{\eps^{2-N\sigma_1}}{{\sf a}}
    \lesssim
    \delta^{
        \frac{(2-N\sigma_1)(1-p')}{2}-p'
    }\ll 1.
\]
We can now repeat all the estimates of Section \ref{sec:NLbounds} with some attention to additional losses of factors $\sf a ^{-1}$ (that can be handled on the desired time interval properly setting the parameters) and conclude that 
there exists $\sigma_\star\in (0,1)$ such that
\begin{align}
    |\mathrm A(t)|+|\mathrm L(t)|
    &\leq
    C\delta^{\sigma_\star}
    D_\eps(\boldsymbol w)(t),
    \label{eq:collapsing-AL}\\
    |\mathrm F(t)|+|\mathrm N(t)|
    &\leq
    \frac1{50}D_\eps(\boldsymbol w)(t)
    +
    C\eps(t)^{2(1-q)}.
    \label{eq:collapsing-FN}
\end{align}
Combining these estimates with \cref{eq:tdtE}, and taking $\delta$
sufficiently small we can conclude the bootstrap as in Section \ref{sec:bootstrapimprovement}. \qed
\section{The vanishing viscosity vortex burst limit}
\label{sec:VanishingViscosity}
In this section, we prove Theorem \ref{theo:burst_intro} and give a quantitative
formulation of the vortex burst mechanism. Since the circulations of a
self-similar Gr\"obli configuration do not have the same sign, we cannot rely on the usual
Wasserstein distance between probability measures to characterize the limit. Following the point of view adopted in
\cite{CeciSeis24}, we therefore use its dual
Kantorovich--Rubinstein formulation. Let us briefly recall these notions before stating a more detailed version of Theorem \ref{theo:burst_intro}.
\subsubsection{Wasserstein distance and Kantorovich-Rubinstein formulation}
Let $\mathcal{P}_1(\RR^2)$ be the space of probability measures on $\RR^2$ with finite first order moments. 
For $\mu, \ \eta \in \mathcal{P}_1(\RR^2)$ their $1$-Wasserstein distance is defined by 
\begin{equation}
    W_1(\mu, \eta) := \inf_{\gamma \in \Pi(\mu, \eta)} \int_{\RR^2 \times \RR^2} |x-y| d \gamma(x,y),
\end{equation}
where $\Pi(\mu, \eta)$ is the set of couplings between $\mu$ and $\eta$.
\begin{remark}
    Note that for every $\mu\in\mathcal P_1(\mathbb R^2)$ and $y\in\mathbb R^2$,
    the only coupling of $\mu$ and $\delta_y$ is concentrated on
    $\mathbb R^2\times\{y\}$. Hence
    \begin{equation}
    \label{eq:W1DiracFormula}
        W_1(\mu,\delta_y)
        =
        \int_{\mathbb R^2}|x-y|\,d\mu(x).
    \end{equation}
\end{remark}
By Kantorovich-Rubinstein duality, we have the following equivalent representation
\begin{equation}
    W_1(\mu, \eta) = \sup_{ \substack{\varphi \in \textnormal{Lip}(\RR^2)\\ \textnormal{Lip}(\varphi) \leq 1}} \left | \int_{\RR^2} \varphi \ d(\mu - \eta) \right|.
\end{equation}
This dual formulation can actually be extended to finite signed Radon measure on $\RR^2$. Thus, let $\mu$ be a finite signed Radon measure on $\RR^2$ such that
\begin{equation}
    \mu(\RR^2) = 0,\qquad \int_{\RR^2} |x| d |\mu|(x) < +\infty.
\end{equation}
Then, we define the Kantorovich-Rubinstein norms as 
\begin{equation}
    \| \mu \|_{\rm KR} := \sup_{ \substack{\varphi \in \textnormal{Lip}(\RR^2)\\ \textnormal{Lip}(\varphi) \leq 1}}  \left| \int_{\RR^2} \varphi d\mu \right |.
\end{equation}
Accordingly, if $\eta_1$ and $\eta_2$ are finite signed Radon measures with the
same total mass and finite first order moments, we set
\begin{equation}
\label{def:KRDistance}
    d_{\rm KR}(\eta_1,\eta_2)
    :=
    \|\eta_1-\eta_2\|_{\rm KR}.
\end{equation}
If $\eta_1$ and $\eta_2$ are nonnegative measures with the same mass $m>0$, then
\begin{equation}
\label{eq:KRAndWasserstein}
    d_{\rm KR}(\eta_1,\eta_2)
    =
    m
    W_1\left(
        \frac{\eta_1}{m},
        \frac{\eta_2}{m}
    \right).
\end{equation}
Hence the Kantorovich--Rubinstein distance agrees with the usual
$1$-Wasserstein distance after normalization.

\subsection{Setting and vortex burst theorem}
In our case, each vortex has constant circulation 
\begin{equation}
    \int_{\RR^2} \omega_i(x,t) dx = \Gamma_i.
\end{equation}
 Thus,  setting
\begin{equation}
    \mu_i (t) : = \frac{\omega_i(t)}{\Gamma_i},
\end{equation}
we get a probability measure. It follows that
\begin{equation}
\label{eq:KRvsW1}
    d_{\rm KR} \left( \sum_{i=1}^3 \omega_i(t), \sum_{i=1}^3 \Gamma_i \delta_{Z_i(t)}\right) \leq \sum_{i=1}^3 | \Gamma_i| W_1(\mu_i(t), \delta_{Z_i(t)}).
\end{equation}

Throughout this section, we fix an expanding normalized Gr\"obli
configuration, so that $\varsigma=1$ (hence $\alpha>0$), and an exponent
$p\in(0,p_0)$ as in Theorem \ref{thm:main_intro}.
Fix a limiting phase $\beta_*\in\mathbb R$ and a reference time $t_\star>0$, and let $d_\nu>0$ be the initial distance  satisfying
\begin{equation}
\label{eq:BurstScaleFinal}
    d_\nu\xrightarrow[\nu\to0]{}0,
    \qquad
    \frac{d_\nu^2}{\Gamma}
    \exp\left(
        \left(\frac{\Gamma}{\nu}\right)^p
    \right)
    \xrightarrow[\nu\to0]{}+\infty.
\end{equation}
Then, we define 
\begin{equation}
\label{eq:BurstPhaseFinal}
    \beta_\nu
    :=
    \beta_*
    -
    \frac{\beta}{\alpha}\log\Big(\alpha\frac{\Gamma t_\star}{d^2_\nu}\Big)\qquad     \mod 2\pi
\end{equation}
and consider the initial vorticity
\begin{equation}
\label{eq:BurstInitialDataFinal}
    \omega_\nu^{\rm in}
    :=
    \sum_{i=1}^3
    \Gamma_i
    \delta_{d_\nu e^{\mathrm i\beta_\nu}{\sf n}_i}.
\end{equation}
We then introduce the signed measure
\begin{equation}
\label{def:NormalizedTriangleMeasure}
\mu_\star
    :=
    \sum_{i=1}^3
    \Gamma_i\delta_{{\sf n}_i}
    -
    \Gamma_{\rm tot}\delta_0, \qquad \Gamma_{\rm tot}
    :=
    \sum_{i=1}^3\Gamma_i.
\end{equation}
Since the normalized Gr\"obli triangle is nondegenerate,
$\mu_\star\neq0$. We define
\begin{equation}
\label{def:BurstGeometricConstant}
    c_{\Gamma,{\sf n}}
    :=
    \frac{1}{\Gamma}
    \|\mu_\star\|_{\rm KR}
    >0.
\end{equation}
For $t>0$, let
\begin{equation}
\label{def:BurstTrajectoryFinal}
    z_{0,i}(t)
    :=
    \sqrt{\alpha\Gamma t}\,
    \exp\left(
        \mathrm i\beta_*
        +
        \mathrm i\frac{\beta}{\alpha}\log \frac{t}{t_\star}
    \right)
    {\sf n}_i,
    \qquad i=1,2,3,
\end{equation}
and set $z_{0,i}(0):=0$. This is the expected Helmholtz-Kirchhoff position of the limiting branch, and therefore we define 
\begin{equation}
    \omega_{0}^{\rm b}(t) :=
    \sum_{i=1}^3
    \Gamma_i\delta_{z_{0,i}(t)},
    \qquad
    t\geq0.
    \label{def:limitingBranch}
\end{equation}
Note that 
$\omega_{0}^{\rm b}(0) = \Gamma_{\rm tot} \delta_0$. Finally, we fix a final target distance $d_{\rm fin} = d_{\rm fin}(\nu) > d_\nu$ reached at time $T_{\rm fin}$, i.e. by the Helmholtz-Kirchhoff formula,
\begin{equation}
    d_{\rm fin}^2 = d_\nu^2 \left( 1+ \frac{T_{\rm fin}}{T_{\rm adv, \nu}} \right), \qquad T_{\rm adv, \nu} := \frac{d_\nu^2}{ \alpha \Gamma}.
\end{equation}
Then, 
\begin{equation}
    T_{\rm fin} = T_{\rm adv, \nu} \left(\frac{d^2_{\rm fin} - d^2_\nu}{ d_\nu ^2} \right) = \frac{d^2_{\rm fin} - d^2_\nu}{\alpha \Gamma}.
    \label{eq:TfinDef}
\end{equation}
To have that $T_{\rm fin}$ lies in the time interval of validity of Theorem \ref{thm:main_intro}, we assume that
\begin{equation}
    \frac{d_{\rm fin}^2}{d_\nu^2} \exp(-\delta^{-p}) \xrightarrow[\nu\to0]{}0 \quad \Longrightarrow \quad T_{\rm fin} \ll T_{\rm adv, \nu} \exp (  \delta^{-p}).
    \label{eq:admissabilityT}
\end{equation}
For later use, we note that for \cref{eq:BurstInitialDataFinal} and \cref{def:limitingBranch}, for every $t\in[0,T_{\rm fin}]$,  we have that 
\begin{equation}
\label{eq:InitialKRExact}
    \frac{1}{\Gamma}
    d_{\rm KR}
    \left(
        \omega_\nu^{\rm in},
        \Gamma_{\rm tot}\delta_0
    \right)
    =
    c_{\Gamma,{\sf n}}d_\nu, \qquad \frac{1}{\Gamma}
    d_{\rm KR}
    \left(
        \omega_0^{\rm b}(t),
        \Gamma_{\rm tot}\delta_0
    \right)
    =
    c_{\Gamma,{\sf n}}
    \sqrt{\alpha\Gamma t}.
\end{equation}

\begin{figure}[tbp]
    \centering    \includegraphics[width=0.95\textwidth]{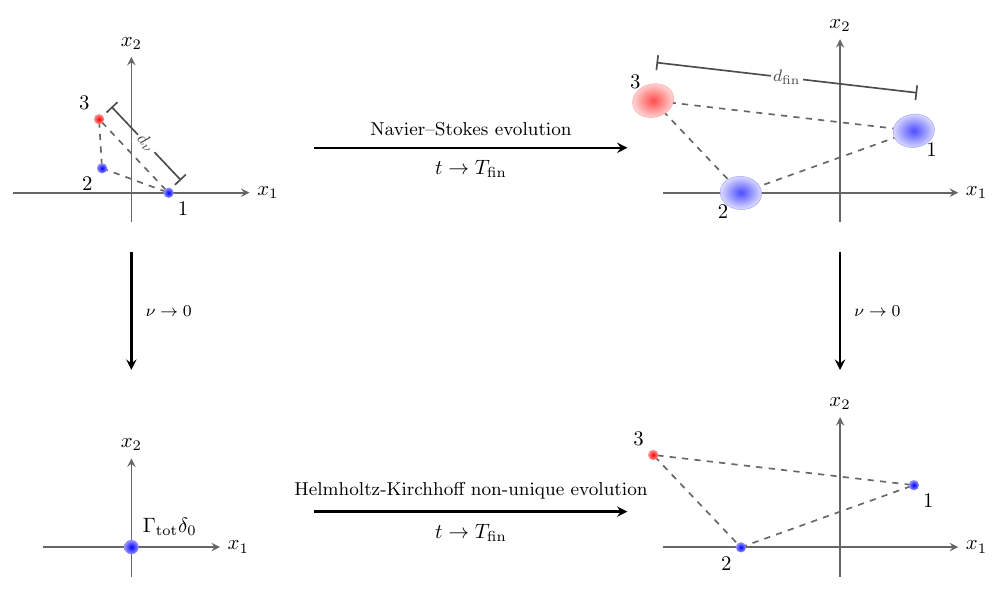} 
    \caption{A schematic picture of the vanishing viscosity vortex burst limit.}
    \label{fig:burst}
\end{figure}

Then, we have the following.
\begin{theorem}
    Let $\omega_\nu$ be the unique Navier--Stokes solution with initial
    datum \cref{eq:BurstInitialDataFinal}, where
    \cref{eq:BurstScaleFinal,eq:BurstPhaseFinal} hold.
    Let $d_{\rm fin}$ satisfying \cref{eq:admissabilityT} and $T_{\rm fin}$ be defined in \cref{eq:TfinDef}.
    Suppose that
    \begin{equation}
        \sqrt{\delta} d_{\rm fin} \xrightarrow[\nu \to 0]{} 0, \qquad \liminf_{\nu \to 0} d_{\rm fin} > 0.
     \end{equation}
    Then, for every
$T\leq  \lim \inf_{\nu \to 0} T_{\rm fin}$,  
    \begin{equation}
    \label{eq:UniformBurstKRConvergence}
    \sup_{0\leq t\leq T_{\rm fin}}
    \frac{1}{\Gamma}
    d_{\rm KR}
    \left(
    \omega_\nu(t),
    \omega_0^{\rm b}(t)
    \right)
    \xrightarrow[\nu\to0]{}0, \quad \Longrightarrow\quad \omega_\nu
    \xrightharpoonup[\nu\to0]{*}
    \omega_0^{\rm b}
    \qquad
    \text{in }
    C\left(
        [0,T];
        \mathcal M(\mathbb R^2)
    \right)
    \end{equation}
in the weak-$*$ topology.
Moreover,
\begin{equation}
\label{eq:BurstInitialCollapseLimit}
\frac{1}{\Gamma}
d_{\rm KR}
\left(
\omega_\nu^{\rm in},
\Gamma_{\rm tot}\delta_0
\right)
\xrightarrow[\nu\to0]{}0,
\end{equation}
whereas
\begin{equation}
\label{eq:PositiveBurstSeparation}
\liminf_{\nu\to0}
\frac{1}{\Gamma}
d_{\rm KR}
\left(
\omega_\nu(T_{\rm fin}),
\Gamma_{\rm tot}\delta_0
\right)
>0.
\end{equation}
\label{thm:ViscousBurst}
\end{theorem}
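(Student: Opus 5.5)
The plan is a triangle inequality in $d_{\rm KR}$, splitting
\[
\frac1\Gamma d_{\rm KR}(\omega_\nu(t),\omega_0^{\rm b}(t))\le \frac1\Gamma d_{\rm KR}\Big(\omega_\nu(t),\sum_i\Gamma_i\delta_{Z_i(t)}\Big)+\frac1\Gamma d_{\rm KR}\Big(\sum_i\Gamma_i\delta_{Z_i(t)},\sum_i\Gamma_i\delta_{z_{i,\nu}(t)}\Big)+\frac1\Gamma d_{\rm KR}\Big(\sum_i\Gamma_i\delta_{z_{i,\nu}(t)},\omega_0^{\rm b}(t)\Big),
\]
where $z_{i,\nu}$ are the Helmholtz--Kirchhoff trajectories from \cref{eq:BurstInitialDataFinal} and $Z_i$ are the modulated centers of \cref{eq:dtZi}. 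Since $d_\nu$ obeys \cref{eq:BurstScaleFinal} and $d_{\rm fin}$ obeys \cref{eq:admissabilityT}, the whole interval $[0,T_{\rm fin}]$ lies inside $[0,T_{\rm adv,\nu}\exp(\delta^{-p})]$. Hence Theorem \ref{thm:main_intro} and Theorem \ref{thm:NonLinControl} apply with $d=d_\nu$, and the constants do not depend on $d_\nu$.

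\textbf{First term.} We have $\omega_\nu=\sum_i\frac{\Gamma_i}{\nu t}(\Omega_{{\rm app},i}+\delta w_i)(\frac{\cdot-Z_i}{\sqrt{\nu t}})$, and $\int\Omega_i=1$. For a $1$-Lipschitz test function $\varphi$, subtract $\varphi(Z_i)$ from each piece, using that each piece has mass $\Gamma_i$. This bounds the first term by
\[
\sqrt{\nu t}\sum_i|\gamma_i|\int|\xi|\,|\Omega_i(\xi,t)|\,\dd\xi .
\]
The integral is uniformly bounded. For the Gaussian and the profiles $\Omega_{n,i}\in\cZ$ this is immediate. For $\delta w_i$ we use $\cX_{\eps,i}\subset L^1(e^{|\xi|^{c_0}}\dd\xi)$ and the bootstrap bound $\|\boldsymbol w\|\lesssim\eps$. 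So the first term is $O(\sqrt{\nu T_{\rm fin}})\lesssim\sqrt{\delta}\,d_{\rm fin}\to0$.

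\textbf{Second term.} It is at most $\sum_i|\gamma_i||\rho_i(t)|$. By Theorem \ref{thm:NonLinControl}, $|\rho_i|\lesssim\delta^{1-2p}\eps^2 \dhk\lesssim \delta^{2-2p}d_{\rm fin}\to0$.

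\textbf{Third term.} This term is purely explicit and is the one I expect to be delicate. By construction of $\beta_\nu$ we have
\[
z_{i,\nu}(t)=\sqrt{d_\nu^2+\alpha\Gamma t}\,\exp\Big(\mathrm i\beta_*+\mathrm i\tfrac\beta\alpha\log\tfrac{t+T_{\rm adv,\nu}}{t_\star}\Big){\sf n}_i .
\]
Comparing with $z_{0,i}$ gives a modulus error of at most $d_\nu$, since $|\sqrt{a+b}-\sqrt b|\le\sqrt a$. The phase error is $\frac{|\beta|}{\alpha}\log(1+T_{\rm adv,\nu}/t)$ multiplied by $\sqrt{\alpha\Gamma t}$. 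The phase error does not vanish uniformly as $t\to0$, so uniformity near $t=0$ must come from a separate smallness argument. Split at a cutoff $t_\nu$ with $T_{\rm adv,\nu}\ll t_\nu\to0$, for instance $t_\nu=\sqrt{T_{\rm adv,\nu}}$, which tends to $0$.
\begin{itemize}
\item For $t\ge t_\nu$, the phase error is at most $\sqrt{\alpha\Gamma T}\,\frac{|\beta|}{\alpha}T_{\rm adv,\nu}/t_\nu\to0$.
\item For $t\le t_\nu$, bound both configurations by their distance to $\Gamma_{\rm tot}\delta_0$, using \cref{eq:InitialKRExact}. Each is $\lesssim\sqrt{d_\nu^2+\alpha\Gamma t_\nu}\to0$.
\end{itemize}
This gives the uniform convergence in \cref{eq:UniformBurstKRConvergence}.

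Weak-$*$ convergence in $C([0,T];\mathcal M)$ then follows. Lipschitz functions are dense in $C_0$, total variation is uniformly bounded (the $L^1$ norm $\le\sum|\Gamma_i|$), and we test against bounded Lipschitz $\varphi$ uniformly in $t$.

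\textbf{Remaining claims.} \cref{eq:BurstInitialCollapseLimit} is the first identity in \cref{eq:InitialKRExact} with $d_\nu\to0$. For \cref{eq:PositiveBurstSeparation}, use the reverse triangle inequality together with the estimates above at $t=T_{\rm fin}$, and $\frac1\Gamma d_{\rm KR}(\sum_i\Gamma_i\delta_{Z_i},\Gamma_{\rm tot}\delta_0)\ge c_{\Gamma,{\sf n}}\dhk(T_{\rm fin})-C|\boldsymbol\rho|$. The right-hand side equals $c_{\Gamma,{\sf n}}d_{\rm fin}+o(1)$, and this has positive liminf because $\liminf d_{\rm fin}>0$.
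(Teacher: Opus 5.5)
Your decomposition is the paper's three-term triangle inequality, and your handling of the first two terms and of the initial-collapse and positive-separation claims is sound. Bounding $\int|\xi|\,|\Omega_i|$ avoids the maximum principle that the paper uses. Passing from $\omega_\nu(T_{\rm fin})$ to $\Gamma_{\rm tot}\delta_0$ through $\sum_i\Gamma_i\delta_{z_{i,\nu}(T_{\rm fin})}$, whose distance to $\Gamma_{\rm tot}\delta_0$ is exactly $\|\mu_\star\|_{\rm KR}\,d_{\rm fin}$, is slightly cleaner than the paper's detour through $\omega_0^{\rm b}$.

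The gap is in the third term. The supremum in the first claim runs over $[0,T_{\rm fin}]$, and the hypotheses only impose $\sqrt{\delta}\,d_{\rm fin}\to0$. So $d_{\rm fin}$, and with it $T_{\rm fin}\approx d_{\rm fin}^2/(\alpha\Gamma)$, may tend to infinity.

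For $t\ge t_\nu$ you bound $\sqrt{t}$ by its maximum and the logarithm by $T_{\rm adv,\nu}/t_\nu$ separately. On $[0,T_{\rm fin}]$ with $t_\nu=\sqrt{T_{\rm adv,\nu}}$, this bound is of order $d_{\rm fin}\,d_\nu$. That stays of order one, for instance, for $d_\nu=\delta^{1/4}$ and $d_{\rm fin}=\delta^{-1/4}$, where all hypotheses hold.

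No choice of cutoff rescues this crude bound. You would need $t_\nu\to0$ and $d_{\rm fin}T_{\rm adv,\nu}/t_\nu\to0$ at the same time, which forces $d_\nu^2 d_{\rm fin}\to0$. This fails for the admissible choice $d_\nu=|\log\delta|^{-1}$, $d_{\rm fin}=\delta^{-1/4}$. As written, your argument gives uniform convergence only on a fixed bounded interval $[0,T]$. That is enough for the weak-$*$ conclusion, but not for the supremum over $[0,T_{\rm fin}]$ that you claim.

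The fix, which is the paper's argument, is to keep $\sqrt{t}$ and the logarithm together. Set $s_\nu:=T_{\rm adv,\nu}$.
\begin{itemize}
\item For $t\ge s_\nu$: $\sqrt{t}\,\log(1+s_\nu/t)\le s_\nu/\sqrt{t}\le\sqrt{s_\nu}$.
\item For $0<t\le s_\nu$: use $|e^{\mathrm{i}a}-e^{\mathrm{i}b}|\le 2$ together with $\sqrt{t}\le\sqrt{s_\nu}$.
\end{itemize}
Since $\sqrt{\alpha\Gamma s_\nu}=d_\nu$, this gives $|z_{i,\nu}(t)-z_{0,i}(t)|\le C d_\nu$ for all $t\ge0$. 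You then need neither the intermediate cutoff nor the detour through $\Gamma_{\rm tot}\delta_0$. Combined with your first two terms, which are already uniform on $[0,T_{\rm fin}]$, this yields
\[
\sup_{0\le t\le T_{\rm fin}}\frac{1}{\Gamma}\,d_{\rm KR}\bigl(\omega_\nu(t),\omega_0^{\rm b}(t)\bigr)\le C\bigl(d_\nu+\sqrt{\delta}\,d_{\rm fin}\bigr)\longrightarrow 0 .
\]
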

Therefore, although the initial vorticities converge to the collapsed
measure $\Gamma_{\rm tot}\delta_0$, their Navier--Stokes evolutions
converge to the expanding branch $\omega_0^{\rm b}$ and reach
configurations that remain separated from the collapsed state once the vanishing viscosity limit is considered.

\begin{remark}
    To recover Theorem \ref{theo:burst_intro}, fix $T>0$ and choose
    \begin{equation}
        d_{\rm fin}(\nu)
        :=
        \sqrt{d_\nu^2+ 2\alpha\Gamma T}.
    \end{equation}
    Then $T_{\rm fin}=2T$. Moreover, \cref{eq:BurstScaleFinal} implies
    \cref{eq:admissabilityT}, while
    $\sqrt{\delta}\,d_{\rm fin}\to0$ and
    $d_{\rm fin}\to\sqrt{2\alpha\Gamma T}>0$. Hence Theorem
    \ref{theo:burst_intro} follows from Theorem \ref{thm:ViscousBurst}.
\end{remark}
\begin{proof}
    Thanks to condition \cref{eq:BurstScaleFinal} and \cref{eq:admissabilityT} we have that
    \begin{equation}
        \frac{T_{\rm fin}}{T_{\adv,\nu} \exp( \delta^{-p})} = \left(
            \frac{d_{\rm fin}^2}{d_\nu^2}-1
         \right)
            \exp(-\delta^{-p})
        \xrightarrow[\nu\to0]{}0.
    \end{equation}
    Thus, for all sufficiently small $\nu$, the whole interval
    $[0,T_{\rm fin}]$ is contained in the validity interval of Theorem
\ref{thm:main_intro}. 
We claim that,
\begin{equation}
    \sup_{0\leq t\leq T_{\rm fin}}\frac{1}{\Gamma}
    d_{\rm KR}
    \left(
        \omega_\nu(t),
        \omega_0^{\rm b}(t)
    \right)
    \leq
    C\left(d_\nu + \sqrt{\delta} d_{\rm fin}\right),
    \label{eq:BoundDKR}
\end{equation}
\begin{equation}
\label{eq:ViscousBurstSeparation}
\left|
\frac{1}{\Gamma}
d_{\rm KR}
\left(
\omega_\nu(t),
\Gamma_{\rm tot}\delta_0
\right)
-
c_{\Gamma,{\sf n}}\sqrt{\alpha\Gamma t}
\right|
\leq
C
\left(
d_\nu+\sqrt{\delta}d_{\rm fin}
\right),
\end{equation}
for every $t \in [0,T_{\rm fin}]$ and, in particular, at the target time $T_{\rm fin}$,
\begin{equation}
\label{eq:TargetBurstSeparation}
\left|
\frac{1}{\Gamma}
d_{\rm KR}
\left(
\omega_\nu(T_{\rm fin}),
\Gamma_{\rm tot}\delta_0
\right)
-
c_{\Gamma,{\sf n}}
\sqrt{d_{\rm fin}^2-d_\nu^2}
\right|
\leq
C
\left(
d_\nu+\sqrt{\delta}d_{\rm fin}
\right).
\end{equation}
Before proving the claim, we provide the proof of Theorem \ref{thm:ViscousBurst} provided that the claim holds true. 
Since $d_\nu \to 0 $ and $\sqrt{\delta} d_{\rm fin} \to 0$ as $\nu \to 0$, \cref{eq:UniformBurstKRConvergence} follows from \cref{eq:BoundDKR}.

Fix a finite $T>0$ independently of $\nu$.
By \cref{eq:BurstScaleFinal}, the approximation horizon
$T_{\rm adv,\nu}\exp(\delta^{-p})$ tends to infinity,
and therefore contains $[0,T]$ for sufficiently small $\nu$.
Applying the three-term estimates below on this fixed
interval gives
\[
\sup_{0\leq t\leq T}
\frac{1}{\Gamma}
d_{\rm KR}\bigl(\omega_\nu(t),\omega_0^{\rm b}(t)\bigr)
\leq C\bigl(d_\nu+\sqrt{\nu T}\bigr)\longrightarrow0.
\]
This also covers the endpoint allowed in the theorem.
The convergence in the Kantorovich--Rubinstein distance, together
with the uniform bound on the total variations
\begin{equation}
    \|\omega_\nu(t)\|_{\mathcal M}
    \leq
    \sum_{i=1}^3|\Gamma_i|,
\end{equation}
implies weak-$*$ convergence of measures, uniformly on $[0,T]$.
Indeed, let
\begin{equation}
    \sigma_\nu(t)
    :=
    \omega_\nu(t)-\omega_0^{\rm b}(t).
\end{equation}
For any $\varphi\in C_0(\mathbb R^2)$ and $\eta>0$, choose
$\psi\in C_c^\infty(\mathbb R^2)$ such that
$\|\varphi-\psi\|_{L^\infty}\leq\eta$. Since
$\sigma_\nu(t)(\mathbb R^2)=0$, we have
\begin{equation}
    \left|
        \int_{\mathbb R^2}
        \psi\,d\sigma_\nu(t)
    \right|
    \leq
    {\rm Lip}(\psi)
    \|\sigma_\nu(t)\|_{\rm KR}.
\end{equation}
Moreover,
\begin{equation}
    \left|
        \int_{\mathbb R^2}
        (\varphi-\psi)\, d\sigma_\nu(t)
    \right|
    \leq
    \eta
    \left(
        \|\omega_\nu(t)\|_{\mathcal M}
        +
        \|\omega_0^{\rm b}(t)\|_{\mathcal M}
    \right)
    \leq
    2\eta\sum_{i=1}^3|\Gamma_i|.
\end{equation}
Therefore,
\begin{equation}
    \begin{aligned}
    \sup_{t\in[0,T]}
    \left|
        \int_{\mathbb R^2}
        \varphi\,
         d\sigma_\nu(t)
    \right|
    &\leq
    {\rm Lip}(\psi)
    \sup_{t\in[0,T]}
    \|\sigma_\nu(t)\|_{\rm KR}
    +
    2\eta\sum_{i=1}^3|\Gamma_i|.
\end{aligned}
\end{equation}
We first let $\nu\to0$ and then $\eta\to0$, which proves the
weak-$*$ convergence uniformly on $[0,T]$.

The initial collapse
\cref{eq:BurstInitialCollapseLimit} follows directly from
\cref{eq:InitialKRExact} and $d_\nu\to0$. On the other hand,
\cref{eq:TargetBurstSeparation} and the fact that $d_\nu \to 0$ and $\sqrt{\delta} d_{\rm fin} \to 0$ imply
\begin{equation}
\frac{1}{\Gamma}
d_{\rm KR}
\left(
\omega_\nu(T_{\rm fin}),
\Gamma_{\rm tot}\delta_0
\right)
=
c_{\Gamma,{\sf n}}
d_{\rm fin}
+
o(1),
\end{equation}
and, since 
\begin{equation}
\liminf_{\nu\to0}d_{\rm fin}>0,
\end{equation}
then
\cref{eq:PositiveBurstSeparation} follows.

We are now left to prove \cref{eq:BoundDKR}, \cref{eq:ViscousBurstSeparation} and \cref{eq:TargetBurstSeparation}.
Let $z_{i,\nu}(t)$ denote the expanding Helmholtz--Kirchhoff trajectory with
initial condition
\begin{equation}
    z_{i,\nu}(0)
    =
    d_\nu e^{\mathrm i\beta_\nu}{\sf n}_i.
\end{equation}
For every $t\in[0,T_{\sf fin}]$,
we have
\begin{align}
d_{\rm KR}
\left(
    \omega_\nu(t),
    \omega_0^{\rm b}(t)
\right) & \leq \mathcal{D}_1(t) + \mathcal{D}_2(t) + \mathcal{D}_3(t)
\end{align}
where 
\begin{align}
& \mathcal{D}_1 := d_{\rm KR} 
\left(
    \omega_\nu(t),
    \sum_{i=1}^3\Gamma_i\delta_{Z_i(t)}
\right),\\
&\mathcal{D}_2 := d_{\rm KR}
\left(
    \sum_{i=1}^3\Gamma_i\delta_{Z_i(t)},
    \sum_{i=1}^3\Gamma_i\delta_{z_{i,\nu}(t)}
\right),\\ & \mathcal{D}_3 :=
d_{\rm KR}
\left(
    \sum_{i=1}^3\Gamma_i\delta_{z_{i,\nu}(t)},
    \omega_0^{\rm b}(t)
\right).
\label{eq:BurstMainTriangleInequality}
\end{align}
We bound each one of the terms above separately.
    By the self-similar ansatz \cref{eq:vorticityAnsatz}, 
    \begin{equation}
        \omega_\nu = \sum_{i=1}^3 \omega_{i,\nu}(x,t), \qquad \omega_{i,\nu}(x,t):= \frac{\Gamma_i}{\nu t} \Omega_i \left( \frac{x - Z_i(t)}{\sqrt{ \nu t}}, t \right)
    \end{equation}
    Moreover, Lemma \ref{lem:PositionChoice} gives 
    \begin{equation}
    \int_{\mathbb R^2}\Omega_i(\xi,t)\,\dd\xi=1.
\end{equation}
The maximum principle applied to \cref{eq:OmegaIEq}, since $\Omega_i(\cdot,0) = G \geq 0 $, yields that $\Omega_i(\xi,t) \geq 0$. Therefore, 
\begin{equation}
    \mu_{i,\nu}(t) := \frac{\omega_{i,\nu}(t)}{\Gamma_i}
\end{equation}
is a probability measure. We claim that
\begin{equation}
    \sup_{0<t<T_{\rm fin}} \int_{\RR^2} |\xi| \Omega_{i}(\xi,t) d \xi \leq C. 
    \label{eq:MomentBound}
\end{equation}
Recall that, 
\begin{equation}
    \Omega_i =  G + \sum_{n=2}^4 \eps(t)^n\Omega_{n,i} + \delta w_i.
\end{equation}
By \cref{eq:ApproxProfilesBoundE,eq:ApproxProfilesBound} in the expanding case,  $\Omega_{n,i} \in \cZ$ uniformly and we have that
\begin{equation}
    \int_{\RR^2} |\xi| |\Omega_{n,i}(\xi,t) |d \xi \leq C, \qquad n = 2,3,4.
\end{equation}
Moreover, for the nonlinear correction, by Theorem \ref{thm:NonLinControl},  we have
\begin{equation}
    \int_{\mathbb R^2}
    |\xi||w_i(\xi,t)|
    \,\dd\xi
    \leq
    C
    \|\boldsymbol{w}(t)\|_{\boldsymbol{\mathcal X}_{\eps}} \leq C  \eps(t).
\end{equation}
proving the claim.

Since one of the measures in the Wasserstein distance is a Dirac delta, \cref{eq:W1DiracFormula} and the change of variables $x = Z_i(t) + \sqrt{\nu t} \ \xi$ give
\begin{align}
    W_1
    \left(
        \mu_{i,\nu}(t),
        \delta_{Z_i(t)}
    \right)
    &=
    \int_{\mathbb R^2}
    |x-Z_i(t)|
    \,\mathrm d\mu_{i,\nu}(t)(x)=
    \sqrt{\nu t}
    \int_{\mathbb R^2}
    |\xi|\Omega_i(\xi,t)
    \,\dd\xi.
\end{align}
Therefore, thanks to \cref{eq:MomentBound} we have
\begin{equation}
\label{eq:ComponentW1Final}
    W_1
    \left(
        \mu_{i,\nu}(t),
        \delta_{Z_i(t)}
    \right)
    \leq
    C\sqrt{\nu t}.
\end{equation}
Using \cref{eq:KRvsW1},
\cref{eq:ComponentW1Final} implies
\begin{equation}
\label{eq:ExactVorticityToExactCentersFinal}
    \mathcal{D}_1(t)
    \leq
    C\Gamma\sqrt{\nu t}.
\end{equation}
We now treat $\mathcal{D}_2$. 
By \cref{eq:main_bound_intro}, \(\epsilon\lesssim\sqrt\delta\), and \(d_{\rm HK}\epsilon=\sqrt{\nu t}\), \begin{equation}
\label{eq:ExactCentersToHKFinal}
\mathcal D_2(t)
\leq \sum_i|\Gamma_i||Z_i(t)-z_{i,\nu}(t)|
\leq C\Gamma d_{\rm HK}(t)\sqrt{\delta}\,\eps(t)^2
\leq C\Gamma\delta\sqrt{\nu t}.
\end{equation}
It remains to estimate $\mathcal{D}_3$.
Using \cref{def:dhk} and the phase choice
\cref{eq:BurstPhaseFinal}, we can rewrite
\begin{equation}
\label{eq:RenormalizedTrajectoryFinal}
    z_{i,\nu}(t)
    =
    \sqrt{d_\nu^2+\alpha\Gamma t}\,
    \exp\left(
        \mathrm i\beta_*
        +
        \mathrm i\frac{\beta}{\alpha}
        \log\left( \frac{
            t+\frac{d_\nu^2}{\alpha\Gamma}}{t_\star}
        \right)
    \right)
    {\sf n}_i.
\end{equation}
Set
\begin{equation}
    s_\nu
    :=
    \frac{d_\nu^2}{\alpha\Gamma}.
\end{equation}
The radial factors satisfy
\begin{equation}
    \left|
        \sqrt{d_\nu^2+\alpha\Gamma t}
        -
        \sqrt{\alpha\Gamma t}
    \right|
    \leq
    d_\nu.
\end{equation}
For the angular factors, using $
    |e^{\mathrm ia}-e^{\mathrm ib}|
    \leq
    \min\{2,|a-b|\}$, we get 
\begin{equation}
    \sqrt{t}
    \left|
        e^{\mathrm i\frac{\beta}{\alpha}\log(\frac{t+s_\nu}{t_\star})}
        -
        e^{\mathrm i\frac{\beta}{\alpha}\log \frac{t}{t_\star}}
    \right|
    \leq
    C\sqrt{s_\nu},
    \qquad t>0.
\end{equation}
Indeed, if $0<t\leq s_\nu$, then
\begin{equation}
    \sqrt t
    \left|
        e^{\mathrm i\frac{\beta}{\alpha}\log(\frac{t+s_\nu}{t_\star})}
        -
        e^{\mathrm i\frac{\beta}{\alpha}\log \frac{t}{t_\star}}
    \right|
    \leq
    2\sqrt{s_\nu},
\end{equation}
while, if $t\geq s_\nu$, we have
\begin{align}
    \sqrt t
    \left|
        e^{\mathrm i\frac{\beta}{\alpha}\log(\frac{t+s_\nu}{t_\star})}
        -
        e^{\mathrm i\frac{\beta}{\alpha}\log \frac{t}{t_\star}}
    \right|
    \leq
    C\sqrt t
    \log\left(1+\frac{s_\nu}{t}\right) \leq
    C\frac{s_\nu}{\sqrt t}
    \leq
    C\sqrt{s_\nu}.
\end{align}
Since $\sqrt{\alpha\Gamma s_\nu}= d_\nu $
we conclude that
\begin{equation}
\label{eq:HKToBurstTrajectoryFinal}
    \sup_{0\leq t\leq T_{\rm fin}}
    |z_{i,\nu}(t)-z_{0,i}(t)|
    \leq
    Cd_\nu.
\end{equation}
Therefore,
\begin{equation}
\label{eq:HKToBurstKRFinal}
    \sup_{0\leq t\leq T_{\rm fin}}
    \mathcal{D}_3
    \leq
    C\  \Gamma d_\nu.
\end{equation}
Combining
\cref{eq:ExactVorticityToExactCentersFinal}, \cref{eq:ExactCentersToHKFinal} and \cref{eq:HKToBurstKRFinal}
with the fact that for $t \leq T_{\rm fin}$ we have that $\sqrt{\nu t} \leq C \sqrt{\delta} d_{\rm fin}$, we get 
\begin{equation}
    \sup_{0\leq t\leq T_{\rm fin}}\frac{1}{\Gamma}
    d_{\rm KR}
    \left(
        \omega_\nu(t),
        \omega_0^{\rm b}(t)
    \right)
    \leq
    C\left(d_\nu + \sqrt{\delta} d_{\rm fin}\right).
\end{equation}
which proves
\cref{eq:BoundDKR}.
To prove the identities in
\cref{eq:InitialKRExact}, let us define
\begin{equation}
    S_{r,\vartheta}(x)
    :=
    r e^{\mathrm i\vartheta}x.
\end{equation}
Since
\begin{equation}
    \omega_\nu^{\rm in}
    -
    \Gamma_{\rm tot}\delta_0
    =
    (S_{d_\nu,\beta_\nu})_\#\mu_\star,
\end{equation}
the rotational invariance and the homogeneity of the
Kantorovich--Rubinstein norm give
\begin{equation}
    \frac{1}{\Gamma}
    d_{\rm KR}
    \left(
        \omega_\nu^{\rm in},
        \Gamma_{\rm tot}\delta_0
    \right)
    =
    \frac{d_\nu}{\Gamma}
    \|\mu_\star\|_{\rm KR}
    =
    c_{\Gamma,{\sf n}}d_\nu.
\end{equation}
Similarly, for $t>0$,
\begin{equation}
    \omega_0^{\rm b}(t)
    -
    \Gamma_{\rm tot}\delta_0
    =
    \left(
        S_{
            \sqrt{\alpha\Gamma t},
            \,
            \beta_*+\frac{\beta}{\alpha}\log \frac{t}{t_\star}
        }
    \right)_\#
\mu_\star.
\end{equation}
Thus,
\begin{equation}
    \frac{1}{\Gamma}
    d_{\rm KR}
    \left(
        \omega_0^{\rm b}(t),
        \Gamma_{\rm tot}\delta_0
    \right)
    =
    c_{\Gamma,{\sf n}}
    \sqrt{\alpha\Gamma t},
\end{equation}
and the same identity holds at $t=0$, which shows \cref{eq:InitialKRExact}.

Finally, the reverse triangle inequality gives
\begin{equation}
    \left|
    d_{\rm KR}
    \left(
    \omega_\nu(t),
    \Gamma_{\rm tot}\delta_0
    \right)
    -
    d_{\rm KR}
    \left(
    \omega_0^{\rm b}(t),
    \Gamma_{\rm tot}\delta_0
    \right)
    \right|
    \leq
    d_{\rm KR}
    \left(
    \omega_\nu(t),
    \omega_0^{\rm b}(t)
    \right).
\end{equation}
Combining it with \cref{eq:BoundDKR} and \cref{eq:InitialKRExact} we get \cref{eq:ViscousBurstSeparation}.
At $t=T_{\rm fin}$, \cref{eq:TfinDef} gives
\begin{equation}
\alpha\Gamma T_{\rm fin}
=
d_{\rm fin}^2-d_\nu^2,
\end{equation}
and \cref{eq:TargetBurstSeparation} follows.
\end{proof}

\appendix
\section{Exceptional configurations on Aref's circle}
\label{app:explicit}
Let us compute the coefficient $\kappa$ appearing in the proof of Lemma \ref{lem:SpectralPropertiesB}, which determines if $\mathfrak{B}$ is diagonalizable or not.  We work directly in the Aref
normalization introduced in \cref{eq:aref_circle_app}, and assume that vortices $1$ and
$2$ are at distance one.  Since in our coordinates the maximal distance is normalized to one, 
we write them as ${\rm n}_j$ instead of ${\sf n}_j$. We always renormalize the circulations so that
\begin{equation}
\label{eq:scaled_circulations_jordan}
    \upgamma_j:=\frac{\gamma_j}{\gamma_1+\gamma_2},
    \qquad j=1,2,3.
\end{equation}
Then
\begin{equation}
\label{eq:scaled_circulation_relations_jordan}
    \upgamma_1+\upgamma_2=1,
    \qquad
    \upgamma_3=-\upgamma_1\upgamma_2,
\end{equation}
and the positions on Aref's circle are
\begin{equation}
\label{eq:aref_positions_jordan}
    {\rm n}_1=\upgamma_2,
    \qquad
    {\rm n}_2=-\upgamma_1,
    \qquad
    {\rm n}_3=z=x_1+\mathrm{i}x_2,
    \qquad
    |z|^2=1+\upgamma_3.
\end{equation}
Let $\ell:=|z_2^{\rm in}-z_1^{\rm in}|$ and choose
$\varphi$ so that
\[
z_j^{\rm in}-z_k^{\rm in}
=\ell e^{\mathrm i\varphi}({\rm n}_j-{\rm n}_k).
\]
Restoring this scale and rotation in \cref{eq:def_M_app}
gives
\begin{equation}
\label{eq:M_scaled_jordan}
M=e^{-2\mathrm i\varphi}
\frac{\Gamma(\gamma_1+\gamma_2)}{2\pi\ell^2}\,
{\mathsf M},
\qquad
\overline M M
=\left(
\frac{\Gamma(\gamma_1+\gamma_2)}{2\pi\ell^2}
\right)^2{\mathsf A},
\qquad
{\mathsf A}:=\overline{\mathsf M}\,{\mathsf M},
\end{equation}
where
\begin{equation}
\label{eq:mathsf_M_jordan}
{\mathsf M}
=
\begin{pmatrix}
\displaystyle
\upgamma_2+\frac{\upgamma_3}{(z-\upgamma_2)^2}
&-\upgamma_2
&\displaystyle-\frac{\upgamma_3}{(z-\upgamma_2)^2}
\\[3mm]
-\upgamma_1
&\displaystyle
\upgamma_1+\frac{\upgamma_3}{(z+\upgamma_1)^2}
&\displaystyle-\frac{\upgamma_3}{(z+\upgamma_1)^2}
\\[3mm]
\displaystyle-\frac{\upgamma_1}{(z-\upgamma_2)^2}
&\displaystyle-\frac{\upgamma_2}{(z+\upgamma_1)^2}
&\displaystyle
\frac{\upgamma_1}{(z-\upgamma_2)^2}
+\frac{\upgamma_2}{(z+\upgamma_1)^2}
\end{pmatrix}.
\end{equation}
Then, let $\boldsymbol{1}=(1,1,1)^T$ and $e_3=(0,0,1)^T$.  The center of vorticity
of \cref{eq:aref_positions_jordan} is
\begin{equation}
    {\rm n}_{\rm cv}
    =\frac{\upgamma_3z}{1+\upgamma_3},
\end{equation}
and we finally set
\begin{equation}
\label{eq:centered_aref_vector_jordan}
    \boldsymbol{\mathrm n}
    :=
    \begin{pmatrix}
        {\rm n}_1-{\rm n}_{\rm cv}\\
        {\rm n}_2-{\rm n}_{\rm cv}\\
        {\rm n}_3-{\rm n}_{\rm cv}
    \end{pmatrix}.
\end{equation}
The identities used in the proof of Lemma \ref{lem:SpectralPropertiesB} give, for
some $\lambda_{\upgamma}>0$,
\begin{equation}
\label{eq:eigen_relations_mathsf_A_jordan}
    {\mathsf A}\boldsymbol{1}=0,
    \qquad
    {\mathsf A}\boldsymbol{\mathrm n}
    =\lambda_{\upgamma}\boldsymbol{\mathrm n},
    \qquad
    \operatorname{tr}({\mathsf A})=2\lambda_{\upgamma}.
\end{equation}
Moreover,
\begin{equation}
    \det(\boldsymbol{1},\boldsymbol{\mathrm n},e_3)
    ={\rm n}_2-{\rm n}_1=-1.
\end{equation}
Consequently, there are unique $\mu,\kappa_{\upgamma}\in\mathbb C$ such that
\begin{equation}
\label{eq:def_kappa_upgamma_jordan}
    ({\mathsf A}-\lambda_{\upgamma}I_3)e_3
    =\mu\boldsymbol{1}+\kappa_{\upgamma}\boldsymbol{\mathrm n}.
\end{equation}
Since the trace identity in \cref{eq:eigen_relations_mathsf_A_jordan} removes the
$e_3$ component, by  taking the difference of the second and first components in
\cref{eq:def_kappa_upgamma_jordan} gives
\begin{equation}
\label{eq:kappa_as_entry_difference_jordan}
    \kappa_{\upgamma}={\mathsf A}_{13}-{\mathsf A}_{23}.
\end{equation}
If $w=e_3+(\mu/\lambda_{\upgamma})\boldsymbol{1}$, then
\begin{equation}
    ({\mathsf A}-\lambda_{\upgamma}I_3)w
    =\kappa_{\upgamma}\boldsymbol{\mathrm n},
\end{equation}
and hence in the basis generated by $(\boldsymbol{1},\boldsymbol{\mathrm n},w)$ we have the desired Jordan block decomposition
\begin{equation}
\label{eq:jordan_form_mathsf_A_jordan}
    [{\mathsf A}]_{(\boldsymbol{1},\boldsymbol{\mathrm n},w)}
    =
    \begin{pmatrix}
        0&0&0\\
        0&\lambda_{\upgamma}&\kappa_{\upgamma}\\
        0&0&\lambda_{\upgamma}
    \end{pmatrix}.
\end{equation}
It remains to compute \cref{eq:kappa_as_entry_difference_jordan}.  From
\cref{eq:mathsf_M_jordan}, a direct calculation shows that
\begin{align}
\label{eq:kappa_direct_expansion_jordan}
\kappa_{\upgamma}
={}& -
\left(1+\frac{\upgamma_3}{(\overline z-\upgamma_2)^2}\right)
\frac{\upgamma_3}{(z-\upgamma_2)^2}
+
\left(1+\frac{\upgamma_3}{(\overline z+\upgamma_1)^2}\right)
\frac{\upgamma_3}{(z+\upgamma_1)^2}
\notag\\
&\quad
-\upgamma_3
\left(
\frac{1}{(\overline z-\upgamma_2)^2}
-\frac{1}{(\overline z+\upgamma_1)^2}
\right)
\left(
\frac{\upgamma_1}{(z-\upgamma_2)^2}
+\frac{\upgamma_2}{(z+\upgamma_1)^2}
\right).
\end{align}
Rearranging these terms and using $z=x_1+{\rm i}x_2$ with
$z\overline z=1+\upgamma_3$ yields
\begin{equation}
\label{eq:kappa_factorized_jordan}
    \kappa_{\upgamma}(z)
    =
    2\upgamma_3\,
    \frac{\overline z\,{\sf p}_{\upgamma}(x_1)}
    {|z+\upgamma_1|^4|z-\upgamma_2|^4},
\end{equation}
where
\begin{equation}
\label{eq:polynomial_p_upgamma_jordan}
\begin{split}
    {\sf p}_{\upgamma}(X)
    :=
    4(\upgamma_2-\upgamma_1)X^3
    -4(2+5\upgamma_3)X^2
    +2(\upgamma_2-\upgamma_1)(1+4\upgamma_3)X
    +2+\upgamma_3-4\upgamma_3^2.
\end{split}
\end{equation}
Therefore, with the normalization in \cref{eq:jordan_form_mathsf_A_jordan},
the coefficient for $\overline M M$ is
\begin{equation}
\label{eq:full_kappa_jordan}
    \kappa
    =
    \left(\frac{\Gamma(\gamma_1+\gamma_2)}{2\pi \ell^2}\right)^2
    2\upgamma_3\,
    \frac{\overline z\,{\sf p}_{\upgamma}(x_1)}
    {|z+\upgamma_1|^4|z-\upgamma_2|^4}.
\end{equation}
Therefore, on Aref's circle we get that
\begin{equation}
\label{eq:kappa_zero_condition_jordan}
    \kappa=0
    \quad\Longleftrightarrow\quad
    {\sf p}_{\upgamma}(x_1)=0.
\end{equation}
In particular, the condition depends only on $x_1$.  Thus every exceptional
configuration $x_1+\mathrm{i}x_2$ is paired with its reflection
$x_1-\mathrm{i}x_2$ across the horizontal axis and we deduce the following.
\begin{lemma}
\label{lem:four_exceptional_points_jordan}
There are exactly four configurations on Aref's circle for which $\kappa=0$,
one in the interior of each arc delimited by the four relative equilibria.
\end{lemma}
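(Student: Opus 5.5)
Since \cref{eq:kappa_zero_condition_jordan} reduces the problem to the real cubic ${\sf p}_{\upgamma}(x_1)$, the plan is a sign-change count for ${\sf p}_{\upgamma}$ on the interval $[-R,R]$, $R:=\sqrt{1+\upgamma_3}$, which is the range of $x_1$ on Aref's circle. Every $x_1\in(-R,R)$ corresponds to exactly two points $x_1\pm\mathrm{i}x_2$ of the circle, with $x_2\neq0$. The four relative equilibria sit at $x_1=\pm R$ (the collinear ones) and at $x_1=x_m:=(\upgamma_2-\upgamma_1)/2$ with $x_2=\pm\sqrt{R^2-x_m^2}$ (the equilateral ones). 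Hence the four open arcs are exactly the preimages of $(x_m,R)$ and $(-R,x_m)$ in the upper and lower half-planes. The lemma is therefore equivalent to the following claim: ${\sf p}_{\upgamma}$ has exactly one root in $(-R,x_m)$, exactly one root in $(x_m,R)$, and no other root in $[-R,R]$.

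First I will reduce to one parameter. Because $\upgamma_1+\upgamma_2=1$ and vortices $1,2$ have the same sign, both are positive. I set $s:=\upgamma_2-\upgamma_1\in(-1,1)$. Then $\upgamma_1\upgamma_2=(1-s^2)/4$, which gives
\begin{equation}
\upgamma_3=\frac{s^2-1}{4},\qquad 1+4\upgamma_3=s^2,\qquad R^2=\frac{3+s^2}{4},\qquad x_m=\frac{s}{2}.
\end{equation}
Substituting into \cref{eq:polynomial_p_upgamma_jordan} expresses ${\sf p}$ with coefficients polynomial in $s$. I will then compute ${\sf p}(x_m)$, ${\sf p}(R)$ and ${\sf p}(-R)$ as explicit functions of $s\in(-1,1)$. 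The target is a sign pattern valid for all $s$, namely ${\sf p}(x_m)>0$ and ${\sf p}(\pm R)<0$. The symmetric case $s=0$ serves as a check: there ${\sf p}(X)=\tfrac32-3X^2$ and $R^2=3/4$, so the roots $\pm1/\sqrt2$ lie in $(0,R)$ and $(-R,0)$, as required. The intermediate value theorem then produces at least one root in each of $(-R,x_m)$ and $(x_m,R)$.

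Next I must exclude any further roots in $[-R,R]$. For $s=0$ the polynomial is quadratic, so there are no others. For $s\neq0$, ${\sf p}$ is a genuine cubic with leading coefficient $4s$, so it has at most one more real root. I will locate this third root outside $[-R,R]$, on the side where ${\sf p}(X)\to+\infty$. Concretely, for $s>0$ the sign at $+\infty$ is positive while ${\sf p}(R)<0$, so the third root is larger than $R$; the case $s<0$ follows by the symmetry $X\mapsto -X$, $s\mapsto -s$, which is the relabeling of vortices $1$ and $2$. Since ${\sf p}$ then has three simple real roots and two are accounted for in $(-R,x_m)\cup(x_m,R)$, none is left in $[-R,R]$. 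I also need to rule out a root exactly at $x_m$ or $\pm R$, and the strict signs above do this. This completes the count, giving exactly one exceptional $x_1$ in each of $(-R,x_m)$ and $(x_m,R)$, hence one exceptional configuration in the interior of each of the four arcs.

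The main obstacle is verifying the endpoint signs uniformly in $s\in(-1,1)$. The value ${\sf p}(x_m)$ is a polynomial in $s$ and should simplify to something manifestly positive. By contrast, ${\sf p}(\pm R)$ involves $\sqrt{3+s^2}$. I would first write ${\sf p}(R)=a(s)+b(s)R$ using $R^2=(3+s^2)/4$. Negativity then becomes the comparison $a(s)<-b(s)R$, which I would settle by squaring after checking signs, or alternatively by a short SageMath check in the spirit of \cref{rem:3arefvectors}. If the sign pattern turns out to be different, for instance with ${\sf p}(R)$ and ${\sf p}(-R)$ of opposite sign, I would instead count roots via the discriminant and the value ${\sf p}'$ at $x_m$. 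The overall structure of the argument would stay the same.
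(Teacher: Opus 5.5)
Your proposal is correct and follows the paper's proof: the paper also evaluates ${\sf p}_{\upgamma}$ at the equilateral abscissa $x_m$ (where it equals $3/2$) and at $\pm R_{\upgamma}$ (where it is negative), applies the intermediate value theorem on $(-R_{\upgamma},x_m)$ and $(x_m,R_{\upgamma})$, uses the sign of $\upgamma_2-\upgamma_1$ to place the third root outside $[-R_{\upgamma},R_{\upgamma}]$ (treating $\upgamma_1=\upgamma_2$ as the quadratic case), and doubles the count via $x_2\mapsto-x_2$. Your planned endpoint check closes as intended: in your variable $s$ one finds ${\sf p}(\pm R)=\pm 3s(1+s^2)R-\tfrac34(1+5s^2+2s^4)$, and after squaring, negativity reduces to $(1-s^2)^2>0$ for $s\in(-1,1)$, so the fallback via the discriminant is never needed.
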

See Fig. \ref{fig:aref_circle_jordan_free} for a visualization of the special configurations.
\begin{figure}[tbp]
\centering
\begin{tikzpicture}[scale=2.55,>=stealth]
\definecolor{collorange}{RGB}{255,140,0}
\definecolor{jordanviolet}{RGB}{112,48,160}
\tikzset{
    vortex pos/.style={circle, shading=radial, inner color=blue,
        outer color=blue!40, draw=none, minimum size=7.5pt, inner sep=0pt},
    equilibrium/.style={circle, fill=black, draw=none,
        minimum size=4.5pt, inner sep=0pt},
    jordan free/.style={circle, shading=radial, inner color=jordanviolet, outer color=jordanviolet!60,
        draw=none, minimum size=7.5pt, inner sep=0pt},
    arc coll/.style={line width=2pt, collorange, line cap=round},
    arc exp/.style={line width=2pt, green!50!black, line cap=round},
    gridline/.style={dashed, black!40, thick}
}

\pgfmathsetmacro{\R}{sqrt(7)/3}
\pgfmathsetmacro{\xone}{-1/3}
\pgfmathsetmacro{\xtwo}{2/3}
\pgfmathsetmacro{\xmid}{1/6}
\pgfmathsetmacro{\yeq}{sqrt(3)/2}
\pgfmathsetmacro{\thetaE}{atan2(\yeq,\xmid)}
\pgfmathsetmacro{\xJp}{0.8152347940351869}
\pgfmathsetmacro{\yJp}{0.3364075034421558}
\pgfmathsetmacro{\xJm}{-0.5943979148639263}
\pgfmathsetmacro{\yJm}{0.6515127754566249}

\coordinate (V1) at (\xone,0);
\coordinate (V2) at (\xtwo,0);
\coordinate (E1) at (\R,0);
\coordinate (E3) at (-\R,0);
\coordinate (E2) at (\xmid,\yeq);
\coordinate (E4) at (\xmid,-\yeq);

\draw[->,thick,black!60] (-1.12,0) -- (1.18,0)
    node[right,text=black] {$x_1$};
\draw[->,thick,black!60] (0,-1.06) -- (0,1.08)
    node[above,text=black] {$x_2$};
\draw[dash dot,black!40] (\xmid,-0.96) -- (\xmid,0.96);
\draw[gridline] (V1) -- (E2) -- (V2);
\draw[gridline] (V1) -- (E4) -- (V2);

\draw[arc coll] (0:\R) arc (0:\thetaE:\R);
\draw[arc exp]  (\thetaE:\R) arc (\thetaE:180:\R);
\draw[arc coll] (180:\R) arc (180:{360-\thetaE}:\R);
\draw[arc exp]  ({360-\thetaE}:\R) arc ({360-\thetaE}:360:\R);

\node[equilibrium] at (E1) {};
\node[equilibrium] at (E2) {};
\node[equilibrium] at (E3) {};
\node[equilibrium] at (E4) {};
\node[vortex pos] at (V1) {};
\node[below left=2pt] at (V1) {$2$};
\node[vortex pos] at (V2) {};
\node[below right=2pt] at (V2) {$1$};

\draw[densely dotted,jordanviolet!70]
    (\xJp,-\yJp-0.10) -- (\xJp,\yJp+0.10);
\draw[densely dotted,jordanviolet!70]
    (\xJm,-\yJm-0.10) -- (\xJm,\yJm+0.10);
\node[jordan free] at (\xJp,\yJp) {};
\node[jordan free] at (\xJp,-\yJp) {};
\node[jordan free] at (\xJm,\yJm) {};
\node[jordan free] at (\xJm,-\yJm) {};

\node[anchor=west,text=jordanviolet] at (0.91,0.52)
    { $\kappa=0$};
\draw[->,jordanviolet] (0.95,0.49) -- (\xJp+0.02,\yJp+0.03);
\end{tikzpicture}
\caption{Aref's circle with normalized circulations $\gamma_1=1, \gamma_2=2$.
The violet circles are the configurations for which the repeated eigenvalue is
semisimple, and hence no Jordan block occurs.}
\label{fig:aref_circle_jordan_free}
\end{figure}

\begin{proof}
Let $R_{\upgamma}:=\sqrt{1+\upgamma_3}$.  The equilateral configurations have
first coordinate $(\upgamma_2-\upgamma_1)/2$, and direct substitution gives
\begin{equation}
    {\sf p}_{\upgamma}
    \left(\frac{\upgamma_2-\upgamma_1}{2}\right)
    =\frac32,
    \qquad
    {\sf p}_{\upgamma}(\pm R_{\upgamma})<0.
\end{equation}
The endpoint inequalities follow by using
$\upgamma_3=-\upgamma_1\upgamma_2$ and
$\upgamma_1+\upgamma_2=1$ in \cref{eq:polynomial_p_upgamma_jordan}.
Hence ${\sf p}_{\upgamma}$ has one root in each of the intervals
\begin{equation}
    \left(-R_{\upgamma},\frac{\upgamma_2-\upgamma_1}{2}\right),
    \qquad
    \left(\frac{\upgamma_2-\upgamma_1}{2},R_{\upgamma}\right).
\end{equation}
If $\mathsf\gamma_1>\mathsf\gamma_2$, the third root of the
cubic lies to the left of $-R_\gamma$.
If $\mathsf\gamma_1<\mathsf\gamma_2$, it lies to the right
of $R_\gamma$.   
When $\upgamma_1=\upgamma_2$, the polynomial is quadratic
and has exactly the two roots above.  Each admissible first coordinate gives
two points related by $x_2\mapsto-x_2$, proving the claim.
\end{proof}
\section{Proof of the construction of the approximate solution}
\label{app:ApproxSolProof}

In this appendix we prove Proposition \ref{prop:ApproximateSol}. Throughout the proof, the
approximate vortex centers are fixed once and for all as the
Helmholtz--Kirchhoff trajectories solving \cref{eq:HK}. We use the expansion of the translation
operators in Lemma \ref{lem:TeProperties}.

We first expand the residual up to the order needed in the construction. Recall
that
\begin{equation}
    \Omega_{{\rm app},i}
    =
    G
    +
    \sum_{n=2}^4
    \eps^n\Omega_{n,i},
    \qquad
    \Psi_{{\rm app},i}
    =
    \Psi_G
    +
    \sum_{n=2}^4
    \eps^n\Psi_{n,i}.
\end{equation}
Since
\begin{equation}
    t\partial_t\eps^n
    =
    \frac{n}{2}
    \frac{1}{1+\varsigma \frac{t}{T_{\rm adv}}}
    \eps^n,
    \label{eq:tdtepsnAppendix}
\end{equation}
the dissipative term gives
\begin{equation}
\begin{aligned}
    \delta(t\partial_t-\cL)\Omega_{\app,i}
    =
    \sum_{n=2}^4
    \eps^n\delta
    \left(
        t\partial_t
        +
        \frac{n}{2}
         \frac{1}{1+\varsigma \frac{t}{T_{\rm adv}}}
        -
        \cL
    \right)
    \Omega_{n,i}.
\end{aligned}
\label{eq:DissExpansionAppendix}
\end{equation}

We now turn to the transport term. The external field generated by the
zeroth-order profiles contributes, after the Helmholtz--Kirchhoff cancellation,
only from order $\eps^2$ on. Indeed, we can write 
\begin{align}
    \Phi_{\eps,i}(\Psi_G,\boldsymbol{z})&-\gamma_i\Psi_G -
        \sum_{j\neq i}
        \frac{\gamma_j}{2\pi}
        \log\left(
            \frac{|z_i-z_j|}{\eps\dhk}\right)
    \\
    &=
        \sum_{j\neq i}
        \gamma_j
        \mathcal T_{\eps,\boldsymbol z}^{i,j}\Psi_G
        -
        \frac{\delta t}{\eps\dhk}
        \partial_t z_i\cdot \xi^\perp
        -
        \sum_{j\neq i}
        \frac{\gamma_j}{2\pi}
        \log\left(
            \frac{|z_i-z_j|}{\eps\dhk}
        \right)
     =: \eps^2 h_i + \cO_{\mathcal{S}_*}(\eps^3)
    \label{def:Di20Appendix}
\end{align}
where we separate the leading order contribution from the higher order one in $\epsilon$ after expanding the translation operator $\mathcal{T}_{\eps,z}^{i,j}$ and noting that the zero and first order terms cancel out.
Note that $h_i$ is exactly the same defined in \cref{eq:defhiFunctional}.
Using Lemma \ref{lem:R0}, the expansion of the translation operators, and the moment
constraints imposed on the profiles, we obtain
\begin{equation}
\begin{aligned}
    \cR_{i}
    =
    &\,
    \sum_{n=2}^4
    \eps^n
    \left[
        \cH_{n,i}
        +
        \gamma_i\Lambda\Omega_{n,i}
        +
        \delta
        \left(
            t\partial_t
            +
            \frac{n}{2}
             \frac{1}{1+\varsigma \frac{t}{T_{\rm adv}}}
            -
            \cL
        \right)
        \Omega_{n,i}
    \right]
    \\
    &+
    \eps^4 \ \mathcal{H}_{4,i}^{S}
    +
    \cO_{\cZ}(\eps^5),
\end{aligned}
\label{eq:R4ExpansionAppendix}
\end{equation}
where
\begin{equation}
    \mathcal{H}_{4,i}^{S} : = \gamma_i\{\Psi_{2,i},\Omega_{2,i}\}
        +
        \{h_i,\Omega_{2,i}\}
\end{equation}
is the additional term arising from the second order profiles interaction. 
Let us briefly explain why the translated profiles of the other vortices only
enter the remainder in \cref{eq:R4ExpansionAppendix}. Since the profiles have zero mass and zero first moments, by Lemma \ref{lem:TeProperties} we know that  the first nonconstant
contribution of their translated streamfunctions is of order $\eps^3$. Thus, since a Poisson bracket is involved we can ignore constants and, after multiplication by the amplitude
$\eps^n$, $n\geq2$, we get an $\cO_{\cZ}(\eps^5)$ remainder.
Let us focus now on the term $\mathcal{H}_{4,i}^S$. Using the definition of $\Omega_{2,i}^E$, Proposition \ref{prop:Lambda} and the identities in \eqref{eq:iffFunctional}, its Eulerian part can be rewritten as follows:
\begin{align}
    \gamma_i\{\Psi_{2,i}^E,\Omega_{2,i}^E\}
        +
        \{h_i,\Omega_{2,i}^E\} & = \gamma_i \{\Psi_{2,i}^E,\Omega_{2,i}^E\} - \gamma_i \{\mathsf{H} \Omega_{i,2}^E, \Omega_{i,2}^E\} \\ & = - \gamma_i\{A \Omega_{i,2}^E, \Omega_{i,2}^E\} \\ & = - \gamma_i\{F'(G) \Omega_{i,2}^E, \Omega_{i,2}^E\} \\ 
        & = -\frac{\gamma_i}{2} \{A,(\Omega_{i,2}^E)^2 \}
\end{align} 
The last expression can only have angular mode $4$. Then,
\begin{equation}
\mathcal{P}_0\left(\gamma_i\{\Psi_{2,i}^E,\Omega_{2,i}^E\}
        +
        \{h_i,\Omega_{2,i}^E\} \right) = 0.
\end{equation}
Therefore, the Eulerian contribution at order $\eps^4$ can be canceled by   inverting the operator $\Lambda$. Then, we set
\begin{equation}
\begin{aligned}
    \widetilde{\cH}_{4,i}^{E}
    :=
    &\,
    \cH_{4,i}
    +
    \left(
        \gamma_i\{\Psi_{2,i}^{E},\Omega_{2,i}^{E}\}
        +
        \{h_i,\Omega_{2,i}^{E}\}
    \right).
\end{aligned}
\label{def:TildeH4EAppendix}
\end{equation}
On the other hand, the terms at order $\delta\eps^4$ are collected in the Navier--Stokes term
\begin{equation}
\begin{aligned}
    \widetilde{\cH}_{4,i}^{NS}
    :=
    &\,
    \gamma_i
    \left(
        \{\Psi_{2,i}^{E},\Omega_{2,i}^{NS}\}
        +
        \{\Psi_{2,i}^{NS},\Omega_{2,i}^{E}\}
    \right)
    +
    \{h_i,\Omega_{2,i}^{NS}\}.
\end{aligned}
\label{def:TildeH4NSAppendix}
\end{equation}
Finally, the Navier-Stokes interaction term in $\mathcal{H}_{4,i}^S$ carries a $\delta^2$ and it is therefore absorbed in the error.

\begin{remark}
    In \cite{Gallay2011}, a second order radial correction is introduced as a solution of a parabolic equation in order to remove the second-order profile interaction term that arises at fourth order. In our setting, the Euler and Navier--Stokes decomposition allows us to determine that the leading Eulerian part has actually  mode $4$. Then, $\Lambda$ can be inverted on it. All the others possibly radial contributions comes from the viscous profiles.
\end{remark}
We now highlight how the definitions of the approximate solution accomplish the task of canceling the required orders.

\smallskip
\paragraph{$\blackinwhitediamond$ \textbf{Cancellation at order $\eps^2$}:} the non-radial Eulerian contribution $\Omega_{2,i}^{E}$ ensures that
\begin{equation}
    \cH_{2,i}
    +
    \gamma_i\Lambda\Omega_{2,i}^{E} = 0.
\end{equation}
The first dissipative contribution that it generates is removed by the
Navier--Stokes correction $\Omega_{2,i}^{NS}$
\begin{equation}
    \gamma_i\Lambda\Omega_{2,i}^{NS}
    +
    \left(
        t\partial_t
        +
        \frac{1}{1+\varsigma \frac{t}{T_{\rm adv}}}
        -
        \cL
    \right)
    \Omega_{2,i}^{E}
    =
    0.
    \label{eq:NS2CancellationAppendix}
\end{equation}
Note that the operator $\Lambda$ can be inverted since $\mathcal{H}_{2,i}$ satisfies the hypothesis in Proposition \ref{prop:Lambda}. Thanks to previous discussion, the second order Eulerian profile interactions generated at fourth order do not have a radial component; therefore, we do not need to introduce a second order Eulerian profile to remove them. 
\smallskip
\paragraph{$\blackinwhitediamond$ \textbf{Cancellation at order $\eps^3$:}}
following the same strategy as the one adopted at second order the non-radial Eulerian part and the Navier-Stokes corrections are defined in the exact same way upon changing $\mathcal{H}_{2,i}$ with $\mathcal{H}_{3,i}$ and the coefficients in the equations. However, since the interaction between the second order profiles does not generate radial terms of third order, we can set  $\Omega_{3,i}^{E_r} = 0$.

\smallskip
\paragraph{$\blackinwhitediamond$ \textbf{Cancellation at order $\eps^4$:}}
At order $\eps^4$, after the second-order cancellations have been performed,
the remaining purely Eulerian contribution is the one collected in
$\widetilde{\cH}_{4,i}^{E}$. Since it is non radial and satisfies the moments conditions, we can invert the operator $\Lambda$ on it and define $\Omega_{4,i}^E $ as the unique solution to
\begin{equation}
    \gamma_i\Lambda\Omega_{4,i}^{E}
    +
    \widetilde{\cH}_{4,i}^{E}
    =
    0.
\end{equation}
It remains to treat the terms of size $\delta\eps^4$. These contain two types
of contributions: the dissipative term generated by $\Omega_{4,i}^{E}$ and the
Navier--Stokes interactions collected in $\widetilde{\cH}_{4,i}^{NS}$. The
non-radial part is removed by the definition of $\Omega_{4,i}^{NS}$, which
ensures that
\begin{equation}
    \begin{aligned}
\gamma_i\Lambda\Omega_{4,i}^{NS}
+
(1-\cP_0)\widetilde{\cH}_{4,i}^{NS}
\
+
\left(
t\partial_t
+
2\frac{1}{1+\varsigma \frac{t}{T_{\rm adv}}}
-
\cL
\right)
\Omega_{4,i}^{E}
=
0.
\end{aligned}
\label{eq:NS4CancellationAppendix}
\end{equation}
The radial part of $\widetilde{\cH}_{4,i}^{NS}$ cannot be canceled through
$\Lambda^{-1}$. It is therefore absorbed by the radial profile
$\Omega_{4,i}^{E_{\rm r}}$, which solves
\begin{equation}
\left(
t\partial_t
+
2\frac{1}{1+\varsigma \frac{t}{T_{\rm adv}}}
-
\cL
\right)
\Omega_{4,i}^{E_{\rm r}}
=
-
\cP_0\widetilde{\cH}_{4,i}^{NS},
\qquad
\Omega_{4,i}^{E_{\rm r}}\big|_{t=0}=-(2-\mathcal{L})^{-1} \mathcal{P}_0 \widetilde{\mathcal{H}}_{4,i}^{NS}(\cdot, 0),
\label{eq:Radial4CancellationAppendix}
\end{equation}
that completes the cancellation of the fourth order terms.
\begin{remark}
    The initial datum $\Omega_{4,i}^{E_{\rm r}}(\cdot, 0)$ does not affect the initial profile $G$, since it is multiplied by $\epsilon(t)^4$.
\end{remark}

\smallskip
\paragraph{$\blackinwhitediamond$ \textbf{Moment constraints:}}
With the cancellations at hand, let us verify that indeed the desired moment constraints are satisfied.  By Lemma \ref{lem:R0} we know that
\begin{equation}
    M(\cH_{n,i})=0,
    \qquad
    m^1(\cH_{n,i})=(0,0),
    \qquad
    n=2,3,4.
\end{equation}
Hence, by Proposition \ref{prop:Lambda}, the Eulerian profiles at order $n = 2, 3$ can be defined and they have zero mass and zero first order moment. The same conclusion holds for the Navier--Stokes profiles since the operators $t\partial_t, \, \cL $ preserve the relevant compatibility conditions.
For $n = 4$, $\widetilde\cH_{4,i}^{E}$ contains only terms of angular mode $4$ as previously observed, while, applying $1- \cP_0$ to  $\widetilde\cH_{4,i}^{NS}$ leads to a forcing with angular modes different from $0$ and $1$. Then, it is not hard to verify that
\begin{equation}
    M(\Omega_{n,i}^{\mathsf{q}})=0,
    \qquad
    m^1(\Omega_{n,i}^{\mathsf{q}})=(0,0), 
    \qquad
    n=2,3,4, \quad \mathsf{q}\in \{E,NS\}.
\end{equation}
It remains to treat the fourth order radial profile. Its first order moment readily vanishes by radial
symmetry, that is
\begin{equation}
    m^1(\Omega_{4,i}^{E_{\rm r}})=(0,0).
\end{equation}
Taking the spatial integral in the defining equation gives
\begin{equation}
    t\partial_t M(\Omega_{4,i}^{E_{\rm r}})
    +
    2
    \frac{1}{1+\varsigma \frac{t}{T_{\rm adv}}}
    M(\Omega_{4,i}^{E_{\rm r}})
    =
    0.
\end{equation}
Here we used that $M(\cL f)=0$ and that the spatial integral of a Poisson
bracket vanishes to cancel out the terms in the right-hand side. Since the radial profile has zero mass at initial time,
its mass remains zero for all time where the solution is defined and therefore
\begin{equation}
    M(\Omega_{4,i}^{E_{\rm r}})=0.
\end{equation}
Recalling that for $n=2,3$ we have $\Omega_{n,i}^{E_{\rm r}}=0$, we finally get
\begin{equation}
    M(\Omega_{n,i}^{\mathsf{q}})=0,
    \qquad
    m^1(\Omega_{n,i}^{\mathsf{q}})=(0,0),
    \qquad
    n=2,3,4,
    \qquad
    \mathsf{q}\in\{E,E_{\rm r},NS\}.
    \label{eq:MomentConditionsAppendix}
\end{equation}
With this we have justified that we can indeed rigorously define the approximate solutions as in Definition \ref{def:ApproxProfilesOrder4}.

\smallskip
\paragraph{$\blackinwhitediamond$ \textbf{Bounds on the profiles:}}
By Lemma \ref{lem:R0},
\begin{equation}
    \cH_{n,i}=\cO_{\cZ}(1),
    \qquad
    n=2,3,4.
\end{equation}
The time-derivative bounds follow from the explicit
self-similar motion. Indeed, the coefficients
$\dhk^n(z_i-z_j)^{-n}$ depend on time only through
the rotation angle $\theta(t)$. Their repeated
$t\partial_t$ derivatives are uniformly bounded in
the expanding case, while each derivative costs at
most one factor $(1-t/T_{\rm adv})^{-1}$ in the
collapsing case. Since $\Lambda^{-1}$ and $\cL$
commute with rotations, the same bounds hold for
the nonradial Eulerian profiles, including their
second time derivatives. Differentiating the defining
formulas for the Navier--Stokes profiles then gives
the stated bounds. This argument also applies to
the quadratic fourth-order forcings.
For the radial profile, Lemma \ref{lem:lin}, applied
with $A_j(\tau)\lesssim(1-\tau)^{-j-1}$ in the
collapsing case and bounded $A_j$ in the expanding
case, yields the required estimates.
Then, one can directly check that the bounds for on the profiles for $n = 2, 3$ in Proposition \ref{prop:ApproximateSol} hold. From these combined with \cref{def:TildeH4EAppendix,def:TildeH4NSAppendix}, we get that for $k=0,1,2$ and $m=0,1$
\begin{align}
        (t\partial_t)^k\widetilde{\cH}_{4,i}^{E}
 & =
\begin{cases}
\cO_{\cZ}(1), & \text{if }\varsigma =1,\\
\cO_{\cZ}\bigl((1-t/T_{\rm adv})^{-k}\bigr),
& \text{if }\varsigma=-1,
\end{cases}
\\
    \qquad
(t\de_t)^m\widetilde{\cH}_{4,i}^{NS} &=\begin{cases}\cO_{\cZ}(1), &\quad \text{ if } \varsigma=1,\\
    \cO_{\cZ}\big((1-\frac{t}{T_{\rm adv}})^{-(1+m)}\big), &\quad \text{ if } \varsigma=-1.
    \end{cases}
\end{align}
The bounds for the fourth order profiles then follow.

\smallskip
\paragraph{$\blackinwhitediamond$ \textbf{Bounds on the remainder:}}
Thanks to the cancellations that we highlighted before, we have  removed all contributions of order $\delta^m\eps^n$ for $m=0,1$ and $n=2,3,4$. We are thus left with the remainder
\begin{equation}
    \cR_{i}
    =
    \sum_{n=2}^4
    \eps^n\delta^2
    \left(
        t\partial_t
        +
        \frac{n}{2}
        \frac{1}{1+\varsigma \frac{t}{T_{\rm adv}}}
        -
        \cL
    \right)
    \Omega_{n,i}^{NS}+
    \gamma_i\delta^2\eps^4
        \gamma_i\{\Psi_{2,i}^{NS},\Omega_{2,i}^{NS}\}
    +
    \cO_{\cZ}(\eps^5).
\label{eq:R4cancellations}
\end{equation}

By the bounds on the profiles, we deduce that
\begin{equation}
    \cR_{i}
    =
    \begin{cases}\cO_{\cZ}(\eps^5+\delta^2\eps^2), &\quad \text{ if } \varsigma=1,\\
    \cO_{\cZ}\big(\eps^5+\delta^2\eps^2(1-\frac{t}{T_{\rm adv}})^{-2}\big), &\quad \text{ if } \varsigma=-1.
\end{cases}
    \label{eq:R4FinalAppendix}
\end{equation}
To conclude, we note that by \cref{def:keytaueps} we get
\begin{align}
    \eps(t)^2 (1-\frac{t}{T_{\rm adv}})^{-2}=\frac{\delta}{|\alpha|}\frac{\frac{t}{T_{\rm adv}}}{(1-\frac{t}{T_{\rm adv}})^3} \ll 1.
\end{align}
Then the proof of the last item in Proposition \ref{prop:ApproximateSol} is a direct calculation.

\section{Proof of Lemma \ref{lem:lin}}
\label{app:LemmaLin}

Here, we provide a proof of Lemma \ref{lem:lin}, for which we use the
known explicit formula for the semigroup associated to the operator
\(\mathcal{L}\), see \cite[Appendix A]{GalWay2002} for instance.

\begin{proof}[Proof of Lemma \ref{lem:lin}]

Throughout the proof, the implicit constants in $\lesssim$
may depend on $q$, on the order of differentiation , and on the
constants and polynomial growth exponents in the assumptions
on $H$. 
They are independent of $\varsigma$ and of all time, space,
and integration variables.
We use $N$ to denote a nonnegative polynomial exponent,
depending only on the differentiation orders and the growth
exponents in the assumptions on $H$, which may be increased
from one occurrence to the next.

    We start by deriving the representation formula for a solution of
    \cref{eq:LinOm2}. As in \cite{GalWay2002}, let \(S(t)\) be the
    semigroup generated by the operator \(\mathcal{L}\). By variations
    of constants, we readily get that
    \begin{equation}
    \label{eq:f_pm_formula_r_tau0}
        f_{\varsigma,q}(\tau)
        =
         \left(
    \frac{\tau_0(1+\varsigma\tau)}
    {\tau(1+\varsigma\tau_0)}
    \right)^q
    S\left(\log\frac{\tau}{\tau_0}\right)
    f_{\varsigma,q}(\tau_0)+ \left(\frac{1+\varsigma\tau}{\tau}\right)^q
        \int_{\tau_0}^\tau
        \frac{r^{q-1}}{(1+\varsigma r)^{q}}
        S\left(\log\frac{\tau}{r}\right)H(r,\xi)
        \,dr.
    \end{equation}
    Since the solution is regular at zero, it is bounded in
$\mathcal Y$ near $\tau=0$. The first term therefore tends
to zero in $\mathcal Y$ as $\tau_0\to 0$, because
$q>0$ and $S(t)$ is contractive in $\mathcal Y$.
Consequently,
\begin{equation}
\label{eq:f_pm_formula_r}
    f_{\varsigma,q}(\tau)
    =
    \left(\frac{1+\varsigma\tau}{\tau}\right)^q
    \int_0^\tau
    \frac{r^{q-1}}{(1+\varsigma r)^q}
    S\left(\log\frac{\tau}{r}\right)H(r,\cdot)\,dr.
\end{equation}
The same argument applied to the difference of two regular
solutions proves uniqueness.\\
We recall the explicit formula for $u \in \cZ$
    \begin{equation}
        \left(S(t)u\right)(\xi)
        =
        \frac{1}{4\pi(1-\e^{-t})}
        \int_{\mathbb{R}^2}
        \e^{-\frac{|\xi-\eta\e^{-t/2}|^2}{4(1-\e^{-t})}}
        u(\eta)\,d\eta.
    \end{equation}
    Note that we can rewrite
    \begin{equation}
        \e^{|\xi|^2/4}\left(S(t)u\right)(\xi)
        =
        \frac{1}{4\pi(1-\e^{-t})}
        \int_{\mathbb{R}^2}
        \e^{-\frac{|\eta-\xi\e^{-t/2}|^2}{4(1-\e^{-t})}}
        \widetilde u(\eta)\,d\eta,
        \qquad
        \widetilde u(\eta):=\e^{|\eta|^2/4}u(\eta).
    \end{equation}
    Setting
    \[
        \widetilde f_{\varsigma,q}(\tau,\xi)
        :=
        \e^{|\xi|^2/4}f_{\varsigma,q}(\tau,\xi),
        \qquad 
        \widetilde H_j(r,\eta)
    :=
    e^{|\eta|^2/4}(r\partial_r)^jH(r,\eta),
    \]
    we then have
    \begin{align}
        \widetilde f_{\varsigma,q}(\tau,\xi)
        =
        \left(\frac{1+\varsigma\tau}{\tau}\right)^q
        \int_0^\tau
        \frac{r^{q-1}}{(1+\varsigma r)^{q}}
        \frac{1}{4\pi(1-r/\tau)}
        \int_{\mathbb{R}^2}
        \e^{-\frac{|\eta-\xi\sqrt{r/\tau}|^2}{4(1-r/\tau)}}
        \widetilde H_0(r,\eta)\,d\eta\,dr.
    \end{align}
    By the change of variables \(s=r/\tau\), we rewrite
    \begin{equation}
\label{eq:FWK}
    \widetilde f_{\varsigma,q}(\tau,\xi)
    =
    \int_0^1
    \mathcal W_{\varsigma,q}(\tau,s)
    \mathcal K_s
    \bigl(\widetilde H_0(\tau s,\cdot)\bigr)(\xi)\,ds,
\end{equation}
with
\begin{equation}
    \mathcal W_{\varsigma,q}(\tau,s)
    =
    s^{q-1}
    \left(
    \frac{1+\varsigma\tau}{1+\varsigma s\tau}
    \right)^q,
    \qquad
    \mathcal K_s(g)(\xi)
    :=
    \frac{1}{4\pi(1-s)}
    \int_{\mathbb R^2}
    e^{-\frac{|\eta-\xi\sqrt s|^2}{4(1-s)}}
    g(\eta)\,d\eta.
\end{equation}

The assumptions on $H$ imply continuity at zero in
$\mathcal Y$. Moreover,
$\mathcal W_{\varsigma,q}(\tau,s)\to s^{q-1}$
as $\tau\to 0$.
For $\tau$ sufficiently small, the integrand in the
semigroup representation is dominated in $\mathcal Y$
by an integrable multiple of $s^{q-1}$.
Hence
\begin{align}
    \lim_{\tau\to 0}f_{\varsigma,q}(\tau)
    &=
    \int_0^1
    s^{q-1}S\left(\log\frac1s\right)H(0,\cdot)\,ds
    \notag\\
    &=
    \int_0^\infty e^{-qv}S(v)H(0,\cdot)\,dv
    =
    (q-\mathcal L)^{-1}H(0,\cdot).
\end{align}
Thus the representation has exactly the prescribed
initial trace, which belongs to $\mathcal Z$ by Lemma \ref{lem:Linvert}.

We next establish estimates for $\mathcal K_s$ at a fixed
time $r$ and for a fixed $j\in\mathbb N_0$.
    By hypothesis, for every \(\alpha\in\mathbb{N}^2\),
    \begin{equation}
        |\partial_\eta^\alpha\widetilde H_j(r,\eta)|
        \lesssim
        (1+|\eta|)^{N} A_j(r).
    \end{equation}
    Moreover,
\begin{equation}
    \int_{\mathbb R^2}
    e^{-|\eta|^2/4}\widetilde H_j(r,\eta)\,d\eta
    =
    (r\partial_r)^jM(H(r,\cdot))
    =
    0.
\end{equation}
Differentiation under the integral is justified by
the weighted polynomial bounds and the local
boundedness of the functions $A_j$.
    In the sequel, we will often use the following standard Gaussian
    moment estimate: for every \(c>0\), \(a\in\mathbb{R}^2\), and every
    \(m\in\mathbb{N}\), we have
    \begin{equation}
    \label{eq:GaussaniMomentBound}
        \int_{\mathbb{R}^2}
        \e^{-c|\eta-a|^2}(1+|\eta|)^m\,d\eta
        \lesssim
        (1+|a|)^m.
    \end{equation}
    We now claim that
    \begin{equation}
    \label{eq:Ks_zero_order_sqrt_gain}
        |\mathcal{K}_s(\widetilde H_j)(r,\xi)|
        \lesssim
         A_j(r)\sqrt{s}(1+|\xi|)^{N},
        \qquad 0<s<1.
    \end{equation}
    Let us consider separately the cases \(s\in(0,1/2)\) and
    \(s\in(1/2,1)\). For the latter, the Gaussian moment estimates with the assumptions yields
\begin{align}
    &\left|
    \mathcal K_s
    \bigl(\widetilde H_j(r,\cdot)\bigr)(\xi)
    \right|\lesssim
     A_j(r)(1+|\xi|)^{N}.
\end{align}
    Since \(\sqrt{s}\) is bounded from below, this gives the desired
    bound for \(s\in(1/2,1)\).
    When \(s\in(0,1/2)\), we crucially use the fact that
    \begin{equation}
        0=M((r \de_r)^jH(r,\cdot))
        =
        \int_{\mathbb{R}^2}
        \e^{-|\eta|^2/4}\widetilde H_j(r,\eta)\,d\eta
    \end{equation}
    to rewrite
    \begin{align}
        4\pi \mathcal{K}_s(\widetilde H_j)(r,\xi)
        &=
        \int_{\mathbb{R}^2}
        \left[
        \frac{1}{(1-s)}
        \e^{-\frac{|\eta-\sqrt{s}\xi|^2}{4(1-s)}}
        -
        \e^{-|\eta|^2/4}
        \right]
        \widetilde H_j(r,\eta)\,d\eta
        \notag\\
        &=
        \int_{\mathbb{R}^2}
        \left[
        \mathcal{I}_1(\xi,\eta,s)
        +
        \mathcal{I}_2(\xi,\eta,s)
        +
        \mathcal{I}_3(\xi,\eta,s)
        \right]
        \widetilde H_j(r,\eta)\,d\eta,
    \end{align}
    where
    \begin{align}
        \mathcal{I}_1(\xi,\eta,s)
        &:=\frac{s}{1-s}
        \e^{-\frac{|\eta-\sqrt{s}\xi|^2}{4(1-s)}},
        \\
        \mathcal{I}_2(\xi,\eta,s)
        &:=
        \e^{-\frac{|\eta-\sqrt{s}\xi|^2}{4}}
        \left(
        \e^{-\frac{|\eta-\sqrt{s}\xi|^2}{4}\frac{s}{1-s}}
        -1
        \right),
        \\
        \mathcal{I}_3(\xi,\eta,s)
        &:=
        \e^{-\frac{|\eta-\sqrt{s}\xi|^2}{4}}
        -\e^{-|\eta|^2/4}.
    \end{align}
    For \(\mathcal{I}_1\), since \(s\in(0,1/2)\), we have
    \begin{equation}
        |\mathcal{I}_1(\xi,\eta,s)|
        \lesssim
        s\, 
        \e^{-\frac{|\eta-\sqrt{s}\xi|^2}{4(1-s)}}.
    \end{equation}
        For the term \(\mathcal{I}_2\), the bound
    \(1-\e^{-u}\leq u\) for \(u\geq0\) and the fact that $1-s\geq 1/2$ gives
    \begin{equation}
        |\mathcal{I}_2(\xi,\eta,s)|
        \lesssim
        s\,
        \e^{-\frac{|\eta-\sqrt{s}\xi|^2}{4}}
        |\eta-\sqrt{s}\xi|^2.
    \end{equation}
    Hence, using \cref{eq:GaussaniMomentBound} we arrive at
    \begin{equation}
    \label{eq:I1Estimate}
        \int_{\mathbb{R}^2}
        (|\mathcal{I}_1(\xi,\eta,s)|+|\mathcal{I}_2(\xi,\eta,s)|)
        |\widetilde H_j(r,\eta)|\,d\eta
        \lesssim
         A_j(r) s(1+|\xi|)^{N_{}}.
    \end{equation}    
    Finally, we estimate \(\mathcal{I}_3\), for which we note that
\begin{align}
    \mathcal{I}_3(\xi,\eta,s)
    &=
    \int_0^1
    \frac{d}{d\theta}
    \e^{-|\eta-\theta\sqrt{s}\xi|^2/4}
    \,d\theta=
    \frac{\sqrt{s}}{2}
    \int_0^1
    \xi\cdot(\eta-\theta\sqrt{s}\xi)
    \e^{-|\eta-\theta\sqrt{s}\xi|^2/4}
    \,d\theta.
\end{align}
Consequently,
\begin{equation}
    |\mathcal{I}_3(\xi,\eta,s)|
    \leq  C\sqrt{s}|\xi|
    \int_0^1
    \e^{-|\eta-\theta\sqrt{s}\xi|^2/8}
    \,d\theta.
\end{equation}
By the assumptions and
\cref{eq:GaussaniMomentBound}, we obtain
\begin{align}
    \int_{\mathbb{R}^2}
    |\mathcal{I}_3(\xi,\eta,s)|
    |\widetilde H_j(r,\eta)|\,d\eta
    \lesssim
    C A_j(r)\sqrt{s}|\xi|
    \int_0^1
    (1+\theta\sqrt{s}|\xi|)^N\,d\theta\lesssim
    C A_j(r)\sqrt{s}(1+|\xi|)^{N}.
    \label{eq:I3Estimate}
\end{align}
    Combining \cref{eq:I1Estimate,eq:I3Estimate}, we obtain
    \cref{eq:Ks_zero_order_sqrt_gain}.

Then, for every multi-index \(\alpha\), integration by parts gives
\begin{equation}
    \partial_\xi^\alpha
    \mathcal K_s
    \bigl(\widetilde H_j(r,\cdot)\bigr)
    =
    s^{|\alpha|/2}
    \mathcal K_s
    \bigl(\partial_\eta^\alpha
    \widetilde H_j(r,\cdot)\bigr).
\end{equation}
    When \(|\alpha|\geq1\), the Gaussian moment estimate and
    \(s^{|\alpha|/2}\leq\sqrt{s}\) give the desired gain.     Consequently,
    \begin{equation}
\label{eq:Ks_derivative_sqrt_gain}
    \left|
    \partial_\xi^\alpha
    \mathcal K_s
    \bigl(\widetilde H_j(r,\cdot)\bigr)(\xi)
    \right|
    \lesssim
    A_j(r)\sqrt s
    (1+|\xi|)^{N},
    \qquad 0<s<1.
\end{equation}

Going back to \cref{eq:FWK}, time differentiation acts
both on $\mathcal W_{\varsigma,q}$ and on the forcing.
For fixed $s$, we have
\[
    (\tau\partial_\tau)^j
    \widetilde H_0(\tau s,\eta)
    =
    \widetilde H_j(\tau s,\eta).
\]
Therefore, the Leibniz formula gives
\begin{align}
    &(\tau\partial_\tau)^\beta
    \partial_\xi^\alpha
    \widetilde f_{\varsigma,q}(\tau,\xi) =
    \sum_{j=0}^{\beta}\binom{\beta}{j}
    \int_0^1
    (\tau\partial_\tau)^{\beta-j}
    \mathcal W_{\varsigma,q}(\tau,s)
    \partial_\xi^\alpha
    \mathcal K_s
    \bigl(\widetilde H_j(\tau s,\cdot)\bigr)(\xi)\,ds.
\end{align}
Differentiation under the integral is justified by
\cref{eq:Ks_derivative_sqrt_gain}, the local boundedness
of $A_j$, and the integrability of the resulting
weights on compact time intervals.
We estimate this expression separately for
$\varsigma=1$ and $\varsigma=-1$.

    \smallskip
    \paragraph{\(\blackinwhitediamond\)
    \textbf{Case: \(\varsigma=1\).}}
    We have
    \begin{equation}
        \mathcal W_{1,q}(\tau,s)
        =
        s^{q-1}
        \left(\frac{1+\tau}{1+s\tau}\right)^q.
    \end{equation}

    Since
\begin{equation}
\tau\partial_\tau\log\mathcal W_{1,q}(\tau,s)
    =
    q\left(
    \frac{\tau}{1+\tau}
    -
    \frac{s\tau}{1+s\tau}
    \right),
\end{equation}
and all logarithmic time derivatives of the fractions
on the right-hand side are uniformly bounded, repeated
differentiation yields
\begin{equation}
\label{eq:Wplus_q_derivatives}
    \left|
    (\tau\partial_\tau)^k
    \mathcal W_{1,q}(\tau,s)
    \right|
    \lesssim\mathcal W_{1,q}(\tau,s),
    \qquad k\in\mathbb N_0.
\end{equation}
Moreover,
\[
    \mathcal W_{1,q}(\tau,s)\sqrt s
    =
    s^{-1/2}
    \left(
    \frac{s(1+\tau)}{1+s\tau}
    \right)^q
    \leq s^{-1/2},
\]
since
\[
    0\leq\frac{s(1+\tau)}{1+s\tau}\leq1.
\]
For $\tau \geq 0$, integration directly gives
\[\int_0^1\mathcal W_{1,q}(\tau,s)\sqrt s\,ds\le\int_0^1s^{-1/2}\,ds=2.\]
For each $j\leq\beta$,
\[
    A_j(\tau s)\leq
    \sup_{0\leq r\leq\tau}A_j(r).
\]
Combining the Leibniz formula with
\cref{eq:Ks_derivative_sqrt_gain} we obtain
\begin{equation}
    \left|
    (\tau\partial_\tau)^\beta
    \partial_\xi^\alpha
    \widetilde f_{1,q}(\tau,\xi)
    \right|
    \lesssim
    (1+|\xi|)^{N}
    \sum_{j=0}^{\beta}
    \sup_{0\leq r\leq\tau}A_j(r).
\end{equation}

    \smallskip
    \paragraph{\(\blackinwhitediamond\)
    \textbf{Case: \(\varsigma=-1\).}}
    We have
    \begin{equation}
        \mathcal W_{-1,q}(\tau,s)
        =
        s^{q-1}
        \left(\frac{1-\tau}{1-s\tau}\right)^q,
        \qquad 0\leq\tau<1.
    \end{equation}
As before, since $1-s\tau\geq1-\tau$, repeated logarithmic time
differentiation gives
\begin{equation}
\label{eq:Wminus_q_derivatives}
    \left|
    (\tau\partial_\tau)^k
    \mathcal W_{-1,q}(\tau,s)
    \right|
    \lesssim
    \frac{1}{(1-\tau)^k}
    \mathcal W_{-1,q}(\tau,s),
    \qquad k\in\mathbb N_0.
\end{equation}
We next claim that
\begin{equation}
\label{eq:Wminus_q_integral}
    \int_0^1
    \frac{\mathcal W_{-1,q}(\tau,s)}
    {1-s\tau}\,ds
    \leq\frac{2}{q},
    \qquad 0\leq\tau<1.
\end{equation}
Indeed, for $0\le\tau<1$, let $u=\frac{s(1-\tau)}{(1-s\tau)}$. Then,
\[\int_0^1\frac{\mathcal W_{-1,q}(\tau,s)}{1-s\tau}\,ds=\frac1q.\]
Since $\sqrt s\le1$ and $1-s\tau\ge1-\tau$, for every $j\ge0$,
\[\int_0^1\frac{\mathcal W_{-1,q}(\tau,s)\sqrt s}{(1-s\tau)^{j+1}}\,ds\le\frac1{(1-\tau)^j}\int_0^1\frac{\mathcal W_{-1,q}(\tau,s)}{1-s\tau}\,ds=\frac1{q(1-\tau)^j}.\]
Then, for every $0\leq j\leq\beta$,
\[
    A_j(\tau s)
    \leq
    \frac{
    \displaystyle
    \sup_{0\leq r\leq\tau}(1-r)^{j+1}A_j(r)
    }{
    (1-s\tau)^{j+1}
    }.
\]
Therefore, we have
\begin{align}
    \int_0^1
    \left|
    (\tau\partial_\tau)^{\beta-j}
    \right .& \left. \mathcal W_{-1,q}(\tau,s)
    \right|
    \left|
    \partial_\xi^\alpha
    \mathcal K_s
    \bigl(\widetilde H_j(\tau s,\cdot)\bigr)(\xi)
    \right|\,ds \\
    & \lesssim
    \frac{
    (1+|\xi|)^{N}
    }{
    (1-\tau)^{\beta-j}
    }
    \sup_{0\leq r\leq\tau}(1-r)^{j+1}A_j(r)
    \int_0^1
    \frac{\mathcal W_{-1,q}(\tau,s)\sqrt s}
    {(1-s\tau)^{j+1}}\,ds
    \\
    &\lesssim
    \frac{(1+|\xi|)^N}{(1-\tau)^\beta}
    \sup_{0\leq r\leq\tau}
    (1-r)^{j+1}A_j(r).
\end{align}
Summing over $0\leq j\leq\beta$, 
we obtain
\begin{equation}
    \left|
    (\tau\partial_\tau)^\beta
    \partial_\xi^\alpha
    \widetilde f_{-1,q}(\tau,\xi)
    \right|
    \lesssim
    \frac{(1+|\xi|)^{N}}{(1-\tau)^\beta}
    \sum_{j=0}^{\beta}
    \sup_{0\leq r\leq\tau}
    (1-r)^{j+1}A_j(r).
\end{equation}
This proves the estimate in the collapsing case.
    Finally, the assumptions near $\tau=0$ justify the
continuous extension of the logarithmic time derivatives.
Indeed, for $j\geq1$,
\[
    \widetilde H_j(r,\eta)\longrightarrow0
    \qquad\text{as }r\to 0,
\]
by smoothness of $H$. The local boundedness of $A_j$
and the weighted polynomial bounds provide the required
domination.
Moreover, for every fixed $0<s<1$,
\[
    (\tau\partial_\tau)^k
    \mathcal W_{\varsigma,q}(\tau,s)
    \longrightarrow0
    \qquad\text{as }\tau\to 0,
    \quad k\geq1.
\]
In each Leibniz term with $\beta\geq1$, either a positive
time derivative of the weight occurs, or $j\geq1$.
Dominated convergence therefore gives
\[
    (\tau\partial_\tau)^\beta
    \partial_\xi^\alpha
    \widetilde f_{\varsigma,q}(\tau,\xi)
    \longrightarrow0
    \qquad\text{as }\tau\to 0,
    \quad \beta\geq1.
\]
For $\beta=0$, smoothness of $H$ at zero and the
weighted polynomial bounds allow passage to the limit
in the spatially differentiated kernel formula
\cref{eq:FWK}. 
Indeed, for $0\leq\tau\leq1/2$, its
integrand is bounded by
$s^{q-1/2}(1+|\xi|)^N$, which is integrable in $s$.
Dominated convergence and the resolvent representation
therefore yield, for every multi-index $\alpha$,
\[
    \partial_\xi^\alpha
    \widetilde f_{\varsigma,q}(\tau,\xi)
    \longrightarrow
    \partial_\xi^\alpha
    \left[
        e^{|\xi|^2/4}(q-\mathcal L)^{-1}H(0,\cdot)
    \right](\xi)
    \qquad\text{as }\tau\to 0.
\]
Together with the initial trace proved above, this completes the proof.
\end{proof}

\newpage
\subsection*{AI usage statement}
This paper is the first research project of the second author, carried out during
the first year of his PhD. The key arguments and the outline of the manuscript were
developed through regular blackboard discussions and pen-and-paper computations.
Commercial LLMs were used to better understand the techniques employed and to
assist with the tasks specified below.
\begin{itemize}
\item During the editing of the manuscript, AI was used to review the manuscript in
depth, check for mathematical and typographical errors, generate all the figures,
refine some arguments, and assist with the verification of results in the
literature.
\item An important AI contribution concerns the spectral characterization of the
matrix associated with the linearized Helmholtz--Kirchhoff system, presented in
\cref{sec:selfsimilar_HK} and Lemma \ref{lem:SpectralPropertiesB}. The analysis in
\cite{ArefSelfSimilar} relies on algebraically involved identities that exploit in
a highly nontrivial way the conserved quantities of the system, such as those in
\cref{eq:groebli_condition_intro}. It yields the eigenvalues and claims, in
addition, that three eigenvectors are linearly independent, see Remark
\ref{rem:3arefvectors}. This last claim does not affect the main conclusions of
\cite{ArefSelfSimilar}, but from it we initially deduced that the matrix is
diagonalizable. In one of the review phases, AI flagged this
conclusion as an error and provided concrete examples in which the eigenvalues are
correct but Jordan blocks occur. We then checked all the computations in
\cite{ArefSelfSimilar} with a SageMath 10.7 script, fully generated by AI and
subsequently checked by the last author, which is available in the supplementary
material of the arXiv version of the paper. This confirmed that the key identities
in \cite{ArefSelfSimilar} are correct and that only the conclusion on
diagonalizability is not. The complete characterization of the configurations for
which the matrix is diagonalizable, presented in Appendix \ref{app:explicit}, was also
generated with AI and subsequently checked and refined by us.
\item The initial draft of the paper treated only the expanding case. We later
observed from the expression of $\epsilon$ that the collapsing case could also be
addressed on a suitable time interval, upon minor modifications of the arguments
for the expanding case. We identified the required modifications, and the
presentation in \cref{sec:collapsing} was written with the assistance of AI. After
\cite{nguyen2026inviscid} appeared, we used AI assistance also to check our
interpretation of its key points, as discussed in the introduction.
\item For \cref{sec:VanishingViscosity}, we sketched a proof outline based on the
techniques of \cite{CeciSeis24,albritton2025vanishingviscositynonuniquesolutions}.
AI assistance was used to fill in the details, which we then rewrote and
simplified.
\item We outlined the proof of Lemma \ref{lem:lin}. AI then supplied the full details, which we subsequently refined.
\item For all other arguments in the paper, we developed the key ideas and the
details ourselves. AI assistance was used to refine the exposition and to flag
typos and missing steps.
\end{itemize}

\smallskip
\textbf{Acknowledgments.} We are indebted to M.~Colombo for suggesting the
connection with vortex bursts and for discussions and suggestions throughout this work. We also thank Th.~Gallay for useful comments. Both authors were supported by the Swiss National Science
Foundation (SNSF Ambizione grant PZ00P2\_223294).
\section*{}
\bibliographystyle{siam}
\bibliography{bibclass}

\end{document}